\renewcommand\footnotemark{}
\begin{document}

\title{Topographs for binary quadratic forms and class numbers}

\author{
  Cormac ~O'Sullivan\footnote{{\it Date:} June 4, 2025.
\newline \indent \ \ \
  {\it 2020 Mathematics Subject Classification:} 11E16, 11E41, 11Y65
\newline \indent \ \ \
{\em Key words and phrases.} Binary quadratic forms, class numbers, continued fractions.
  \newline \indent \ \ \
Support for this project was provided by a PSC-CUNY Award, jointly funded by The Professional Staff Congress and The City
\newline \indent \ \ \
University of New York.}
  }

\date{}

\maketitle


\def\s#1#2{\langle \,#1 , #2 \,\rangle}

\def\H{{\mathbb{H}}}
\def\F{{\mathfrak F}}
\def\Fd{{\mathcal F}}
\def\Gd{{\mathcal G}}
\def\C{{\mathbb C}}
\def\R{{\mathbb R}}
\def\Z{{\mathbb Z}}
\def\Q{{\mathbb Q}}
\def\N{{\mathbb N}}
\def\G{{\Gamma}}
\def\GH{{\G \backslash \H}}
\def\g{{\gamma}}
\def\L{{\Lambda}}
\def\ee{{\varepsilon}}
\def\K{{\mathcal K}}
\def\Re{\mathrm{Re}}
\def\Im{\mathrm{Im}}
\def\PSL{\mathrm{PSL}}
\def\SL{\mathrm{SL}}
\def\GL{\mathrm{GL}}
\def\Vol{\operatorname{Vol}}
\def\lqs{\leqslant}
\def\gqs{\geqslant}
\def\sgn{\operatorname{sgn}}
\def\res{\operatornamewithlimits{Res}}
\def\li{\operatorname{Li_2}}
\def\lip{\operatorname{Li}'_2}
\def\pl{\operatorname{Li}}

\def\ei{\mathrm{Ei}}

\def\clp{\operatorname{Cl}'_2}
\def\clpp{\operatorname{Cl}''_2}
\def\farey{\mathscr F}

\def\dm{{\mathcal A}}
\def\ov{{\overline{p}}}
\def\ja{{K}}

\def\nb{{\mathcal B}}
\def\cc{{\mathcal C}}
\def\nd{{\mathcal D}}

\def\u{{\text{\rm u}}}
\def\v{{\text{\rm v}}}
\def\ib{{\text{\,\rm i}}}
\def\jb{{\text{\,\rm j}}}
\def\qq{{f}}

\def\mg{{\mathcal M_{G}}}
\def\mz{{\mathcal M_{Z}}}
\def\stab{{\text{\rm Stab}}}
\def\aut{{\text{\rm Aut}}}
\def\ze{{z}}

\newcommand{\stira}[2]{{\genfrac{[}{]}{0pt}{}{#1}{#2}}}
\newcommand{\stirb}[2]{{\genfrac{\{}{\}}{0pt}{}{#1}{#2}}}
\newcommand{\eu}[2]{{\left\langle\!\! \genfrac{\langle}{\rangle}{0pt}{}{#1}{#2}\!\!\right\rangle}}
\newcommand{\eud}[2]{{\big\langle\! \genfrac{\langle}{\rangle}{0pt}{}{#1}{#2}\!\big\rangle}}
\newcommand{\norm}[1]{\left\lVert #1 \right\rVert}
\newcommand{\dx}[1]{\overset{*}{#1}}

\newcommand{\e}{\eqref}
\newcommand{\la}{\label}
\newcommand{\bo}[1]{O\left( #1 \right)}
\newcommand{\ol}[1]{\,\overline{\!{#1}}} 


\newtheorem{theorem}{Theorem}[section]
\newtheorem{lemma}[theorem]{Lemma}
\newtheorem{prop}[theorem]{Proposition}
\newtheorem{conj}[theorem]{Conjecture}
\newtheorem{cor}[theorem]{Corollary}
\newtheorem{assume}[theorem]{Assumptions}
\newtheorem{adef}[theorem]{Definition}

\newtheorem*{algo}{Reduction algorithm}
\newtheorem*{algo2}{Continued fraction algorithm}
\newtheorem*{algo3}{General continued fraction algorithm}
\newtheorem*{algo4}{Algorithm to find $\varepsilon_D$ and $\varepsilon^*_D$}

\numberwithin{figure}{section}
\numberwithin{table}{section}


\newcounter{counrem}
\newtheorem{remark}[counrem]{Remark}

\renewcommand{\labelenumi}{(\roman{enumi})}
\newcommand{\spr}[2]{\sideset{}{_{#2}^{-1}}{\textstyle \prod}({#1})}
\newcommand{\spn}[2]{\sideset{}{_{#2}}{\textstyle \prod}({#1})}

\numberwithin{equation}{section}

\let\originalleft\left
\let\originalright\right
\renewcommand{\left}{\mathopen{}\mathclose\bgroup\originalleft}
\renewcommand{\right}{\aftergroup\egroup\originalright}

\bibliographystyle{alpha}

\begin{abstract}
In this work we study, in greater detail than before, J.H. Conway's topographs for integral binary quadratic forms.
These are trees in the plane with regions labeled by integers following a simple pattern. Each topograph can display the values of a single form, or represent an equivalence class of forms. We give a new treatment of reduction of forms to canonical equivalence class representatives by employing topographs and  a novel continued fraction for complex numbers. This allows uniform reduction for any positive, negative, square or non-square discriminant.
Topograph geometry also provides new class number formulas, and short proofs of results of Gauss relating to sums of three squares. Generalizations of the series of Hurwitz for class numbers give evaluations of certain infinite series, summed over the regions or edges of a topograph.
\end{abstract}

\section{Introduction}
A binary quadratic form is a  polynomial
$
  q(x,y)=a x^2 +b xy + cy^2.
$
 For fixed integers $a, b, c$ and $m$ it has historically been a challenging problem to describe the solutions of the Diophantine equation $q(x,y)=m$. Conway proposed an elegant answer in \cite{con97}   by presenting $q$ in a graphical way he called a  topograph. As reviewed in Section \ref{cla}, these topographs have  simple structures described in terms of descending to their rivers, lakes and wells. An example is shown in Figure \ref{ribbon}.
\SpecialCoor
\psset{griddots=5,subgriddiv=0,gridlabels=0pt}
\psset{xunit=0.2cm, yunit=0.2cm, runit=0.2cm}
\psset{linewidth=1pt}
\psset{dotsize=5pt 0,dotstyle=*}
\begin{figure}[ht]
\centering
\begin{pspicture}(3,4)(54,33) 

\psset{arrowscale=1.1,arrowinset=0.1}
\newrgbcolor{light}{0.9 0.9 1.0}
\newrgbcolor{light}{0.8 0.7 1.0}
\newrgbcolor{blue2}{0.3 0.3 0.8}

\psline[linecolor=light,fillstyle=solid,fillcolor=light](3,7.3)(8.4,9.4)(11,14)(7.7,17.5)(3,18.8)

\psline(3,7.3)(8.4,9.4)(11,14)(7.7,17.5)(3,18.8)

\psline[linecolor=light,fillstyle=solid,fillcolor=light](54,23)(49,22.3)(44,20.5)(40,17.2)(43.2,13)(48.5,10.3)(54,9.3)

\psline(54,23)(49,22.3)(44,20.5)(40,17.2)(43.2,13)(48.5,10.3)(54,9.3)


\psline(10,21.3)(7.7,17.5)

\psline(13.8,26.8)(18.1,24.5)(20.5,20.5)(20.7,16.3)
\psline(18,29.2)(18.1,24.5)
\psline(22,29.5)(23,25)(20.5,20.5)
\psline(27.7,28.6)(23,25)

\psline(28.7,22)(30,17.4)

\psline(34.2,27.5)(36.2,22.9)(34.5,18.5)
\psline(39.5,27.9)(36.2,22.9)

\psline(42.5,24.8)(44,20.5)

\psline(50.8,31.3)(48.3,27)(49,22.3)
\psline(45.8,30.7)(48.3,27)

\psline(8.4,9.4)(11.2,6.3)

\psline(16,14)(17.7,9.6)(16.5,4.6)
\psline(21.2,5)(17.7,9.6)

\psline(25.4,14.7)(27.5,10)(25,4.5)
\psline(31.7,5.6)(27.5,10)

\psline(43.2,13)(41,8.4)

\psline(48.5,10.3)(47.3,5.5)

\psline[linewidth=3.5pt,linecolor=blue2](10.8,14)(16,14)(20.7,16.3)(25.4,14.7)(30,17.4)(34.5,18.5)(40.2,17.2)


\rput(6,13.5){$0$}

\rput(6,21){$25$}
\rput(14.5,19.2){$7$}
\rput(13,10.5){$-11$}
\rput(7,6){$-29$}

\rput(15.3,28.7){$_{85}$}
\rput(20.3,26.2){$40$}
\rput(25,29.8){$_{91}$}

\rput(25,19.3){$9$}
\rput(21,12){$-8$}
\rput(18.7,4.5){$_{-45}$}

\rput(32.5,21.7){$16$}
\rput(34,12.6){$-5$}
\rput(28,5.3){$_{-35}$}

\rput(37,28.1){$63$}
\rput(39.5,21.2){$13$}
\rput(46.1,24){$31$}
\rput(52.1,25.8){$49$}
\rput(48.4,31.5){$_{160}$}

\rput(44.5,9){$-23$}
\rput(51,6){$-41$}

\rput(48,17){$0$}


\end{pspicture}
\caption{Part of a topograph of discriminant $D=18^2$}
\label{ribbon}
\end{figure}
The underlying graph is a tree where all vertices have degree $3$. Starting with three adjacent numbers there is a simple rule to add more numbers, given in Definition \ref{topdef}. The connection with quadratic forms appears in Theorem \ref{abc}.

Another fundamental question involves counting quadratic forms.
Define
\begin{equation}\label{qm}
q|M := q(\alpha x +\beta y, \g x+\delta y) \qquad \text{for} \qquad M = \begin{pmatrix} \alpha &\beta \\ \g &\delta \end{pmatrix},
\end{equation}
giving a right action by matrices.
The forms $q_1$ and $q_2$  are equivalent, written $q_1 \sim q_2$, if
$q_1 = q_2|M$ for some $M \in \SL(2,\Z)$, and the much studied, and in some ways still mysterious, class numbers count the  equivalence classes  of each discriminant $D=b^2-4ac$.  We will use $h^*(D)$  to indicate the number of classes of  forms with integer coefficients and discriminant $D$. The number of these that are primitive, meaning that the $\gcd$ of their coefficients $a, b, c$ is $1$, is denoted by $h(D)$. Here $D$ can be any integer $\equiv 0$ or $1 \bmod 4$ and, by convention, only form classes representing nonnegative integers (i.e. having $a, c \gqs 0$) are counted if $D\lqs 0$.

The numbers in the topograph example in Figure \ref{ribbon} give the values taken by some quadratic form $q(x,y)$ as $x$ and $y$ vary over coprime integers. All forms equivalent to $q$ take the same values and in fact, as Rickards describes in \cite{ri21}, all these equivalent forms are themselves naturally represented on this topograph as its edges. Exploiting this geometric connection between topographs and quadratic forms will lead us to new  class number formulas  as well as simple new proofs of known ones.

As a motivating example, consider the following remarkable class number formula of Duke, Imamo\=glu and T\'oth.  This  result  in fact  inspired this entire project.

\begin{theorem} \cite[Thm. 3]{dit21} \la{t3}
For $D>0$ a fundamental discriminant,
\begin{equation} \la{dz}
  h(D) \log \varepsilon_D = D^{1/2} \sum_{\substack{ [a, \, b, \, c] \ \text{\rm Zagier reduced} \\ b^2-4ac  \, = \,  D}} \frac 1b +D^{3/2} 
  \sum_{\substack{ a, \, a+b+c, \, c \, > \, 0 \\ b^2-4ac  \, = \,  D}} \frac 1{3(2a+b)b(b+2c)}.
\end{equation}
\end{theorem}
On the right of \e{dz} are sums over integral quadratic forms of discriminant $D$. 
The first sum  is finite, indexed by Zagier reduced  forms that we will describe. The second sum is infinite and, from our perspective,  $(2a+b)b(b+2c)$ in the summand has an interesting shape:  it is the product of the  natural edge labels of three edges attached to a vertex in a topograph. So we may ask if it is possible to reinterpret \e{dz} as a sum over the vertices of the $h(D)$ topographs of discriminant $D$.

The authors in \cite{dit21} were extending similar formulas of Hurwitz in \cite{hur} who looked at the $D<0$ case. Our ultimate goal, achieved in Section \ref{sqrd}, is to further extend these results to the more difficult situation when $D$ is square by making use of the topographic framework.

The required theory is built up in stages and we treat a variety of topics:

\vskip 3mm
{\bf Topograph basics.} Topogaphs are introduced from a simplified perspective in Section \ref{ttop}, avoiding quadratic forms or any underlying structure initially, and focusing on their local rule. The connection to continued fractions and $\SL(2,\Z)$ is seen here.

\vskip 3mm
{\bf Topograph classification.} Conway's original classification is recounted in Section \ref{cla} with his appealing notions of rivers, lakes and wells. Topogaphs have different properties depending on whether the discriminant $D$ is positive or negative, square or non-square.

\vskip 3mm
{\bf Automorphs and units.} The proof of Theorem \ref{t3} in  \cite{dit21} needs  automorph groups. These are the elements $M$ of $\SL(2,\Z)$ that fix a particular form: $q|M=q$. On the topograph such elements correspond to paths linking areas that look the same locally. In Section \ref{ive} this novel point of view is used to prove some standard results, giving the generator of the automorph group of a form of discriminant $D$ and, when $D>0$, the related generating unit $\varepsilon_D$ seen on the left of \e{dz}.

\vskip 3mm
{\bf Class numbers for negative discriminants.}
As  seen in Section \ref{<}, counting topographs by using their internal properties gives simply derived (finite) class number formulas that complement the  well-known formulas of Dirichlet involving values of $L$-functions and Kronecker symbols.
We also  give a new short proof of a celebrated result of Gauss:
for $n>3$,
\begin{equation} \la{rr3}
  r'_3(n)=   \begin{cases}
                         12 h(-4n), & \mbox{if $n \equiv 1, 2 \bmod 4$}  \\
                         24 h(-n), & \mbox{if $n \equiv 3 \bmod 8$}\\
                            0, & \mbox{if $n \equiv 0, 4, 7 \bmod 8$}.
                       \end{cases}
\end{equation}
Here $r'_3(n)$ is the number of ways to write $n$ as a sum of squares of $3$ integers with their $\gcd$ being $1$.  Our proof of \e{rr3}  uses little more that the striking identity
\begin{equation}\label{krm}
  \left(\sum_{n \in \Z}(-q)^{n^2} \right)^3 = 1-6\sum_{e,f>0}(-1)^{e+f} q^{e f}-4 \sum_{e,f,g >0}(-1)^{e+f+g} q^{e f+f g+g e}
\end{equation}
of Krammer \cite{kr93} and some basic topograph geometry.

\vskip 3mm
{\bf Class numbers for positive discriminants.}
We highlight this example from Section \ref{rty}: for non-square $D>0$, 
\begin{equation}\label{ex}
  \varepsilon_D^{h(D)} = \prod_{\substack{ a+b+c \,< \, 0 \,  <  \, a,  \, c \\ b^2-4ac  \, = \,  D, \, \gcd(a,b,c)=1}} \frac{-b+\sqrt{D}}{2a},
\end{equation}
 where the product is finite. This seems to be a new formula, though it is based on an exercise from the book \cite{z81} involving Zagier's method of reducing forms.

\vskip 3mm
{\bf Continued fractions for complex numbers.}
In Section \ref{co} we find a natural generalization of the  continued fraction algorithm that applies to all complex numbers. It is related to the lattice reduction techniques of Lagrange and Gauss and has interesting geometric properties.  This algorithm appears to be original and should be of independent interest, differing from the continued fractions based on Gaussian integers going back to Hurwitz, and featured in \cite{dn14} for example.

\vskip 3mm
{\bf Reduction of forms.}
Conway mentions in \cite[p. 25]{con97} that topographs can be used to effectively decide when two given forms are equivalent. We see methods for this  in Section \ref{rdf} where a form $q$ is reduced to equivalent canonical forms based on the continued fractions of the
solutions to $q(x,1)=0$, given by the first and second roots
\begin{equation} \la{zeeq}
  \ze_q := \frac{-b+\sqrt{D}}{2a}, \qquad \ze'_q :=\frac{-b-\sqrt{D}}{2a} \qquad (a\neq 0).
\end{equation}
This makes an attractive topographical picture since, as described in \cite{sv18}, the numbers in the continued fraction expansion indicate how many forward left turns and forward right turns to alternately make in order to move from $q$ to the reduced form. We make this more precise and extend it to the $D<0$ case with our complex continued fractions. This allows a uniform treatment of reduction for all discriminants  $D$. Section \ref{red} also explains the   topograph geometry of Gauss's original method of reduction and the Zagier reduction used on the right of \e{dz}.

\vskip 3mm
{\bf The river as a binary necklace or word.} In the case of positive discriminants, topographs have rivers. The thickened path between the two lakes is the river in Figure \ref{ribbon}. Moving from left to right along it involves the sequence $LRLRR$ of left and right turns, making a binary word. Rivers can also be periodic, making  binary necklaces. Sections \ref{neck} and \ref{word} describe what kinds of words and necklaces are possible and show how their symmetries reflect the symmetries of the topograph as well as illuminating further number theoretic properties.

\vskip 3mm
Topographs give an interesting alternative viewpoint for the classical subjects of binary quadratic forms and quadratic fields.
It is certainly true that many technical computations and proofs  become much simpler with their aid. As Conway declared in \cite[p. vii]{con97}, ``just look!'' 
Topographs have also found recent 
applications in geometry and knot theory: in  \cite{ri21} they are used to count intersection numbers of closed modular geodesics, and Feh\'er employs them in \cite{feh} to find new examples of non-isotopic Seifert
surfaces.

\vskip 3mm
{\bf Acknowledgements.} I am grateful to William Duke, James Rickards and the referee for their helpful comments and suggestions.

\section{Continued fractions and the modular group} \la{co}
For any numbers $a_0,a_1, \dots, a_r$, denote the continued fraction built with them by
\begin{equation*}
  \langle a_0,a_1, \dots, a_r \rangle := a_0+\frac{1}{a_1 + \displaystyle \frac{1}{\ddots + \displaystyle \frac 1{a_r}}}.
\end{equation*}
\begin{algo2}
{\rm For any $x \in  \R$, start with index $i=0$.
\begin{itemize}
  \item Let $m=\lfloor x\rfloor$ and $a_i=m$. If $x-m=0$ then  finish. Otherwise   replace $x$ by $1/(x-m)$.
\end{itemize}
Increment $i$ and repeat.  The output is $(a_0,a_1, \dots )$.
}
\end{algo2}

This algorithm gives the standard continued fraction expansion $x= \langle a_0,a_1, \dots  \rangle$, finite for rational $x$ and otherwise infinite, with $a_0 \in \Z$ and $a_i \in \Z_{\gqs 1}$ for $i \gqs 1$.
Put
\begin{equation} \la{stu}
  S := \begin{pmatrix} 0 & -1\\ 1 & 0 \end{pmatrix}, \quad  T :=\begin{pmatrix} 1 & 1 \\ 0 & 1 \end{pmatrix}, \quad  U := TS= \begin{pmatrix} 1&-1\\1&0 \end{pmatrix}.
\end{equation}
The group $\SL(2,\Z)$ is generated by any two of $S$, $T$ and  $U$. Also let $\G:=\PSL(2,\Z)$. These groups act on the upper half plane $\H$ by linear fractional transformations and a
 standard fundamental domain is
\begin{equation}\label{fd}
\Fd:= \big\{ z =x+iy \in \H \, : \, -1/2\lqs x < 1/2, |z|\gqs 1 \text{ and } |z|=1 \implies x \lqs 0\big\}.
\end{equation}
We next generalize the  continued fraction algorithm so that it can be applied to complex numbers. This is motivated by reduction of quadratic forms of negative discriminant, and will be exactly what we need in Section \ref{ne}. The algorithm seems to be new, though it is related to lattice reduction techniques going back to Lagrange and Gauss \cite{vv07}.
Set
$$\Fd':= \Fd \cup S\Fd \cup \{0\}\cup -(\Fd \cup S\Fd),$$
so that $\Fd'$ is mapped to itself under both $z \mapsto -z$ and $z \mapsto 1/z$ (for $z\neq 0$).

\SpecialCoor
\psset{griddots=5,subgriddiv=0,gridlabels=0pt}
\psset{xunit=1.5cm, yunit=1.5cm, runit=1.5cm}
\psset{linewidth=1pt}
\psset{dotsize=7pt 0,dotstyle=*}
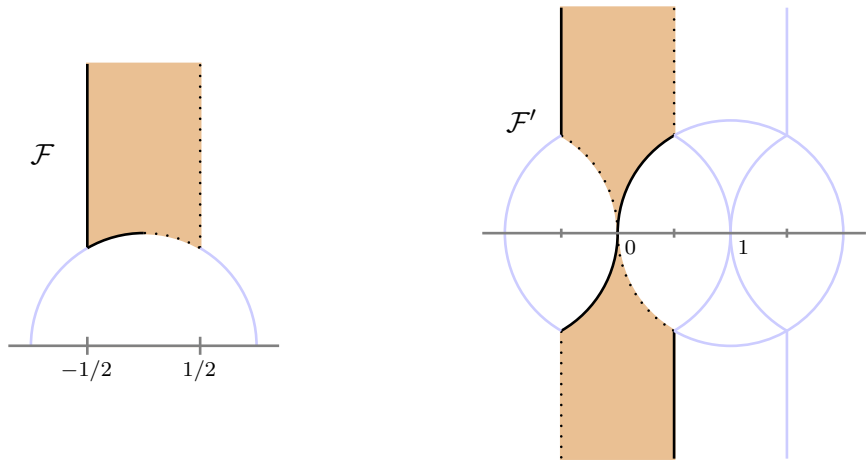
\begin{figure}[ht]
\centering
\begin{pspicture}(0.5,0)(9,4.2) 

\psset{arrowscale=1.4,arrowinset=0.3,arrowlength=1.1}
\newrgbcolor{light}{0.8 0.8 1.0}
\newrgbcolor{pale}{1 0.7 1}
\newrgbcolor{pale}{1 0.7 0.4}
\newrgbcolor{pale}{0.9179 0.7539 0.5781}

\rput(6.7,2){%
        \begin{pspicture}(-1.2,-2)(2.2,2)

\pspolygon[linecolor=pale,fillstyle=solid,fillcolor=pale](-0.5,2)(-0.5,-2)(0.5,-2)(0.5,2)(-0.5,2)

\pswedge[linecolor=white,fillstyle=solid,fillcolor=white](-1,0){1}{-90}{90}
\pswedge[linecolor=white,fillstyle=solid,fillcolor=white](1,0){1}{90}{270}

\psarc[linecolor=pale](-1,0){1}{0}{60}
\psarc[linecolor=pale](1,0){1}{180}{240}

\psarc[linestyle=dotted](-1,0){1}{0}{60}
\psarc(1,0){1}{120}{180}
\psarc(-1,0){1}{-60}{0}
\psarc[linestyle=dotted](1,0){1}{180}{240}

\psline(-0.5,2)(-0.5,0.866025)
\psline[linestyle=dotted](0.5,2)(0.5,0.866025)
\psline[linestyle=dotted](-0.5,-2)(-0.5,-0.866025)
\psline(0.5,-2)(0.5,-0.866025)

\psarc[linecolor=light](0,0){1}{-60}{60}
\psarc[linecolor=light](0,0){1}{120}{240}

\psarc[linecolor=light](2,0){1}{120}{240}

\psarc[linecolor=light](1,0){1}{-120}{120}

\psline[linecolor=light](1.5,2)(1.5,0.866025)
\psline[linecolor=light](1.5,-2)(1.5,-0.866025)

\psline[linecolor=gray](-1.2,0)(2.2,0)
\psline[linecolor=gray](-0.5,0.04)(-0.5,-0.04)
\psline[linecolor=gray](0.5,0.04)(0.5,-0.04)
\psline[linecolor=gray](1.5,0.04)(1.5,-0.04)


\rput(-0.85,1){$\mathcal F'$}
\rput(0.12,-0.12){$_0$}
\rput(1.12,-0.12){$_1$}
\end{pspicture}}

\rput(2,2){%
        \begin{pspicture}(-1.5,-0.5)(1.5,2.5)
\pspolygon[linecolor=pale,fillstyle=solid,fillcolor=pale](-0.5,2.5)(-0.5,0)(0.5,0)(0.5,2.5)(-0.5,2.5)

\pswedge[linecolor=white,fillstyle=solid,fillcolor=white](0,0){1}{0}{180}

\psarc[linecolor=pale](0,0){1}{60}{90}
\psarc[linestyle=dotted](0,0){1}{60}{90}
\psarc(0,0){1}{90}{120}

\psline(-0.5,2.5)(-0.5,0.866025)
\psline[linestyle=dotted](0.5,2.5)(0.5,0.866025)

\psarc[linecolor=light](0,0){1}{0}{60}
\psarc[linecolor=light](0,0){1}{120}{180}

\psline[linecolor=gray](-1.2,0)(1.2,0)
\psline[linecolor=gray](-0.5,0.08)(-0.5,-0.08)
\psline[linecolor=gray](0.5,0.08)(0.5,-0.08)

\rput(-0.5,-0.22){$_{-1/2}$}
\rput(0.5,-0.22){$_{1/2}$}

\rput(-0.9,1.7){$\mathcal F$}
\end{pspicture}}

\end{pspicture}
\caption{The regions $\mathcal F$ and $\mathcal F'$}
\label{funds}
\end{figure}

\begin{algo3}
{\rm For any $z \in  \C$, start with index $i=0$.
\begin{itemize}
  \item Let $m=\lfloor \Re(z)\rfloor$. If $z-m-\delta \in \Fd'$ for $\delta=0$ or $1$ then let $a_{i}=m+\delta$, $z_0=z-m-\delta$ and finish. Otherwise  put $a_{i}=m$ and replace $z$ by $1/(z-m)$.
\end{itemize}
Increment $i$ and repeat. The output is $(a_0,a_1, \dots )$ if $z\in \R$ initially, or $( a_0,a_1, \dots, a_r, z_0)$ if $z\notin \R$.
}
\end{algo3}

There is an ambiguity at the final step of this procedure in exactly two cases. When $z-m = e^{\pm \pi i/3}$ then $\delta=0$ and $\delta=1$ are both valid. We may choose $\delta=0$ in these cases to give a definite result. This could also be achieved by removing the corners $e^{\pm 2\pi i/3}$ from $\Fd'$.

\begin{theorem} \label{genc}
For $z\in \R$ the general continued fraction algorithm produces the usual continued fraction expansion. When $z\notin \R$ it always terminates and we have
\begin{equation*}
  z =  \langle a_0,a_1, \dots, a_{r-1}, a_r+ z_0 \rangle,
\end{equation*}
with $a_0 \in \Z$, $a_i  \in \Z_{\gqs 1}$ for $i \gqs 1$,  and $z_0 \in \Fd'-\{0\}$.
\end{theorem}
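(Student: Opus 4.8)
The plan is to treat the two regimes separately, matching the two outputs promised by the algorithm. For $z\in\R$ the exit test ``$z-m-\delta\in\Fd'$'' is triggered precisely when $z-m=0$ (taking $\delta=0$), since the only real point of $\Fd'$ is $0$; so each step does exactly what the basic continued fraction algorithm does, and the claim that the output is the usual expansion is immediate. The bookkeeping about $a_0\in\Z$ and $a_i\in\Z_{\gqs 1}$ for $i\gqs 1$ is then the standard fact quoted right after the basic algorithm. So the entire content is in the case $z\notin\R$.

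First I would record the structural properties of $\Fd'$ that make the algorithm well-defined: the definition $\Fd'=\Fd\cup S\Fd\cup\{0\}\cup -(\Fd\cup S\Fd)$ is arranged so that $\Fd'$ is stable under $z\mapsto -z$ and under $z\mapsto 1/z$ (for $z\neq 0$), and $\Fd'$ meets each vertical strip $k-\tfrac12\lqs \Re(z)<k+\tfrac12$ in a set that, after translating by $-k$, is a fixed neighborhood of the imaginary axis; in particular for any $w$ with $-\tfrac12\lqs\Re(w)<\tfrac12$ one of $w$ or $w\pm 1$ lies in $\Fd'$ as soon as $|w|\gqs 1$, and conversely if $w\notin\Fd'$ and $w'\notin\Fd'$ for the two choices of shift then $|w|<1$. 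The key geometric lemma is: if $z\notin\R$ and the algorithm does \emph{not} terminate at the current step, then $w:=z-m$ (with $m=\lfloor\Re z\rfloor$, so $0\lqs\Re w<1$, hence $\Re w\lqs \tfrac12$ or $\Re(w-1)>-\tfrac12$) satisfies $|w|<1$ or $|w-1|<1$, and therefore the replacement $z\mapsto 1/(z-m)$ strictly increases $\Im(z)$: indeed $\Im(1/w)=\Im(w)/|w|^2>\Im(w)$ when $|w|<1$.

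Next I would use this to prove termination. Since $z\notin\R$ is preserved by the Möbius steps and $\Im$ is strictly increasing along non-terminating steps while $|\Re|$ stays bounded in $[0,1)$ after subtracting the floor, the iterates cannot stay in the region ``$w\notin\Fd'$ for either shift'' forever — one must check that the increase in $\Im$ is bounded below, or more cleanly argue that $\Fd'$ is exactly $\{w:|w|\gqs 1\}$ intersected with a strip together with its reflections, so non-termination at a step forces $\Im(z)<1$ at that step and strictly smaller than the \emph{next} iterate; iterating, $\Im$ grows without bound, contradicting $\Im<1$. Hence after finitely many steps, say $r$, we reach $z$ with $\lfloor\Re z\rfloor=m$ and $z-m-\delta\in\Fd'$ for $\delta\in\{0,1\}$; we set $a_r=m+\delta$ and $z_0=z-m-\delta\in\Fd'$, and $z_0\neq 0$ because $z\notin\R$ is preserved. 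The identity $z=\langle a_0,\dots,a_{r-1},a_r+z_0\rangle$ follows by unwinding the substitutions $z\mapsto 1/(z-a_i)$, exactly as for ordinary continued fractions, since each step is the map $z\mapsto (z-a_i)^{-1}$. Finally, positivity $a_i\gqs 1$ for $i\gqs 1$: after the first step the new $z$ equals $1/w$ with $|w|<1$ (from the geometric lemma) and $0\lqs\Re w<1$, which forces $\Re(1/w)=\Re(w)/|w|^2>1$ when $\Re w>0$, while if $\Re w=0$ the purely imaginary $1/w$ has floor $0$ but then $z-0=1/w$ is itself forced into $\Fd'$ so the $\delta$-shift applies — this is precisely the role of $\delta$, and the bound $a_i\gqs 1$ is what the remark before the theorem advertises; I would phrase it as: the vertical boundary of $\Fd'$ on the left is $\Re=-\tfrac12$, so subtracting the floor after a Möbius step always lands with real part $\gqs -\tfrac12$ and, because $|1/w|>1$, with floor $\gqs 1$ unless the terminating $\delta=1$ adjustment intervenes, which still yields $a_i\gqs 1$.

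The main obstacle I expect is the termination argument made fully rigorous: one must pin down precisely which $w$ with $0\lqs\Re w<1$ have \emph{both} $w\notin\Fd'$ and $w-1\notin\Fd'$ (the non-terminating case), show this region is contained in $\{|w|<1-\eta\}$ for some fixed $\eta>0$ or otherwise extract a genuine contradiction from the monotone-increasing unbounded $\Im$, and handle the boundary of $\Fd'$ carefully given the half-open conventions in \e{fd}. The cleanest route is probably to observe that $\Fd'$ contains the set $\{w:|w-1|\gqs 1,\ |w+1|\gqs 1\}\cap\{-\tfrac12\lqs\Re w<\tfrac12\}$ together with its shift by $1$, so non-termination at a step with floor-subtracted value $w$ forces $w$ strictly inside both disks $|w|<1$ and $|w-1|<1$, giving $\Im(1/(z-m))\gqs \Im(z)/\max(|w|^2,\ \text{something}<1)$ with a uniform gap — and then a short compactness or explicit-constant argument closes it.
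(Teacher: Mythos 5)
Your strategy is genuinely different from the paper's: the paper tracks which fan $F_{p/q}$ or $\overline G_{p/q}$ the iterate lies in and gets termination from the finiteness of the continued fraction of the rational index $p/q$, whereas you attempt a monotonicity argument on the imaginary part. Two problems arise. First, your key identity is wrong: for $w=u+iv$ one has $1/w=\bar w/|w|^2$, so $\Im(1/w)=-\Im(w)/|w|^2$. The iterates therefore alternate between the upper and lower half-planes --- this is precisely why $\Fd'$ is built to be stable under both $z\mapsto 1/z$ and $z\mapsto -z$, and why the paper needs the conjugated fans $\overline G_{q/p}$. Your ``key geometric lemma'' and everything after it are carried out entirely in $\H$ and never address the lower half-plane iterates. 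The argument can be repaired by working with $|\Im(z)|$ throughout, but as stated the lemma is false.

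Second, and more seriously, the termination argument --- the real content of the theorem --- is not closed. You correctly identify that non-termination at a step forces the floor-reduced value $w$ into the open lens $\{|w|<1,\ |w-1|<1\}$ (up to translation and conjugation), so that $|\Im|$ strictly increases while staying below $\sqrt{3}/2$. But a bounded, strictly increasing sequence need not be finite, and the uniform gap $\eta>0$ you hope for does not exist: the lens touches the unit circle at $e^{i\pi/3}$, so the expansion factor $1/|w|^2$ can be arbitrarily close to $1$. The ``short compactness argument'' you defer to is the crux, and it is missing. A correct way to finish within your framework is to note that each iterate has the form $Az$ for $A\in\GL(2,\Z)$ applied to the fixed starting point $z$, so $|\Im(z_i)|=|\Im(z)|/|c_iz+d_i|^2$ for coprime integers $(c_i,d_i)$; since only finitely many $(c,d)$ satisfy $|cz+d|^2\lqs M$ for any $M$, the set of attainable values of $|\Im(z_i)|$ is finite above any positive threshold, and a strictly increasing sequence in it must terminate. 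Without this (or the paper's Euclidean-algorithm-on-fans argument) the proof is incomplete. Finally, your discussion of $a_i\gqs 1$ is only a sketch: after a non-terminating step one only gets $\Re(1/w)>1/2$, so the floor can be $0$, and you must verify explicitly that in that case the $\delta=1$ exit always fires (it does, because then $|1/w|>1$ forces $1/w-1\in\Fd'$); you assert this but do not check it.
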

\begin{proof}
A comparison with the usual continued fraction algorithm shows they match when $z\in \R$.
To understand the steps when $z\notin \R$ it is useful to set up the next geometric objects.
For relatively prime integers $p$ and $q$ with $q\gqs 0$, define the {\em fan} $F_{p/q}$ as follows:
$$
F_{1/0}=F_{-1/0}:=\bigcup_{n \in \Z} T^n \Fd, \qquad F_{p/q}:= \begin{pmatrix} p & * \\ q & * \end{pmatrix} F_{1/0},
$$
where $(\begin{smallmatrix}p & * \\ q & *\end{smallmatrix})$ is completed to an element of $\SL(2,\Z)$ and $F_{p/q}$ is well-defined. These fans tile $\H$ with all points in exactly one fan, except that points in the orbit of $i$ are in two and those in the orbit of $e^{\pi i/3}$ are in three. Each fan is inscribed by a Ford circle; see \cite[pp. 28, 29]{con97}. Define the alternative fundamental domain
$$
\Gd:= \big\{ z =x+iy \in \H \, : \, -1/2 < x \lqs 1/2, |z|\gqs 1 \text{ and } |z|=1 \implies x \gqs 0\big\},
$$
with boundary on the right. This gives the similar fans
$$
G_{1/0}=G_{-1/0}:=\bigcup_{n \in \Z} T^n \Gd, \qquad G_{p/q}:= \begin{pmatrix} p & * \\ q & * \end{pmatrix} G_{1/0},
$$
where $F_{p/q}$ and $G_{p/q}$ only differ by having complementary boundaries.
It is easy to check that
\begin{equation*}
  z \in F_{p/q} \iff 1/z  \in \overline G_{q/p}.
\end{equation*}

\SpecialCoor
\psset{griddots=5,subgriddiv=0,gridlabels=0pt}
\psset{xunit=5cm, yunit=5cm, runit=5cm}
\psset{linewidth=1pt}
\psset{dotsize=5pt 0,dotstyle=*}
\begin{figure}[ht]
\centering
\begin{pspicture}(-1.1,-0.1)(1.1,1) 

\psset{arrowscale=1.1,arrowinset=0.1}
\newrgbcolor{light}{0.8 0.7 1.0}
\newrgbcolor{bmid}{0.6 0.5 1}
\newrgbcolor{bb}{0.4 0.3 1}
\newrgbcolor{cx}{0.8 0.8 1.0}
\newrgbcolor{white2}{0.4 0.3 1}


\psarc[linecolor=cx](-1,0){1}{60}{90}
\psarc[linecolor=cx](1,0){1}{90}{120}

\psarc[linecolor=cx](-0.3333,0){0.333}{120}{158.213}
\psarc[linecolor=cx](0.3333,0){0.333}{21.7868}{60}

\psarc[linecolor=cx](0.625,0){0.125}{32.2042}{81.7868}  
\psarc[linecolor=cx](-0.625,0){0.125}{98.2132}{147.796}

\psarcn[linecolor=cx](0.733333,0){0.0666667}{92.2042}{38.2132}  
\psarc[linecolor=cx](-0.733333,0){0.0666667}{87.7958}{141.787}


\psarc[linecolor=cx](0.2,0){0.2}{13}{38.2132}
\psarc[linecolor=cx](-0.2,0){0.2}{141.787}{167}
\psarc[linecolor=cx](-0.8,0){0.2}{13}{38.2132}
\psarc[linecolor=cx](0.8,0){0.2}{141.787}{167}


\psline[linecolor=gray](-1.1,0)(1.1,0)
\psline[linecolor=gray](-0.5,0.03)(-0.5,-0.03)
\psline[linecolor=gray](0.5,0.03)(0.5,-0.03)

\rput(-0.5,-0.08){$_{k-1/2}$}
\rput(0.5,-0.08){$_{k+1/2}$}

\psset{linewidth=0.1pt}

\pscustom[linecolor=white2,fillstyle=solid,fillcolor=bb]{
\psarc(0,0){1}{60}{120}
\psarcn(-1,0){1}{60}{0}
\psarcn(1,0){1}{180}{120}
}

\pscustom[linecolor=white2,fillstyle=solid,fillcolor=light]{
\psline(-0.5,0.2887)(-0.5,0.866)
\psarcn(-1,0){1}{60}{0}
\psarc(-0.333,0){0.333}{0}{120}
}

\pscustom[linecolor=white2,fillstyle=solid,fillcolor=light]{
\psline(0.5,0.2887)(0.5,0.866)
\psarc(1,0){1}{120}{180}
\psarcn(0.333,0){0.333}{180}{60}
}

\pscustom[linecolor=white2,fillstyle=solid,fillcolor=bb]{
\psarc(-0.333,0){0.333}{0}{120}
\psarcn(-0.6667,0){0.333}{60}{21.7868}
\psarcn(-0.2,0){0.2}{141.787}{0}
}

\pscustom[linecolor=white2,fillstyle=solid,fillcolor=bb]{
\psarcn(0.333,0){0.333}{180}{60}
\psarc(0.6667,0){0.333}{120}{158.213}
\psarc(0.2,0){0.2}{38.2132}{180}
}

\pscustom[linecolor=white2,fillstyle=solid,fillcolor=light]{
\psarc(-0.2,0){0.2}{0}{141.787}
\psarcn(-0.375,0){0.125}{81.7868}{32.2042}
\psarcn(-0.142857,0){0.142857}{152.204}{0}
}

\pscustom[linecolor=white2,fillstyle=solid,fillcolor=light]{
\psarcn(0.2,0){0.2}{180}{38.2132}
\psarc(0.375,0){0.125}{98.2132}{147.796}
\psarc(0.142857,0){0.142857}{27.7958}{180}
}

\pscustom[linecolor=white2,fillstyle=solid,fillcolor=bb]{
\psarc(-0.142857,0){0.142857}{0}{152.204}
\psarcn(-0.266667,0){0.0666667}{92.2042}{38.2132}
\psarcn(-0.111111,0){0.111111}{158.213}{0}
}

\pscustom[linecolor=white2,fillstyle=solid,fillcolor=bb]{
\psarcn(0.142857,0){0.142857}{180}{27.7958}
\psarc(0.266667,0){0.0666667}{87.7958}{141.787}
\psarc(0.111111,0){0.111111}{21.7868}{180}
}

\pscustom[linecolor=white2,fillstyle=solid,fillcolor=light]{
\psarc(-0.111111,0){0.111111}{0}{158.213}
\psarcn(-0.208333,0){0.0416667}{98.2132}{42.1034}
\psarcn(-0.0909091,0){0.0909091}{162.103}{0}
}

\pscustom[linecolor=white2,fillstyle=solid,fillcolor=light]{
\psarcn(0.111111,0){0.111111}{180}{21.7868}
\psarc(0.208333,0){0.0416667}{81.7868}{137.897}
\psarc(0.0909091,0){0.0909091}{17.8966}{180}
}

\pscustom[linecolor=white2,fillstyle=solid,fillcolor=bb]{
\psarc(-0.0909091,0){0.0909091}{0}{162.103}
\psarcn(-0.171429,0){0.0285714}{102.103}{44.8218}
\psarcn(-0.0769231,0){0.0769231}{164.822}{0}
}

\pscustom[linecolor=white2,fillstyle=solid,fillcolor=bb]{
\psarcn(0.0909091,0){0.0909091}{180}{17.8966}
\psarc(0.171429,0){0.0285714}{77.8966}{135.178}
\psarc(0.0769231,0){0.0769231}{15.1782}{180}
}

\pscustom[linecolor=white2,fillstyle=solid,fillcolor=light]{
\psarc(-0.0769231,0){0.0769231}{0}{164.822}
\psarcn(-0.145833,0){0.0208333}{104.822}{46.8264}
\psarcn(-0.0666667,0){0.0666667}{166.826}{0}
}

\pscustom[linecolor=white2,fillstyle=solid,fillcolor=light]{
\psarcn(0.0769231,0){0.0769231}{180}{15.1782}
\psarc(0.145833,0){0.0208333}{75.1782}{133.174}
\psarc(0.0666667,0){0.0666667}{13.1736}{180}
}

\pscustom[linecolor=white2,fillstyle=solid,fillcolor=bb]{
\psarc(-0.0666667,0){0.0666667}{0}{166.826}
\psarcn(-0.126984,0){0.015873}{106.826}{48.3649}
\psarcn(-0.0588235,0){0.0588235}{168.365}{0}
}

\pscustom[linecolor=white2,fillstyle=solid,fillcolor=bb]{
\psarcn(0.0666667,0){0.0666667}{180}{13.1736}
\psarc(0.126984,0){0.015873}{73.1736}{131.635}
\psarc(0.0588235,0){0.0588235}{11.6351}{180}
}

\pscustom[linecolor=white2,fillstyle=solid,fillcolor=light]{
\psarc(-0.0588235,0){0.0588235}{0}{168.365}
\psarcn(-0.1125,0){0.0125}{108.365}{49.5826}
\psarcn(-0.0526316,0){0.0526316}{169.583}{0}
}

\pscustom[linecolor=white2,fillstyle=solid,fillcolor=light]{
\psarcn(0.0588235,0){0.0588235}{180}{11.6351}
\psarc(0.1125,0){0.0125}{71.6351}{130.417}
\psarc(0.0526316,0){0.0526316}{10.4174}{180}
}

\pscustom[linecolor=bmid,fillstyle=solid,fillcolor=bmid]{
\psarcn(-0.0526316,0){0.0526316}{169.583}{0}
\psarcn(0,0.5){0.5}{270}{257}
}

\pscustom[linecolor=bmid,fillstyle=solid,fillcolor=bmid]{
\psarc(0.0526316,0){0.0526316}{10.4174}{180}
\psarc(0,0.5){0.5}{270}{283}
}

\psset{linewidth=0.5pt}

\psarc(0,0){1}{60}{90} 
\psarc[linecolor=white](0,0){1}{90}{120}
\psarc[linestyle=dashed,](0,0){1}{90}{120} 

\psline[linecolor=white](-0.5,0.2887)(-0.5,0.57735)
\psline[linestyle=dashed](-0.5,0.2887)(-0.5,0.57735) 
\psline(-0.5,0.57735)(-0.5,0.866)
\psline(0.5,0.2887)(0.5,0.57735)
\psline[linecolor=white](0.5,0.57735)(0.5,0.866)
\psline[linestyle=dashed](0.5,0.57735)(0.5,0.866)

\psarc[linecolor=white](-0.6667,0){0.333}{21.7868}{40.8934}
\psarc[linestyle=dashed,dash=2pt 2pt](-0.6667,0){0.333}{21.7868}{40.8934}  
\psarc(-0.6667,0){0.333}{40.8934}{60}
\psarc[linecolor=white](0.6667,0){0.333}{120}{139.106}
\psarc[linestyle=dashed,dash=2pt 2pt](0.6667,0){0.333}{120}{139.106}
\psarc(0.6667,0){0.333}{139.106}{158.213}

\psarc(-0.375,0){0.125}{32.2042}{81.7868}  
\psarc(0.375,0){0.125}{98.2132}{147.796}

\psarcn(-0.266667,0){0.0666667}{92.2042}{38.2132}  
\psarc(0.266667,0){0.0666667}{87.7958}{141.787}

\psarcn(-0.208333,0){0.0416667}{98.2132}{42.1034}  
\psarc(0.208333,0){0.0416667}{81.7868}{137.897}

\psarcn(-0.171429,0){0.0285714}{102.103}{44.8218}  
\psarc(0.171429,0){0.0285714}{77.8966}{135.178}

\psarcn(-0.145833,0){0.0208333}{104.822}{46.8264}  
\psarc(0.145833,0){0.0208333}{75.1782}{133.174}

\psarcn(-0.126984,0){0.015873}{106.826}{48.3649}   
\psarc(0.126984,0){0.015873}{73.1736}{131.635}

\psarcn(-0.1125,0){0.0125}{108.365}{49.5826}   
\psarc(0.1125,0){0.0125}{71.6351}{130.417}

\psarc(0,0.5){0.5}{257}{283}

\rput(0,0.75){$n=0$}
\rput(-0.3,0.48){$_{n=1}$}
\rput(0.3,0.48){$_{n=-1}$}
\rput(-0.3,0.25){$_{2}$}
\rput(0.3,0.25){$_{-2}$}

\end{pspicture}
\caption{The fan $F_{k/1}$}
\label{fanf}
\end{figure}
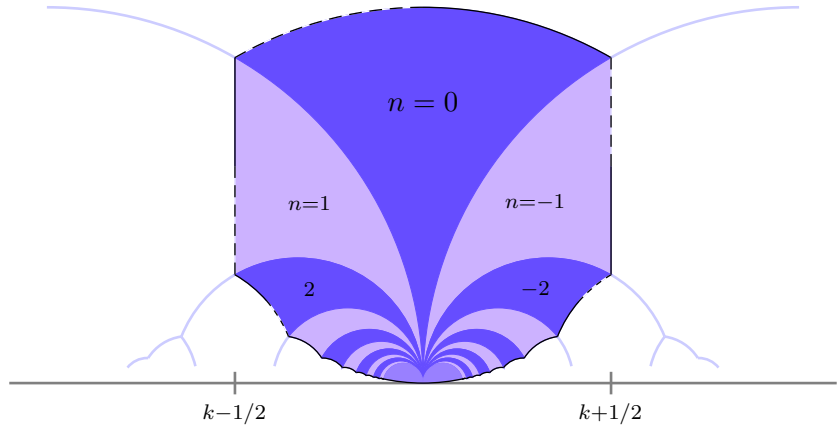

Suppose the algorithm is applied to an initial $z$ in $F_{p/q}$ with $q\gqs 2$. Then $m=\lfloor \Re(z)\rfloor$ also equals $\lfloor p/q\rfloor$ so that $z-m \in F_{p'/q}$ for $0<p'<q$. We have $z'=1/(z-m) \in \overline G_{q/p'}$ for $q/p'>1$. Similarly, with initial $z$ in $\overline G_{p/q}$ we obtain $z' \in F_{q/p'}$. Thus $z=m+1/z'$ and while the points remain in
$F_{a/b}$, $\overline G_{a/b}$ for $b\gqs 2$, the algorithm cannot terminate since $\Fd'-\{0\} \subseteq F_{1/0}\cup F_{0/1} \cup \overline G_{0/1} \cup  \overline G_{1/0}$. Following the usual continued fraction of $p/q$, we eventually obtain
\begin{equation*}
  z= \langle a_0,a_1, \dots, a_{v}, z' \rangle,
\end{equation*}
for some $z'$ in $F_{k/1}$ or $\overline G_{k/1}$ with $k\gqs 2$. The  fan $F_{k/1}$ breaks up, see Figure \ref{fanf}, as
\begin{equation*}
  F_{k/1} = T^k \bigcup_{n \in \Z} S T^n \Fd
\end{equation*}
and if $z'$ is in the part $T^k  S T^n \Fd$ then the algorithm terminates after a few more steps, producing
\begin{subequations} \la{kc} 
\begin{alignat}{2}
  z' & = \langle k-1,1,n-1+z_0 \rangle \quad &&\text{for \quad $n\gqs 2$}, \\
  z' & = \langle k-1,1+z_0 \rangle \quad &&\text{for  \quad $n=1$}, \\
  z' & = \langle k+z_0 \rangle \quad &&\text{for  \quad $n=0$}, \\
  z' & = \langle k,|n|+z_0 \rangle \quad &&\text{for  \quad $n\lqs -1$},
\end{alignat}
\end{subequations}
with $z_0 \in \Fd'-\{0\}$. Also $z'$ in  $T^k  S T^n \overline{\Gd} \subseteq \overline G_{k/1}$ gives the same result. (Slight adjustments to \e{kc} are required at the corners of the fans due to our ambiguous case convention.) The theorem follows.
\end{proof}

It may also be seen from the proof of Theorem \ref{genc} that when $z \in F_{p/q}$ or $\overline G_{p/q}$ then the general algorithm output $a_0, a_1, \dots, a_r$ has up to two more numbers than the continued fraction terms of $p/q$. Excluding these extra  numbers, these sequences agree except possibly differing by $1$ in the last.

For the application we have in mind, put
\begin{equation} \la{lru}
  L := T=\begin{pmatrix} 1 & 1 \\ 0 & 1 \end{pmatrix}, \quad R := TST = -ST^{-1}S =\begin{pmatrix} 1 & 0 \\ 1 & 1 \end{pmatrix}.
\end{equation}
In the next section, the matrices $L$ and $R$ will allow us to move left and right on the topograph.

\begin{cor} \la{lr}
For every $z \in \H$ there exist  $z_1 \in \Fd \cup S \Fd$ and integers $a_i$  so that
\begin{equation*}
  z= L^{a_0} R^{a_1} \cdots L^{a_{2n}} R^{a_{2n+1}}z_1
\end{equation*}
with $a_0 \in \Z$ and $a_i  \in \Z_{\gqs 1}$ for $i \gqs 1$, (and with $a_{2n+1}$  possibly $0$). These integers are produced by the general continued fraction algorithm applied to $z$.
\end{cor}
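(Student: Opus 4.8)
\emph{Proof sketch.} The plan is to read the statement off the output of the general continued fraction algorithm. Since $z\in\H$ is not real, Theorem~\ref{genc} applies: the algorithm terminates and gives $z=\langle a_0,a_1,\dots,a_{r-1},a_r+z_0\rangle$ with $a_0\in\Z$, $a_i\in\Z_{\gqs 1}$ for $i\gqs 1$, and $z_0\in\Fd'-\{0\}$. Since $\Fd'=(\Fd\cup S\Fd)\cup\{0\}\cup-(\Fd\cup S\Fd)$, all that is missing is to recognize the relevant matrix product and to decide on which side of $0$ the base point $z_0$ lies.

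First I would record the dictionary between continued fractions and the generators $L,R$. From $L^aw=a+w$ and $R^bw=w/(bw+1)=1/(b+1/w)$, an easy induction on the recursion $\langle a_0,\mathrm{tail}\rangle=a_0+1/\langle\mathrm{tail}\rangle$ gives
\[
  \langle a_0,\dots,a_{k-1},w\rangle=
  \begin{cases}
    L^{a_0}R^{a_1}\cdots R^{a_{k-1}}\,w & (k\ \text{even}),\\
    L^{a_0}R^{a_1}\cdots L^{a_{k-1}}\,(1/w) & (k\ \text{odd}),
  \end{cases}
\]
the extra reciprocal in the odd case reflecting the sign of the determinant of the corresponding product of matrices $\bigl(\begin{smallmatrix}a_i&1\\1&0\end{smallmatrix}\bigr)$. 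Taking $k=r$ and $w=a_r+z_0=L^{a_r}z_0$, and using $1/(L^{a_r}z_0)=R^{a_r}(1/z_0)$ in the odd case, this yields
\[
  z=L^{a_0}R^{a_1}\cdots R^{a_{r-1}}L^{a_r}z_0\ \ (r\ \text{even}),\qquad
  z=L^{a_0}R^{a_1}\cdots L^{a_{r-1}}R^{a_r}(1/z_0)\ \ (r\ \text{odd}).
\]

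It remains to place the base point in $\Fd\cup S\Fd$, and here the two parities must be seen to match. The value the algorithm inspects at step $r$ is obtained from $z$ by $r$ rounds of ``subtract an integer, then invert''; inversion swaps $\H$ with the lower half plane while the subtraction preserves each, so that value, hence $z_0$, lies in $\H$ exactly when $r$ is even. For $r$ even we get $z_0\in\Fd'\cap\H=\Fd\cup S\Fd$, and rewriting the trailing $L^{a_r}$ as $L^{a_r}R^0$ puts the first display in the claimed form with $2n=r$, $z_1=z_0$, $a_{2n+1}=0$. For $r$ odd, $z_0=-w$ with $w\in\Fd\cup S\Fd$, so $1/z_0=Sw\in S(\Fd\cup S\Fd)=\Fd\cup S\Fd$ (using $S^2=\mathrm{id}$), and the second display is already in the claimed form with $2n+1=r$, $z_1=1/z_0$. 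In both cases the $a_i$ are, up to the possibly appended $a_{2n+1}=0$, exactly the algorithm's output, and the sign and size conditions come straight from Theorem~\ref{genc}. The one genuinely delicate point is this synchronized bookkeeping: the parity of $r$ controls both whether the unfolding leaves an inversion behind and whether the terminal point sits in the upper or lower half plane, and one must check that these two effects cancel so that $z_1$ really falls in $\Fd\cup S\Fd$.
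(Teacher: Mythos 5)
Your proposal is correct and follows essentially the same route as the paper: both start from Theorem~\ref{genc}, unfold the continued fraction $\langle a_0,\dots,a_{r-1},a_r+z_0\rangle$ into a word in $L$ and $R$, and split on the parity of $r$, replacing $z_0$ by $1/z_0$ in the odd case to land in $\Fd\cup S\Fd$. The only cosmetic difference is that the paper passes through the matrix $J=(\begin{smallmatrix}0&1\\1&0\end{smallmatrix})$ and the identity $JT^mJ=R^m$ in $\PSL(2,\Z)$, whereas you do the induction directly and spell out why $z_0$ lies in the upper or lower half plane according to the parity of $r$ (a point the paper asserts without elaboration).
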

\begin{proof}
By Theorem \ref{genc},
$
  z =  \langle a_0,a_1, \dots, a_{r-1}, a_r+ z_0 \rangle,
$
for $z_0 \in \Fd'-\{0\}$.
Let $J :=(\begin{smallmatrix}0 & 1\\ 1 & 0\end{smallmatrix})$ and write
\begin{equation} \label{wt}
  z=T^{a_0}J T^{a_1} J \cdots J T^{a_r} z_0
\end{equation}
If $r$ is even then $z_0 \in \Fd \cup S \Fd$ and set $z_1$ to be $z_0$. Use
\begin{equation*}
  J T^m J = S T^{-m} S = R^m
\end{equation*}
in $\PSL(2,\Z)$ to obtain
\begin{equation*}
  z=L^{a_0}R^{a_1}  \cdots L^{a_r} z_1.
\end{equation*}

If $r$ is odd then $z_0$ in \e{wt} is in $\overline \H$. Set $z_1$ to be $1/z_0 \in \Fd \cup S \Fd$ and
\begin{equation*}
   z=T^{a_0}J T^{a_1} J \cdots J T^{a_r} J z_1 \implies z=L^{a_0}R^{a_1}  \cdots R^{a_r} z_1,
\end{equation*}
as desired.
\end{proof}

\begin{cor}
For every $z \in \H$ there exist  $z_1 \in \Fd \cup S \Fd$ and integers $a_i$ so that
\begin{equation*}
   z_1= L^{a_{2n+1}} R^{a_{2n}} \cdots L^{a_{1}} R^{a_{0}}z
\end{equation*}
with $a_0 \in \Z$ and  $a_i  \in \Z_{\gqs 1}$ for $i \gqs 1$, (and with $a_{2n+1}$ possibly $0$).
\end{cor}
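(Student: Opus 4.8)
The plan is to obtain this as a formal consequence of Corollary~\ref{lr} by conjugating the relevant word with $S$. The algebraic input is the pair of identities
\begin{equation*}
  S L^{k} S^{-1} = R^{-k}, \qquad S R^{k} S^{-1} = L^{-k} \qquad (k \in \Z),
\end{equation*}
which follow from a direct $2\times 2$ computation together with the fact (recorded in \eqref{lru}) that $R = -ST^{-1}S$; recall also that $S^{-1}=S$ in $\PSL(2,\Z)$. Applying these termwise, for the word $W := L^{a_0} R^{a_1} \cdots L^{a_{2n}} R^{a_{2n+1}}$ one gets $S W S^{-1} = R^{-a_0} L^{-a_1} \cdots R^{-a_{2n}} L^{-a_{2n+1}}$.

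First I would apply Corollary~\ref{lr} not to $z$ but to $Sz$, which again lies in $\H$. This produces a point $\tilde z_1 \in \Fd \cup S\Fd$ and integers $a_0 \in \Z$, $a_i \in \Z_{\gqs 1}$ for $i \gqs 1$ (with $a_{2n+1}$ possibly $0$), such that $Sz = W\tilde z_1$. Since $S^2$ acts as the identity in $\PSL(2,\Z)$, applying $S$ on the left and inserting $S^{-1}S$ before $\tilde z_1$ gives $z = S W \tilde z_1 = (SWS^{-1})(S\tilde z_1)$.

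Second, set $z_1 := S\tilde z_1$. Because $S$ sends $\Fd$ onto $S\Fd$ and $S\Fd$ back onto $\Fd$ (again as $S^2$ acts trivially on $\H$), the region $\Fd \cup S\Fd$ is carried to itself by $S$, so $z_1 \in \Fd \cup S\Fd$, as required. Substituting the conjugation formula for $SWS^{-1}$ yields
\begin{equation*}
  z = R^{-a_0} L^{-a_1} \cdots R^{-a_{2n}} L^{-a_{2n+1}} z_1,
\end{equation*}
and inverting this identity gives exactly $z_1 = L^{a_{2n+1}} R^{a_{2n}} \cdots L^{a_1} R^{a_0} z$ with the stated conditions on the exponents.

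I do not anticipate any genuine obstacle here: the statement is essentially Corollary~\ref{lr} read through the substitution $z \mapsto Sz$ followed by conjugation by $S$. The only points needing a little care are verifying the two conjugation identities by an explicit matrix computation and checking that $\Fd \cup S\Fd$ is $S$-stable, which is precisely what guarantees that the new base point $z_1$ still belongs to $\Fd \cup S\Fd$.
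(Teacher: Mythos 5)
Your proof is correct, and since the paper states this corollary without proof (as an immediate companion to Corollary \ref{lr}), your argument supplies the details in what is surely the intended way. The two conjugation identities do hold in $\PSL(2,\Z)$ — as matrices one computes $SLS=-R^{-1}$ and $SRS=-L^{-1}$, and the sign is harmless because the action on $\H$ factors through $\PSL(2,\Z)$ where $S^{-1}=S$ — and the $S$-stability of $\Fd\cup S\Fd$ is exactly as you say. Equivalently, the whole argument can be packaged as the observation that the word $L^{a_{2n+1}}R^{a_{2n}}\cdots R^{a_0}$ is the transpose of $W=L^{a_0}R^{a_1}\cdots R^{a_{2n+1}}$, together with the identity $M^{T}=S^{-1}M^{-1}S$ for $M\in\SL(2,\Z)$; either phrasing reduces the statement to Corollary \ref{lr} applied to $Sz$.
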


\section{Topographs} \la{ttop}
\subsection{Setup}
Let $\mathcal T$ be a tree drawn in the plane, where all vertices have degree 3. This naturally breaks up the plane into regions. In the following,  labels are chosen from $\R$, making real topographs. From Section \ref{cla} on we will only consider integer topographs.

\SpecialCoor
\psset{griddots=5,subgriddiv=0,gridlabels=0pt}
\psset{xunit=0.36cm, yunit=0.36cm, runit=0.2cm}
\psset{linewidth=1pt}
\psset{dotsize=7pt 0,dotstyle=*}
\begin{figure}[ht]
\centering
\begin{pspicture}(0,0)(42,9) 

\psset{arrowscale=1.6,arrowinset=0.3,arrowlength=1.1}
\newrgbcolor{light}{0.9 0.9 1.0}
\newrgbcolor{blue2}{0.3 0.3 0.8}

\rput(7,5){%
        \begin{pspicture}(1,1)(13,9)
          \psline(3,8)(5,5)
\psline(3,2)(5,5)
\psline[ArrowInside=->,ArrowInsidePos=0.6](5,5)(9,5)

\psline(9,5)(11,8)
\psline(9,5)(11,2)

\rput(2,5){\Large $r$}
\rput(7,7.2){\Large $s$}
\rput(7,2.8){\Large $t$}
\rput(12,5){\Large $u$}

\rput(7,0.5){(a)}
        \end{pspicture}}

\rput(21,5){%
        \begin{pspicture}(1,1)(10,9)
          \psline[ArrowInside=->,ArrowInsidePos=0.6](5,5)(3,8)
\psline[ArrowInside=->,ArrowInsidePos=0.6](5,5)(3,2)
\psline[ArrowInside=->,ArrowInsidePos=0.6](5,5)(9,5)


\rput(7,4.2){\textcolor{red}{$e$}}
\rput(3.5,4.1){\textcolor{red}{$f$}}
\rput(4.5,7.1){\textcolor{red}{$g$}}

\rput(2,5){\Large $r$}
\rput(7,7.2){\Large $s$}
\rput(7,2.8){\Large $t$}

\rput(5,0.5){(b)}
        \end{pspicture}}

\rput(35,5){%
        \begin{pspicture}(1,1)(13,9)
          \psline[ArrowInside=->,ArrowInsidePos=0.6](3,8)(5,5)
\psline[ArrowInside=->,ArrowInsidePos=0.6](3,2)(5,5)
\psline[ArrowInside=->,ArrowInsidePos=0.6](5,5)(9,5)

\psline[ArrowInside=->,ArrowInsidePos=0.6](9,5)(11,8)
\psline[ArrowInside=->,ArrowInsidePos=0.6](9,5)(11,2)

\rput(7,4.2){\textcolor{red}{$g$}}
\rput(3.5,4.1){\textcolor{red}{$e$}}
\rput(4.6,7.1){\textcolor{red}{$f$}}

\rput(10.6,6.1){\textcolor{red}{$h$}}
\rput(9.4,3){\textcolor{red}{$i$}}

\rput(7,0.5){(c)}
        \end{pspicture}}

\end{pspicture}
\caption{Topographs locally}
\label{apfig}
\end{figure}
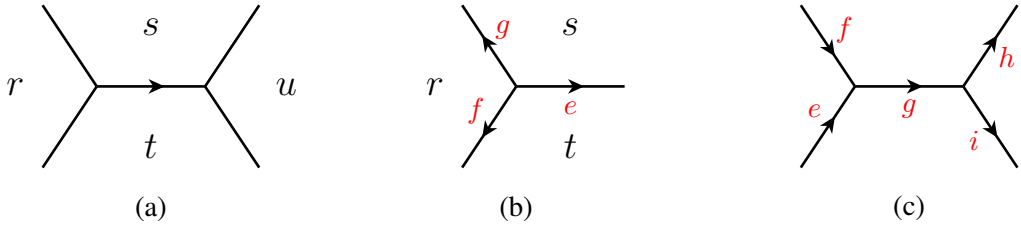

\begin{adef} \la{topdef}
{\rm A graph $\mathcal T$ as above is a {\em topograph} if all the regions are labelled and adjacent region labels, as shown in Figure \ref{apfig}(a), have $r+u=2(s+t)$.
}
\end{adef}

In other words, the region labels satisfy Conway's arithmetic progression rule \cite[p. 9]{con97}, with $r$, $s+t$ and $u$ forming a progression with common difference $\delta$ say. This difference can be used to label the directed edge from $r$ to $u$. Considering this edge in the opposite direction, its label must be $-\delta$.

The labels of regions and edges of a topograph, as in Figure \ref{apfig}(b), are related by
\begin{alignat}{3}\label{r to e}
  e  & =  s+t-r, & \quad f &= r+t-s, &\quad g &= r+s-t, \\
  r  & =  (f+g)/2,& s & =(e+g)/2,& t& = (e+f)/2. \label{e to r}
\end{alignat}
Two topographs are the same if there is a label preserving  isomorphism between them that also preserves orientation. For example, the topographs obtained from a topograph $\mathcal T$ under any reflection are all the same and may be denoted $\mathcal T^*$. Then $\mathcal T$ and $\mathcal T^*$ have opposite orientations and are  not equal in general. Clearly we may multiply every label by a number $\lambda$ to get a new topograph $\lambda \mathcal T$. We may also add two topographs in a natural way, though this depends on how we overlay them. 
It is clear that giving the  labels of three regions meeting at a vertex, or the labels and directions of three edges incident to a  vertex, is enough to specify the entire topograph. 

An easy verification, see \cite[Sect. 5.1]{ha22}, confirms that all the edges bordering a single region must be in arithmetic progression.
From this we may derive a simple invariant that can distinguish some topographs. With the diagram in Figure \ref{apfig}(c) we have
\begin{gather*}
  g-e=i-g, \quad g-f = h-g
  \quad \implies  \quad   (g-e)(g-f) = (i-g)(h-g) \\
 \implies  \quad  g^2-eg-fg+ef = ih-ig-hg+g^2,
\end{gather*}
and it follows that the quantity
\begin{equation}\label{dis}
D=-ef-fg-ge,
\end{equation}
where now all edges are directed away from the common vertex, is conserved at every vertex of a particular topograph. This is the  {\em discriminant} of the topograph. 
In terms of regions at a  vertex, as in Figure \ref{apfig}(b),
\begin{align}
D & = r^2+s^2+t^2 -2rs-2rt-2st \notag\\
& =2(r^2+s^2+t^2)-(r+s+t)^2. \label{dis2}
\end{align}

\newpage
\subsection{Region labels}

\begin{wrapfigure}{r}{4cm}
\SpecialCoor
\psset{griddots=5,subgriddiv=0,gridlabels=0pt}
\psset{xunit=0.3cm, yunit=0.3cm, runit=0.2cm}
\psset{linewidth=1pt}
\psset{dotsize=7pt 0,dotstyle=*}
\centering
\begin{pspicture}(0,-1)(8,7) 

\psset{arrowscale=1.4,arrowinset=0.3,arrowlength=1.1}
\newrgbcolor{light}{0.7 0.7 1.0}
\newrgbcolor{blue2}{0.3 0.3 0.8}

\psline[linecolor=light](1,0)(4,2)(7,0)
\psline[linecolor=light](1,7)(4,5)(7,7)

\psline{->}(4,2)(4,5)

\rput(1,3.5){\Large $a$}
\rput(3.3,3.5){\textcolor{red}{$b$}}
\rput(7,3.5){\Large $c$}

\rput(4,-1.2){$[a,b,c]$}

\end{pspicture}
\end{wrapfigure}

The set of region labels of a topograph has a simple characterization.

\begin{theorem} \label{abc}
Suppose a topograph contains the configuration $[a,b,c]$ shown in the diagram. Then its region labels are $a x^2+b xy+cy^2$ for all coprime integers $x$ and $y$.
\end{theorem}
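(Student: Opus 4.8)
The plan is to show that the function assigning to each region the value $q(x,y)=ax^2+bxy+cy^2$ at a suitable primitive vector $(x,y)$ is a valid topograph labelling, and then invoke the uniqueness remark (three region labels at a vertex determine the whole topograph) to conclude that this is \emph{the} labelling of the given topograph. First I would set up the dictionary between regions and primitive vectors. Fix the vertex where the configuration $[a,b,c]$ sits. The three regions meeting there I assign the vectors $(1,0)$, $(0,1)$, and $(1,-1)$ — chosen so that $q(1,0)=a$, $q(0,1)=c$, and $q(1,-1)=a-b+c$, which is exactly the third label forced by the arithmetic progression rule since $a+c=2\cdot\frac{a+(a-b+c)+c-?}{}$... more precisely, with $s+t$ being the two regions across the marked edge, one checks $q(1,-1)$ is the label dictated by \eqref{r to e}. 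The key structural fact I would use: crossing an edge of the tree corresponds to a move by one of the matrices $L$, $R$ (equivalently $T^{\pm1}$ and $ST^{\pm1}S$), which act on the vectors labelling the two regions \emph{not} separated by that edge, sending the pair of ``new'' vectors to $(x,y)$ and $(x,y)\pm(x',y')$ as appropriate — the Stern–Brocot / Farey structure of the trivalent tree. So the regions of $\mathcal T$ get labelled bijectively by the $\mathrm{SL}(2,\Z)$-orbit of $\{(1,0),(0,1)\}$ under this action, which is precisely the set of primitive pairs $\pm(x,y)$ (and since $q$ is even in $(x,y)$, the sign ambiguity is harmless).

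The main verification is then local: at every vertex, if the three adjacent regions carry primitive vectors $\bm v_1,\bm v_2,\bm v_3$ with $\bm v_1+\bm v_3=\pm\bm v_2$ arising from the tree structure (the defining incidence relation among Farey neighbours), one must check that $q(\bm v_1)+q(\bm v_3)=2\big(q(\bm v_2)+q(\bm v')\big)$ where $\bm v'$ is the fourth vector across the edge, i.e. that Conway's rule $r+u=2(s+t)$ holds. This reduces to the polarization identity for the quadratic form: writing $B(\bm v,\bm w)$ for the associated symmetric bilinear form, $q(\bm v+\bm w)+q(\bm v-\bm w)=2q(\bm v)+2q(\bm w)$, and the relation among the four vectors at an edge is exactly of this shape. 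I would carry this out once, in coordinates, using $B((x_1,y_1),(x_2,y_2)) = a x_1x_2 + \tfrac b2(x_1y_2+x_2y_1)+cy_1y_2$, and note it holds identically in $a,b,c$ — no integrality needed for this step. Having checked the rule at one vertex and knowing the vector-labelling is compatible with all edge-crossings, the rule holds at every vertex, so the $q$-valued labelling is a genuine topograph labelling extending the configuration $[a,b,c]$.

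Finally, since the given topograph and the $q$-labelled one agree on the three regions at the starting vertex, and a topograph is determined by any three region labels at a vertex (the remark following \eqref{e to r}), they coincide. Hence the region labels of the given topograph are exactly $\{ax^2+bxy+cy^2 : \gcd(x,y)=1\}$ as $(x,y)$ ranges over primitive pairs. I expect the main obstacle to be the bookkeeping in the first paragraph: pinning down precisely which primitive vector attaches to which region so that the edge-crossing moves $L,R$ act correctly, and verifying that this assignment is consistent (well-defined and surjective onto primitive pairs up to sign) — essentially identifying the trivalent tree with the Stern–Brocot/Farey tree. Once that identification is in hand, the arithmetic is a one-line polarization identity, and the uniqueness step is immediate from material already in the excerpt.
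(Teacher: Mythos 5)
Your proposal is correct, but it takes a genuinely different route from the paper. The paper proves the theorem by explicitly walking the tree: an alternating block of $a_0$ left turns, $a_1$ right turns, etc.\ is matched with the continued fraction $\langle a_0,a_1,\dots,a_m\rangle=h_m/k_m$, and the configuration reached is computed in terms of the convergents via the matrix identities \eqref{modd}, \eqref{meven}, giving the formulas \eqref{poly}; surjectivity onto coprime pairs then comes from every positive rational having a finite continued fraction. You instead label regions by primitive vectors (Conway's lax vectors/superbases, the Stern--Brocot picture), observe that the four regions around an edge carry $v$, $w$, $v+w$, $v-w$, and reduce Conway's rule $r+u=2(s+t)$ to the polarization identity $q(v+w)+q(v-w)=2q(v)+2q(w)$, finishing with the uniqueness remark that three labels at a vertex determine the topograph. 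Both are sound. Your route is conceptually cleaner --- one bilinear-form identity does all the arithmetic, no induction on convergents is needed, and the argument visibly extends to forms in more variables (this is essentially Conway's original superbase proof, and the paper itself points to \cite[Sect.~3.1]{sn20} for a proof in this spirit). What the paper's computation buys is the explicit convergent formulas \eqref{poly} and the action \eqref{cf}, which are not throwaway: they are reused later (e.g.\ in Lemma \ref{perf} and Proposition \ref{riverpath}). Do note that the ``bookkeeping'' you defer --- identifying the trivalent tree's regions with primitive vectors up to sign, well-definedness and surjectivity of that assignment --- is precisely the content the continued-fraction computation makes explicit, so it should not be waved off entirely; and the displayed fraction in your first paragraph is garbled and should simply be replaced by the observation that the third region at the tail vertex of the edge $b$ is $a-b+c=q(1,-1)$ by the arithmetic progression rule.
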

\begin{proof}
  If we follow the direction of edge $b$, moving forward and left $n$ times, we end at the new configuration $C_1$ in \e{con}
\begin{equation}\label{con}
  C_1=[a,2na+b,an^2+bn+c], \qquad C_2=[a+bn+cn^2,b+2nc,c].
\end{equation}
   Moving instead forward and right $n$ times from $b$ leads to $C_2$. Combining these, by going left $a_0$ times and then right $a_1$ times leads to $[a',b',c']$ with
  \begin{align*}
    a' & = a_1^2\big\{ a\left(a_0+ 1/a_1 \right)^2+b\left(a_0+ 1/a_1  \right)+c\big\},\\
    b' & =a_1\big\{ 2a\left(a_0+ 1/a_1  \right)a_0+b\left(a_0+ 1/a_1 +a_0 \right)+2c\big\}, \\
   c' & =aa_0^2+ba_0+c.
  \end{align*}
The continued fractions $\langle a_0 \rangle$ and $\langle a_0,a_1 \rangle$ may be seen, and further left/right turns lead to further partial quotients. In general, let an alternating sequence of left and right turns be listed by  $a_0,a_1, \dots, a_n$, starting with $a_0 \gqs 0$ left turns.  Writing $\langle a_0,a_1, \dots, a_m \rangle = h_m/k_m$ in lowest terms, the configuration reached at the end of this journey along the topograph is $[a',b',c']$ with
\begin{subequations} \label{poly}
\begin{align}
    a' & = a h_n^2+bh_nk_n+ck_n^2,\label{poly:a}\\
    b' & =2a h_nh_{n-1}+b(h_n k_{n-1}+h_{n-1}k_n)+2c k_n k_{n-1}, \label{poly:b}\\
   c' & =a h_{n-1}^2+bh_{n-1}k_{n-1}+ck_{n-1}^2,\label{poly:c}
  \end{align}
\end{subequations}
when $n$ is odd and the same for $n$ even except that the formulas for $a'$ and $c'$ are switched. This may be verified by induction, although  a better way will be shown shortly.

Therefore every region reached by such a path has a label of the form $a x^2+b xy+cy^2$ for coprime integers $x,y \gqs 1$.  By changing the direction and sign of $b$, and switching $a$ and $c$, we may reach the remaining regions on the other side in the same way. These labels  have the desired form with one of $x,y$ negative. It follows that every region in the topograph has the required label type. Finally, since every positive fraction has a finite continued fraction expansion, and corresponding turn sequence, we find  there are regions with labels corresponding to all coprime $x,y \in \Z$.
\end{proof}

A different proof of this basic result is provided in \cite[Sect. 3.1]{sn20}. 
From \e{poly:b} the edge labels in a topograph containing $[a,b,c]$ may be given the more complicated characterization
\begin{equation*}
  2a x x'+b(xy'+x'y)+2c yy' \qquad \text{for} \qquad  \begin{pmatrix} x & x' \\ y & y' \end{pmatrix} \in \text{GL}(2,\Z),
\end{equation*}
with $\text{GL}(2,\Z)$ the group of integer matrices of determinant $\pm 1$.

 We now match the configuration $[a,b,c]$ in a topograph $\mathcal T$ with the binary quadratic form $q(x,y)=ax^2+bxy+cy^2$ and say that $\mathcal T$ contains $q$.
The variables $x$ and $y$ give a simpler way to see the changes to $[a,b,c]=q(x,y)$ at the start of the proof of Theorem \ref{abc}. Moving left $n$ times corresponds to the change of variables $q(x+ny,x)$, and moving right to $q(x,nx+y)$. In general, as we saw in the introduction, there is the matrix action
\begin{equation} \label{mat}
  q(x,y)|M:=q(\alpha x+\beta y, \g x+\delta y) \qquad \text{for} \qquad M=\begin{pmatrix} \alpha &\beta \\ \g &\delta \end{pmatrix},
\end{equation}
which is equivalent to
\begin{equation} \label{tr}
  q(x,y)|M =\frac 12\begin{pmatrix} x \\ y \end{pmatrix}^t M^t \begin{pmatrix} 2a & b \\ b & 2c \end{pmatrix} M \begin{pmatrix} x \\ y \end{pmatrix},
\end{equation}
where $t$ indicates the transpose. The key actions are given by the $\SL(2,\Z)$ matrices from \e{stu}, \e{lru}:
\begin{equation*}
  L: \  \text{go left}, \quad R: \ \text{go right}, \quad S: \  \text{rotate $180^\circ$}.
\end{equation*}
With a sequence of left and right turns $a_0,a_1, \dots, a_n$ and $\langle a_0,a_1, \dots, a_m \rangle = h_m/k_m$ as before, we have by the simple recurrences for $h_m$ and $k_m$ from \cite[Thm. 149]{hawr}, (and $h_{-1}:=1$, $k_{-1}:=0$),
\begin{align}
  M=L^{a_0}R^{a_1} \cdots L^{a_{n-1}}R^{a_{n}} & = \begin{pmatrix} h_n & h_{n-1} \\ k_n & k_{n-1} \end{pmatrix} \qquad \text{with $n$ odd}, \la{modd}\\
  M=L^{a_0}R^{a_1} \cdots R^{a_{n-1}}L^{a_{n}} & = \begin{pmatrix} h_{n-1} & h_n \\ k_{n-1} & k_{n} \end{pmatrix} \qquad \text{with $n$ even}. \la{meven}
\end{align}
 This yields
\begin{equation} \label{cf}
  q'(x,y) := q(x,y)|M = \begin{cases}
                         q(h_n x+h_{n-1}y, k_n x+k_{n-1}y), & \mbox{if $n$ odd}  \\
                         q(h_{n-1} x+h_{n}y, k_{n-1} x+k_{n}y), & \mbox{if $n$ even}.
                       \end{cases}
\end{equation}
Then the identities \e{poly} follow from $a'=q'(1,0)$, $c'=q'(0,1)$ and $b'=q'(1,1)-a'-c'$.

If a topograph contains $[a,b,c]$ then the discriminants in \e{dis}, \e{dis2} are easily seen to equal $b^2-4ac$ as expected.

\subsection{$\SL(2,\Z)$ acting on topographs} \la{sl2}

\SpecialCoor
\psset{griddots=5,subgriddiv=0,gridlabels=0pt}
\psset{xunit=0.4cm, yunit=0.4cm, runit=0.2cm}
\psset{linewidth=1pt}
\psset{dotsize=7pt 0,dotstyle=*}
\begin{figure}[ht]
\centering
\begin{pspicture}(-1,0.5)(29,9) 

\psset{arrowscale=1.8,arrowinset=0.3,arrowlength=1.1}
\newrgbcolor{light}{0.7 0.7 1.0}
\newrgbcolor{blue2}{0.3 0.3 0.8}

\rput(3,4.5){%
        \begin{pspicture}(1,-1)(7,8)

\psline[linecolor=light]{->}(1,0)(4,2)
\psline[linecolor=light]{->}(7,0)(4,2)
\psline[linecolor=light]{->}(4,5)(7,7)
\psline[linecolor=light]{->}(4,5)(1,7)

\psline{->}(4,2)(4,5)

\rput(1,3.5){\Large $a$}
\rput(3.3,3.5){\textcolor{red}{$b$}}
\rput(7,3.5){\Large $c$}

\rput(4,7.7){\Large $a+b+c$}
\rput(4,-0.5){\Large $a-b+c$}

\rput(1.1,5.7){\textcolor{red}{$b+2a$}}
\rput(1.1,1.4){\textcolor{red}{$b-2a$}}
\rput(6.99,5.7){\textcolor{red}{$b+2c$}}
\rput(6.9,1.4){\textcolor{red}{$b-2c$}}
\end{pspicture}}

\rput(14,4.5){%
        \begin{pspicture}(1,2)(7,8)

\psline[linecolor=light]{->}(7,7)(4,5)
\psline[linecolor=light]{->}(1,7)(4,5)

\psline{->}(4,2)(4,5)

\rput(1.5,5.3){$q|U$}
\rput(5.5,7){$q|U^2$}

\rput(4.8,3.5){$q$}

\end{pspicture}}

\rput(25,4.5){%
        \begin{pspicture}(1,-1)(7,8)

\psline[linecolor=light]{->}(1,0)(4,2)
\psline[linecolor=light]{->}(7,0)(4,2)
\psline[linecolor=light]{->}(4,5)(7,7)
\psline[linecolor=light]{->}(4,5)(1,7)

\psline{->}(4,2)(4,5)

\rput(4.8,3.5){$q$}

\rput(1.1,5.7){$q|L$}
\rput(1.1,1.4){$q|L^{-1}$}
\rput(6.99,5.7){$q|R$}
\rput(7.1,1.4){$q|R^{-1}$}
\end{pspicture}}

\end{pspicture}
\caption{The $\SL(2,\Z)$ action}
\label{sl2fig}
\end{figure}
Looking at the $\SL(2,\Z)$ action on $q=[a,b,c]$ in more detail, we see with Figure \ref{sl2fig},
\begin{equation} \la{uu}
\begin{aligned}
  q|L & = [a,b+2a,a+b+c], \qquad &  q|S & = [c,-b,a],\\
  q|L^{-1} & = [a,b-2a,a-b+c], &  q|U & = [a+b+c,-b-2a,a], \\
q|R & = [a+b+c,b+2c,c], &  q|U^2 & = [c,-b-2c,a+b+c], \\
 q|R^{-1} & = [a-b+c,b-2c,c]. &   &
\end{aligned}
\end{equation}
These elements $L$, $R$, $S$, $U$ let us maneuver around each topograph. Of course the action of $-I$ has no effect and we may pass to $\G=\PSL(2,\Z)$. As reviewed in \cite[Sect. 3.1]{ri21}, the underlying directed tree of a topograph $\mathcal T$ exactly reflects the structure of $\G$ so that for any $q_1$ and $q_2$ on $\mathcal T$ there is a unique $M\in \G$ giving $q_2=q_1|M$.
Here we must be clear to specify where the $q_i$ are on $\mathcal T$ since each configuration can appear more than once -- see Section \ref{ive}.
We have $q_1 \sim q_2$ if and only if they appear on the same topograph. Each edge corresponds to two forms which may be displayed by using directed half-edges as in Figure \ref{clas}. This gives a clear way to visualize  equivalence classes of forms.

\SpecialCoor
\psset{griddots=5,subgriddiv=0,gridlabels=0pt}
\psset{xunit=0.35cm, yunit=0.35cm, runit=0.2cm}
\psset{linewidth=1pt}
\psset{dotsize=7pt 0,dotstyle=*}
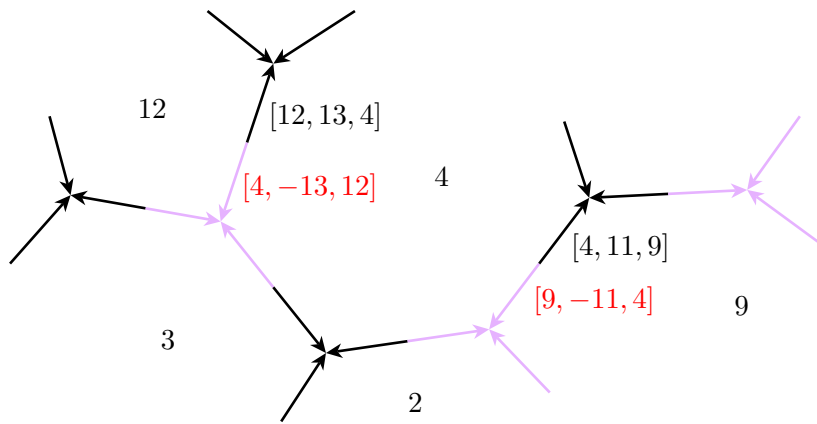
\begin{figure}[ht]
\centering
\begin{pspicture}(0,12)(32,29) 

\psset{arrowscale=1.6,arrowinset=0.3,arrowlength=1.1}
\newrgbcolor{light}{0.9 0.7 1.0}
\newrgbcolor{blue2}{0.3 0.3 0.8}

\psline{->}(1,18.4)(3.3,21)
\psline{->}(2.5,24)(3.3,21)
\psline{->}(6.15, 20.5)(3.3,21)

\psline{->}(8.5,28)(11,26)
\psline{->}(14.1,28)(11,26)
\psline{->}(10,23)(11,26)

\psline[linecolor=light]{->}(6.15, 20.5)(9,20)
\psline[linecolor=light]{->}(10,23)(9,20)
\psline[linecolor=light]{->}(11., 17.5)(9,20)

\psline{->}(11., 17.5)(13,15)
\psline{->}(11.3,12.4)(13,15)
\psline{->}(16.1, 15.45)(13,15)

\psline[linecolor=light]{->}(16.1, 15.45)(19.2, 15.9)
\psline[linecolor=light]{->}(21.5,13.5)(19.2, 15.9)
\psline[linecolor=light]{->}(21.1, 18.4)(19.2, 15.9)

\psline{->}(21.1, 18.4)(23, 20.9)
\psline{->}(22.05, 23.8)(23, 20.9)
\psline{->}(26., 21.05)(23, 20.9)

\psline[linecolor=light]{->}(26., 21.05)(29, 21.2)
\psline[linecolor=light]{->}(31,24)(29, 21.2)
\psline[linecolor=light]{->}(32,19)(29, 21.2)

\rput(6.4,24.3){$12$}
\rput(7,15.5){$3$}
\rput(17.4,21.7){$4$}
\rput(16.4,13.1){$2$}
\rput(28.8,16.8){$9$}

\rput(13,24){$[12,13,4]$}
\rput(12.4,21.3){\textcolor{red}{$[4,-13,12]$}}

\rput(24.2,19){$[4,11,9]$}
\rput(23.2,17){\textcolor{red}{$[9,-11,4]$}}

\end{pspicture}
\caption{Visualizing all forms in an equivalence class}
\label{clas}
\end{figure}

Conway described in \cite[pp. 27 - 33]{con97} a striking natural realization in $\H$ of any topograph by letting its regions be the interiors of  fans as in Figure \ref{fanf}. The values of a form $q_0(x,y)$  then appear with the region label $q_0(u,v)$ on the fan $F_{u/v}$. Alternatively, from our point of view, use $q_0$  as an anchor point and  associate it with the arc $\psi$ that is the border between the domain $\Fd$ in \e{fd} and $S\Fd$, and directed rightwards. Other forms $q=q_0|M$ are then associated  to the directed arc $M \psi$.

\subsection{Variations}

Definition \ref{topdef} has the advantage  of showing how simply topographs may be defined, with Theorem \ref{abc} demonstrating the way quadratic forms  naturally  emerge. (An even simpler description is seen in Definition \ref{edgdef} below.) The original definition
in \cite[pp. 5--10]{con97}, see also \cite[Chap. 9]{wei17}, is in terms of lax bases and superbases, and this construction extends to quadratic forms in any number of variables. The definition  in \cite{ha22} is based on the Farey diagram with each region corresponding to a fraction. The emphasis in these works is to display the values of a single quadratic form. In \cite{msw19} they consider  variations, including topographs with all vertices of degree 4 or all of degree 6. Further interesting generalizations appear in \cite{sn20}.

The theory developed in this section has assumed that the region labels are real numbers, but applies to any additive subgroup of $\C$.  Definition \ref{topdef} makes sense  in fact for  labels in any abelian group. For example, reducing the integer region labels of a topograph   mod $|D|$, or mod $p$ for prime $p$ dividing $D$, lets us find its genus; see \cite[Sect 2]{cox}.

A slight variant of Definition \ref{topdef}  may be given by starting with the edges:
\begin{adef} (Edges primary) \la{edgdef}
{\rm Let $\mathcal T$ be a tree drawn in the plane, where all vertices have degree 3. Label the directed edges of $\mathcal T$. An edge may be considered in the opposite direction if the sign of its label is switched. Then $\mathcal T$ with this labelling is an {\em edge topograph} if every  directed path along the border of a region has successive edge labels in arithmetic progression.
}
\end{adef}

For labels in $\R$ this definition is equivalent to Definition \ref{topdef}  -- with \e{e to r} label each region by one half the difference of its border arithmetic progression, (this is independent of the direction taken). If $A$ is any abelian group and $\phi:A \to A$ sends $a$ to $a+a$ then we can also pass between Definitions \ref{topdef} and \ref{edgdef} with region labels in $A$ if  differences of edge labels are always in $\phi(A)$ and $\phi$ is injective. For example, integer edge labels of the same parity in Definition \ref{edgdef} give integer topographs in  Definition \ref{topdef}.

\section{Classifying topographs} \la{cla}

For all the following sections of this paper we assume that the binary quadratic forms under discussion have integer coefficients. Correspondingly, all topographs have integer region and edge labels.

\begin{lemma} \label{gcd}
On any topograph the $\gcd$ of the labels of three regions meeting at any vertex is always the same. Similarly the  $\gcd$ of labels of the three edges  incident to any vertex is fixed. The edge $\gcd$ must equal or be double the region $\gcd$. Edge labels must have the same parity as $D$.
\end{lemma}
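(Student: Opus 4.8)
The plan is to prove each assertion by propagating information across a single edge, using the local structure of a topograph at two adjacent vertices. Recall from Figure~\ref{apfig}(b) and~\e{r to e}--\e{e to r} that at a vertex with regions $r,s,t$ the three outgoing edge labels are $e=s+t-r$, $f=r+t-s$, $g=r+s-t$, and conversely $2r=f+g$, $2s=e+g$, $2t=e+f$. The key observation is that any two adjacent vertices share two of their three regions, so the three-region $\gcd$ at one vertex divides at least two regions at the neighbour, and we must only check it divides the third. Since any two vertices of the tree $\mathcal T$ are joined by a path, establishing invariance across one edge suffices.

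First I would prove the region-$\gcd$ invariance. Let the two vertices bordering an edge be as in Figure~\ref{apfig}(a): one vertex has regions $\{r,s,t\}$ and the other has regions $\{s,t,u\}$, with $u=2(s+t)-r$ by Definition~\ref{topdef}. Write $d=\gcd(r,s,t)$ and $d'=\gcd(s,t,u)$. Since $u=2(s+t)-r$ we get $d\mid u$, hence $d\mid d'$; since $r=2(s+t)-u$ we get $d'\mid r$, hence $d'\mid d$. Therefore $d=d'$, and invariance along the whole tree follows by connectedness. For the edge-$\gcd$ invariance I would argue similarly: at adjacent vertices the two half-edges of the shared edge carry labels $\pm\delta$ (the common difference of the shared region's border progression, as noted after Definition~\ref{topdef}), so the shared edge contributes the same value $|\delta|$ to both vertex-edge-$\gcd$s; the remaining four half-edges are determined, and one checks via~\e{dis} or directly from the arithmetic-progression rule along each region's border that the $\gcd$ of the three outgoing labels is unchanged. (Concretely, using Figure~\ref{apfig}(c): the new outgoing edges $h,i$ at the second vertex satisfy $h=2g-f$, $i=2g-e$, so $\gcd(e,f,g)$ and $\gcd(g,h,i)$ divide each other by the same two-way argument.)

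Next I would relate the two $\gcd$s at a fixed vertex. From $2r=f+g$, $2s=e+g$, $2t=e+f$, any common divisor of $e,f,g$ divides $2r,2s,2t$, so the region $\gcd$ divides twice the edge $\gcd$... more precisely, the edge $\gcd$ divides $2\cdot(\text{region }\gcd)$. Conversely from $e=s+t-r$, etc., any common divisor of $r,s,t$ divides $e,f,g$, so the region $\gcd$ divides the edge $\gcd$. Combining, if $d$ is the region $\gcd$ and $D_e$ the edge $\gcd$, then $d\mid D_e\mid 2d$, so $D_e\in\{d,2d\}$ as claimed. Finally, for the parity statement: by~\e{dis}, $D=-ef-fg-ge$ with all three edges directed away from the vertex, and by the arithmetic-progression rule along each region's border, $f$ and $g$ (both bordering region $r$) satisfy $f\equiv g \pmod{?}$—actually the cleanest route is~\e{e to r} together with~\e{dis2}: one has $D=b^2-4ac$ for the form $[a,b,c]=q$ sitting at that vertex, and among the surrounding edges one label equals $b$ (up to sign), so $D\equiv b^2 \equiv b \pmod 2$, forcing every edge label to have the same parity as $D$. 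I would phrase this by noting that every edge label arises as the "$b$"-coefficient of some form on the topograph (by Theorem~\ref{abc} and the explicit moves~\e{uu}), and $b^2-4ac=D$ gives $b\equiv D\pmod 2$.

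The one step needing slight care—the main (mild) obstacle—is the edge-$\gcd$ invariance, because unlike regions the edges at adjacent vertices overlap in only one (directed) edge rather than two, so the two-way divisibility argument must be run through the relations $h=2g-f$, $i=2g-e$ (and their analogues) rather than a single substitution; once those linear relations are written down from Definition~\ref{edgdef}'s arithmetic-progression rule along region borders, it is again immediate. Everything else is a direct divisibility computation from the identities already established in the text.
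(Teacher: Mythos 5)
Your proof is correct. The paper omits the proof of this lemma as ``easy,'' and your argument --- propagating the region gcd across an edge via $u=2(s+t)-r$, the edge gcd via $h=2g-f$, $i=2g-e$, relating the two gcds through \eqref{r to e} and \eqref{e to r} to get $d\mid D_e\mid 2d$, and reading the parity off $b^2-4ac=D$ for the form whose middle coefficient is the given edge label --- is exactly the intended computation.
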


The easy proof is omitted. In particular, if a topograph contains a primitive form $[a,b,c]$ then its region $\gcd$ is $1$ and its edge $\gcd$ is $1$ if $D$ is odd and $2$ if $D$ is even. So we may call a topograph primitive if its region $\gcd$ is $1$.
(We are following the convention that $\gcd(0,0,0)=0$. Then Lemma \ref{gcd} applies to the $0$ topograph which has all region labels $0$,  and is not primitive.)

Computing $h(D)$ then corresponds to counting primitive topographs of discriminant $D$. The principal forms
\begin{equation}\label{prin}
  [1,0,-D/4] \quad \text{if} \quad D\equiv 0 \bmod 4, \qquad   [1,1,(1-D)/4] \quad \text{if} \quad D\equiv 1 \bmod 4
\end{equation}
 show that $h(D)\gqs 1$. In the rest of this section we follow and extend Conway's  classification of topographs in \cite[pp. 8--26]{con97}. This uses the signs $-$, $0$ or $+$ of their regions to develop their structural properties.   The elementary {\em climbing lemma} is key: if there is a configuration $[a,b,c]$ 
with all entries positive, then all edge and adjacent region labels must strictly increase as we continue forward. Similarly,  all entries negative  gives strictly decreasing labels. 

\begin{adef} \la{deflr}
{\rm A region with label $0$ is called a {\em lake}. A {\em river} in a topograph consists of all edges  that border a positively labeled region on one side and a negatively labeled region on the other.
}
\end{adef}

\subsection{$0+$ topographs} \la{0+}
The topographs in the $0+$  family contain at least one lake, at least one positive region and no negative regions. As in \e{con} with $a=0$, the regions adjacent to a lake have labels $bn+c$ for $n\in \Z$. So we must have $b=0$ and $c$ a positive integer. The discriminant is $D=0$, the region labels are $cy^2$ for $y\in \Z$ and the edge labels are all the even multiples of $c$. By the climbing lemma all regions and edges strictly increase moving away from the lake, so there is only one lake.  There is  one primitive  topograph in this family, containing $[0,0,1]$ and shown in Figure \ref{det0}. The  $0-$ topographs are just $-1$ times the $0+$ topographs.

\SpecialCoor
\psset{griddots=5,subgriddiv=0,gridlabels=0pt}
\psset{xunit=0.18cm, yunit=0.18cm, runit=0.18cm}
\psset{linewidth=1pt}
\psset{dotsize=7pt 0,dotstyle=*}
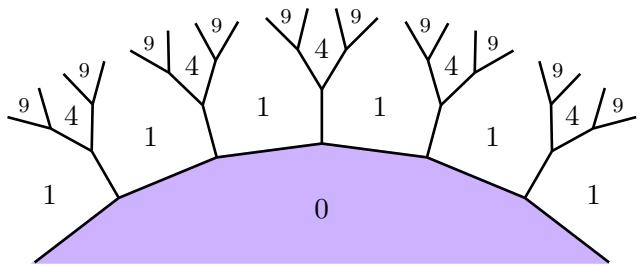
\begin{figure}[ht]
\centering
\begin{pspicture}(8,21)(52,41) 

\psset{arrowscale=1.6,arrowinset=0.3,arrowlength=1.1}
\newrgbcolor{light}{0.9 0.7 1.0}
\newrgbcolor{blue2}{0.3 0.3 0.8}

\newrgbcolor{light}{0.8 0.7 1.0}
\psline[linecolor=light,fillstyle=solid,fillcolor=light](51.2132, 21.2132)(45., 25.9808)(37.7646, 28.9778)(30., 30.)(22.2354, 28.9778)(15., 25.9808)(8.7868, 21.2132)(51.2132, 21.2132)

\rput(30., 25.2){$0$}

\psline(51.2132, 21.2132)(45., 25.9808)(37.7646, 28.9778)(30., 30.)(22.2354, 28.9778)(15., 25.9808)(8.7868, 21.2132)

\psline(30., 30.)(30., 34.)(28.2124, 36.9568)(25.8711, 39.2836)
\psline(28.2124, 36.9568)(28.9529, 39.9863)
\psline(30., 34.)(31.7876, 36.9568)(34.1289, 39.2836)
\psline(31.7876, 36.9568)(31.0471, 39.9863)

\rput(34.3074, 32.7177){$1$}
\rput(30., 37){$4$}
\rput(27.4166, 39.4154){$_9$}
\rput(32.5834, 39.4154){$_9$}

\psline(37.7646, 28.9778)(38.7998, 32.8415)(37.8384, 36.1602)(36.1792,39.0137)
\psline(37.8384, 36.1602)(39.3378, 38.8948)
\psline(38.7998, 32.8415)(41.2918, 35.2349)(44.1555, 36.8764)
\psline(41.2918, 35.2349)(41.3606, 38.3528)

\rput(42.6286, 30.488){$1$}
\rput(39.5763, 35.7393){$4$}
\rput(37.7061, 38.741){$_9$}
\rput(42.6968,37.4038){$_9$}

\psline(45., 25.9808)(47., 29.4449)(46.9303, 32.8993)(46.0661, 36.085)
\psline(46.9303, 32.8993)(49.0864, 35.1527)
\psline(47., 29.4449)(50.0265, 31.1117)(53.2175, 31.9562)
\psline(50.0265, 31.1117)(50.8999, 34.1056)

\rput(50.0891, 26.1807){$1$}
\rput(48.5, 32.0429){$4$}
\rput(47.4704, 35.4265){$_9$}
\rput(51.945,32.8431){$_9$}

\psline(22.2354, 28.9778)(21.2002, 32.8415)(18.7082, 35.2349)(15.8445, 36.8764)
\psline(18.7082, 35.2349)(18.6394, 38.3528)
\psline(21.2002, 32.8415)(22.1616, 36.1602)(23.8208, 39.0137)
\psline(22.1616, 36.1602)(20.6622, 38.8948)

\rput(25.6926, 32.7177){$1$}
\rput(20.4237, 35.7393){$4$}
\rput(17.3032, 37.4038){$_9$}
\rput(22.2939, 38.741){$_9$}

\psline(15., 25.9808)(13., 29.4449)(9.9735, 31.1117)(6.78248, 31.9562)
\psline(9.9735, 31.1117)(9.10006, 34.1056)
\psline(13., 29.4449)(13.0697, 32.8993)(13.9339, 36.085)
\psline(13.0697, 32.8993)(10.9136, 35.1527)

\rput(17.3714, 30.488){$1$}
\rput(11.5, 32.0429){$4$}
\rput(8.055, 32.8431){$_9$}
\rput(12.5296, 35.4265){$_9$}

\rput(9.91087, 26.1807){$1$}

\end{pspicture}
\caption{A primitive topograph with $D=0$}
\label{det0}
\end{figure}

\subsection{$0{+}-$ topographs} \label{opm}
Here we have regions of all three signs. The regions adjacent to a lake have labels $b'n+c'$ for $n\in \Z$, as before, and we must have $b'\neq 0$ for a $0{+}-$ topograph.

If $c'\equiv 0 \bmod b'$ then the topograph contains $[0,b',0]$ and  has two adjacent lakes (separated by Conway's weir). This gives a primitive topograph $\mathcal T$ with $D=1$, containing $[0,1,0]=xy$, with regions labelled by $\Z$ and edges labeled by the odd numbers. See the left side of Figure \ref{1 4}. Then $-\mathcal T$ equals $\mathcal T$ and, in the other cases, $b'\mathcal T$ is non-primitive with $D=b'^2$.

\SpecialCoor
\psset{griddots=5,subgriddiv=0,gridlabels=0pt}
\psset{xunit=0.26cm, yunit=0.26cm, runit=0.23cm}
\psset{linewidth=1pt}
\psset{dotsize=7pt 0,dotstyle=*}
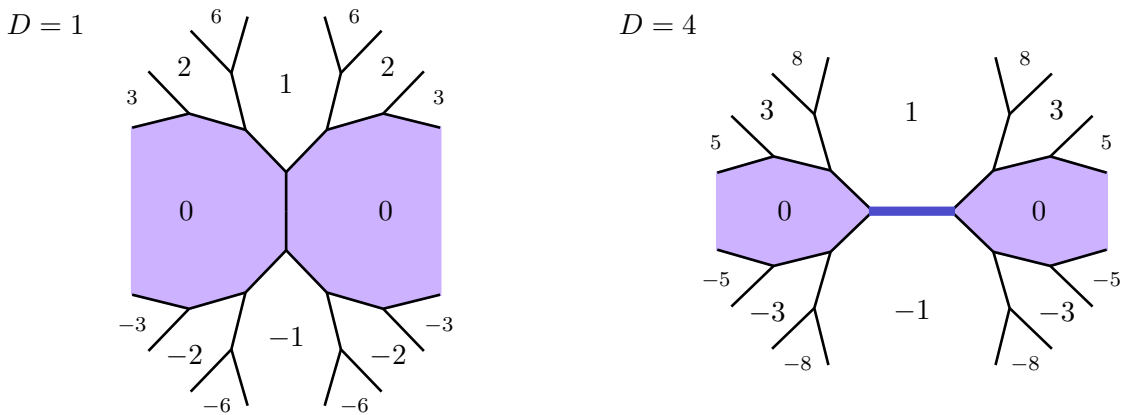
\begin{figure}[ht]
\centering
\begin{pspicture}(0,0)(53,22) 

\psset{arrowscale=1.6,arrowinset=0.3,arrowlength=1.1}
\newrgbcolor{light}{0.8 0.8 1.0}
\newrgbcolor{pale}{1 0.7 1}
\newrgbcolor{pale}{1 0.7 0.4}
\newrgbcolor{light}{0.8 0.7 1.0}
\newrgbcolor{blue2}{0.3 0.3 0.8}

\rput(10,11.4){%
\begin{pspicture}(-10,-10)(10,12) 

\psline[linecolor=light,fillstyle=solid,fillcolor=light]
(-7.87479, -4.27258)(-4.96267, -4.99339)(-2.0803, -4.16156)(0., -2.)
(0,0)(0., 2.)(-2.0803, 4.16156)(-4.96267, 4.99339)(-7.87479, 4.27258)(-7.87479, -4.27258)

\psline[linecolor=light,fillstyle=solid,fillcolor=light]
(0,0)(0., -2.)(2.0803, -4.16156)(4.96267, -4.99339)(7.87479, -4.27258)
(7.87479, 4.27258)(4.96267, 4.99339)(2.0803, 4.16156)(0., 2.)(0,0)

\psline(0,0)(0., 2.)(-2.0803, 4.16156)(-4.96267, 4.99339)(-7.87479, 4.27258)
\psline(-4.96267, 4.99339)(-7.04297, 7.15495)
\psline(-2.0803, 4.16156)(-2.80111, 7.07368)(-4.8814, 9.23525)
\psline(-2.80111, 7.07368)(-1.96928, 9.95606)
\psline(0., 2.)(2.0803, 4.16156)(4.96267, 4.99339)(7.87479, 4.27258)
\psline(4.96267, 4.99339)(7.04297, 7.15495)
\psline(2.0803, 4.16156)(2.80111, 7.07368)(4.8814, 9.23525)
\psline(2.80111, 7.07368)(1.96928, 9.95606)

\rput(5.1, 0){$0$}
\rput(0., 6.5){$1$}
\rput(-5.20074, 7.40391){$2$}
\rput(-7.84504, 5.82521){$_{3}$}
\rput(-3.52191, 9.9858){$_{6}$}
\rput(5.20074, 7.40391){$2$}
\rput(3.52191, 9.9858){$_{6}$}
\rput(7.84504, 5.82521){$_{3}$}


\psline(0,0)(0., -2.)(2.0803, -4.16156)(4.96267, -4.99339)(7.87479, -4.27258)
\psline(4.96267, -4.99339)(7.04297, -7.15495)
\psline(2.0803, -4.16156)(2.80111, -7.07368)(4.8814, -9.23525)
\psline(2.80111, -7.07368)(1.96928, -9.95606)
\psline(0., -2.)(-2.0803, -4.16156)(-4.96267, -4.99339)(-7.87479, -4.27258)
\psline(-4.96267, -4.99339)(-7.04297, -7.15495)
\psline(-2.0803, -4.16156)(-2.80111, -7.07368)(-4.8814, -9.23525)
\psline(-2.80111, -7.07368)(-1.96928, -9.95606)

\rput(-5.1, 0){$0$}
\rput(0., -6.5){$-1$}
\rput(5.20074, -7.40391){$-2$}
\rput(7.84504, -5.82521){$_{-3}$}
\rput(3.52191, -9.9858){$_{-6}$}
\rput(-5.20074, -7.40391){$-2$}
\rput(-3.52191, -9.9858){$_{-6}$}
\rput(-7.84504, -5.82521){$_{-3}$}


\end{pspicture}}

\rput(42,11.4){%
\begin{pspicture}(-10,-10)(10,12) 

\psline[linecolor=light,fillstyle=solid,fillcolor=light]
(2,0)(4.16156,-2.0803)(7.07368,-2.80111)(9.95606,-1.96928)
( 9.95606,1.96928)( 7.07368,2.80111)( 4.16156,2.0803)(2,0)

\psline[linecolor=light,fillstyle=solid,fillcolor=light]
(-2,0)(-4.16156,-2.0803)(-7.07368,-2.80111)(-9.95606,-1.96928)
( -9.95606,1.96928)( -7.07368,2.80111)(-4.16156,2.0803)(-2,0)

\psline(0,0)(2,0)(4.16156,-2.0803)(4.99339,-4.96267)(4.27258,-7.87479)
\psline(4.99339,-4.96267)(7.15495,-7.04297)
\psline(4.16156,-2.0803)(7.07368,-2.80111)( 9.23525,-4.8814)
\psline(7.07368,-2.80111)(9.95606,-1.96928)
\psline(2,0)( 4.16156,2.0803)( 4.99339,4.96267)( 4.27258,7.87479)
\psline( 4.99339,4.96267)( 7.15495,7.04297)
\psline( 4.16156,2.0803)( 7.07368,2.80111)( 9.23525,4.8814)
\psline( 7.07368,2.80111)( 9.95606,1.96928)

\rput( 0,5.1){$1$}
\rput( 6.5,0.){$0$}
\rput( 7.40391,-5.20074){$-3$}
\rput( 5.82521,-7.84504){$_{-8}$}
\rput( 9.9858,-3.52191){$_{-5}$}
\rput( 7.40391,5.20074){$3$}
\rput( 9.9858,3.52191){$_{5}$}
\rput( 5.82521,7.84504){$_{8}$}

\psline(0,0)(-2,0)( -4.16156,2.0803)( -4.99339,4.96267)( -4.27258,7.87479)
\psline( -4.99339,4.96267)( -7.15495,7.04297)
\psline( -4.16156,2.0803)( -7.07368,2.80111)( -9.23525,4.8814)
\psline( -7.07368,2.80111)( -9.95606,1.96928)
\psline(-2,0)( -4.16156,-2.0803)( -4.99339,-4.96267)( -4.27258,-7.87479)
\psline( -4.99339,-4.96267)( -7.15495,-7.04297)
\psline( -4.16156,-2.0803)( -7.07368,-2.80111)( -9.23525,-4.8814)
\psline( -7.07368,-2.80111)( -9.95606,-1.96928)

\rput( 0,-5.1){$-1$}
\rput( -6.5,0.){$0$}
\rput( -7.40391,5.20074){$3$}
\rput( -5.82521,7.84504){$_{8}$}
\rput( -9.9858,3.52191){$_{5}$}
\rput( -7.40391,-5.20074){$-3$}
\rput( -9.9858,-3.52191){$_{-5}$}
\rput( -5.82521,-7.84504){$_{-8}$}

\psline[linewidth=3.5pt,linecolor=blue2](-2.2,0)(2.2,0)


\end{pspicture}}

\rput(-2.3,20){$D=1$}
\rput(29,20){$D=4$}

\end{pspicture}
\caption{The only topograph of discriminant $1$ and the only primitive topograph of discriminant $4$}
\label{1 4}
\end{figure}

For $c'\not\equiv 0 \bmod b'$, the regions adjacent to the lake must go from positive to negative once as $n$ varies. A river edge separates the positive regions from the negative ones and river edges continue uniquely away from the lake. A river edge corresponds to $[a,b,c]$ with $ac<0$. As there are finitely many possibilities for such a configuration, the  river must repeat, and hence become periodic, or else finish at a second lake. However, in this case the river cannot become periodic as that would mean it was infinite in both directions. The last river edge must correspond to $[a,-a-c,c]$ and hence $D=(a-c)^2 \gqs 4$. 
We shall always orient topographs with rivers so that the positive regions are above the river and the negative ones below. By the climbing lemma, labels of regions and edges increase in absolute value moving away from the lakes and river. So, in this $0{+}-$ case, there are exactly two lakes and the river is the unique simple path joining them. The simplest examples of rivers, with lengths $0$ and $1$, are displayed in Figure \ref{1 4}.

\subsection{$+$ topographs} \label{+}
A topograph in the $+$ family has all region labels positive. Direct each edge to make its label positive (or leave undirected if $0$). At each vertex there are three possibilities. If one edge is directed in to it then the other two edges must be directed out by the climbing lemma. If one edge at a vertex is undirected then the other two  must also be directed out.
 The last possibility is that all edges are directed out. Starting at any vertex we may therefore climb down and reach either a unique {\em vertex well}, which is a vertex with out-degree 3 as in Figure \ref{well},
\SpecialCoor
\psset{griddots=5,subgriddiv=0,gridlabels=0pt}
\psset{xunit=0.35cm, yunit=0.35cm, runit=0.2cm}
\psset{linewidth=1pt}
\psset{dotsize=7pt 0,dotstyle=*}
\begin{figure}[ht]
\centering
\begin{pspicture}(5,5.5)(27,20) 

\psset{arrowscale=1.6,arrowinset=0.3,arrowlength=1.1}
\newrgbcolor{light}{0.9 0.9 1.0}
\newrgbcolor{blue2}{0.3 0.3 0.8}


\psline[ArrowInside=->,ArrowInsidePos=0.6](16,12)(16,15.7)(12,17.3)(9,15.6)
\psline[ArrowInside=->,ArrowInsidePos=0.6](12,17.3)(11.2,19.6)
\psline[ArrowInside=->,ArrowInsidePos=0.6](16,15.7)(19.5,17.6)(20.5,19.8)
\psline[ArrowInside=->,ArrowInsidePos=0.6](19.5,17.6)(22.5,16.1)

\psline[ArrowInside=->,ArrowInsidePos=0.6](16,12)(11.8,10.1)(8.2,10.7)(6.3,12.7)
\psline[ArrowInside=->,ArrowInsidePos=0.6](8.2,10.7)(6.3,9.3)
\psline[ArrowInside=->,ArrowInsidePos=0.6](11.8,10.1)(11.3,7.6)(8.6,6.5)
\psline[ArrowInside=->,ArrowInsidePos=0.6](11.3,7.6)(14,6)

\psline[ArrowInside=->,ArrowInsidePos=0.6](16,12)(20.1,10.5)(23.7,11.2)(25.1,13)
\psline[ArrowInside=->,ArrowInsidePos=0.6](23.7,11.2)(25.4,9.6)
\psline[ArrowInside=->,ArrowInsidePos=0.6](20.1,10.5)(20.8,7.8)(23.7,6.7)
\psline[ArrowInside=->,ArrowInsidePos=0.6](20.8,7.8)(18.6,6)

\psdot[linecolor=red](16,12)

\rput(12,13.9){$2$}
\rput(20,13.9){$4$}
\rput(16,8.8){$5$}

\rput(10,18){$14$}
\rput(16,18.5){$7$}
\rput(22,18.5){$20$}

\rput(5.5,11.1){$19$}
\rput(8.8,8.8){$10$}
\rput(11.3,5.9){$28$}

\rput(26,11.6){$35$}
\rput(22.9,9.2){$16$}
\rput(21.3,5.9){$38$}

\rput(16.8,14){\textcolor{red}{$_1$}}
\rput(18,10.3){\textcolor{red}{$_7$}}
\rput(13.5,11.8){\textcolor{red}{$_3$}}

\end{pspicture}
\caption{A topograph of discriminant $D=-31$}
\label{well}
\end{figure}
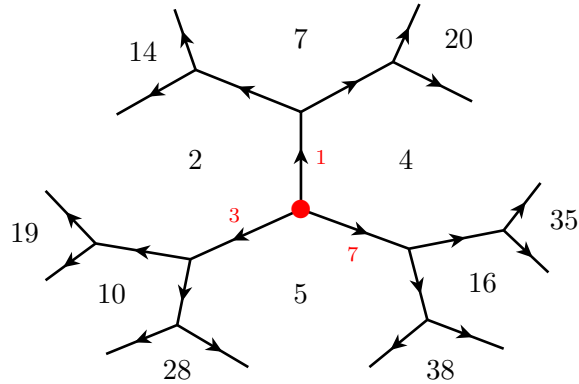
or a unique {\em  edge well} which is an edge with label $0$. Both well types are shown in Figure \ref{-3-4}.  The  quadratic forms on a $+$ topograph are called positive definite.
 The family of $-$ topographs is obtained  by multiplying by $-1$; the corresponding forms are called negative definite. At a vertex of a well of either type, recalling \e{dis}, $D=-ef-fg-ge<0$. Therefore $+$ topographs and $-$ topographs have negative discriminants.

\subsection{$+-$ topographs} \la{+-}
In this final  family, $+-$ topographs have both positive and negative regions but no lakes. Their forms  are called indefinite. As in Section \ref{opm}, the river is a single simple path separating the positive and negative regions.
\SpecialCoor
\psset{griddots=5,subgriddiv=0,gridlabels=0pt}
\psset{xunit=0.22cm, yunit=0.21cm, runit=0.2cm}
\psset{linewidth=1pt}
\psset{dotsize=5pt 0,dotstyle=*}
\begin{figure}[ht]
\centering
\begin{pspicture}(4,7)(47,35) 

\psset{arrowscale=1.2,arrowinset=0.1,arrowlength=1.0}
\newrgbcolor{light}{0.9 0.9 1.0}
\newrgbcolor{blue2}{0.3 0.3 0.8}


\psline(4.5,32)(7.2,29.8)(9.5,26.3)(9.5,22.2)
\psline(8.2,33.6)(7.2,29.8)

\psline(14.5,32)(11.8,29.8)(9.5,26.3)(9.5,22.2)
\psline(10.8,33.6)(11.8,29.8)

\psline(18.2,32.3)(21.3,30.2)(23.7,27.3)(24.5,23)
\psline(21.5,34)(21.3,30.2)
\psline(24,34.2)(25.3,31)(23.7,27.3)
\psline(28,34)(25.3,31)

\psline(39.8,32.3)(36.7,30.2)(34.3,27.3)(33.5,23)
\psline(36.5,34)(36.7,30.2)
\psline(34,34.2)(32.7,31)(34.3,27.3)
\psline(30,34)(32.7,31)

\psline(17,20)(17,16)(15,12)(11.5,10)
\psline(15.5,8)(15,12)
\psline(17,16)(19,12)(22.5,10)
\psline(18.5,8)(19,12)

\psline(41,20)(41,16)(43,12)(46.5,10)
\psline(42.5,8)(43,12)
\psline(41,16)(39,12)(35.5,10)
\psline(39.5,8)(39,12)

\psline[linewidth=3.5pt,linecolor=blue2](4,20)(9.5,22.2)(17,20)(24.5,23)(33.5,23)(41,20)

\psline[linewidth=3.5pt,linecolor=blue2,linestyle=dashed](41,20)(46.5,22.2)


\rput(6,24){$5$}
\rput(17,25){$5$}
\rput(29,27){$8$}
\rput(40,26){$5$}

\rput(6.2,32.6){$_{53}$}
\rput(9.5,30.3){$24$}
\rput(13,32.9){$_{53}$}

\rput(19.8,33.1){$_{60}$}
\rput(23.2,31.2){$29$}
\rput(26,34){$_{69}$}

\rput(38.2,33.1){$_{60}$}
\rput(34.8,31.2){$29$}
\rput(32,34){$_{69}$}

\rput(11,17){$-4$}
\rput(29,17.5){$-3$}
\rput(44.3,17.5){$-4$}

\rput(13.4,9.2){$_{-43}$}
\rput(17,11.2){$-19$}
\rput(20.2,9.2){$_{-40}$}

\rput(44.6,9.2){$_{-43}$}
\rput(41,11.2){$-19$}
\rput(37.4,9.2){$_{-40}$}

\end{pspicture}
\caption{A topograph of discriminant $D=96$ with its periodic river}
\label{riv96}
\end{figure}
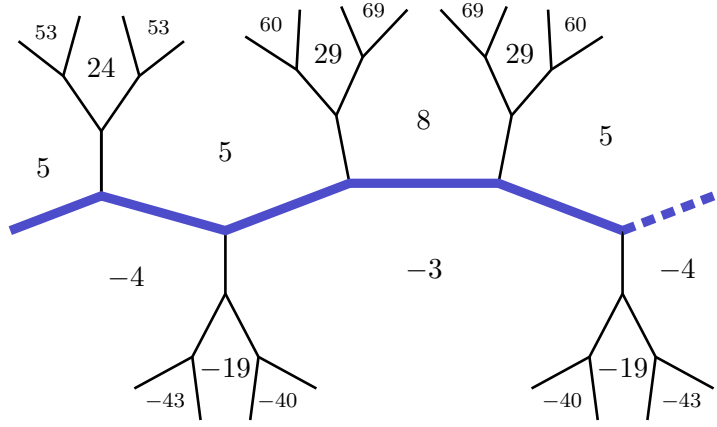
This time it is periodic, meaning that configurations like $[a,b,c]$ on the path of the river, with $a>0>c$, must eventually repeat, as in Figure \ref{riv96}.
Also $b^2+4a|c|=D$ here implies that $D\gqs 4$. We show next that in this case $D$ cannot be a perfect square -- see another proof in \cite[Prop. 11.2]{wei17}.

\begin{lemma} \label{perf}
A topograph with a perfect square discriminant $D$  must contain a lake.
\end{lemma}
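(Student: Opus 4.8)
The plan is to reduce everything to Theorem~\ref{abc}. Since the topograph contains a configuration $[a,b,c]$, that theorem tells us its region labels run over \emph{all} values $q(x,y)=ax^2+bxy+cy^2$ with $x,y$ coprime integers. So it suffices to produce a single coprime pair $(x_0,y_0)\neq(0,0)$ with $q(x_0,y_0)=0$: the region carrying that label is then a lake.

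Write the hypothesis as $D=n^2$ with $n\geq 0$ (this includes $D=0$ and the zero topograph). If $a=0$ (or $c=0$), then a region already has label $0$ and we are done; so assume $a\neq 0$. Here I would invoke the usual completion of the square,
\[
  4a\,q(x,y)=(2ax+by)^2-Dy^2=\bigl(2ax+(b-n)y\bigr)\bigl(2ax+(b+n)y\bigr),
\]
and simply set $(x,y)=(b-n,\,-2a)$, which annihilates the first factor; hence $q(b-n,-2a)=0$, and this pair is nonzero because $a\neq 0$. Dividing out $g:=\gcd(b-n,2a)\geq 1$ then yields the coprime pair $(x_0,y_0):=\bigl((b-n)/g,\,-2a/g\bigr)$ with $q(x_0,y_0)=q(b-n,-2a)/g^2=0$, as required.

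I do not expect a genuine obstacle here; the only points to watch are that ``perfect square discriminant'' permits $D=0$, that the exhibited pair is really nonzero (hence the case split on whether $a$ vanishes), and that one uses the full strength of Theorem~\ref{abc} --- namely that \emph{every} value on coprime inputs is attained as a region label, not merely that region labels have that algebraic shape. As an alternative consistent with the classification just completed, one could instead note that a lake-free topograph is of type $+$, $-$, or $+-$; the first two have $D<0$, and for an indefinite topograph the same factorization, applied to a river configuration $[a,b,c]$ with $ac<0$, again produces a coprime zero of $q$ and hence a lake, contradicting lake-freeness.
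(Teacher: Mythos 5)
Your proof is correct, and it reaches the same underlying coprime zero of $q$ as the paper does, but by a different mechanism. The paper's proof takes the first root $\ze_q=\frac{-b+\sqrt{D}}{2a}=h_n/k_n$ (rational because $D$ is a square), expands it as a finite continued fraction, and follows the associated path of $L$/$R$ turns on the topograph; by \eqref{cf} the endpoint satisfies $q(h_n,k_n)=k_n^2\,q(\ze_q,1)=0$, so the path terminates at a lake. Your pair $(b-n,-2a)$ is, up to sign and the common factor $g$, exactly $(h_n,k_n)$, but you produce it by completing the square and factoring $4a\,q(x,y)=\bigl(2ax+(b-n)y\bigr)\bigl(2ax+(b+n)y\bigr)$, then appeal to the surjectivity statement in Theorem~\ref{abc} (every coprime pair is realized as a region) to conclude a lake exists. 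What your route buys is economy: no continued fractions, no path-following, just an explicit integer zero. What the paper's route buys is location: it shows the lake is reached by the path associated with $\ze_q$, which is precisely the geometric fact reused later (e.g.\ in Proposition~\ref{which-lake} and the square-discriminant reduction algorithm). Your handling of the edge cases ($a=0$ or $c=0$ giving an immediate lake, $D=0$ permitted, the exhibited pair nonzero since $a\neq 0$, division by $\gcd$ to get coprimality) is all sound.
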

\begin{proof}
  Let $q=[a,b,c]$ be a form on this topograph where we may assume $ac \neq 0$. Then its first root, see \e{zeeq}, is rational with a finite continued fraction:
  \begin{equation*}
    \ze_q = \langle a_0,a_1, \dots, a_n\rangle = h_n/k_n.
  \end{equation*}
Let $M=L^{a_0}R^{a_1} \cdots R^{a_n}$, assuming for now that $n$ is odd. Following this path of left and right turns from $q$ leads to $q'=q|M$ with, by \e{cf},
\begin{equation*}
  q'(1,0)=q(h_n,k_n)=k_n^2 \cdot q(\ze_q,1)=0.
\end{equation*}
Hence $q'=[0,b',c']$ and the path has reached a lake. Similarly, $n$ even leads to $q'=[a',b',0]$.
\end{proof}

We will also require the next result.

\begin{lemma} \la{har}
An infinite river in a $+-$ topograph cannot just make $L$ turns or just $R$ turns as we move along it. In particular it must consist of at least $2$ edges before repeating and must contain an $L$ turn and an $R$ turn. The total number of river edges, without repeating, on all $+-$ topographs of a fixed discriminant $D$ is finite.
\end{lemma}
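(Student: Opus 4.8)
The plan is to read off, from the $\SL(2,\Z)$ action recorded in \e{uu}, how a river edge transforms under an $L$ or an $R$ turn, and then to exploit the fact that the discriminant $D=b^2-4ac$ is constant to bound $b$. Recall (Section \ref{+-}) that, with our orientation convention placing the positive regions above the river, a river edge is a configuration $[a,b,c]$ with $a>0>c$ and $b^2-4ac=D$, and that moving forward along the river the next river edge is unique: it is $q|L=[a,b+2a,a+b+c]$ if we turn left and $q|R=[a+b+c,b+2c,c]$ if we turn right, exactly one of which preserves the ``opposite signs'' pattern since $a+b+c\ne 0$ on a $+-$ topograph. In particular an $L$ turn leaves the positive entry $a$ fixed and sends $b\mapsto b+2a$, while an $R$ turn leaves the negative entry $c$ fixed and sends $b\mapsto b+2c$.

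First I would dispose of the ``only $L$ turns'' case. If, followed forward, the river makes only left turns from a river edge $[a_0,b_0,c_0]$ with $a_0\ge 1$, then after $n$ turns the configuration is $[a_0,\,b_0+2na_0,\,c_n]$ with $c_n<0$, so that $D=(b_0+2na_0)^2-4a_0 c_n>(b_0+2na_0)^2$. Since $b_0+2na_0\to\infty$, this contradicts $D$ being fixed. The ``only $R$ turns'' case is symmetric: now $c_0\le -1$ stays fixed while $b$ runs through $b_0+2nc_0\to-\infty$, and again $D=(b_0+2nc_0)^2-4a_nc_0>(b_0+2nc_0)^2\to\infty$, a contradiction. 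This proves the first assertion. The remaining claims of the second sentence then follow formally: an infinite river in a $+-$ topograph is periodic, so if it made no $L$ turn it would consist of $R$ turns only (and conversely), which is excluded; a river of period $1$ would repeat a single configuration, hence make the same turn at every vertex, again excluded; so the minimal period is at least $2$, and over one period both turn types must occur (otherwise the periodic extension would use only one turn type).

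For the last sentence, every river edge is a configuration $[a,b,c]$ with $ac<0$ and $b^2-4ac=D$, i.e. $D=b^2+4|a|\,|c|$, which forces $b^2\le D$, $|a|\le D/4$ and $|c|\le D/4$. Hence there are only finitely many river-edge configurations of a given discriminant $D$, and a fortiori finitely many river edges in total over all $+-$ topographs of discriminant $D$ (each such configuration in fact determining its topograph, since it exhibits three regions meeting at a vertex).

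I do not expect a serious obstacle; the only real care needed is bookkeeping of the orientation and sign conventions so that ``turn left $\leftrightarrow$ apply $L$'' and ``river edge $\leftrightarrow a>0>c$'' are used consistently, together with the already-established structural facts from Section \ref{+-} that the river is an infinite simple periodic path along which every configuration has $ac<0$.
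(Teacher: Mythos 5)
Your proof is correct and follows essentially the same route as the paper: the key observation in both is that a river of only $L$ (or only $R$) turns borders a single region, so the middle coefficient marches through a non-constant arithmetic progression $b_0+2na_0$, and the finiteness claim is in both cases just a count of configurations $[a,b,c]$ with $ac<0$ and $b^2-4ac=D$. The only cosmetic difference is how the contradiction is closed: the paper appeals to periodicity of the river (a periodic edge-label sequence cannot be a non-constant arithmetic progression), while you note that $D=b^2+4|a||c|>b^2$ forces $b$ to be bounded on river edges, contradicting $b\to\pm\infty$; both are valid.
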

\begin{proof}
If a river makes only $L$ turns, or only $R$ turns, then it borders a single region and its edge labels are in a non-constant arithmetic progression. This is not possible as the river is periodic.

 By counting configurations with $a>0>c$, the  desired number of river edges is
$
  \sum_{b} \sigma (\frac{D-b^2}4  ),
$
where $|b|<\sqrt{D}$, $b \equiv D \bmod 2$ and $\sigma(n)$ equals the number of divisors of $n$. (Then the number of $+-$ topographs with discriminant $D$ is at most half of this sum.)
\end{proof}

\subsection{Summary} \la{summ}
This classification has partitioned the integers $D\equiv 0,1 \bmod 4$ into four sets. Topographs of these discriminants (primitive or not) have the following features.
\begin{enumerate}
 \item If $D = 0$ then we can have the $0$ topograph with all regions $0$. Otherwise there is one lake, and other region labels are all positive or all negative. (Cases $0$, $0+$, $0-$.)
  \item If $D < 0$   their region labels take only positive values  or only negative values. These are the only topographs with wells. (Cases $+$, $-$.)
  \item If $D > 0$ is  not a perfect square then each topograph  has an infinite periodic river, separating positive and negative regions. There are no lakes. (Case $+-$.)
  \item If $D > 0$ is a perfect square   then there are exactly two lakes and they are joined by a river, possibly of  length zero. They contain both positive and negative regions. (Case $0{+}-$.)
\end{enumerate}
The square cases lead to a simple topographic proof of a result of Gauss. Let $\phi$ be Euler's totient function.

\begin{prop} \la{hm2}
  We have $h^*(0)=\infty$, $h(0)=1$ and for all integers $m\gqs 1$
  \begin{equation*}
    h^*(m^2)=m, \qquad h(m^2)=\phi(m).
  \end{equation*}
\end{prop}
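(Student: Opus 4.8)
The plan is to handle the discriminant-zero case directly from the classification, and to obtain the two square-discriminant formulas by first establishing $h(m^2)=\phi(m)$ via an explicit bijection with $(\Z/m\Z)^\times$ and then deducing $h^*(m^2)=m$ from the decomposition of a form into its content times a primitive form.

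For $D=0$ I would argue as follows. By the Summary (cases $0$ and $0+$, since forms representing negative values are not counted here), a topograph of discriminant $0$ representing only nonnegative integers is either the all-zero topograph or, by Section \ref{0+}, the topograph containing $[0,0,c]$ for some positive integer $c$, that is, the form $c y^2$. Distinct values of $c$ give inequivalent forms because the content, i.e.\ the region $\gcd$, is a class invariant by Lemma \ref{gcd}, and the content of $[0,0,c]$ is $c$. Hence there are infinitely many classes, so $h^*(0)=\infty$, and the content is $1$ only for $c=1$ (the all-zero topograph has content $0$), so $h(0)=1$.

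Now fix $m\gqs 1$ and $D=m^2$. I would first show that $c\mapsto[\,[0,m,c]\,]$ is a well-defined bijection from $(\Z/m\Z)^\times$ onto the primitive classes of discriminant $m^2$, giving $h(m^2)=\phi(m)$. Each $[0,m,c]$ has discriminant $m^2$ and content $\gcd(m,c)$, so it is primitive exactly when $\gcd(c,m)=1$; and since $[0,m,c]\,|\,L^k=[0,m,c+km]$ by \e{uu}, the class depends only on $c\bmod m$. \emph{Surjectivity:} Lemma \ref{perf} shows every discriminant-$m^2$ topograph contains a lake, which sits in a configuration $[0,b,c]$ (apply $S$ if the lake is the right-hand region of the configuration around a boundary edge), with $b^2-4\cdot 0\cdot c=m^2$, so $b=\pm m$; if the topograph is primitive then $\gcd(c,m)=1$. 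If $b=m$ we are done; if $b=-m$, choose $c'$ with $cc'\equiv 1\pmod m$ and check that $M=\bigl(\begin{smallmatrix} c' & (1-cc')/m \\ -m & c \end{smallmatrix}\bigr)\in\SL(2,\Z)$ satisfies $[0,m,c']\,|\,M=[0,-m,c]$, so the class is of the required form. \emph{Injectivity:} if $[0,m,c]\,|\,M=[0,m,c']$ with $M=\bigl(\begin{smallmatrix}\alpha&\beta\\\gamma&\delta\end{smallmatrix}\bigr)\in\SL(2,\Z)$, then the identity reads, as a factorization in the UFD $\Z[x,y]$,
\begin{equation*}
  (\gamma x+\delta y)\bigl((m\alpha+c\gamma)x+(m\beta+c\delta)y\bigr)=y(mx+c'y),
\end{equation*}
so $\{\gamma x+\delta y,\ (m\alpha+c\gamma)x+(m\beta+c\delta)y\}=\{\pm y,\ \pm(mx+c'y)\}$; the case $\gamma x+\delta y=\pm(mx+c'y)$ forces $\det M=-1$, so in fact $\gamma=0$, $M$ is (up to sign) upper triangular, and comparing coefficients gives $c'\equiv c\pmod m$. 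This proves $h(m^2)=\phi(m)$.

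Finally I would deduce $h^*(m^2)=m$: every integral form of discriminant $m^2$ equals $g$ times a primitive form $q_0$, where $g$ is its content, and then $m^2=g^2\cdot\mathrm{disc}(q_0)$ forces $g\mid m$ and $\mathrm{disc}(q_0)=(m/g)^2$. Since the content is class-invariant (Lemma \ref{gcd}), $q\mapsto(g,[q/g])$ is a bijection between classes of discriminant $m^2$ and pairs consisting of a divisor $g$ of $m$ together with a primitive class of discriminant $(m/g)^2$, whence $h^*(m^2)=\sum_{g\mid m}h\bigl((m/g)^2\bigr)=\sum_{d\mid m}\phi(d)=m$. The step I expect to be the main obstacle is the surjectivity above — concretely, the fact that a square-discriminant topograph may present its lakes in configurations $[0,-m,c]$ rather than $[0,m,c]$, which the explicit matrix is designed to absorb; an alternative is to argue geometrically, by tracking orientations along the river joining the two lakes, that the two lakes carry opposite orientations, so one of them always exhibits a form of the shape $[0,m,c]$.
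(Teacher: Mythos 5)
Your argument is correct, but it runs in the opposite direction from the paper's and uses different machinery. The paper counts $h^*(m^2)$ first and geometrically: by the classification in Section \ref{cla}, a topograph of discriminant $m^2$ either contains $[0,m,0]$ (adjacent lakes) or has a river between two nonadjacent lakes, and its rightmost river edge is a configuration $[a,-a-c,c]$ with $a>0>c$ and $a+|c|=m$; thus $a\in\{1,\dots,m-1\}$ pins down the topograph, giving $h^*(m^2)=m$ at once, and $h(m^2)=\phi(m)$ falls out because $\gcd(a,-a-c,c)=\gcd(a,m)$. You instead establish $h(m^2)=\phi(m)$ first, via the bijection $c\mapsto[[0,m,c]]$ with $(\Z/m\Z)^\times$ — your explicit matrix sending $[0,m,c']$ to $[0,-m,c]$ is correct (I checked $\det M=1$ and the transformation), your injectivity argument by unique factorization of $y(mx+c'y)$ in $\Z[x,y]$ is sound (case (b) does force $\det M=-1$), and your surjectivity needs only Lemma \ref{perf} rather than the full two-lakes-joined-by-a-river picture — and then you deduce $h^*(m^2)=\sum_{d\mid m}\phi(d)=m$ from the content decomposition. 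Each route has something to recommend it: the paper's count is shorter given the structural classification already in hand and produces a canonical representative per class (the rightmost river edge) that feeds the reduction theory of Section \ref{sqd}; yours is more self-contained algebraically, and the inverse relation $cc'\equiv 1\bmod m$ between the two lake configurations is exactly the content of Proposition \ref{prok}, which you are in effect reproving. Your $D=0$ case matches the paper's.
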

\begin{proof}
  As we have seen, the topograph containing $[0,0,1]$ and the topograph containing $[0,0,-1]$ are the only primitive ones of discriminant $0$. Then $h(0)=1$ as we are not including topographs with negative region labels in this count. The topographs containing  $[0,0,c]$ show that $h^*(0)=\infty$.

  Topographs of discriminant $m^2$ for $m \gqs 1$ either contain $[0,m,0]$, with two adjacent lakes, or a river between two nonadjacent lakes. In the latter case consider the rightmost river edge. It corresponds to the configuration $[a,-a-c,c]$ with $a>0>c$. Hence $(a-c)^2=m^2$ and $a+|c|=m$. This is possible only if $m\gqs 2$, which we now assume. Therefore $a$ with $1\lqs a \lqs m-1$ specifies this topograph. It follows that $h^*(m^2)=m$. Also $[a,-a-c,c]$ is primitive iff $\gcd(a,m)=1$, giving $h(m^2)=\phi(m)$. Our argument shows that when $m=1$ the topograph must contain two adjacent lakes and $[0,1,0]$. Hence $h^*(1)=h(1)=1$.
\end{proof}

Section \ref{sqd} gives another characterization of topographs with square discriminants. For any discriminant $D$ it is easily seen by factoring out the $\gcd$ of the coefficients of $[a,b,c]$ that
\begin{equation*}
  h^*(D)=\sum_{n^2 \mid D}h(D/n^2).
\end{equation*}
Hence $h^*(D)=h(D)$ when $D$ is squarefree, or more generally one of the {\em fundamental discriminants}:
\begin{equation*}
   \dots,   -19,-15,-11, -8,-7, -4,-3, 5, 8, 12, 13, 17, 21, 24, \dots .
\end{equation*}
 Apart from $h^*(0)$, we will see that $h^*(D)$ and $h(D)$ are finite for all $D$ and may be calculated in various ways.
The relationship between binary quadratic forms and ideal classes in quadratic number fields  is not treated in this paper, but see for example \cite{bue89}, \cite[Chap. 5]{coh93}, \cite[Chap. 10]{z81} for this important connection.



\section{Class numbers for $D<0$} \la{<}

\subsection{Counting wells}
As discussed in Section \ref{sl2}, topographs correspond to quadratic form equivalence classes and so may be used to count them. The class numbers $h(D)$ and $h^*(D)$ count the number of primitive and not necessarily primitive topographs of discriminant $D$, respectively. For $D<0$ the Hurwitz number $H(|D|)$ is the same as $h^*(D)$ except that, for $a\gqs 1$, it counts the topograph containing $[a,a,a]$ with weight $1/3$, and the topograph containing $[a,0,a]$ with weight $1/2$. These topographs are multiples of those shown in Figure \ref{-3-4}, with rotational symmetries of order $3$ and $2$. No other primitive topographs  of negative discriminant have orientation preserving symmetry; see Corollary \ref{aut}.

\SpecialCoor
\psset{griddots=5,subgriddiv=0,gridlabels=0pt}
\psset{xunit=0.23cm, yunit=0.23cm, runit=0.23cm}
\psset{linewidth=1pt}
\psset{dotsize=7pt 0,dotstyle=*}
\begin{figure}[ht]
\centering
\begin{pspicture}(0,-1)(50,22) 

\psset{arrowscale=1.4,arrowinset=0.3,arrowlength=1.1}
\newrgbcolor{light}{0.8 0.8 1.0}
\newrgbcolor{pale}{1 0.7 1}
\newrgbcolor{pale}{1 0.7 0.4}

\rput(11,10.5){%
        \begin{pspicture}(-10,-10)(10,12) 

\psset{arrowscale=1.6,arrowinset=0.3,arrowlength=1.1}
\newrgbcolor{light}{0.9 0.7 1.0}
\newrgbcolor{blue2}{0.3 0.3 0.8}

\psline(0,0)(0., 3.)(-1.82628, 5.38006)(-4.59792, 6.52811)(-7.57226, 6.13653)
\psline(-4.59792, 6.52811)(-6.42421, 8.90817)
\psline(-1.82628, 5.38006)(-2.21786, 8.35439)(-4.04415, 10.7345)
\psline(-2.21786, 8.35439)(-1.06981, 11.126)
\psline(0., 3.)(1.82628, 5.38006)(4.59792, 6.52811)(7.57226, 6.13653)
\psline(4.59792, 6.52811)(6.42421, 8.90817)
\psline(1.82628, 5.38006)(2.21786, 8.35439)(4.04415, 10.7345)
\psline(2.21786, 8.35439)(1.06981, 11.126)

\rput(4.41673, 2.55){$1$}
\rput(0., 7.5){$3$}
\rput(-4.56571, 8.95015){$7$}
\rput(-7.36956, 7.67616){$_{13}$}
\rput(-2.60944, 11.3287){$_{19}$}
\rput(4.56571, 8.95015){$7$}
\rput(2.60944, 11.3287){$_{19}$}
\rput(7.36956, 7.67616){$_{13}$}

\psline(0,0)(-2.59808, -1.5)(-3.74613, -4.27164)(-3.35455, -7.24597)(-1.52826, -9.62603)
\psline(-3.35455, -7.24597)(-4.5026, -10.0176)
\psline(-3.74613, -4.27164)(-6.12619, -6.09792)(-7.27424, -8.86956)
\psline(-6.12619, -6.09792)(-9.10052, -6.4895)
\psline(-2.59808, -1.5)(-5.57241, -1.10842)(-7.95247, 0.717863)(-9.10052, 3.4895)
\psline(-7.95247, 0.717863)(-10.9268, 1.10944)
\psline(-5.57241, -1.10842)(-8.34405, -2.25647)(-11.3184, -1.86489)
\psline(-8.34405, -2.25647)(-10.1703, -4.63653)

\rput(-4.41673, 2.55){$1$}
\rput(-6.49519, -3.75){$3$}
\rput(-5.4682, -8.4291){$7$}
\rput(-2.96297, -10.2203){$_{13}$}
\rput(-8.50625, -7.92421){$_{19}$}
\rput(-10.0339, -0.521054){$7$}
\rput(-11.1157, -3.40452){$_{19}$}
\rput(-10.3325, 2.54415){$_{13}$}

\psline(0,0)(2.59808, -1.5)(5.57241, -1.10842)(7.95247, 0.717863)(9.10052, 3.4895)
\psline(7.95247, 0.717863)(10.9268, 1.10944)
\psline(5.57241, -1.10842)(8.34405, -2.25647)(11.3184, -1.86489)
\psline(8.34405, -2.25647)(10.1703, -4.63653)
\psline(2.59808, -1.5)(3.74613, -4.27164)(3.35455, -7.24597)(1.52826, -9.62603)
\psline(3.35455, -7.24597)(4.5026, -10.0176)
\psline(3.74613, -4.27164)(6.12619, -6.09792)(7.27424, -8.86956)
\psline(6.12619, -6.09792)(9.10052, -6.4895)

\rput(0., -5.1){$1$}
\rput(6.49519, -3.75){$3$}
\rput(10.0339, -0.521054){$7$}
\rput(10.3325, 2.54415){$_{13}$}
\rput(11.1157, -3.40452){$_{19}$}
\rput(5.4682, -8.4291){$7$}
\rput(8.50625, -7.92421){$_{19}$}
\rput(2.96297, -10.2203){$_{13}$}

\psdot[linecolor=red](0,0)

\end{pspicture}}

\rput(42,11.4){%
\begin{pspicture}(-10,-10)(10,12) 

\psset{arrowscale=1.6,arrowinset=0.3,arrowlength=1.1}
\newrgbcolor{light}{0.9 0.7 1.0}
\newrgbcolor{blue2}{0.3 0.3 0.8}

\psline(0,0)(0., 2.)(-2.0803, 4.16156)(-4.96267, 4.99339)(-7.87479, 4.27258)
\psline(-4.96267, 4.99339)(-7.04297, 7.15495)
\psline(-2.0803, 4.16156)(-2.80111, 7.07368)(-4.8814, 9.23525)
\psline(-2.80111, 7.07368)(-1.96928, 9.95606)
\psline(0., 2.)(2.0803, 4.16156)(4.96267, 4.99339)(7.87479, 4.27258)
\psline(4.96267, 4.99339)(7.04297, 7.15495)
\psline(2.0803, 4.16156)(2.80111, 7.07368)(4.8814, 9.23525)
\psline(2.80111, 7.07368)(1.96928, 9.95606)

\rput(5.1, 0){$1$}
\rput(0., 6.5){$2$}
\rput(-5.20074, 7.40391){$5$}
\rput(-7.84504, 5.82521){$_{10}$}
\rput(-3.52191, 9.9858){$_{13}$}
\rput(5.20074, 7.40391){$5$}
\rput(3.52191, 9.9858){$_{13}$}
\rput(7.84504, 5.82521){$_{10}$}

\psline(0,0)(0., -2.)(2.0803, -4.16156)(4.96267, -4.99339)(7.87479, -4.27258)
\psline(4.96267, -4.99339)(7.04297, -7.15495)
\psline(2.0803, -4.16156)(2.80111, -7.07368)(4.8814, -9.23525)
\psline(2.80111, -7.07368)(1.96928, -9.95606)
\psline(0., -2.)(-2.0803, -4.16156)(-4.96267, -4.99339)(-7.87479, -4.27258)
\psline(-4.96267, -4.99339)(-7.04297, -7.15495)
\psline(-2.0803, -4.16156)(-2.80111, -7.07368)(-4.8814, -9.23525)
\psline(-2.80111, -7.07368)(-1.96928, -9.95606)

\rput(-5.1, 0){$1$}
\rput(0., -6.5){$2$}
\rput(5.20074, -7.40391){$5$}
\rput(7.84504, -5.82521){$_{10}$}
\rput(3.52191, -9.9858){$_{13}$}
\rput(-5.20074, -7.40391){$5$}
\rput(-3.52191, -9.9858){$_{13}$}
\rput(-7.84504, -5.82521){$_{10}$}

\psline[linecolor=red,linewidth=3.4pt](0,-2)(0,2)

\end{pspicture}}

\rput(-2.3,20){$D=-3$}
\rput(29,20){$D=-4$}

\end{pspicture}
\caption{The topographs of discriminants $D=-3$ and $D=-4$}
\label{-3-4}
\end{figure}
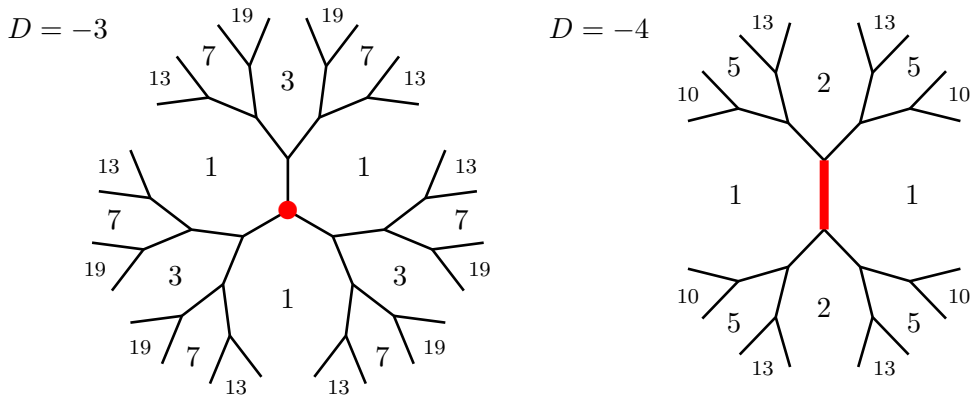

Topographs of negative discriminant may be counted by their  well configurations from Section \ref{+}. 

\begin{theorem} \la{class}
We have  $h(-3)=h(-4)=1$. Suppose  $D\equiv 0,1 \bmod 4$ is  $<-4$. Put $n :=|D|$ if $D$ is odd and  $n :=|D|/4$ otherwise. Then, using $\doteq$ to indicate equality provided the conditions following are met,
\begin{equation} \label{hed}
  h(D) \doteq 2 \sum_{\substack{ e>f>g >0 \\ ef+eg+fg=n}} 1 + \sum_{\substack{ e,f >0  \\ e^2+2ef=n}} 1
  +  \sum_{\substack{ e>f >0 \\ ef=n}} 1,
\end{equation}
where the sums are over pairs or triples of  integers with $\gcd=1$. In the first two sums, the pairs or triples should be all odd if $D$ is odd, and not all odd if $D$ is even. The last sum is only included when $D$ is even.
\end{theorem}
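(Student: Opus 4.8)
The plan is to count primitive $+$ topographs of discriminant $D$ by their wells. By the classification summarized in Section~\ref{summ}, a topograph with $D<0$ is a $+$ or $-$ topograph, and by the sign convention $h(D)$ is the number of primitive $+$ topographs; by the analysis of $+$ topographs in Section~\ref{+}, each of these has a \emph{unique} well, which is either a \emph{vertex well} (a vertex whose three incident edges all point away, with positive labels) or an \emph{edge well} (an edge labelled $0$). Since the labels and directions of the three edges at a vertex determine the entire topograph, a primitive $+$ topograph of discriminant $D$ is the same datum as such a well configuration, taken up to the $3$-fold rotation at a vertex. So after setting up this bijection I would enumerate the two kinds of well separately.

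For a vertex well with outgoing edge labels $e,f,g>0$, formula~\eqref{dis} gives $ef+fg+ge=|D|$, and the three regions meeting there are $(f+g)/2$, $(e+g)/2$, $(e+f)/2$, all integral (the edge labels share the parity of $D$ by Lemma~\ref{gcd}) and positive, so by the climbing lemma the whole topograph is indeed a $+$ topograph. I would then translate primitivity through Lemma~\ref{gcd}: if $D$ is odd then $e,f,g$ are odd with $\gcd(e,f,g)=1$ and $n=|D|=ef+fg+ge$; if $D$ is even the edge $\gcd$ is $2$, so writing $e=2e'$ etc.\ the triple $(e',f',g')$ is coprime, not all odd (otherwise the regions $e'+f'$, $f'+g'$, $g'+e'$ would all be even), and $n=|D|/4=e'f'+f'g'+g'e'$. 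Conversely any such triple produces a primitive $+$ topograph.

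The orientation bookkeeping is the delicate point. A triple of \emph{distinct} positive integers admits exactly two inequivalent cyclic arrangements around the vertex, producing a topograph and its mirror image; these are genuinely different, since any automorphism must fix the unique well and its three distinct incident edge labels and hence is trivial (cf.\ Corollary~\ref{aut}), which accounts for the factor $2$ in the first sum. When exactly two of $e,f,g$ coincide the cyclic arrangement is unique and the reflection exchanging the two equal edges identifies the topograph with its mirror image, giving the unweighted second sum, with $e$ the repeated value and $f$ the single one, so that $ef+fg+ge=e^2+2ef=n$. The remaining possibility $e=f=g$ forces $|D|=3e^2$ and, by primitivity, $|D|=3$, which (together with $D=-4$) explains the exclusion $D<-4$; the base cases $h(-3)=h(-4)=1$ are read off from Figure~\ref{-3-4}.

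For an edge well, the $0$-edge is the middle edge of a configuration $[a,0,c]$, so $D=-4ac$; hence $D$ is even — this is why the last sum occurs only in that case — we have $ac=n=|D|/4$, and primitivity amounts to $\gcd(a,c)=1$ (the region $\gcd$ at the well is $\gcd(a,c,a+c)$). Since $[a,0,c]|S=[c,0,a]$ lies on the same topograph, such topographs biject with unordered coprime pairs $\{a,c\}$ with $ac=n$, and, excluding $a=c=1$ (that is, $D=-4$), these are precisely the pairs $e>f>0$ with $ef=n$. As vertex wells and edge wells are mutually exclusive and all the above correspondences are bijections onto the set of primitive $+$ topographs of discriminant $D$, adding the three counts yields \eqref{hed}. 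I expect the main obstacle to be exactly this symmetry bookkeeping: justifying the factor $2$ versus factor $1$ at vertex wells and ruling out spurious orientation-preserving automorphisms (hence the hypothesis $D<-4$ and the appeal to the climbing lemma and Corollary~\ref{aut}), together with verifying that the $\gcd$-and-parity conditions on $(e,f,g)$ are exactly primitivity of the resulting topograph.
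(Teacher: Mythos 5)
Your proposal is correct and follows essentially the same route as the paper: classify primitive topographs of discriminant $D<0$ by their unique well, translate primitivity and the parity of $D$ into the $\gcd$ and parity conditions on the outgoing edge labels (halving them when $D$ is even), and account for the factor $2$ versus $1$ by whether the three labels at a vertex well are distinct or two coincide, with the edge wells supplying the final divisor sum. Your extra care about why the two orientations of a distinct triple give genuinely different topographs (via the triviality of automorphisms for $D<-4$) is a slightly more explicit justification of a step the paper leaves implicit, but it is the same argument.
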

\begin{proof}
We count the possible wells of topographs of discriminant $D$.
An edge well corresponds to $[a,0,c]$ with $D=-4ac$ and $\gcd(a,c)=1$. The case $a=c$ gives one topograph with $D=-4$. Otherwise we obtain the last sum in \e{hed} when $D$ is even.

 A vertex well has positively labeled outward directed edges so that $D=-ef-fg-ge$. As in Lemma \ref{gcd} and the discussion after it, we have $\gcd(e,f,g)=1$ or $2$  as $D$ is odd or even, respectively. Let $n=|D|$ in the first case and $e,f,g$ must all be odd. Let $n=|D|/4$ in the second case, where we also replace $e, f, g$ by half their values and these half values cannot all be odd, (or the surrounding regions would all be even by \e{e to r} and the topograph not primitive). The case of equality, $e=f=g=1$ gives one topograph with $D=-3$. Otherwise, if only two edges are equal, say $e$ and $g$, then their contribution is the second sum in \e{hed}. If all edges are unequal, then they may be ordered and we see they produce two  topographs, corresponding to each orientation.
\end{proof}

Theorem \ref{class} seems to be new. Mordell in \cite{mor} proved the  $D$ even case of the next result for Hurwitz numbers. His proof is longer,  linking solutions of $ef+fg+ge=n$ to reduced forms more directly.

\begin{theorem} \la{hur}
Suppose  $D\equiv 0,1 \bmod 4$ is  $<0$. Put $n :=|D|$ if $D$ is odd and  $n :=|D|/4$ otherwise. Then
\begin{equation} \la{hhd}
  H(|D|) \doteq 2 \sum_{\substack{ e>f>g >0 \\ ef+eg+fg=n}} 1 + \sum_{\substack{ e,f >0, \ e\neq f \\ e^2+2ef=n}} 1
  +  \frac 13 \sum_{\substack{ e >0 \\ 3 e^2=n}} 1+ \frac 12 \sum_{\substack{ e,f >0 \\ ef=n}} 1.
\end{equation}
In the first two sums, the pairs or triples should be all odd if $D$ is odd. The last sum is only included when $D$ is even.  
\end{theorem}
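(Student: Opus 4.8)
The plan is to repeat the argument proving Theorem \ref{class}, but now counting \emph{every} topograph of discriminant $D$, not merely the primitive ones, and carrying the fractional weights that enter the definition of the Hurwitz number. Recall from the opening of this section that $H(|D|)$ equals the number of topographs of discriminant $D$, counted with weight $1/3$ for the topograph containing $[a,a,a]$, weight $1/2$ for the one containing $[a,0,a]$, and weight $1$ for all others. By the classification in Section \ref{summ}, a topograph of discriminant $D<0$ is a $+$ topograph (we count only the positive definite ones, by convention), and by Section \ref{+} it possesses a unique well, either a vertex well or an edge well. So it suffices to enumerate the possible wells of discriminant $D$ and attach to each the weight of its topograph.

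First, the edge wells. An edge well is an edge labelled $0$, hence corresponds to a configuration $[a,0,c]$ with $a,c>0$ and $D=-4ac$; since edge labels share the parity of $D$, this occurs only when $D$ is even, and then $ac=n$. The two directed half-edges carry $[a,0,c]$ and $[c,0,a]$, so this topograph is determined by the ordered pair $(a,c)$ up to interchange, and conversely every such pair arises from building the topograph off $[a,0,c]$ by the climbing lemma. An unordered pair $\{a,c\}$ with $a\neq c$ contributes one topograph of weight $1$, while the diagonal $a=c$ (when $n$ is a square) gives the $[a,0,a]$ topograph, of weight $1/2$. Summing, the edge wells contribute $\tfrac12\sum_{ef=n}1$, the sum running over ordered pairs $e,f>0$ with $ef=n$; this is the last term of \e{hhd}, present only for even $D$.

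Next, the vertex wells. A vertex well is a vertex whose three edges all point outward, with positive integer labels $e,f,g$ satisfying $D=-ef-fg-ge$, again of the parity of $D$. If $D$ is odd we take $n=|D|$, with $e,f,g$ all odd; if $D$ is even we take $n=|D|/4$ and, writing $e,f,g$ as twice positive integers, rename those halves $e,f,g$ (now unrestricted, since primitivity is not imposed), obtaining $ef+fg+ge=n$. Splitting by coincidences among $e,f,g$: if all three are distinct we order them $e>f>g$, and exactly as in the proof of Theorem \ref{class} the two cyclic arrangements around the vertex give two distinct mirror-image topographs, each of weight $1$, yielding the first sum of \e{hhd}; if exactly two coincide, say $e=g\neq f$, we obtain a single topograph of weight $1$ with $e^2+2ef=n$, the second sum; and if $e=f=g$ we obtain the single topograph containing $[e,e,e]$, of weight $1/3$, with $3e^2=n$, the third sum. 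Adding the four contributions gives \e{hhd}.

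The routine parts are the parity bookkeeping and the fact, established in Section \ref{+}, that each $+$ topograph descends to a unique well. The point requiring care is the matching of the two symmetric configurations $[a,0,a]$ and $[a,a,a]$ (which are the only symmetric topographs of negative discriminant) with the weights $1/2$ and $1/3$ built into $H$, together with the verification that a vertex well with three distinct edge labels yields two topographs rather than one. I would also stress that, for even $D$, no parity restriction may be placed on the halved edge labels $e,f,g$ — primitivity is not required here — which is precisely why \e{hhd}, unlike \e{hed}, carries no ``not all odd'' clause on its first two sums.
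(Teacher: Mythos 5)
Your proof is correct and follows exactly the route the paper takes: the paper's own proof of Theorem \ref{hur} simply says to repeat the well-counting argument of Theorem \ref{class} with the $\gcd=1$ condition removed and the weights $1/3$, $1/2$ attached to the $[a,a,a]$ and $[a,0,a]$ topographs, which is precisely what you have written out in full. Your accounting of the four cases (distinct triples giving two mirror-image topographs, a repeated pair giving one, the all-equal case with weight $1/3$, and the ordered-pair form of the edge-well sum absorbing the weight $1/2$ on the diagonal) matches the paper's intended argument, including the observation that no ``not all odd'' clause is needed once primitivity is dropped.
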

\begin{proof}
The proof is very similar to the previous one, removing the $\gcd =1$ condition. Extra terms corresponding to $[e,e,e]$ with weight $1/3$ and $[e,0,e]$ with weight $1/2$ are added. (The formula for $h^*(D)$ is the same as \e{hhd} but with these  extra term weights put equal to $1$.)
\end{proof}

Following Mordell, we may make \e{hhd} much neater.
Let $\Upsilon(n)$ be the number of ordered triple solutions $(e,f,g)$ to $ef+fg+ge=n$ in nonnegative integers, where solutions with one of $e, f, g$ equal to zero count with weight $1/2$. Let $\Upsilon_{odd}(n)$ be the number of  solutions in positive odd integers. 
This notation allows the next simple restatement of Theorem \ref{hur}.  Mordell proved the first part $\Upsilon(n) =   3 H(4n)$, (he was counting forms with even middle coefficients, forcing $|D|=4n$).

\begin{cor} \la{mo}
For positive integers $n$,
\begin{equation*}
  \Upsilon(n) =   3 H(4n), \qquad \Upsilon_{odd}(n) =
   \begin{cases}
                         3 H(n), & \mbox{if $n \equiv 3 \bmod 4$}  \\
                         0, & \mbox{if $n \not\equiv 3 \bmod 4$}.
                       \end{cases}
\end{equation*}
\end{cor}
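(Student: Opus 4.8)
The plan is to read off both identities from Theorem~\ref{hur} by rewriting the right-hand side of \eqref{hhd} as a count of ordered triples. For the first identity I would apply Theorem~\ref{hur} with $D=-4n$, so that $|D|/4$ equals our $n$ and all four sums in \eqref{hhd} occur with no parity constraint. Then I would partition the ordered triples $(e,f,g)$ of nonnegative integers satisfying $ef+fg+ge=n$ — note $n>0$ rules out two vanishing coordinates — into four types: (i) three positive distinct coordinates; (ii) three positive coordinates with exactly two equal; (iii) three positive equal coordinates; (iv) exactly one zero coordinate.

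Each type is then matched, with its multiplicity, to a term of \eqref{hhd} multiplied by $3$. A triple of type (i) arises from an unordered triple $e>f>g>0$ with $ef+eg+fg=n$ in $3!=6$ ways, matching $3\cdot2=6$ times the first sum. A triple of type (ii) arises from a pair $e\neq f$, $e,f>0$, with $e^2+2ef=n$ in $3$ ways (the position of the unequal coordinate), matching $3$ times the second sum. A triple of type (iii) arises from the unique $e$ with $3e^2=n$ in one way, matching $3\cdot\tfrac13$ times the third sum. A triple of type (iv) places a zero in one of three slots and fills the remaining two by an ordered pair $(u,v)$ with $uv=n$, of which there are $\sigma(n)$, giving $3\sigma(n)$ such triples; each carries weight $\tfrac12$ in the definition of $\Upsilon$, so their total contribution is $\tfrac32\sigma(n)=3\cdot\tfrac12\sum_{ef=n}1$, the last sum. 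Adding the four contributions gives $\Upsilon(n)=3H(4n)$.

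For $\Upsilon_{odd}$ I would first record the parity obstruction: when $e,f,g$ are positive and odd we have $e^2+f^2+g^2\equiv3\bmod8$ while $e+f+g$ is odd, so $ef+fg+ge=\tfrac12\big((e+f+g)^2-(e^2+f^2+g^2)\big)\equiv3\bmod4$; hence $\Upsilon_{odd}(n)=0$ unless $n\equiv3\bmod4$. When $n\equiv3\bmod4$ apply Theorem~\ref{hur} with $D=-n$ (the odd case): the first two sums run over odd tuples, the divisor sum is absent, and $3e^2=n\equiv3\bmod4$ already forces $e$ odd in the third sum. Partitioning the ordered triples of \emph{positive odd} integers with $ef+fg+ge=n$ into types (i)--(iii) exactly as above, with the same multiplicities $6$, $3$, $1$, identifies $\Upsilon_{odd}(n)$ with $3$ times the three surviving terms of \eqref{hhd}, that is, $\Upsilon_{odd}(n)=3H(n)$.

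The only point requiring care is the bookkeeping for the degenerate configurations, and it is routine. One checks that the weight $\tfrac12$ attached to triples with a zero coordinate is absorbed precisely by the coefficient $\tfrac12$ of $\sum_{ef=n}1$ in \eqref{hhd} — provided that sum is read over \emph{ordered} pairs, so that it equals $\sigma(n)$ — and that the weights $\tfrac13$ and $\tfrac12$ that the Hurwitz count attaches to the symmetric topographs $[e,e,e]$ and $[e,0,e]$ are exactly cancelled by the $3$, respectively $2$, cyclic arrangements of the corresponding ordered triple. In short, the corollary is just Theorem~\ref{hur} re-expressed in terms of ordered solution counts, which is the shape in which Mordell \cite{mor} stated the special case $|D|=4n$.
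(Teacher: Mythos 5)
Your proof is correct and is exactly the bookkeeping the paper has in mind: the corollary is stated there without proof as a ``simple restatement'' of Theorem \ref{hur}, and your partition of the ordered triples into the four types with multiplicities $6$, $3$, $1$ and $3\cdot\tfrac12$ supplies the verification the paper omits. (Only the closing remark about the weights $\tfrac13$ and $\tfrac12$ being ``cancelled by the $3$, respectively $2$, cyclic arrangements'' is imprecise---$(e,e,e)$ has a single arrangement and in both degenerate cases the cancellation comes from the global factor $3$---but your earlier accounting already treats these cases correctly, so nothing is affected.)
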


\subsection{Sums of three squares}
Theorem \ref{hur} may also be used to connect $H(|D|)$ to $r_3(n)$, the number of ways to write $n$ as a sum of squares of three integers. This link goes back to Gauss's work in the {\em Disquisitiones} from 1801.
Krammer's identity \e{krm} implies, for $n \gqs 1$,
\begin{equation} \la{crt}
  (-1)^{n+1} r_3(n) = 4 \sum_{\substack{ e, f, g >0 \\ ef+eg+fg=n}} (-1)^{e+f+g} + 6 \sum_{\substack{ e,f >0 \\ ef=n}} (-1)^{e+f}.
\end{equation}
It is mentioned at the end of \cite{kr93}, that \e{crt} may be used to give a proof of Theorem \ref{gat} below. This proof is supplied in \cite[Sects. 5, 6, 7]{mo17} along with another proof of \e{crt} in \cite[Sect. 3]{mo17}, though Mortenson was unaware of Krammer's paper. The identity \e{crt} is also stated in Crandall \cite[Eq. (6.2)]{cra}. Both \cite{kr93} and  \cite{cra} refer to an earlier $q$-series identity of Andrews as the inspiration for \e{crt}.

 An expanded version of \e{crt} is
\begin{equation} \la{cran}
  (-1)^{n+1} \frac{r_3(n)}{12}= 2 \sum_{\substack{ e>f>g >0 \\ ef+eg+fg=n}} (-1)^{e+f+g} + \sum_{\substack{ e,f >0, \ e\neq f \\ e^2+2ef=n}} (-1)^f
  +  \frac 13 \sum_{\substack{ e >0 \\ 3 e^2=n}} (-1)^e + \frac 12 \sum_{\substack{ e,f >0 \\ ef=n}} (-1)^{e+f}.
\end{equation}
By matching up the pieces on the right sides of \e{hhd} and \e{cran} we can link $ r_3(n)$ with $H(|D|)$
 and  obtain a  proof of the next formula that is 
 simpler than the proof in \cite{mo17}.

\begin{theorem} \la{gat}
For $n> 0$,
 \begin{equation} \la{cas}
  r_3(n)=   \begin{cases}
                         12 H(4n), & \mbox{if $n \equiv 1, 2 \bmod 4$}  \\
                         12(H(4n)-2 H(n)), & \mbox{if $n \equiv 3 \bmod 8$} \\
                         r_3(n/4), & \mbox{if $n \equiv 0 \bmod 4$} \\
                         0 , & \mbox{if $n \equiv 7 \bmod 8$}.
                       \end{cases}
\end{equation}
\end{theorem}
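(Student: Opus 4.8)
The plan is to obtain \e{cas} by comparing the expanded Krammer identity \e{cran} term by term with the Hurwitz formula \e{hhd}, after pinning down the signs $(-1)^{e+f+g}$, $(-1)^f$, $(-1)^e$, $(-1)^{e+f}$ occurring in \e{cran} via a parity analysis of the solution tuples.

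Two of the four cases are elementary and use no topographs. If $n\equiv 0\bmod 4$, then since an integer square is $\equiv 0$ or $1\bmod 4$, any representation $a^2+b^2+c^2=n$ forces $a,b,c$ even, and halving gives a bijection with the representations of $n/4$; hence $r_3(n)=r_3(n/4)$. If $n\equiv 7\bmod 8$, then squares are $0,1,4\bmod 8$ and no three of these add to $7\bmod 8$, so $r_3(n)=0$.

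For $n\not\equiv 0\bmod 4$ the prefactor $(-1)^{n+1}$ in \e{cran} equals $+1$ when $n$ is odd and $-1$ when $n\equiv 2\bmod 4$. I would first record these parity facts, each a one-line computation: a triple with $ef+eg+fg=n$ can have all three entries odd only when $n\equiv 3\bmod 4$, has exactly one even entry in the remaining odd case, and has exactly two even entries when $n\equiv 2\bmod 4$; in $e^2+2ef=n$ with $n$ odd one has $e$ odd, with $f$ even iff $n\equiv 1\bmod 4$, and the equation has no solution when $n\equiv 2\bmod 4$; $3e^2=n$ forces $e$ odd when $n$ is odd and has no solution when $n\equiv 2\bmod 4$; and $(-1)^{e+f}=(-1)^{n+1}$ whenever $ef=n$. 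Multiplying \e{cran} through by $(-1)^{n+1}$, the signs collapse to: $+1$ on every triple except $-1$ on all-odd triples; $+1$ on every pair $e^2+2ef=n$ except $-1$ on those with both entries odd; $-1$ on every solution of $3e^2=n$; and $+1$ on every divisor pair $ef=n$. Writing $T,P,C,D$ for the four (weighted) unsigned sums on the right of \e{hhd} — so that $H(|D|)=2T+P+C+D$ with the parameter there equal to $n$ — and $T_{\mathrm{odd}},P_{\mathrm{odd}}$ for the restrictions of $T,P$ to tuples of all-odd entries, this yields
\[
  \frac{r_3(n)}{12}\;=\;2\bigl(T-2T_{\mathrm{odd}}\bigr)+\bigl(P-2P_{\mathrm{odd}}\bigr)-C+D .
\]
Now feed \e{hhd} back in. For $|D|=4n$ (even discriminant, so no parity restriction) one has $H(4n)=2T+P+C+D$; for $n\equiv 3\bmod 4$ and $|D|=n$ (odd discriminant, whose parity restriction keeps precisely the all-odd tuples and drops the divisor sum) one has $H(n)=2T_{\mathrm{odd}}+P_{\mathrm{odd}}+C$. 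When $n\equiv 1$ or $2\bmod 4$ the parity facts force $T_{\mathrm{odd}}=P_{\mathrm{odd}}=C=0$, so the display gives $r_3(n)=12\,H(4n)$; when $n\equiv 3\bmod 8$ the display is exactly $12\bigl(H(4n)-2H(n)\bigr)$, since $(2T+P+C+D)-2(2T_{\mathrm{odd}}+P_{\mathrm{odd}}+C)$ equals its right-hand side. The main obstacle is the parity bookkeeping in the two middle sums and the weighted term $C$: one must settle exactly which residues of $n$ admit solutions of $e^2+2ef=n$ and $3e^2=n$ and with what parities, and — for $n\equiv 3\bmod 8$ — check that the weights $\tfrac13$ and $\tfrac12$ attached in \e{hhd} (equivalently, in the definition of the Hurwitz number $H$) to the symmetric configurations $[e,e,e]$ and $[e,0,e]$ are reproduced on the nose by the corresponding weighted terms of \e{cran}, so that the identity holds with the Hurwitz normalization and not merely with $h^*$. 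Everything else is a routine matching of like terms.
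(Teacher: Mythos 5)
Your proof is correct and follows essentially the same route as the paper's: the cases $n\equiv 0\bmod 4$ and $n\equiv 7\bmod 8$ via squares mod $8$, and the cases $n\equiv 1,2\bmod 4$ and $n\equiv 3\bmod 8$ by matching the expanded Krammer identity \eqref{cran} term by term against Theorem \ref{hur}'s expressions for $H(4n)$ and $H(n)$ through a parity analysis of the solution tuples. Your write-up simply makes explicit the parity bookkeeping that the paper compresses into ``checking the parity possibilities,'' and all of your recorded parity facts and the resulting identity $r_3(n)/12=2(T-2T_{\mathrm{odd}})+(P-2P_{\mathrm{odd}})-C+D$ check out.
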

\begin{proof}
As squares are $\equiv 0, 1, 4 \bmod 8$ we easily see no solutions when $n \equiv 7 \bmod 8$ and only even square solutions when $n \equiv 0 \bmod 4$.
 Checking the parity possibilities when $n \equiv 1, 2 \bmod 4$ confirms that \e{hhd} and \e{cran} match, giving $r_3(n) = 12 H(4n)$. The final case has $n \equiv 3 \bmod 8$. Then by Theorem \ref{hur}, $H(4n)$ equals the right side of \e{hhd}
where
 the first sum  can have $e, f, g$ all odd or exactly one even. In the second and third sums  $e, f$ must be odd. Theorem \ref{hur} also implies
 \begin{equation*}
  H(n) \doteq  2 \sum_{\substack{ e>f>g >0 \\ ef+eg+fg=n}} 1 + \sum_{\substack{ e,f >0, \ e\neq f \\ e^2+2ef=n}} 1
  +  \frac 13 \sum_{\substack{ e >0 \\ 3 e^2=n}} 1,
\end{equation*}
with $e, f, g$  odd in all sums. Therefore
\begin{equation*}
  H(4n) - 2H(n)=  2 \sum_{\substack{ e>f>g >0 \\
  \text{one even}\\ ef+eg+fg=n }} 1 - 2 \sum_{\substack{ e>f>g >0\\
  \text{all odd} \\ ef+eg+fg=n }} 1 - \sum_{\substack{ e,f >0, \ e\neq f\\ \text{all odd} \\ e^2+2ef=n }} 1
  -  \frac 13 \sum_{\substack{ e >0 \\ 3 e^2=n}} 1+ \frac 12 \sum_{\substack{ e,f >0 \\ ef=n}} 1,
\end{equation*}
agreeing with the right side of \e{cran}.
\end{proof}

 See also \cite[p. 90]{hiza} for more information on Theorem \ref{gat} and the connection to weight $3/2$ modular forms, as well as further references in \cite{mo17}.

\subsection{Primitive sums of three squares}

We have already stated Gauss's often-quoted formula \e{rr3}, relating $h(D)$ to  primitive sums of three squares. Grosswald gives a brief sketch of Gauss's elaborate proof in \cite[pp. 59--60]{gro85}, based on ternary quadratic forms, and there do not seem to be any recent demonstrations of \e{rr3} in the literature. We show next that it follows quickly from Theorem \ref{class} and \e{crt}.

Let $s(n)$ denote $(-1)^{n+1}$ times the right side of \e{crt}. Also let $s'(n)$ be a primitive version:
\begin{equation} \la{crt2}
   s'(n) = 4(-1)^{n+1} \sum_{\substack{ e, f, g >0 \\ ef+eg+fg=n \\ \gcd(e,f,g)=1}} (-1)^{e+f+g} + 6 (-1)^{n+1}\sum_{\substack{ e,f >0 \\ ef=n  \\ \gcd(e,f)=1}} (-1)^{e+f}.
\end{equation}

\begin{lemma} \la{mma}
For all $n\gqs 1$ with $4 \nmid n$ we have $r_3'(n)=s'(n)$.
\end{lemma}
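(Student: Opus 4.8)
The plan is to reduce this primitive identity to the already–established non-primitive version \e{crt}, which in the notation of the paragraph before the lemma says exactly $s(n)=r_3(n)$, and then to sieve out the primitive part by an induction on $n$.

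First I would record the elementary gcd–decomposition
\[
  r_3(m)=\sum_{d^2\mid m} r_3'(m/d^2)\qquad(m\gqs 1),
\]
which holds for every $m$: a representation $m=x^2+y^2+z^2$ has a well–defined $d=\gcd(|x|,|y|,|z|)\gqs 1$, and $(x/d,y/d,z/d)$ is then a primitive representation of $m/d^2$, this correspondence being a bijection. The serious step is the analogous decomposition for $s$, which I claim holds \emph{only} when $4\nmid m$:
\[
  s(m)=\sum_{d^2\mid m} s'(m/d^2)\qquad(4\nmid m).
\]
To prove it, expand $s(m)$ via \e{crt} and group the triples $(e,f,g)$ with $ef+eg+fg=m$ according to $d:=\gcd(e,f,g)$; writing $e=de'$, $f=df'$, $g=dg'$ the constraint becomes $d^2\mid m$ and $e'f'+e'g'+f'g'=m/d^2$ with $\gcd(e',f',g')=1$. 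Now the hypothesis $4\nmid m$ forces every such $d$ to be odd (otherwise $4\mid d^2\mid m$), so $(-1)^{e+f+g}=(-1)^{d(e'+f'+g')}=(-1)^{e'+f'+g'}$ and, since $d^2$ is odd, $(-1)^{m+1}=(-1)^{(m/d^2)+1}$. Grouping the divisor sum $\sum_{ef=m}(-1)^{e+f}$ the same way, by $d=\gcd(e,f)$, matching each summand against the definition \e{crt2} of $s'$ yields the displayed identity.

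Finally I would run a strong induction on $n$ over the integers with $4\nmid n$. Fix such an $n$ and assume $r_3'(m)=s'(m)$ for all $m<n$ with $4\nmid m$. Every $d>1$ with $d^2\mid n$ is odd and satisfies $n/d^2<n$ with $4\nmid n/d^2$ (as $n/d^2\mid n$), so the induction hypothesis gives $r_3'(n/d^2)=s'(n/d^2)$ for all such $d$. Subtracting the two decompositions above and invoking $r_3(n)=s(n)$ from \e{crt}, every term with $d>1$ cancels and we are left with $r_3'(n)=s'(n)$. (The case $n=1$ is automatic, since then $d=1$ is the only square divisor, so no separate base case is needed.)

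The only genuinely delicate point is the sign bookkeeping in the middle step: everything hinges on the observation that $4\nmid n$ is precisely the condition keeping all the relevant $d$ odd, so that the parity signs $(-1)^{e+f+g}$ and $(-1)^{n+1}$ transfer unchanged to the primitive variables — for $4\mid n$ even square divisors appear, the signs collapse to $1$, and the clean decomposition of $s$ fails, which is exactly why the lemma is stated with the restriction. Away from that observation the argument is a routine Möbius-type sieve over square divisors.
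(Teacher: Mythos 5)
Your proof is correct and takes essentially the same route as the paper's: the key step in both is the decomposition $s(m)=\sum_{d^2\mid m}s'(m/d^2)$ for $4\nmid m$, obtained by grouping the triples (and divisor pairs) by their gcd $d$ and using that $4\nmid m$ forces $d$ odd, so the signs $(-1)^{e+f+g}$ and the prefactor $(-1)^{m+1}$ transfer unchanged to the primitive variables. The only difference is that the paper inverts this relation, together with $r_3(m)=\sum_{d^2\mid m}r_3'(m/d^2)$, by an explicit M\"obius inversion over square divisors, whereas you carry out the same cancellation by strong induction on $n$; the two are interchangeable.
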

\begin{proof}
Easily, $r_3(n)=\sum_{k^2 | n} r_3'(n/k^2)$ for any $n\gqs 1$. It follows by M\"obius inversion, as in \cite[Eq. (1.3)]{ch07}, that
\begin{equation} \la{tx}
  r_3'(n)=\sum_{k^2 | n} \mu(k) \cdot  r_3(n/k^2).
\end{equation}
Write $s'(n)=\alpha'(n)+\beta'(n)$ with $\alpha'(n)$ the first sum in \e{crt2} and $\beta'(n)$ the second.
Similarly write $s(n)=\alpha(n)+\beta(n)$ where $\alpha(n)$ and $\beta(n)$ omit the $\gcd$ conditions of $\alpha'(n)$  and $\beta'(n)$.
Suppose we have a summand of $\alpha(n)$ with $\gcd(e,f,g)=k$. Then, factoring out $k$,
\begin{equation*}
  (-1)^{e+f+g}   = (-1)^{k(e'+f'+g')}
 = (-1)^{e'+f'+g'}
\end{equation*}
when $k$ is odd. Hence
$\alpha(n)=\sum_{k^2 | n} \alpha'(n/k^2)$ for $n\gqs 1$ when $4 \nmid n$. So for these $n$ values
\begin{equation*}
  \alpha'(n)=\sum_{k^2 | n} \mu(k)  \cdot \alpha(n/k^2).
\end{equation*}
The same is true for $\beta(n)$ and $\beta'(n)$ and hence, for $4 \nmid n$,
\begin{equation} \la{tx2}
 s'(n)=\sum_{k^2 | n} \mu(k)  \cdot s(n/k^2).
\end{equation}
With \e{tx}, \e{tx2} the proof is complete since $r_3(n)$ and $s(n)$ agree.
\end{proof}

\begin{theorem} \la{garx}
  For $n>3$,
\begin{equation} \la{gar}
  r'_3(n)=   \begin{cases}
                         12 h(-4n), & \mbox{if $n \equiv 1, 2 \bmod 4$}  \\
                         12\big(h(-4n)- h(-n)\big), & \mbox{if $n \equiv 3 \bmod 8$}\\
                            0, & \mbox{if $n \equiv 0, 4, 7 \bmod 8$}.
                       \end{cases}
\end{equation}
\end{theorem}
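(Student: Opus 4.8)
The plan is to obtain \e{gar} as the primitive refinement of Theorem \ref{gat}, combining Lemma \ref{mma} with the primitive well count of Theorem \ref{class}, exactly paralleling the way \e{hhd} and \e{cran} were matched.

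First the two vanishing cases. If $4\mid n$, that is $n\equiv 0$ or $4\bmod 8$, then in any $n=x^2+y^2+z^2$ the congruence $x^2+y^2+z^2\equiv 0\bmod 4$ together with $x^2\equiv 0,1\bmod 4$ forces $x,y,z$ all even, so no representation is primitive and $r_3'(n)=0$. If $n\equiv 7\bmod 8$ then $n$ is odd and every $n/k^2$ with $k^2\mid n$ is again $\equiv 7\bmod 8$, so all terms on the right of the M\"obius inversion \e{tx} vanish and $r_3'(n)=0$.

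For $n\equiv 1,2\bmod 4$ or $n\equiv 3\bmod 8$ we have $4\nmid n$, so Lemma \ref{mma} gives $r_3'(n)=s'(n)$. I would reorganise $s'(n)$ into unordered form, splitting the ordered triples of \e{crt2} according to how many of $e,f,g$ coincide (weights $6$, $3$, $1$) just as \e{crt} was expanded to \e{cran}, and discarding the all-equal term ($3e^2=n$ with $e=1$) and the perfect-square divisor term ($e=f=1$) since both need $n\lqs 3$. This yields
\begin{equation*}
  (-1)^{n+1}\frac{r_3'(n)}{12}
   = 2\sum_{\substack{ e>f>g >0,\ \gcd(e,f,g)=1 \\ ef+eg+fg=n}}(-1)^{e+f+g}
   + \sum_{\substack{ e,f>0,\ e\neq f,\ \gcd(e,f)=1 \\ e^2+2ef=n}}(-1)^{f}
   + \frac12\sum_{\substack{ e,f>0,\ \gcd(e,f)=1 \\ ef=n}}(-1)^{e+f}.
\end{equation*}
The sign analysis is the one behind the proof of Theorem \ref{gat}. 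A primitive summand above is never all even; when $n$ is even it has exactly two even entries, giving $(-1)^{n+1}(-1)^{e+f+g}=+1$; when $n$ is odd it is either all odd, which (from $e^2+f^2+g^2\equiv 3\bmod 8$ and $(e+f+g)^2\equiv 1\bmod 8$, and directly for the pair sum) forces $n\equiv 3\bmod 4$ and gives $(-1)^{n+1}(-1)^{e+f+g}=-1$, or has exactly one even entry, giving $+1$. So each primitive summand contributes $+1$ to $r_3'(n)/12$ if it is not all odd and $-1$ if it is all odd. Comparing with \e{hed}: the sums defining $h(-4n)$ (with $D=-4n$, so $n=|D|/4$) run over the not-all-odd triples and pairs together with the coprime divisor pairs $ef=n$, while those defining $h(-n)$ (with $D=-n$) run over the all-odd triples and pairs and carry no divisor term. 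Hence for $n\equiv 1,2\bmod 4$, where no all-odd solutions occur, the displayed identity reads $r_3'(n)=12\,h(-4n)$, and for $n\equiv 3\bmod 8$ it reads $r_3'(n)=12\bigl(h(-4n)-h(-n)\bigr)$; the hypothesis $n>3$ guarantees $-4n<-4$ and $-n<-4$ so that Theorem \ref{class} applies. This is \e{gar}.

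The step I expect to need genuine care, rather than routine computation, is making the term-by-term match between the reorganised $s'(n)$ and \e{hed} airtight: tracking the coefficient $2$ on the ordered-triple sum against the coefficient $1$ on the two-equal sum and the $\tfrac12$ on the divisor sum, confirming that the all-equal and perfect-square edge cases are exactly the excluded $n=3$ and $n=1$, and checking that the modulus conditions in Theorem \ref{class} (``all odd if $D$ odd, not all odd if $D$ even, divisor sum only if $D$ even'') dovetail with the odd/even split forced on primitive solutions by the parity of $n$, separately for each of the three sums and not just for the generic triple sum.
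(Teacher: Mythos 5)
Your proposal is correct and follows essentially the same route as the paper: dispose of the $n\equiv 0,4,7 \bmod 8$ cases by the squares-mod-$8$ observation, invoke Lemma \ref{mma} to replace $r_3'(n)$ by $s'(n)$, rewrite $s'(n)$ in unordered form (your display is the paper's \e{pp}, with the $e\neq f$ and $\tfrac12\sum_{e,f}$ versus $\sum_{e>f}$ variants coinciding for $n>3$), and match it term by term against \e{hed}. The parity and sign analysis you spell out is exactly the verification the paper compresses into ``as in the proof of Theorem \ref{gat} it is a simple matter to verify.''
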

\begin{proof}
The cases $n \equiv 0, 4, 7 \bmod 8$ are easily dealt with, as at the start of the proof of Theorem \ref{gat}.
For the remaining $n$ values, by Lemma \ref{mma},
\begin{equation} \la{pp}
  (-1)^{n+1} \frac{r'_3(n)}{12}  \doteq  2 \sum_{\substack{ e>f>g >0 \\ ef+eg+fg=n}} (-1)^{e+f+g} + \sum_{\substack{ e,f >0,  \\ e^2+2ef=n}} (-1)^f
  +  \sum_{\substack{ e >f >0 \\ ef=n}} (-1)^{e+f},
\end{equation}
for  indices with $\gcd = 1$. As in the proof of Theorem \ref{gat} it is a simple matter to verify that \e{pp} agrees  with the expressions for $ h(-4n)$ or $h(-4n)- h(-n)$ given by \e{hed}.
\end{proof}

The $n \equiv 3 \bmod 8$ cases of Theorems \ref{gat} and \ref{garx} are simplified  with the next lemma, giving the usual quoted form as in \e{rr3}.

\begin{lemma}
For  integers $n \equiv 3 \bmod 8$,
\begin{equation} \la{stan}
  h(-4n)=3h(-n) \quad \text{if \quad $n>3$} \qquad \text{and} \qquad H(4n)=4H(n) \quad \text{if \quad $n>0$}.
\end{equation}
\end{lemma}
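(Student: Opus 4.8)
The plan is to deduce both identities from the explicit well-counting formulas of this section, after reducing them to a single combinatorial fact about the equation $ef+fg+ge=n$.

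First I would record that, as $n\equiv 3\pmod 8$ is odd, we have $-n\equiv 1\pmod 4$ and $-4n\equiv 0\pmod 4$, so for $n>3$ (resp.\ $n>0$) all four of $h(-n),h(-4n)$ (resp.\ $H(n),H(4n)$) are given by the well-counting formulas \e{hed} (resp.\ \e{hhd}). Because $n$ is odd, every positive-integer solution of $ef+fg+ge=n$ has $e,f,g$ either all odd or with exactly one even entry; hence the odd-discriminant instance (for $-n$, resp.\ $n$) counts only all-odd triples and has no divisor-pair term, while the even-discriminant instance (for $-4n$, resp.\ $4n$) counts triples with exactly one even entry and in addition has the edge-well term with $ef=n$. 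After a routine bookkeeping of the symmetry weights $1/3$ on $e=f=g$ and $1/2$ on $[a,0,a]$, and passing everywhere to ordered triples, both claimed identities reduce to
\begin{equation*}
  N_{\mathrm{mix}}(n)+\tfrac32\,\sigma(n)=3\,N_{\mathrm{odd}}(n),
\end{equation*}
where $N_{\mathrm{odd}}(n)$ and $N_{\mathrm{mix}}(n)$ count the ordered positive-integer solutions of $ef+fg+ge=n$ that are all odd, resp.\ have exactly one even entry, and $\sigma(n)$ is the number of divisors of $n$: once with no $\gcd$ restriction (this yields $H(4n)=4H(n)$, via the corollary $\Upsilon(n)=3H(4n)$, $\Upsilon_{odd}(n)=3H(n)$), and once with $\gcd(e,f,g)=1$ imposed throughout and $\sigma(n)$ replaced by the number of coprime factorizations of $n$ (this yields $h(-4n)=3h(-n)$).

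To prove that identity I would invoke Krammer's identity in the form \e{crt}: for $n\equiv 3\pmod 8$ we have $(-1)^{n+1}=1$ and each solution of $ef+fg+ge=n$ is all-odd or has exactly one even entry, so \e{crt} reads $r_3(n)=4\big(N_{\mathrm{mix}}(n)-N_{\mathrm{odd}}(n)\big)+6\,\sigma(n)$, while Theorem \ref{hur} gives $24H(n)=8\,N_{\mathrm{odd}}(n)$. Thus the displayed identity is exactly equivalent to the classical Gauss evaluation $r_3(n)=24H(n)$ for $n\equiv 3\pmod 8$ (and its primitive companion $r'_3(n)=24h(-n)$, which follows from it by the M\"obius inversion already used in Lemma \ref{mma}). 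Granting this, the lemma follows. Equivalently — and most cheaply — one combines the already-proved Theorems \ref{gat} and \ref{garx}, namely $r_3(n)=12\big(H(4n)-2H(n)\big)$ and $r'_3(n)=12\big(h(-4n)-h(-n)\big)$, with $r_3(n)=24H(n)$ and $r'_3(n)=24h(-n)$, and reads off $H(4n)=4H(n)$ and $h(-4n)=3h(-n)$ in one line each.

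I expect the main obstacle to be making the argument self-contained: the input $r_3(n)=24H(n)$ for $n\equiv 3\pmod 8$ is essentially the substantive half of the three-squares theorem, so if one wishes to avoid citing it one must prove $N_{\mathrm{mix}}(n)+\tfrac32\sigma(n)=3N_{\mathrm{odd}}(n)$ directly — for instance by an explicit correspondence on the solutions of $ef+fg+ge=n$ organized by the $2$-adic valuations of $e,f,g$, the factor $3$ reflecting that $2$ is inert in $\Q(\sqrt{-n})$ with $|\mathbb{F}_4^\times/\mathbb{F}_2^\times|=3$. The delicate points there are the triples with a repeated or vanishing coordinate, but these are exactly the cases that Theorems \ref{class} and \ref{hur} already isolate with their special terms, so careful bookkeeping should close the argument.
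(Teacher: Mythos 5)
Your reduction of both identities to the single counting statement $N_{\mathrm{mix}}(n)+\tfrac32\sigma(n)=3N_{\mathrm{odd}}(n)$ is correct as bookkeeping, but the way you propose to establish that statement leaves a genuine gap: it is circular relative to the logical structure of this section. You want to feed in $r_3(n)=24H(n)$ and $r_3'(n)=24h(-n)$ for $n\equiv 3\bmod 8$, but in this paper those evaluations are \emph{outputs} of the present lemma, not inputs: Theorems \ref{gat} and \ref{garx} only prove $r_3(n)=12\bigl(H(4n)-2H(n)\bigr)$ and $r_3'(n)=12\bigl(h(-4n)-h(-n)\bigr)$, and it is precisely the lemma under discussion that upgrades these to the ``usual quoted form'' $24H(n)$ and $24h(-n)$ (this is how \e{rr3} from the introduction is obtained). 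So your ``cheapest'' route --- combining Theorems \ref{gat} and \ref{garx} with $r_3(n)=24H(n)$ --- assumes what the lemma is needed to prove, and your main route replaces the lemma by an exactly equivalent statement: as you note yourself, the counting identity is equivalent to $r_3(n)=24H(n)$. Importing Gauss's evaluation as an external classical fact would also be far heavier than what is required, since it is the substantive half of the three-squares theorem, whereas the lemma is an elementary class-number relation. The fallback you sketch --- a direct correspondence proving $N_{\mathrm{mix}}+\tfrac32\sigma=3N_{\mathrm{odd}}$ via $2$-adic valuations --- is not carried out, and because it is equivalent to $r_3(n)=24H(n)$ one should not expect the only delicate points to be the degenerate triples.

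The paper's own proof is short and non-circular: it quotes the standard relation \e{er} between $h(mf^2)$ and $h(m)$ for a fundamental discriminant $m$, a consequence of Dirichlet's class number formula. Writing $-n=mf^2$ with $m$ squarefree, one has $f$ odd and $m\equiv 5\bmod 8$, so $-4n=m(2f)^2$ and \e{er} gives $h(-4n)=2\bigl(1-\tfrac12(\tfrac m2)\bigr)h(-n)=3h(-n)$ because the Kronecker symbol $(\tfrac m2)$ equals $-1$; the Hurwitz case is analogous. To salvage your approach you would need an independent proof of the triple-counting identity, which is not supplied.
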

\begin{proof}
For the  first equality we quote a result from \cite[Sect. 5.3]{coh93} that is related to Dirichlet's class number formula. Set $w(D)$ to be $2$, $4$, $6$ as $D<-4$, $D=-4$ and $D=-3$, respectively. Any non-square $D \equiv 0, 1 \bmod 4$ can be factored uniquely as $m f^2$ with $m$ a fundamental discriminant. Then
\begin{equation} \la{er}
  \frac{h(D)}{w(D)}= \frac{h(m)}{w(m)} f \prod_{p | f} \left(1-\frac 1p  \left(\frac mp  \right) \right) \qquad (D<0),
\end{equation}
for  $p$ prime and $(\frac mp)$ the Kronecker symbol. Let $m$ be the squarefree part of $-n$ so that $-n = m f^2$. Then $f$ is odd and $m \equiv 5 \bmod 8$. For $D=-n$ and $D=-4n$ we have $D=m f^2$ and $D=m (2f)^2$, respectively, with $m$ fundamental. Hence  \e{er} implies
\begin{equation*}
  h(-4n)= 2\left(1-\frac 12  \left(\frac m2  \right) \right)h(-n) = 3h(-n).
\end{equation*}
The proof of the second equality in \e{stan} is similar, with the details given in \cite[Lemma 2.5]{mo17}.
\end{proof}

\section{Reduction of  forms} \la{rdf}

In this section we show how continued fractions of roots may be used to efficiently and explicitly reduce each form  to a canonical form, or cycle of forms, in its equivalence class. The connection between reduction and continued fractions goes back to Dirichlet in \cite{dir}; this  is also discussed in \cite{fr05,smi18}. Our treatment here is new,  unifying the cases of $D$ positive, negative and zero. The basic idea is simple: on a topograph, reduction  means descending to a river, lake or well. As usual, forms and topographs are not assumed to be primitive. 

\begin{adef} \la{zer}
{\rm For $q=[a,b,c]$ with $a \neq 0$, its {\em first root $\ze_q$} and {\em second root $\ze'_q$} are given by 
\begin{equation*} 
  \ze_q := \frac{-b+\sqrt{D}}{2a}, \qquad \ze'_q :=\frac{-b-\sqrt{D}}{2a} \qquad (a\neq 0),
\end{equation*}
as already seen in \e{zeeq}. For $a=0$ (so that the discriminant $D$ is necessarily a square) and $q\neq [0,0,0]$, set
 \begin{equation} \la{a=000}
  \ze_q=\infty, \quad \ze_q'=-c/b \quad \text{if} \quad b \lqs 0 \qquad \text{and} \qquad
  \ze_q=-c/b, \quad \ze_q'=\infty \quad \text{if} \quad b\gqs 0,
\end{equation}
allowing the value $1/0=\infty$. Clearly $q(\ze_q,1)=0$ when $\ze_q \neq \infty$ and $q(\ze'_q,1)=0$ when $\ze'_q \neq \infty$.
}
\end{adef}

Put $(\begin{smallmatrix}r & s\\ t & u \end{smallmatrix})\infty = r/t$.
When $q_0=q|M$ for $M \in \SL(2,\Z)$ and $q\neq [0,0,0]$, the roots of these forms are related in all cases by
\begin{equation}\label{roots}
  \ze_{q_0}=M^{-1} \ze_q, \qquad  \ze'_{q_0}=M^{-1} \ze'_q.
\end{equation}
This follows by verifying it for the generators $T$ and $S$.

For $D>0$ the roots are in $\R \cup \{\infty\}$ and we see that each form $q$ corresponds to a directed hyperbolic geodesic in $\H$ from $\ze_q$ to $\ze'_q$. The image of any of these geodesics (a semicircle centered on $\R$ or a vertical line) under the action of $\SL(2,\Z)$ is another such geodesic. For $D<0$ the roots are conjugate non-reals, each form $q$ just corresponds to the point $z_q$ in $\H$, and $\SL(2,\Z)$ acts on these points. See Figure \ref{paf}; we will exploit this geometry in Section \ref{di}.

\begin{adef}
{\rm For a quadratic form $q\neq [0,0,0]$, the {\em path associated with  its root $\ze_q \in \C \cup \{\infty\}$} is defined as follows.  Let $a_0$, $a_1$, $a_2, \dots$ be the general continued fraction coefficients of $\ze_q$, as described in Section \ref{co}. Starting at configuration $q$  on the topograph,  make an alternating sequence of  turns, first going left $a_0$ times, then right $a_1$ times, and so on.  {\em Matrices associated with $\ze_q$ and this path} are, taking  coefficients up to $a_r$,
\begin{equation} \la{assoc}
  M=L^{a_0}R^{a_1}L^{a_2} \cdots R^{a_r},
\end{equation}
(ending with $L^{a_r}$ if $r$ is even). If $\ze_q = \infty$ or $0$ then we remain at $q$ and $M=I$. The paths and matrices for the second root $\ze'_q$ are defined analogously.
}
\end{adef}

We will need the next technical lemma.

\begin{lemma} \la{cont}
Suppose you stop at a form $q_1$, partway along the path associated with the root $\ze_q$ of $q$. If you then continue along the path associated with the root $\ze_{q_1}$ of $q_1$, it will be identical to the continuation of the original path from this point. Similarly for the real second roots $\ze'_q$ and $\ze'_{q_1}$.
\end{lemma}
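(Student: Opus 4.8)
The plan is to reduce everything to a statement about continued fractions of the roots together with the relation \eqref{roots} between roots of equivalent forms. First I would set up notation: write $\ze_q=\langle a_0,a_1,a_2,\dots\rangle$ via the general continued fraction algorithm, and let $q_1=q|M_1$ where $M_1=L^{a_0}R^{a_1}\cdots$ is the partial product of the path corresponding to the first $k$ coefficients (ending appropriately with an $L$- or $R$-power depending on parity of the number of steps taken). The key observation is that $M_1$ is built from exactly the matrices $\begin{pmatrix} h_j & h_{j-1}\\ k_j & k_{j-1}\end{pmatrix}$ (up to swapping columns with parity, as in \eqref{modd}, \eqref{meven}) where $h_j/k_j$ are the convergents of $\ze_q$.

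Next I would compute $\ze_{q_1}$ using \eqref{roots}: $\ze_{q_1}=M_1^{-1}\ze_q$. The point is that $M_1^{-1}$ applied to $\langle a_0,a_1,\dots\rangle$ strips off exactly the coefficients $a_0,\dots$ that $M_1$ encodes, leaving $\langle a_k, a_{k+1},\dots\rangle$ — this is the standard fact that $L^{-a_0}$ applied to $\langle a_0,a_1,\dots\rangle$ gives $\langle 0,a_1,a_2,\dots\rangle=1/\langle a_1,a_2,\dots\rangle$, and applying $R^{-a_1}=JT^{-a_1}J$ (conjugate by $J$) then strips $a_1$, etc. So $\ze_{q_1}$ has continued fraction $\langle a_k,a_{k+1},\dots\rangle$ (the ``tail'' of $\ze_q$'s expansion from where we stopped). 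One must be slightly careful: ``partway along the path'' could mean stopping in the middle of a run of $L$'s or $R$'s, so if we have taken, say, $a_0$ left turns and then $j<a_1$ right turns, then $\ze_{q_1}=\langle a_1-j, a_2,a_3,\dots\rangle$ and the sign/orientation bookkeeping (whether the next turn from $q_1$ is left or right) must be checked so that ``continue along the path associated with $\ze_{q_1}$'' genuinely re-traces the remaining $L^{a_1-j}R^{a_2}\cdots$. Then by induction/uniqueness of the continued fraction expansion, the path associated with $\ze_{q_1}$ from $q_1$ is exactly the remaining portion of the original path, so the concatenation coincides with the original. The second-root statement is identical by the parallel definition of $\ze'_q$, using the $\ze'_{q_0}=M^{-1}\ze'_q$ half of \eqref{roots}; the hypothesis that $\ze'_q$ is real is exactly what guarantees its associated path is defined in the same way (via the real continued fraction algorithm).

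The main obstacle I expect is the parity and orientation bookkeeping: the path is an alternating sequence of left and right runs, so when we stop in the middle of a run we must verify that the ``first root path'' of $q_1$ does not, e.g., start with a left run of length $0$ that then forces the first genuine run to be a right run, and that this matches the leftover of the original run. Concretely, one needs: if $q_1$ is reached after an even number of completed runs plus $j$ steps into the next ($L$-)run, then $\ze_{q_1}$'s expansion begins with $a$-coefficient equal to the number of remaining steps in that run, and the algorithm's ``first turn is left'' convention aligns. This is a finite case-check (stopped at a vertex versus stopped mid-run; $j=0$ versus $j\gqs 1$) that I would organize by invoking the recurrences \eqref{modd}--\eqref{meven} and the transformation $JT^mJ=R^m$ from the proof of Corollary \ref{lr}, so that $M_1^{-1}\ze_q$ can be evaluated symbolically as a tail of the continued fraction rather than by direct fractional-linear computation. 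Everything else — that the tail of a continued fraction expansion is the continued fraction expansion of the corresponding $\ze_{q_1}$, and that this is unique — is immediate from Theorem \ref{genc} and the uniqueness of the standard expansion, so no real calculation is needed beyond the bookkeeping.
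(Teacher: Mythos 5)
For real $\ze_q$ your argument is essentially the paper's own: both reduce the claim to the statement that the continued fraction of $\ze_{q_1}=M_1^{-1}\ze_q$ is the tail of the expansion of $\ze_q$, and both concede that the actual content is a finite case-check at the stopping point. The paper carries that check out explicitly for a single edge (four cases: $x\gqs 1$, $x<0$, $0<x\lqs 1/2$, $1/2<x<1$; the last two are precisely your worry about a leading run of length $0$, where one $R$-turn strips the two coefficients $a_0=0$, $a_1=1$ at once) and then inducts, which is just your ``strip one coefficient at a time'' organized edge-by-edge. So for $\ze_q,\ze'_q\in\R$ your plan is sound and matches the paper.

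The gap is the complex first root. For $D<0$ the path associated with $\ze_q\in\H$ is defined by the \emph{general} continued fraction algorithm, and there your appeal to ``the standard fact'' about tails and ``uniqueness of the standard expansion'' does not apply: the general algorithm carries the termination test $z-m-\delta\in\Fd'$ for $\delta=0$ or $1$, and this has to be tracked through the fractional-linear maps when you stop mid-run. For instance, after $a_0$ left turns and $j<a_1$ right turns you land at $\ze_{q_1}=R^{-j}(\ze_q-a_0)=w/(1-jw)$ with $w=\ze_q-a_0$, a point the original run of the algorithm never visits as an intermediate value, so you must verify directly that the algorithm applied to it neither terminates prematurely nor triggers a different $\delta$-adjustment. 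The paper explicitly calls this direct argument ``more elaborate'' and does not do it; instead it gets the complex case for free by a forward reference to Theorem \ref{negred} and Proposition \ref{towell}, which show that both the original path and the restarted path are reduction paths leading straight to the unique well, so on a tree they must coincide. To complete your proof you need either to do the $\Fd'$ bookkeeping honestly for the general algorithm, or to borrow that structural argument for the $\ze_q\in\H$ case.
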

\begin{proof}
Suppose first that $\ze_q = x \in \R$ and $x=\langle a_0,a_1,a_2,\dots \rangle$. We examine the result of stopping after the first edge on the path. For $x=0$ there is no path, but otherwise there are four possibilities:
\begin{enumerate}
  \item If $x\gqs 1$ then $a_0\gqs 1$. Let $q_1=q|L$ so that $z_{q_1}=L^{-1}z_q = x-1 = \langle a_0-1,a_1,a_2,\dots \rangle.$
  \item If $x<0$ then $a_0\lqs -1$. Let $q_1=q|L^{-1}$ so that $z_{q_1}=L z_q = x+1 = \langle a_0+1,a_1,a_2,\dots \rangle.$
  \item If $0<x\lqs 1/2$ then $a_0=0$ and $a_1 \gqs 2$. Let $q_1=q|R$ so that $$z_{q_1}=R^{-1} z_q = \frac x{1-x} = \langle 0,a_1-1,a_2,\dots \rangle.$$
  \item Lastly, if $1/2<x<1$ then $a_0=0$, $a_1 =1$ and $a_2 =\lfloor \frac x{1-x}\rfloor$. Let $q_1=q|R$ so that $z_{q_1} = \frac x{1-x} = \langle a_2,a_3,\dots \rangle.$
\end{enumerate}
So the new path always matches the original one. The argument is identical for the second root $\ze'_q$, and by induction we obtain the lemma for $\ze_q$ and $\ze'_q$ real. The corresponding argument for $\ze_q \in \H$, involving the general continued fraction, is more elaborate. However, we obtain the result in this case very simply with Theorem \ref{negred} and Proposition \ref{towell} below, since they show that all paths  lead directly to the well.
\end{proof}

\subsection{Negative discriminants} \la{ne}
We assume here that all forms and topographs have discriminant $D<0$. Forms $[a,b,c]$ have $a, c>0$, (are positive definite), and topographs have all region labels positive -- see Section \ref{+}. The following definition is standard.

\begin{adef}
{\rm A form $[a,b,c]$ of discriminant $D<0$ is {\em reduced} when $|b|\lqs a \lqs c$. If  $|b|= a$ or $a = c$, it is also required that $b\gqs 0$.
}
\end{adef}

\begin{algo}
{\rm For a form $q$ of discriminant $D<0$,  apply the general continued fraction algorithm to its first root $\ze_{q} \in \H$. This produces $(a_0,a_1, \dots, a_r)$ with an associated path and matrix $M$ as well as $z_0 \in \Fd \cup S\Fd$ with $M^{-1}\ze_{q} =z_0$ by Corollary \ref{lr}. Replace $M$ by $MS$ if necessary to ensure $z_0 $ is in the fundamental domain $\Fd$.  The output is $q_0=q|M$ and $M$.
}
\end{algo}

\begin{theorem} \label{negred}
The form produced by the above algorithm is  reduced, showing that every form of negative discriminant is explicitly equivalent to a reduced form. Two forms (not necessarily primitive) are equivalent if and only if their reduced forms are equal.
\end{theorem}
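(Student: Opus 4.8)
The plan is to establish three things in turn: that the algorithm's output is well-defined and reduced, that it therefore produces a reduced form in every equivalence class, and finally that this reduced form is a complete invariant of the class. For the first claim, I would start from Corollary \ref{lr}, which gives $z_0 \in \Fd \cup S\Fd$ with $M^{-1}\ze_q = z_0$. After the possible adjustment $M \mapsto MS$ we have $M^{-1}\ze_q = z_0 \in \Fd$. Now $z_0 = \ze_{q_0}$ by \e{roots}, so I need the standard dictionary between membership of $\ze_{q_0}$ in the fundamental domain $\Fd$ and the coefficient inequalities $|b_0| \lqs a_0 \lqs c_0$ (with the boundary sign conditions). Writing $q_0 = [a_0,b_0,c_0]$ with $a_0 > 0$, one has $\ze_{q_0} = (-b_0 + \sqrt{D})/(2a_0)$, so $\Re(\ze_{q_0}) = -b_0/(2a_0)$ and $|\ze_{q_0}|^2 = c_0/a_0$; the conditions $-1/2 \lqs \Re(\ze_{q_0}) < 1/2$ and $|\ze_{q_0}| \gqs 1$ translate exactly to $-a_0 < b_0 \lqs a_0$ and $a_0 \lqs c_0$, and the edge condition $|\ze_{q_0}| = 1 \implies \Re \lqs 0$ becomes $a_0 = c_0 \implies b_0 \gqs 0$. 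The remaining boundary case $|b_0| = a_0$ forcing $b_0 \gqs 0$ follows because $-a_0 < b_0$ already excludes $b_0 = -a_0$. This handles the "every form is equivalent to a reduced form" part, and the equivalence is explicit because $M$ is produced constructively.

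For the converse direction of the iff, I would argue that equivalent forms land on the same topograph (Section \ref{sl2}: $q_1 \sim q_2$ iff they appear on the same topograph), so it suffices to show a topograph of discriminant $D < 0$ contains a \emph{unique} reduced form. Here I would invoke the geometry developed in Section \ref{+}: such a topograph has a unique well, either a vertex well or an edge well. The natural approach is to show that the reduced forms on the topograph are precisely the forms sitting at the well — a vertex well $[e,f,g] \to$ (in region coordinates) the three rotations of a configuration $[a,a,a]$-type or $[a,b,c]$ with $0 < b < a < c$, and an edge well $[a,0,c]$ with $a \lqs c$ — and then check by inspection of these finitely many well configurations that exactly one of them satisfies the reduced inequalities with the correct boundary conventions. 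An alternative, cleaner route avoiding well bookkeeping: show directly that if $[a,b,c]$ is reduced then $a$ is the minimum nonzero region label on the topograph and $c$ the next, so that $a$, $c$ are determined by the topograph, and then $b$ is determined up to sign by $b^2 = D + 4ac$ with the sign pinned down by the boundary conditions except when two of $a,|b|,c$ coincide, where the convention $b \gqs 0$ removes the ambiguity; the climbing lemma shows the minimum is attained exactly at a well, so this is consistent. Either way, uniqueness on a fixed topograph plus "same topograph $\iff$ equivalent" gives: $q_1 \sim q_2$ $\iff$ their reduced forms, being the unique reduced form on the common topograph, are equal.

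I would organize the write-up as: (1) invoke Corollary \ref{lr} and the $\Fd$-to-inequalities dictionary to conclude the output is reduced; (2) note the algorithm applies to any $q$, giving existence; (3) for uniqueness, use the climbing lemma to see that a reduced form must sit at the (unique) well, enumerate the well types, and observe that each well carries exactly one reduced form; (4) combine with the topograph characterization of equivalence. The main obstacle I expect is step (3) — the careful case analysis matching the boundary conventions in the definition of "reduced" ($|b| = a$ or $a = c \implies b \gqs 0$) against the three possible configurations at a vertex well and the edge well, making sure no topograph is double-counted and none is missed, in particular correctly handling the symmetric topographs with $D = -3$ and $D = -4$ (which the theorem statement flags by treating $h(-3) = h(-4) = 1$ separately). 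A secondary subtlety is verifying that the $M \mapsto MS$ adjustment is always legitimate, i.e. that $z_0 \in S\Fd$ indeed implies $Sz_0 \in \Fd$, and that this does not disturb the conclusion that the resulting form has discriminant $D$ — but that is routine since $S \in \SL(2,\Z)$ and $\Fd, S\Fd$ partition $\Fd \cup S\Fd$ up to the boundary.
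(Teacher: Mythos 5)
Your first half---landing $\ze_{q_0}$ in $\Fd$ via Corollary \ref{lr} (with the $M\mapsto MS$ adjustment) and translating membership in $\Fd$ into $|b_0|\lqs a_0\lqs c_0$ plus the boundary conventions---is exactly the paper's argument; the paper simply cites \cite[Sect. 5.3.1]{coh93} for the dictionary you spell out, and your computation of $\Re(\ze_{q_0})=-b_0/(2a_0)$ and $|\ze_{q_0}|^2=c_0/a_0$ is correct. Where you genuinely diverge is the converse of the iff. The paper disposes of it in two lines: by \e{roots} the first roots of equivalent forms lie in a single $\G$-orbit, and a $\G$-orbit meets $\Fd$ exactly once, so the reduced forms, whose roots are the unique orbit representative in $\Fd$, must coincide. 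You instead propose to show that each topograph of negative discriminant carries exactly one reduced form, by proving that a reduced form must sit at the unique well and then enumerating the well configurations. That route is sound---it amounts to an independent proof of what the paper later records as Proposition \ref{towell}, and the inequalities $2a-b\gqs b>0$, $2c-b\gqs b>0$ from that proposition's proof are exactly what you need for ``reduced $\implies$ at the well''---but it costs you the case analysis you yourself flag: six forms at a vertex well, two at an edge well, and the boundary conventions when edge labels coincide (note these coincidences occur for every discriminant admitting a form $[a,a,c]$ or $[a,b,a]$, not only $D=-3,-4$). The fundamental-domain argument buys all of that for free, since the strictness of $\Fd$ is the uniqueness statement in disguise; your topographic argument buys a self-contained combinatorial proof that does not lean on the action on $\H$, and locates the reduced form at the well as a byproduct rather than a corollary. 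Both are correct.
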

\begin{proof} The algorithm produces
$$
q_0=q|M \quad \text{with} \quad z_{q_0} = M^{-1}z_q \in \Fd,
$$
and it is easy to check that $q_0$ is reduced if and only if $\ze_{q_0} \in \Fd$, as in  \cite[Sect. 5.3.1]{coh93} for example.

It is clear that two forms must be equivalent if their reduced forms are equal. Conversely, the first roots of two equivalent forms lie in the same $\G$ orbit by \e{roots}. This orbit intersects $\Fd$ exactly once, meaning their reduced forms must be the same.
\end{proof}

\SpecialCoor
\psset{griddots=5,subgriddiv=0,gridlabels=0pt}
\psset{xunit=0.32cm, yunit=0.32cm, runit=0.32cm}
\psset{linewidth=1pt}
\psset{dotsize=7pt 0,dotstyle=*}
\begin{figure}[ht]
\centering
 \begin{pspicture}(-10,-10)(10,12) 

\psset{arrowscale=2.4,arrowinset=0.3,arrowlength=1.1}
\newrgbcolor{light}{0.8 0.8 1.0}
\newrgbcolor{pale}{1 0.7 1}
\newrgbcolor{pale}{1 0.85 0.65}
\newrgbcolor{light}{0.87 0.57 0.77}

\pscurve[linewidth=7pt,linecolor=pale](0,0.5)(-2.59808, -1.2)(-5.57241, -1.10842)(-8.34405, -2.25647)(-10.1703, -4.63653)

\pscurve[linewidth=5pt,linecolor=light](-2.59808, -1.7)(0,-0.2)(0., 3.)(1.82628, 5.38006)(4.59792, 6.52811)(7.57226, 6.13653)
\pscurve[linewidth=5pt,linecolor=light](1.82628, 5.38006)(4.59792, 6.52811)(6.42421, 8.90817)

\psline(0,0)(0., 3.)(-1.82628, 5.38006)(-4.59792, 6.52811)(-7.57226, 6.13653)
\psline(-4.59792, 6.52811)(-6.42421, 8.90817)
\psline(-1.82628, 5.38006)(-2.21786, 8.35439)(-4.04415, 10.7345)
\psline(-2.21786, 8.35439)(-1.06981, 11.126)
\psline(0., 3.)(1.82628, 5.38006)(4.59792, 6.52811)(7.57226, 6.13653)
\psline(4.59792, 6.52811)(6.42421, 8.90817)
\psline(1.82628, 5.38006)(2.21786, 8.35439)(4.04415, 10.7345)
\psline(2.21786, 8.35439)(1.06981, 11.126)

\rput(4.41673, 2.55){$3$}
\rput(0., 7.5){$10$}
\rput(-4.56571, 8.95015){$23$}
\rput(-7.36956, 7.67616){$_{42}$}
\rput(-2.60944, 11.3287){$_{63}$}
\rput(4.56571, 8.95015){$23$}
\rput(2.60944, 11.3287){$_{63}$}
\rput(7.36956, 7.67616){$_{42}$}

\psline(0,0)(-2.59808, -1.5)(-3.74613, -4.27164)(-3.35455, -7.24597)(-1.52826, -9.62603)
\psline(-3.35455, -7.24597)(-4.5026, -10.0176)
\psline(-3.74613, -4.27164)(-6.12619, -6.09792)(-7.27424, -8.86956)
\psline(-6.12619, -6.09792)(-9.10052, -6.4895)
\psline(-2.59808, -1.5)(-5.57241, -1.10842)(-7.95247, 0.717863)(-9.10052, 3.4895)
\psline(-7.95247, 0.717863)(-10.9268, 1.10944)
\psline(-5.57241, -1.10842)(-8.34405, -2.25647)(-11.3184, -1.86489)
\psline(-8.34405, -2.25647)(-10.1703, -4.63653)

\rput(-4.41673, 2.55){$3$}
\rput(-6.49519, -3.75){$7$}
\rput(-5.4682, -8.4291){$15$}
\rput(-2.96297, -10.2203){$_{27}$}
\rput(-8.50625, -7.92421){$_{42}$}
\rput(-10.0339, -0.521054){$18$}
\rput(-11.1157, -3.40452){$_{47}$}
\rput(-10.3325, 2.54415){$_{35}$}

\psline(0,0)(2.59808, -1.5)(5.57241, -1.10842)(7.95247, 0.717863)(9.10052, 3.4895)
\psline(7.95247, 0.717863)(10.9268, 1.10944)
\psline(5.57241, -1.10842)(8.34405, -2.25647)(11.3184, -1.86489)
\psline(8.34405, -2.25647)(10.1703, -4.63653)
\psline(2.59808, -1.5)(3.74613, -4.27164)(3.35455, -7.24597)(1.52826, -9.62603)
\psline(3.35455, -7.24597)(4.5026, -10.0176)
\psline(3.74613, -4.27164)(6.12619, -6.09792)(7.27424, -8.86956)
\psline(6.12619, -6.09792)(9.10052, -6.4895)

\rput(0., -5.1){$2$}
\rput(6.49519, -3.75){$7$}
\rput(10.0339, -0.521054){$18$}
\rput(10.3325, 2.54415){$_{35}$}
\rput(11.1157, -3.40452){$_{47}$}
\rput(5.4682, -8.4291){$15$}
\rput(8.50625, -7.92421){$_{42}$}
\rput(2.96297, -10.2203){$_{27}$}

\rput(0., -2){\small{reduced}}

\psline[ArrowInside=->,ArrowInsidePos=0.65](-10.1703, -4.63653)(-8.34405, -2.25647)
\psline[ArrowInside=->,ArrowInsidePos=0.65](4.59792, 6.52811)(7.57226, 6.13653)
\psline[ArrowInside=->,ArrowInsidePos=0.65](0,0)(-2.59808, -1.5)

\psdot[linecolor=red](0,0)

\end{pspicture}
\caption{Reduction on a  topograph of discriminant $D=-20$}
\label{-20}
\end{figure}
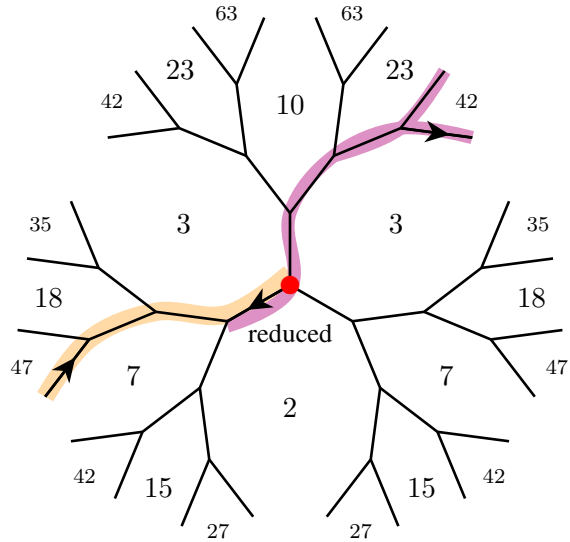

It follows from Theorem \ref{negred} that every topograph of negative discriminant contains a unique reduced form. The next result links it to the wells of Section \ref{+}.

\begin{prop} \la{towell}
Let $\mathcal T$ be a topograph of discriminant $D<0$ with reduced form $[a,b,c]$. If $\mathcal T$ has an edge well then it coincides with $[a,b,c]$. If $\mathcal T$ has a vertex well then, of the three edges incident to it, $[a,b,c]$ coincides with an edge with label of minimal absolute value.
\end{prop}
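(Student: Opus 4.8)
The plan is to reduce everything to the uniqueness statement in Theorem \ref{negred}: on a topograph $\mathcal{T}$ of negative discriminant there is a \emph{unique} reduced form, so it is enough to exhibit \emph{some} reduced configuration at the well, and uniqueness then forces it to be $[a,b,c]$. Recall from Section \ref{+} that $\mathcal{T}$ has exactly one well, of exactly one type, so the two cases of the statement are disjoint. For the edge-well case this is immediate: an edge well is an edge with label $0$; let $a'\lqs c'$ be the two (positive) region labels it borders. Then $[a',0,c']$ is a configuration on $\mathcal{T}$ and it is reduced, since $|0|\lqs a'\lqs c'$ and the middle coefficient $0$ is $\gqs 0$ (the condition required when $|b|=a$ or $a=c$). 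By uniqueness this is $[a,b,c]$.

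For the vertex well, direct the three incident edges away from the well, with positive labels $e,f,g$, so that by \e{e to r} the three surrounding regions are $(f+g)/2$, $(e+g)/2$, $(e+f)/2$, and $D=-(ef+fg+ge)$. Let $e$ be an edge of minimal label; its two bordering regions are $(e+f)/2$ and $(e+g)/2$, both $\gqs e$ because $f,g\gqs e$. Reading $e$ away from the well gives a configuration $[P,e,Q]$ (and reading it toward the well gives $[Q,-e,P]$), where $\{P,Q\}$ are those two regions in the order dictated by the planar embedding. If $P\lqs Q$ then $[P,e,Q]$ is reduced: $|e|\lqs P\lqs Q$, and the middle coefficient $e\gqs 0$ handles any boundary case. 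If instead $P>Q$, then $[P,e,Q]$ is not reduced, while $[Q,-e,P]$ is reduced exactly when $e<Q$; since $Q=\min\{(e+f)/2,(e+g)/2\}=\tfrac12(e+\min\{f,g\})$, we have $Q=e$ precisely when $e=\min\{f,g\}$. Hence a minimal edge fails to carry a reduced configuration only when it is tied for minimal with a neighbouring edge \emph{and} the embedding puts the larger bordering region on its left.

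To finish, write the three edges in their cyclic order $\alpha,\beta,\gamma$ around the well vertex, with corresponding surrounding regions $A=(\alpha+\beta)/2$, $B=(\beta+\gamma)/2$, $C=(\gamma+\alpha)/2$, and the away-configurations $[A,\alpha,C]$, $[B,\beta,A]$, $[C,\gamma,B]$. The "larger region on the left" failure condition for $\alpha$ reads $A>C$, i.e.\ $\beta>\gamma$; for $\beta$ it reads $\gamma>\alpha$; for $\gamma$ it reads $\alpha>\beta$. If two of these edges are minimal and both fail, the two inequalities combine to force a strict inequality between the two equal labels, a contradiction (and if all three labels are equal, none of the conditions can hold). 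So at least one minimal edge carries a reduced configuration, and by uniqueness it is $[a,b,c]$, sitting on an edge of minimal absolute value.

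I expect the only real obstacle to be the orientation bookkeeping in the vertex-well case: making precise, from the planar embedding, which of the two configurations of a given edge at the well actually occurs, so that the failure condition is pinned down correctly and seen to be incompatible across two distinct minimal edges. Everything else follows immediately from $f,g\gqs e$, the definition of a reduced form, and the uniqueness in Theorem \ref{negred}.
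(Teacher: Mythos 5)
Your proof is correct, but it approaches the statement from the opposite end to the paper. You start at the well, exhibit a reduced configuration among its incident edges, and then invoke the uniqueness of the reduced form from Theorem \ref{negred}; the cost is the case analysis in your last two paragraphs, needed to handle ties among minimal edge labels and the left/right placement of the bordering regions (your resolution of this --- that the three failure conditions $\beta>\gamma$, $\gamma>\alpha$, $\alpha>\beta$ are pairwise incompatible once the relevant pair of labels is forced to be equal --- is sound). The paper runs the argument the other way: it takes the reduced form $[a,b,c]$ as given and, for $b>0$, looks at the vertex $v$ it is directed away from; by Figure \ref{sl2fig} the three edges leaving $v$ carry labels $b$, $2a-b$, $2c-b$, and $|b|\lqs a\lqs c$ immediately gives $2a-b\gqs b>0$ and $2c-b\gqs b>0$, so $v$ has out-degree three, is therefore the unique vertex well, and $b$ is minimal among the labels there; the case $b<0$ is symmetric and $b=0$ gives the edge well. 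That short computation eliminates all of the orientation and tie-breaking bookkeeping, which is what the reversed direction buys; your version, in exchange, locates a reduced configuration sitting at the well directly, without first asking where the reduced form points, though both proofs ultimately rest on the same uniqueness statement.
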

\begin{proof}
If $\mathcal T$ has an edge well with adjacent regions $a \lqs c$ then $[a,0,c]$ is reduced. Also if the reduced form $[a,b,c]$ has $b=0$ then this  coincides with the unique edge well.

In the remaining case $\mathcal T$ has a vertex well and $b \neq 0$. If $b>0$, let $v$ be the vertex that $[a,b,c]$ is directed away from. The labels of all the edges directed away from $v$ are $b$, $2a-b$ and $2c-b$, as in Figure \ref{sl2fig}. Since $[a,b,c]$ is reduced
\begin{equation*}
  2a-b\gqs 2b-b=b>0, \qquad 2c-b\gqs 2b-b=b>0.
\end{equation*}
Therefore $v$ is the vertex well and $b$ is minimal. The case $b<0$ is similar, with  the vertex that $[a,b,c]$ is directed to being the vertex well.
\end{proof}

With the climbing lemma it may also be seen that the edge label $b$ of the reduced form is minimal in absolute value among all edge labels of $\mathcal T$. Further,
the smallest region labels of $\mathcal T$ are $a\lqs c \lqs a+c-b$.

Figure \ref{well} shows an example with the reduced form $[2,1,4]$ incident to the well. See also   the reduced form $[2,2,3]$ in Figure \ref{-20}. To apply the reduction algorithm to the form $q=[47,-36,7]$, indicated with an arrow on the left of Figure \ref{-20}, we find  $z_q=\frac{36+\sqrt{-20}}{94}=\langle 0,2,1+z_0 \rangle$ as a general continued fraction, giving the path $L^0R^2L^1$ and requiring a final $S$. This path is highlighted in the figure.

For the form $q=[42,22,3]$ indicated  on the right, we have $z_q=\frac{-22+\sqrt{-20}}{84}=\langle -1,1,2,1+z_0 \rangle$  giving the path $L^{-1}R^1L^2R^1$. As $q$ was directed away from the well, the initial $L^{a_0}$ with $a_0<0$ moves in reverse, (see the right of Figure \ref{sl2fig}), going one edge outside the optimal path so that the succeeding turns may move forward directly to the reduced form.

 The reduction in this section gives a precise  version of the geometric reduction outlined by Buell in \cite[p. 18]{bue89}.

\subsection{Square discriminants} \la{sqd}

Now topographs have discriminant $D=m^2$ for $m \gqs 0$. Their properties are detailed in Sections \ref{0+}, \ref{opm}.

\begin{prop} \label{which-lake}
Let $q$ be a form on a topograph with  discriminant $D=m^2>0$. Then the path from $q$ associated with $\ze_q$ leads to the right lake. Precisely, this path contains only one edge on the right lake  and this edge must be at the end of the path. Likewise, the path from $q$ associated with the second root $\ze'_q$ leads to the left lake, containing only one edge on the left lake  which must be at its end. The same is true when $D=m=0$ and $q\neq[0,0,0]$:  then $\ze_q=\ze'_q$ and there is only one lake.
\end{prop}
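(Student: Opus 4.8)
The plan is to identify the edges bordering each lake by the shape of the configuration on them, and then exploit Lemma~\ref{cont}. Write $D=m^2$ with $m\gqs 0$. For $m\gqs 1$ the Summary gives a topograph with two lakes joined by a river (possibly of length $0$), oriented with positive regions above; as in the proof of Proposition~\ref{hm2} the rightmost river edge is then a configuration $[a,-a-c,c]$ with $a>0>c$, and the right lake is the region $0$ at its forward vertex. (If $m=0$ and $q\neq[0,0,0]$ there is a single lake and \emph{left} and \emph{right} both name it; that case runs exactly as below.)

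First I would describe the lake edges. At the forward vertex of $[a,-a-c,c]$ the two non-river edges are $[a,-a-c,c]\,|\,L=[a,\,a-c,\,0]$ and $[a,-a-c,c]\,|\,R=[0,\,c-a,\,c]$ by Figure~\ref{sl2fig}; since $a>0>c$ and $(a-c)^2=D=m^2$ these are $[a,m,0]$ and $[0,-m,c]$, and further $L$- or $R$-turns that keep us against the lake preserve the shape. Hence an edge borders the right lake exactly when it carries a configuration $[0,-m,c']$, equivalently (reversed) $[a',m,0]$; and symmetrically the left-lake edges are the $[0,m,c']$, equivalently $[a',-m,0]$. Comparing these with Definition~\ref{zer} — using $b'^2=D$ when an outer coefficient vanishes and $4a'c'=D-b'^2=0$ with $b'=\pm m$ otherwise — yields the reformulation I will actually use: \emph{$q^*$ borders the right lake iff $\ze_{q^*}\in\{0,\infty\}$, equivalently iff the path associated with $\ze_{q^*}$ is empty}; symmetrically $q^*$ borders the left lake iff $\ze'_{q^*}\in\{0,\infty\}$.

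Next I would show the $\ze_q$-path terminates on the right lake. If $\ze_q\in\{0,\infty\}$ then $q$ is already a right-lake edge and its associated path is empty, so there is nothing to do. Otherwise, $D$ being a square, $\ze_q$ is a rational number other than $0$, say $\ze_q=\langle a_0,\dots,a_n\rangle=h_n/k_n$ in lowest terms, with associated matrix $M$ as in \eqref{modd}--\eqref{meven}. Exactly as in the proof of Lemma~\ref{perf}, identity \eqref{cf} shows $q'=q|M$ has an outer coefficient equal to $q(h_n,k_n)=k_n^2\,q(\ze_q,1)=0$, so $q'$ is a lake edge; and $\ze_{q'}=M^{-1}\ze_q$ by \eqref{roots}, which a one-line computation with the $h,k$-recurrences evaluates to $\infty$ for $n$ odd and to $0$ for $n$ even. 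By the reformulation, $q'$ is a right-lake edge, so the path \emph{ends} on the right lake.

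Finally I would rule out earlier visits. If the $\ze_q$-path passed through a right-lake edge $q^*$ at any stage, then by the reformulation the path associated with $\ze_{q^*}$ is empty, and Lemma~\ref{cont} identifies the remainder of the original path with that empty path; hence $q^*$ is the last edge. So the $\ze_q$-path meets the right lake in exactly one edge, its last — the assertion for $\ze_q$. The argument for $\ze'_q$ and the left lake is identical after replacing $\ze$ by $\ze'$ throughout (Lemma~\ref{cont} and Definition~\ref{zer} treat the second root on the same footing), and when $D=m=0$ one has $\ze_q=\ze'_q$ and a single lake, so the two assertions coincide. The main thing needing care is the bookkeeping of the first two steps — matching the orientation convention so the lake that appears is genuinely the \emph{right} one, and tracking the parity of $n$ — but neither is a real obstacle.
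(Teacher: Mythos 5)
Your proof is correct and follows the paper's overall strategy: descend along the continued-fraction path to a lake edge via the computation of Lemma \ref{perf}, identify which lake that edge borders, and use Lemma \ref{cont}. The two places where you deviate are both sound and arguably tidier: you decide that the terminal edge lies on the \emph{right} lake by computing $\ze_{q'}=M^{-1}\ze_q\in\{0,\infty\}$ directly from the convergent matrices \eqref{modd}--\eqref{meven} together with your characterization of right-lake edges as exactly those whose first root is $0$ or $\infty$, whereas the paper pins down the sign of the terminal configuration $[a,\pm m,0]$ by examining the penultimate configuration and applying Lemma \ref{cont} there; and you rule out earlier lake visits by observing that a right-lake edge has empty associated path, so Lemma \ref{cont} forces the original path to terminate at the first such edge, whereas the paper appeals to the absence of circuits in the tree. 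These are minor local substitutions rather than a different route, and your reformulation in terms of the roots being $0$ or $\infty$ is a clean way to package both steps at once.
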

\begin{proof}
Notice that the lake edges of the right lake have labels $m$ directed clockwise around the lake. The  left lake edges have labels $m$ directed counter clockwise. This follows from our choice of having the positive regions above the river; see Figure \ref{red 18}.
For the trivial cases of $\ze_q=\infty$ or $0$ we must have $q$ already on the right lake. In the same way, $q$ is on the left lake if $\ze'_q=\infty$ or $0$.

For $\ze_q \neq \infty, 0$, as seen in Lemma \ref{perf}, the final configuration on the path from $q$ associated with $\ze_q$ is $[a,b,0]$ if it finishes with an $L$. We must have $b=\pm m$ and claim that in fact $b=m$. For this, write $b=\delta m$ and suppose that $q_0$ is the next to last configuration on the path. Then, with \e{uu},
\begin{equation*}
  q_0|L= [a,\delta m,0] \quad \implies  \quad q_0=[a,-2 a+\delta m, a-\delta m].
\end{equation*}
When $a\neq 0$ then $\ze_{q_0} = 1+m(1-\delta)/(2a)$. When $a=0$ then $\ze_{q_0} =\infty$ if $\delta=-1$ and $\ze_{q_0} = 1$ if $\delta=1$. However, the path associated to $q_0$ must match the path from $q$ by Lemma \ref{cont} and hence we require $\ze_{q_0} = 1$ and $\delta=1$, proving the claim.

So the path from $q$ associated with $\ze_q$ has final configuration $[a,m,0]$, for some $a$, if it finishes with a left turn.
In the same way, the final configuration is $[0,-m,c]$, for some $c$, if it finishes with a right turn. Also the same is true for the path from $q$ associated with the second root $\ze'_q$ except that  $m$ and $-m$ are switched.

 Consequently, the final configuration on the path from $q$ associated with $\ze_q$ must be on the right lake, with the next to last path edge not on the lake. It follows that no other path edges can be on the right lake since that would create a circuit in the tree. Similarly for the other cases.
\end{proof}

 For definiteness we may force the continued fractions to be $\langle a_0,a_1, \dots, a_n \rangle$ for $n$ odd. This can be done since, if $n$ is even, $\langle \dots, a_{n-1}, a_n \rangle$ equals $\langle \dots, a_{n-1}+1 \rangle$ if $a_n=1$ and equals $\langle \dots, a_{n-1},a_n-1,1 \rangle$  if $a_n\gqs 2$. This makes the associated paths finish with an $R$. In this way, since paths consist of forward left and right turns on a tree, (the first turn could involve going backwards), the properties found in Proposition \ref{which-lake} completely determine the paths associated with each root.  The proposition may also be used to give a simple reduction algorithm.

\begin{adef} \la{defx}
{\rm A form of discriminant $D=m^2>0$ is {\em reduced} if it equals $[0,m,c]$ for $0<c \lqs m$. The reduced forms of discriminant $D=0$ equal $[0,0,c]$.
}
\end{adef}

\begin{algo}
{\rm Starting with any form $q$ of discriminant $D=m^2$,   take the path associated with its first root $\ze_q$ leading to $q_1=q|M_1$ on the right lake. Then take the path from $q_1$ associated with its second root $\ze'_{q_1}$ to $q_2=q_1|M_2$, ending with a right turn on the left lake. If $q_2=[0,m,0]$ then  replace $M_2$ by $M_2L$ to get $q_2=[0,m,m]$. The output is $q_2$ and $M=M_1M_2$. (The form $q=[0,0,0]$ is already reduced so we ignore it.)
}
\end{algo}

\begin{theorem} \la{redsq}
The form produced by the above algorithm is  reduced, showing that every form of square discriminant is explicitly equivalent to a reduced form. Two forms  are equivalent if and only if their reduced forms are equal. 
\end{theorem}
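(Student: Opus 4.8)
\noindent The plan is to prove the statement in two parts: first, that the form $q_2$ returned by the algorithm is reduced in the sense of Definition~\ref{defx} (which, since $q_2=q|M$ with $M\in\SL(2,\Z)$, gives that every form of square discriminant is explicitly equivalent to a reduced form); second, that each topograph of square discriminant contains exactly one reduced form (which gives the equivalence criterion). For the first part I would begin by fixing the shapes of the intermediate forms. Adopting the convention (introduced just before Definition~\ref{defx}) that all continued fractions have odd length, Proposition~\ref{which-lake} and its proof show that the path associated with $\ze_q$ ends with a right turn at a form $q_1$ on the right lake, which in the generic situation has the shape $[0,-m,c_1]$; then $\ze_{q_1}=\infty$ and $\ze'_{q_1}=c_1/m\in\Q$, and the path from $q_1$ associated with $\ze'_{q_1}$ again ends with a right turn, at $q_2=q_1|M_2$, which Proposition~\ref{which-lake} forces to have the shape $[0,m,c_2]$ on the left lake. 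So everything reduces to showing $0<c_2\lqs m$.

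\smallskip\noindent For that bound I would compute with convergents. Writing $\ze'_{q_1}=c_1/m=\langle a_0,\dots,a_n\rangle=h_n/k_n$ in lowest terms with $n$ odd, formula~\e{modd} gives $M_2=\bigl(\begin{smallmatrix}h_n&h_{n-1}\\ k_n&k_{n-1}\end{smallmatrix}\bigr)$, and since $\ze_{q_1}=\infty$, \e{roots} gives $\ze_{q_2}=M_2^{-1}\infty=-k_{n-1}/k_n$. But $q_2=[0,m,c_2]$ has $\ze_{q_2}=-c_2/m$ by~\e{a=000}, so $c_2/m=k_{n-1}/k_n$. The elementary facts $1\lqs k_{n-1}\lqs k_n$, $\gcd(k_{n-1},k_n)=1$ and $k_n=m/\gcd(c_1,m)$ then force $c_2=(m/k_n)k_{n-1}$ to be a positive integer with $c_2\lqs m$, so $q_2$ is reduced. (When $c_1=0$ the forced expansion $0=\langle-1,1\rangle$ already yields $q_2=[0,m,m]$, and for $D=0$ one finds $q_1=[0,0,c_1]$ is already reduced; the safeguard ``if $q_2=[0,m,0]$'' is needed only for the weir form, where it produces $[0,m,m]$.)

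\smallskip\noindent For the second part, the ``if'' direction is immediate: $q\sim q_2$, so forms with equal reduced forms are equivalent by transitivity. For ``only if'', equivalent forms lie on a common topograph $\mathcal T$, and the algorithm sends each of them to a reduced form lying on $\mathcal T$; so it suffices to show $\mathcal T$ has at most one reduced form. Each reduced form $[0,m,c]$ of $\mathcal T$ lies on an edge bordering the left lake, with $c$ the label of the region on the far side; by~\e{dis2}, at any vertex on the boundary of that lake the three region labels are $0,s,t$ with $D=(s-t)^2=m^2$, so the neighbours of the left lake form a two-sided arithmetic progression of common difference $m$, hence exhaust a single residue class modulo $m$, which meets $(0,m]$ in exactly one integer. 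Thus the reduced form of $\mathcal T$ is unique and the two outputs coincide. (For $D=0$ the common difference is $0$, the lake has a single neighbour value, and $[0,0,0]$ is its own reduced form.)

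\smallskip\noindent The step I expect to be the main obstacle is not the computation in the second paragraph but the careful handling of the degenerate configurations: the weir form $[0,m,0]$, the forms $[a,m,0]$ with $a\neq 0$ (for which $\ze_q=0$, so the first half of the algorithm is trivial and the convergent computation must be rerun with the numerators of $\ze'_{q_1}$, or the input first normalised by $S$), and the case $D=0$. In each of these several of the quantities above collapse, and one must check directly that the algorithm still returns the unique reduced form and that the fix-up step behaves as claimed.
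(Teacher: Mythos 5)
Your proposal is correct, and its overall architecture --- (1) the algorithm's output is reduced, (2) each topograph carries exactly one reduced configuration, so equivalent forms have equal outputs --- is the same as the paper's. The genuine difference is in step (1): the paper gets $0<c<m$ purely geometrically, by observing that the second path must run along the river to the vertex where the river meets the left lake, and at that vertex the three surrounding regions are $0$, $c$, $c-m$ with $c>0$ above the river and $c-m<0$ below it (adjacent lakes and $D=0$ are then dispatched separately, just as you do). You instead compute with convergents: comparing $\ze_{q_2}=M_2^{-1}\infty=-k_{n-1}/k_n$ with $\ze_{q_2}=-c_2/m$ gives $c_2=(m/k_n)\,k_{n-1}\in(0,m]$ from $1\lqs k_{n-1}\lqs k_n$. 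Both are sound; the paper's argument is shorter and formula-free, while yours yields the exact value $c_2=\gcd(c_1,m)\,k_{n-1}$ and detects the boundary case $c_2=m$ (equivalently $m\mid c_1$, adjacent lakes) inside the same computation. Two small points to tighten. In the uniqueness step you assert that every reduced configuration $[0,m,c]$ borders the \emph{left} lake; this does need a word, and it follows from the orientation convention recorded in the proof of Proposition \ref{which-lake}: the right lake's edges carry label $-m$ when directed with the lake on the left, so its configurations are $[0,-m,c]$ and never match the reduced shape. And the aside $0=\langle-1,1\rangle$ is not what the forced-odd-length convention produces for the weir (the paper's path for a root equal to $0$ is trivial by definition, and it is the explicit replacement $M_2\mapsto M_2L$ that yields $[0,m,m]$, as you say in the same sentence), so that clause is best dropped in favour of the safeguard alone.
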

\begin{proof}
Suppose $q$ is a form on a topograph with discriminant $D>0$ and nonadjacent lakes. The algorithm's second path from the right lake to the left one must pass along the river. The river meets the left lake at a vertex with surrounding regions labelled $0$, $c$, $c-m$ with $c>0$ above the river and $c-m<0$ below it. Hence $0<c<m$. By Proposition \ref{which-lake}, the  algorithm's output is $[0,m,c]$.

If the lakes are adjacent, the algorithm gives a reduced form from $[0,m,0]$  with an extra $L$ turn -- see the left of Figure \ref{1 4} for example. If $D=0$ then the path ending with a right turn on the single lake gives $[0,0,c]$.  We have shown that the reduction algorithm always produces a reduced form.

 If two forms are equivalent then they appear on the same topograph and their reduced forms are the same configuration on the left lake, (or on the single lake if $D=0$). Two forms with equal reduced forms must be equivalent.
\end{proof}

\SpecialCoor
\psset{griddots=5,subgriddiv=0,gridlabels=0pt}
\psset{xunit=0.2cm, yunit=0.2cm, runit=0.2cm}
\psset{linewidth=1pt}
\psset{dotsize=5pt 0,dotstyle=*}
\begin{figure}[ht]
\centering
\begin{pspicture}(3,4)(54,33) 

\psset{arrowscale=2.4,arrowinset=0.3,arrowlength=1.1}
\newrgbcolor{light}{0.9 0.9 1.0}
\newrgbcolor{light}{0.8 0.7 1.0}
\newrgbcolor{blue2}{0.1 0.2 0.8}
\newrgbcolor{blue2}{0.3 0.3 0.8}
\newrgbcolor{pale}{1 0.85 0.65}
\newrgbcolor{light2}{0.87 0.57 0.77}

\psline[linecolor=light,fillstyle=solid,fillcolor=light](3,7.3)(8.4,9.4)(11,14)(7.7,17.5)(3,18.8)

\psline[linecolor=light,fillstyle=solid,fillcolor=light](54,23)(49,22.3)(44,20.5)(40,17.2)(43.2,13)(48.5,10.3)(54,9.3)

\pscurve[linewidth=7pt,linecolor=light2](39.5,27.9)(36.2,22.9)(34.5,18.5)(40.2,16.2)(42.8,12.4)

\pscurve[linewidth=7pt,linecolor=pale](44.5,21)(41,17.7)(43.7,13.5)
\pscurve[linewidth=7pt,linecolor=pale](44,21)(40.2,17.7)(34.5,19)(30,17.9)(25.4,15.2)(20.7,16.8)(16,14.5)(10.8,14.7)(7.7,18.2)

\psline(54,23)(49,22.3)(44,20.5)(40,17.2)(43.2,13)(48.5,10.3)(54,9.3)

\psline(3,7.3)(8.4,9.4)(11,14)(7.7,17.5)(3,18.8)


\psline(10,21.3)(7.7,17.5)

\psline(13.8,26.8)(18.1,24.5)(20.5,20.5)(20.7,16.3)
\psline(18,29.2)(18.1,24.5)
\psline(22,29.5)(23,25)(20.5,20.5)
\psline(27.7,28.6)(23,25)

\psline(28.7,22)(30,17.4)

\psline(34.2,27.5)(36.2,22.9)(34.5,18.5)
\psline(39.5,27.9)(36.2,22.9)

\psline(42.5,24.8)(44,20.5)

\psline(50.8,31.3)(48.3,27)(49,22.3)
\psline(45.8,30.7)(48.3,27)

\psline[ArrowInside=->,ArrowInsidePos=0.65](39.5,27.9)(36.2,22.9)
\psline[ArrowInside=->,ArrowInsidePos=0.65](40,17.2)(43.2,13)
\psline[ArrowInside=->,ArrowInsidePos=0.65](11,14)(7.7,17.5)
\psline[ArrowInside=->,ArrowInsidePos=0.65](11.2,6.3)(8.4,9.4)

\psline(8.4,9.4)(11.2,6.3)

\psline(16,14)(17.7,9.6)(16.5,4.6)
\psline(21.2,5)(17.7,9.6)

\psline(25.4,14.7)(27.5,10)(25,4.5)
\psline(31.7,5.6)(27.5,10)

\psline(43.2,13)(41,8.4)

\psline(48.5,10.3)(47.3,5.5)

\psline[linewidth=3.5pt,linecolor=blue2](10.8,14)(16,14)(20.7,16.3)(25.4,14.7)(30,17.4)(34.5,18.5)(40.2,17.2)


\rput(6,13.5){$0$}

\rput(6,21){$25$}
\rput(14.5,20){$7$}  
\rput(13,10.5){$-11$}
\rput(7,6){$-29$}

\rput(15.3,28.7){$_{85}$}
\rput(20.3,26.2){$40$}
\rput(25,29.8){$_{91}$}

\rput(25,19.3){$9$}
\rput(21,12){$-8$}
\rput(18.7,4.5){$_{-45}$}

\rput(32.5,21.7){$16$}
\rput(34,12.6){$-5$}
\rput(28,5.3){$_{-35}$}

\rput(37,28.1){$63$}
\rput(39.5,21.2){$13$}
\rput(46.1,24){$31$}
\rput(52.1,25.8){$49$}
\rput(48.4,31.5){$_{160}$}

\rput(44.5,9){$-23$}
\rput(51,6){$-41$}

\rput(48,17){$0$}


\rput(12.8, 16.8){\small{reduced}}

\end{pspicture}
\caption{Reduction on a topograph of discriminant $D=18^2$}
\label{red 18}
\end{figure}
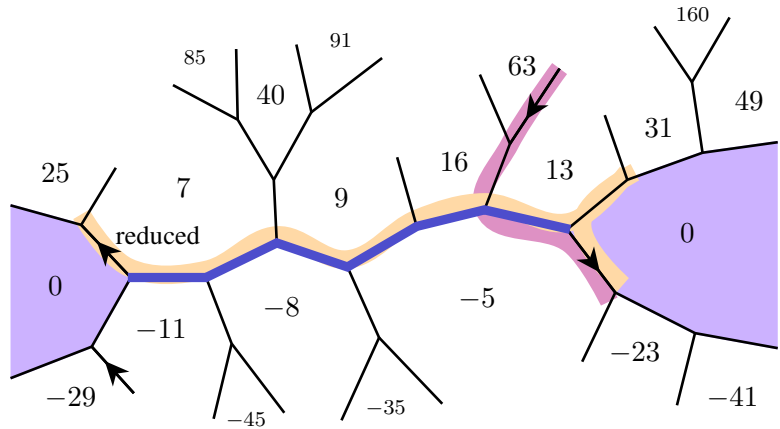

Figure \ref{red 18} shows the reduction algorithm applied to $q=[13,-60,63]$. Then $z_q=3=\langle 2,1 \rangle$, giving a path to $q_1=[0,-18,-5]$. Next, with \e{a=000}, $z_{q_1}'= -5/18=\langle -1,1,2,1,1,2 \rangle$  leading from $q_1$ to the reduced form $[0,18,7]$. It can be seen that the reduction path is a little inefficient. We cannot just use the second root $z_q'$ since in some cases, such as the indicated $[-29,40,-11]$, the path associated to the second root ends up at a  form on the left lake that is not reduced.

Since the lake edge labels are $\pm m$, the  labels of the regions adjacent to each lake must be in arithmetic progression with difference $m$. The progressions on each lake are related as follows.

\begin{prop} \label{prok}
Suppose a topograph is primitive with discriminant  $D=m^2$ for $m\gqs 1$. 
There exist $r$ and $s$ with $r s \equiv 1 \bmod m$ so that the region labels at the left lake are $\equiv r \bmod m$ while those at the right are $\equiv s \bmod m$.
\end{prop}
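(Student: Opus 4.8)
The plan is to read off both residues from one well-chosen form and to use the continued fraction that carries one lake to the other. By Definition~\ref{defx} and Theorem~\ref{redsq} the topograph contains a reduced form $q_L=[0,m,c]$ with $0\lqs c\lqs m$ lying on an edge of the left lake, and primitivity forces $\gcd(c,m)=1$. By \eqref{con} with $a=0$, the labels of the regions meeting the left lake form the progression $\{mn+c:n\in\Z\}$, so we may take $r\equiv c\pmod m$, a unit; and the text's remark that the labels around each lake form an arithmetic progression of difference $m$ shows $r$ is well defined. The case $m=1$ (adjacent lakes) is trivial, so assume $m\gqs2$, hence $0<c<m$.

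Next I would push $q_L$ to the right lake along the path associated with its first root. By \eqref{a=000}, $\ze_{q_L}=-c/m$. Expand $-c/m=\langle a_0,a_1,\dots,a_n\rangle=h_n/k_n$ in lowest terms with $n$ odd (legitimate by the remark following Proposition~\ref{which-lake}); since $\gcd(c,m)=1$ and $k_n>0$ this forces $h_n=-c$, $k_n=m$. Put $M=L^{a_0}R^{a_1}\cdots R^{a_n}$, so by \eqref{modd} $M=\left(\begin{smallmatrix}h_n&h_{n-1}\\ k_n&k_{n-1}\end{smallmatrix}\right)$. By Proposition~\ref{which-lake} this path lands on the right lake at $q_R:=q_L|M$, and by \eqref{cf} (as in the proof of Lemma~\ref{perf})
\[
q_R(1,0)=q_L(h_n,k_n)=m\,h_nk_n+c\,k_n^2=-cm^2+cm^2=0,
\]
so $q_R=[0,b_R,c_R]$ with $b_R=\pm m$ and
\[
c_R=q_R(0,1)=q_L(h_{n-1},k_{n-1})=m\,h_{n-1}k_{n-1}+c\,k_{n-1}^2\equiv c\,k_{n-1}^2\pmod m.
\]
Applying \eqref{con} to $q_R$ shows the right-lake labels form the progression $\{b_Rn+c_R:n\in\Z\}$, so $s\equiv c_R\equiv c\,k_{n-1}^2\pmod m$.

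To finish, observe that $M\in\SL(2,\Z)$ has determinant $1$, so with \eqref{modd} and $(h_n,k_n)=(-c,m)$ we get $-c\,k_{n-1}-m\,h_{n-1}=1$, i.e.\ $c\,k_{n-1}\equiv-1\pmod m$. Hence
\[
rs\equiv c\cdot c\,k_{n-1}^2=(c\,k_{n-1})^2\equiv1\pmod m,
\]
and in fact $s\equiv r^{-1}\pmod m$. I expect the only real difficulty to be bookkeeping: pinning down the signs $h_n=-c$, $k_n=m$ and the parity of $n$, and confirming that $c_R$ is the last coefficient of $q_R$ (equivalently, that the path ends with a right turn on a $[0,\pm m,\ast]$ configuration). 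The conceptual reason behind the computation — that the right lake is the topograph region of a Farey neighbour $(-c,m)$ of the left-lake region $(1,0)$, and the labels around such a region are governed by the unimodular relation $mp+cq=\pm1$ — can serve as an independent check and could replace the continued-fraction bookkeeping if one prefers the lax-vector description of regions.
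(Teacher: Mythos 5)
Your proof is correct and follows essentially the same route as the paper: take a form on the left lake, follow the continued-fraction path of a root across to the right lake, read off the new lake residue from the convergents via \eqref{cf}, and close with the determinant identity $h_nk_{n-1}-h_{n-1}k_n=1$. The only difference is cosmetic — the paper starts from $[r,-m,0]$ with root $m/r$ and $n$ even, while you start from the reduced form $[0,m,c]$ with root $-c/m$ and $n$ odd — and your sign/parity bookkeeping ($h_n=-c$, $k_n=m$, $ck_{n-1}\equiv-1\bmod m$) checks out.
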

\begin{proof}
Let $q=[r,-m,0]$ for $r> 0$ correspond to a lake edge on the left lake. Then $\ze_q=m/r$ in lowest terms and with \e{meven} the associated matrix is $M=(\begin{smallmatrix}h_{n-1} & m\\ k_{n-1} & r \end{smallmatrix})$, giving the path to the right lake,  where we may assume $n$ is even and we finish with an $L$. Hence $q|M=[s,m,0]$ by Proposition \ref{which-lake} and
\begin{equation*}
  r(h_{n-1} x+ m y)^2 \equiv s x^2 \bmod m \quad \implies  \quad  r h_{n-1}^2 \equiv s \bmod m.
\end{equation*}
Also $\det M =1$ implies $r  h_{n-1} \equiv 1 \bmod m$. Therefore $h_{n-1} \equiv s \bmod m$ and so $r s \equiv 1 \bmod m$.
\end{proof}

\subsection{Positive non-square discriminants} \la{pnsd}
Let $D>0$ be a non-square discriminant in this section. For a form $q$ of this discriminant, the components of the matrices in \e{assoc} associated with $\ze_q$ and $\ze'_q$ may be described as follows:
\begin{align} \la{mml}
  M_L=M_L(q) := L^k = \begin{pmatrix} 1 & k \\ 0 & 1 \end{pmatrix} \quad &\text{for} \quad k=\lfloor \ze_q \rfloor = \left\lfloor \frac{-b+\sqrt{D}}{2a}\right\rfloor,\\
  \la{mmr}
  M_R=M_R(q) := R^k = \begin{pmatrix} 1 & 0 \\ k & 1 \end{pmatrix}\quad &\text{for} \quad k=\lfloor 1/\ze_q \rfloor = \left\lfloor -\frac{b+\sqrt{D}}{2c}\right\rfloor.
\end{align}
Similarly $M_L'$ and $M_R'$ are defined with the second root $\ze'_q$. By Lemma \ref{cont} we obtain for example
\begin{equation*}
  M=L^{a_0} R^{a_1} L^{a_2} = M_L(q) \cdot  M_R(q_1)  \cdot M_L(q_2)
\end{equation*}
with $q_1=q|M_L(q)$ and $q_2=q_1|M_R(q_1)$ for an initial matrix associated with $\ze_q$.

Recall Section \ref{+-}; topographs here have infinite rivers with finite periods.
The analog of Proposition \ref{which-lake} is the next result.

\begin{prop} \label{riverpath}
Let $q$ be a form on a topograph with non-square discriminant $D>0$. Then the path from $q$ associated with $\ze_q$ leads to the river and then follows it forever going rightwards. The path  associated with the second root $\ze'_q$ also leads to the river, following it forever going leftwards.
\end{prop}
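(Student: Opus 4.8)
The plan is to analyse the behaviour of the path once it has reached the river, and then to show that it always does reach it; the second root is then a mirror image. Throughout, note that since $D$ is not a perfect square, Lemma \ref{perf} (or simply the fact that $a=0$ forces $D=b^2$) gives $a\neq 0$ for every form on the topograph, so every first root $\ze_q$ is a quadratic irrational and its continued fraction — hence the associated path — is infinite.

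\emph{Step 1: behaviour on the river.} First I would prove a local statement. Suppose $q=[a,b,c]$ is a river edge oriented so that $a>0>c$ — equivalently $\ze_q>0$, and these are precisely the river edges directed \emph{rightwards} (positive region on the left of the arrow, in the configuration convention of Theorem \ref{abc}). Using $D=b^2-4ac$ and $a+b+c\neq 0$ (true because $D$ is not a square), one verifies
\begin{align*}
\ze_q>1 &\iff a+b+c<0 \iff \text{the river leaves $q$ going forward-left},\\
0<\ze_q<1 &\iff a+b+c>0 \iff \text{the river leaves $q$ going forward-right},
\end{align*}
and, by \e{con}, that the next river edge is $q|L=[a,2a+b,a+b+c]$ in the first case and $q|R=[a+b+c,b+2c,c]$ in the second; in either case it is again a rightward river edge, with first root $\ze_q-1$, resp.\ $\ze_q/(1-\ze_q)$, again positive. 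Since the continued fraction algorithm applied to $\ze_q>0$ prescribes exactly ``turn forward-left while the current root exceeds $1$, then make one forward-right turn, and repeat'', an induction then shows that the path associated with $\ze_q$, when \emph{started at such a $q$}, never leaves the river and runs rightwards along it forever.

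\emph{Step 2: reaching the river.} Next I would write the convergents of $\ze_q$ as $h_n/k_n$ and the matrices $M_n=L^{a_0}R^{a_1}\cdots$ as in \e{modd}--\e{meven}, and set $q_n=q|M_n$, so $\ze_{q_n}=M_n^{-1}\ze_q$ and $\ze'_{q_n}=M_n^{-1}\ze'_q$ by \e{roots}. Once the path has completed its initial run $L^{a_0}$, the root $\ze_{q_n}$ is a tail $\langle a_j,a_{j+1},\dots\rangle$ of the continued fraction and hence positive. On the other hand, from $M_n^{-1}\ze'_q=\frac{k_{n-1}}{k_n}\cdot\frac{\ze'_q-h_{n-1}/k_{n-1}}{h_n/k_n-\ze'_q}$ and $h_n/k_n\to\ze_q\neq\ze'_q$, the second factor tends to $-1$, so $\ze'_{q_n}<0$ for all large $n$ (alternatively, by Lagrange $\ze_{q_n}$ is eventually purely periodic, whence $\ze_{q_n}>1$ and $\ze'_{q_n}\in(-1,0)$ by Galois's criterion). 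Thus for all large $n$ one has $\ze_{q_n}>0>\ze'_{q_n}$, so $q_n(1,0)=a_n$ and $q_n(0,1)=c_n=a_n\ze_{q_n}\ze'_{q_n}$ have opposite signs; hence $q_n$ is a river edge, necessarily with $a_n>0$, i.e.\ a rightward one. Fixing one such $n$ and invoking Lemma \ref{cont}, the path from $q_n$ onward coincides with the path associated with $\ze_{q_n}$, which by Step 1 runs rightwards along the river forever; so the original path consists of the finite initial segment from $q$ to $q_n$ followed by the rightward river.

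\emph{Step 3: the second root and the obstacle.} The path associated with $\ze'_q$ is handled by the identical argument with forward-left and forward-right interchanged and with $\ze_q$, $\ze'_q$ swapped: it reaches the river and then follows it in one fixed direction, which must be leftwards — directly, because the stabilising river edges now satisfy $\ze'_{q_n}>0>\ze_{q_n}$, i.e.\ first coefficient negative, which is the leftward orientation; or more cheaply, because the $\ze_q$-path and the $\ze'_q$-path are distinct simple paths issuing from $q$ in a tree, hence share only finitely many edges, and so cannot traverse the river in the same direction. I expect the main obstacle to be the orientation bookkeeping in Steps 1--2: pinning ``rightward'' to the sign condition $a>0>c$ and matching it consistently across consecutive river edges, and checking that a negative $a_0$ (an initial run of backward turns, which may even slide the path backward along the river) does not spoil the conclusion — the path may begin by leaving the river, but once it returns to a rightward river edge, Step 1 takes over.
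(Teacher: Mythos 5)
Your proposal is correct. It shares the paper's overall architecture --- both arguments reduce the problem to showing that the forms $q_n=q|M_n$ at the breakpoints of the continued-fraction path are eventually simple, i.e.\ rightward river edges with $a_n>0>c_n$ --- but the two verifications of that key step are genuinely different. The paper evaluates the form directly at the convergents, writing $q(h_n,k_n)=k_n^2\varepsilon_n(\sqrt{D}+a\varepsilon_n)$ and $q(h_{n-1},k_{n-1})=k_{n-1}^2\varepsilon_{n-1}(\sqrt{D}+a\varepsilon_{n-1})$ as in \e{riv}, \e{riv2}, and reads off the opposite signs from $\varepsilon_n>0>\varepsilon_{n-1}$; you instead track the two roots under $M_n^{-1}$, using $h_n/k_n\to\ze_q\neq\ze'_q$ (or Galois's criterion for purely periodic continued fractions) to get $\ze_{q_n}>0>\ze'_{q_n}$ and then recover the sign pattern of $a_n,c_n$ from $\ze_{q_n}-\ze'_{q_n}=\sqrt{D}/a_n$ and $\ze_{q_n}\ze'_{q_n}=c_n/a_n$. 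The two computations carry the same information, but yours buys a cleaner second-root argument (pure symmetry in $\ze_q\leftrightarrow\ze'_q$, versus the paper's remark that $\sqrt{D}$ becomes $-\sqrt{D}$) at the cost of being slightly less explicit about the rate at which the path approaches the river. Your Step 1 is also a genuine addition: the paper compresses ``and then it stays on the river'' into one sentence, justified by the fact that \emph{all} sufficiently large breakpoints lie on the river and the intervening segments are unique simple paths in a tree, whereas you prove the on-river dynamics directly ($\ze_q>1$ forces a forward-left turn matching the river, $0<\ze_q<1$ a forward-right turn, each landing on another simple form), which makes the mechanism more transparent and is correctly spliced into the global path via Lemma \ref{cont}. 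The orientation bookkeeping you flag as the main risk is handled consistently: $a>0>c$ is the paper's definition of a rightward river edge, and your initial-segment caveat about $a_0<0$ is exactly right, since the conclusion only concerns the tail of the path.
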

\begin{proof}
As $\ze_q$ has an infinite
 continued fraction expansion, consider its convergent
  \begin{equation*}
    \langle a_0,a_1, \dots, a_n\rangle = h_n/k_n
  \end{equation*}
and assume $n$ is odd. Recall from \cite[Chap. 10]{hawr} that $h_{2m}/k_{2m}<\ze_q<h_{2m+1}/k_{2m+1}$ and $h_n/k_n = \ze_q +\varepsilon_n$ with $0<\varepsilon_n<1/k_n^2$ for $k_n$ strictly increasing with  $n$.
Let $M=L^{a_0}R^{a_1} \cdots R^{a_n}$. Following this path of left and right turns from $q$ leads to $q'=q|M$ with, using \e{cf},
\begin{align}
  q'(1,0)=q(h_n,k_n) & =k_n^2 \cdot q(\ze_q + \varepsilon_n,1) \notag \\
  & =k_n^2 \Bigl(a(\ze_q + \varepsilon_n)^2+b(\ze_q + \varepsilon_n)+c\bigr) \notag \\
  & =k_n^2 \Bigl(a\ze_q^2 + b\ze_q +c+\varepsilon_n(2a \ze_q+b+a\varepsilon_n)\bigr) \notag \\
  & =k_n^2 \varepsilon_n(2a \ze_q+b+a\varepsilon_n) \notag \\
  & =k_n^2 \varepsilon_n (\sqrt{D}+ a \varepsilon_n). \label{riv}
\end{align}
Similarly,
\begin{equation}\label{riv2}
  q'(0,1)=q(h_{n-1},k_{n-1})=k_{n-1}^2 \varepsilon_{n-1} (\sqrt{D}+ a  \varepsilon_{n-1}),
\end{equation}
with $-1/k_{n-1}^2<\varepsilon_{n-1}<0$.
Therefore $q'=[a',b',c']$ has $a'>0>c'$ for all $n$ large enough by \e{riv} and \e{riv2}.
So the path from $q$ associated with $\ze_q$ reaches the river and follows it with configurations oriented  rightwards. As this path only makes forward left and right turns, it must stay on the river moving rightwards.

In the same way, the path associated with $\ze'_q$ leads to leftward oriented configurations on the river since $\sqrt{D}$ in \e{riv}, \e{riv2} becomes $-\sqrt{D}$.
\end{proof}

The above result is stated in \cite[Prop. 3]{sv18}. The periodicity of the river reflects the eventual periodicity of the continued fractions of the quadratic irrationals $\ze_q$ and $\ze'_q$.
It is also seen with \e{riv} and \e{riv2} that regions adjacent to these paths 
have bounded labels. By the climbing lemma, every path of forward left and right turns not  on the river must have unbounded adjacent regions. This is quantified in \cite{sv18}.

We obtain an easy kind of reduction. The following definition is from  \cite[Sect. 1.1]{cz93} where these forms were used to construct rational period functions.

\begin{adef}
{\rm A form $[a,b,c]$  is {\em simple} if $a>0>c$. These are exactly the rightward directed river edges.
}
\end{adef}

\begin{theorem}
Every form of positive non-square discriminant is explicitly equivalent to a simple form. Two  forms are equivalent if and only if the simple forms they correspond to are on the same river.
\end{theorem}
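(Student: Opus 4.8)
The plan is to read the reduction off Proposition~\ref{riverpath} and then to pass through the dictionary ``equivalent $\iff$ same topograph $\iff$ same river''.

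First I would fix a form $q$ of non-square discriminant $D>0$. Since $D$ is not a square we have $a\neq 0$, so $\ze_q$ is a real quadratic irrational and the continued fraction algorithm applied to it runs forever, producing coefficients $a_0,a_1,a_2,\dots$ together with the forms $q_n=q|M_n$ reached after the associated turns, where $M_n=L^{a_0}R^{a_1}\cdots$ is truncated at $a_n$ and $q_n$ is given by \e{cf}. By Proposition~\ref{riverpath} this path reaches the river and then follows it rightwards, and the explicit computations \e{riv}, \e{riv2} show that $q_n=[a_n,b_n,c_n]$ has $a_n>0>c_n$ for all sufficiently large $n$. Let $N$ be the least index with $q_N$ simple; then $q|M_N$ is a simple form, exhibited explicitly by the algorithm, which proves the first assertion. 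There is no danger of ``overshooting'': a simple form $[a,b,c]$ has $a>0>c$, so its edge borders a positive region and a negative region and is a river edge in the sense of Definition~\ref{deflr}; hence $q_n$ is simple precisely from the moment the path first touches the river onward.

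For the equivalence criterion, recall from Section~\ref{sl2} that $q_1\sim q_2$ if and only if $q_1$ and $q_2$ lie on one and the same topograph. Since $D>0$ is non-square, Lemma~\ref{perf} shows this topograph has no lakes, so by the classification of Section~\ref{cla} it is a $+-$ topograph and carries exactly one river, which is infinite and periodic. The simple form reduced from $q$ is a form equivalent to $q$, hence lies on $q$'s topograph, and it is a rightward-directed edge of that topograph's river; thus the topograph, and so the river, of $q$ coincides with that of its simple form. Therefore $q_1\sim q_2$ iff $q_1,q_2$ have the same topograph iff their simple forms have the same topograph iff their simple forms lie on a common river — the last equivalence using that a single river edge $[a,b,c]$, being a configuration (a directed edge together with its two adjacent regions $a$ and $c$, and hence the third region $a-b+c$ at one of its vertices), determines its entire topograph, so a river is contained in a unique topograph.

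The only step needing genuine care is the finiteness/no-overshoot claim of the first assertion, and that is supplied by Proposition~\ref{riverpath} together with \e{riv} and \e{riv2}; everything else is bookkeeping with the already-established correspondence between $\SL(2,\Z)$-equivalence, topographs, and rivers.
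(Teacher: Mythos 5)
Your proposal is correct and follows essentially the same route as the paper: Proposition \ref{riverpath} (with the estimates \eqref{riv}, \eqref{riv2}) supplies the explicit path to a simple form, and the equivalence criterion is read off from the correspondence between equivalence classes, topographs, and their unique periodic rivers. The extra care you take with the ``no overshoot'' point and with the fact that a single river edge determines its topograph is sound but only makes explicit what the paper leaves implicit.
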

\begin{proof}
Proposition \ref{riverpath} shows that  for any form $q$, the path associated with its first root $\ze_q$ leads to a simple form. Continuing this path, edge by edge, leads to all the simple forms in this equivalence class since these are just the rightward directed  edges of this topograph's periodic river.
\end{proof}

A natural alternative, giving fewer representatives, is just to use the simple forms that appear at the end of  sequences of $L$s and sequences of $R$s. These inflection points are the `river bend' configurations of \cite{msw19}.

\SpecialCoor
\psset{griddots=5,subgriddiv=0,gridlabels=0pt}
\psset{xunit=0.38cm, yunit=0.3cm, runit=0.2cm}
\psset{linewidth=1pt}
\psset{dotsize=5pt 0,dotstyle=*}
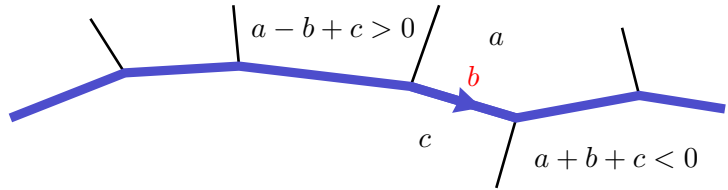
\begin{figure}[ht]
\centering
\begin{pspicture}(0,1.3)(26,9.8) 

\psset{arrowscale=1.2,arrowinset=0.1,arrowlength=1.0}
\newrgbcolor{light}{0.9 0.9 1.0}
\newrgbcolor{blue2}{0.3 0.3 0.8}

\psline(2.8,8.4)(4,6)
\psline(7.8,9)(8,6.3)
\psline(15,9)(14,5.4)
\psline(17,0.9)(17.7,4)
\psline(21.4,8)(22,5)

\psline[linewidth=3.5pt,linecolor=blue2](0,4)(4,6)(8,6.3)(14,5.4)(17.7,4)(22,5)(25,4.4)

\psline[linewidth=3.5pt,linecolor=blue2,ArrowInside=->,ArrowInsidePos=0.6](14,5.4)(17.7,4)

\rput(16.2,5.87){\textcolor{red}{$b$}}
\rput(11.3,8){$a-b+c>0$}
\rput(17,7.5){$a$}
\rput(14.5,3){$c$}
\rput(21.2,2.2){$a+b+c<0$}

\end{pspicture}
\caption{Along the river}
\label{alo}
\end{figure}

\begin{adef} \la{sr}
{\rm A form $[a,b,c]$  is {\em simply reduced} if it is simple, ($a>0>c$), and has $|a+c|<|b|$.
}
\end{adef}

If a form $q=[a,b,c]$ appears at the end of a sequence of $R$ turns, moving rightwards along the river with the next turn $L$, then we see from Figure \ref{alo} that $|a+c|<-b$. To get to the end of the next $L$ sequence apply $M_L$ from \e{mml}.
At the end of a sequence of $L$ turns, a form $q=[a,b,c]$ must similarly have $|a+c|<b$. To get to the end of the next $R$ sequence apply
$M_R$ from \e{mmr}.
Applying alternately $M_L$ and  $M_R$ repeatedly  produces a cycle of simply reduced forms on the river.

\begin{algo}
{\rm Start with any form $q$  of positive non-square discriminant. Repeatedly apply $M_L$ and $M_R$ alternately: $q \to q|M_L$, $q \to q|M_R$, $q \to q|M_L$, etc. Finish when this sequence becomes periodic, and the output is a cycle of simply reduced forms.
}
\end{algo}

The following result now follows from Proposition \ref{riverpath}.

\begin{theorem} \label{simp}
Every form of non-square discriminant $D>0$ is explicitly equivalent to a cycle of simply reduced forms. Two  forms are equivalent if and only if their  cycles of simply reduced forms agree.
\end{theorem}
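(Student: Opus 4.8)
The plan is to build everything on Proposition \ref{riverpath} and on the explicit shape of $M_L$, $M_R$ in \eqref{mml}, \eqref{mmr}. The first step is to recognize the algorithm as a repackaging of the path associated with the first root $\ze_q$: if $\ze_q = \langle a_0,a_1,a_2,\dots\rangle$, then by \eqref{mml}, \eqref{mmr} and Lemma \ref{cont} the successive iterates $q$, $q|M_L$, $q|M_LM_R$, $q|M_LM_RM_L,\dots$ are precisely the forms reached at the ends of the blocks $L^{a_0}$, $R^{a_1}$, $L^{a_2},\dots$ of that path. By Proposition \ref{riverpath} the path reaches the river after finitely many turns and then runs rightwards along it forever, so from some index onward every iterate lies on the river at the end of a maximal run of $L$'s or $R$'s, i.e.\ at one of its inflection (``river bend'') configurations. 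Since a $+-$ topograph of discriminant $D$ has only finitely many rightward river edges (Lemma \ref{har}), the river and its set of river bends are periodic, hence the iteration is eventually periodic, the algorithm terminates, and it returns the cyclically ordered list of river bends.

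The second step is to identify this list with the set of all simply reduced forms of the equivalence class. A river bend is a rightward river edge, hence simple, and at the end of an $L$-run it satisfies $|a+c|<b$ while at the end of an $R$-run it satisfies $|a+c|<-b$, by the computation behind Figure \ref{alo}; so every river bend is simply reduced. For the reverse inclusion one checks with \eqref{uu} that a simple form $[a,b,c]$ with $b>0$ and $|a+c|<b$ has $a+b+c>0$, so the rightward turn \emph{out of} it cannot be $L$; comparing with the river edge immediately preceding it shows symmetrically that the rightward turn \emph{into} it cannot be $R$; hence it is the end of an $L$-run. The case $b<0$ is mirror-symmetric and $b=0$ is impossible for a simple form. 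Thus the river bends are exactly the simply reduced forms, occurring in a single cyclic order along the river, and this is the cycle the algorithm outputs; the iterates produced before the path meets the river are not even simple, so nothing is missed or double-counted.

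The equivalence criterion is then immediate. If $q_1\sim q_2$ they lie on the same topograph, hence share the same river, hence the same cyclic sequence of river bends, i.e.\ the same cycle of simply reduced forms. Conversely, two forms with coinciding cycles share a simply reduced form, and since a form occurs on only one topograph their topographs coincide, so the forms are equivalent. The explicit equivalence of $q$ with a form in its cycle is recorded as the product of the $M_L$ and $M_R$ matrices used along the way.

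I expect the main obstacle to be the second step: pinning down rigorously that ``simply reduced'' and ``river bend'' describe the same set of forms, since the theorem tacitly requires the algorithm to return \emph{all} simply reduced forms of the class, not merely the orbit it happens to traverse. Everything else is bookkeeping layered on Proposition \ref{riverpath}, with the only real care being to track where the $\ze_q$-path first meets the river so that the periodic tail of the iteration coincides exactly with the complete cycle of river bends.
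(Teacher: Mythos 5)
Your argument is correct and is essentially the paper's own: the algorithm traces the path associated with $\ze_q$, Proposition \ref{riverpath} sends that path to the river and rightwards along it forever, the analysis behind Figure \ref{alo} (equivalently, the signs of $a+b+c$ and $a-b+c$) identifies the river bends with exactly the simply reduced forms, and periodicity of the river yields the cycle and the equivalence criterion. The only nit is your aside that ``$b=0$ is impossible for a simple form'' --- it is not (e.g.\ $[1,0,-2]$ is simple of discriminant $8$) --- but it \emph{is} impossible for a simply reduced form since $|b|>|a+c|\gqs 0$, which is all your case analysis actually needs.
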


\SpecialCoor
\psset{griddots=5,subgriddiv=0,gridlabels=0pt}
\psset{xunit=0.23cm, yunit=0.23cm, runit=0.23cm}
\psset{linewidth=1pt}
\psset{dotsize=5pt 0,dotstyle=*}
\begin{figure}[ht]
\centering
\begin{pspicture}(2,0)(61,12) 

\psset{arrowscale=1.2,arrowinset=0.1,arrowlength=1.0}
\newrgbcolor{light}{0.9 0.9 1.0}
\newrgbcolor{blue2}{0.3 0.3 0.8}
\newrgbcolor{red2}{1 0.55 0.65}

\psline(5.3,1)(6,4)
\psline(11,1)(10,4)
\psline(10.5,10)(13.5,8)(14,5)
\psline(15,11)(13.5,8)
\psline(17.2,11)(18.5,8)(18,5)
\psline(21,10.2)(18.5,8)
\psline(22,1)(22,4)

\psline(23,10)(25.2,7.6)(26,5)
\psline(26.2,11)(25.2,7.6)
\psline(28,11)(29.7,8.3)(30,5.5)
\psline(30.7,11.2)(29.7,8.3)

\psline(33.1,11)(34.6,8.2)(34,5.5)
\psline(37,11)(34.6,8.2)
\psline(38.5,10.9)(39.2,7.8)(38,5)
\psline(41.8,10)(39.2,7.8)

\psline(41,0.7)(42,4)
\psline(46,0.3)(46,3.5)
\psline(50.7,0.6)(50,4)

\psline(50,10.3)(53,8.8)(53.5,5.5)
\psline(55,11.2)(53,8.8)
\psline(56.3,1.5)(57,5)

\psline[linewidth=2.5pt,linecolor=blue2](2,5)(6,4)(10,4)(14,5)(18,5)(22,4)(26,5)(30,5.5)
(34,5.5)(38,5)(42,4)(46,3.5)(50,4)(53.5,5.5)(57,5)

\psline[linewidth=2.5pt,linecolor=blue2,linestyle=dashed](57,5)(61,4)

\psline[linewidth=2.5pt,linecolor=red2,ArrowInside=->,ArrowInsidePos=0.6](10,4)(14,5)
\psline[linewidth=2.5pt,linecolor=red2,ArrowInside=->,ArrowInsidePos=0.6](18,5)(22,4)
\psline[linewidth=2.5pt,linecolor=red2,ArrowInside=->,ArrowInsidePos=0.6](22,4)(26,5)
\psline[linewidth=2.5pt,linecolor=red2,ArrowInside=->,ArrowInsidePos=0.6](38,5)(42,4)
\psline[linewidth=2.5pt,linecolor=red2,ArrowInside=->,ArrowInsidePos=0.6](50,4)(53.5,5.5)
\psline[linewidth=2.5pt,linecolor=red2,ArrowInside=->,ArrowInsidePos=0.6](53.5,5.5)(57,5)

\rput(8,7){\textcolor{gray}{$_{59}$}}
\rput(16.1,7.8){\textcolor{gray}{$_{181}$}}
\rput(22.0,7){\textcolor{gray}{$_{145}$}}
\rput(27.5,7.3){\textcolor{gray}{$_{271}$}}

\rput(32.1,7.9){\textcolor{gray}{$_{299}$}}
\rput(36.8,7.7){\textcolor{gray}{$_{229}$}}
\rput(46.3,6.5){\textcolor{gray}{$_{61}$}}
\rput(58,7.5){\textcolor{gray}{$_{59}$}}

\rput(2.6,2.3){\textcolor{gray}{$_{-245}$}}
\rput(8,2){\textcolor{gray}{$_{-221}$}}
\rput(16,2.5){\textcolor{gray}{$_{-79}$}}
\rput(32,2.7){\textcolor{gray}{$_{-49}$}}

\rput(43.8,1.9){\textcolor{gray}{$_{-205}$}}
\rput(48.2,1.7){\textcolor{gray}{$_{-239}$}}
\rput(53.4,3.1){\textcolor{gray}{$_{-151}$}}
\rput(58.8,2.2){\textcolor{gray}{$_{-245}$}}

\end{pspicture}
\caption{A cycle of six simply reduced forms in a river}
\label{smriv}
\end{figure}
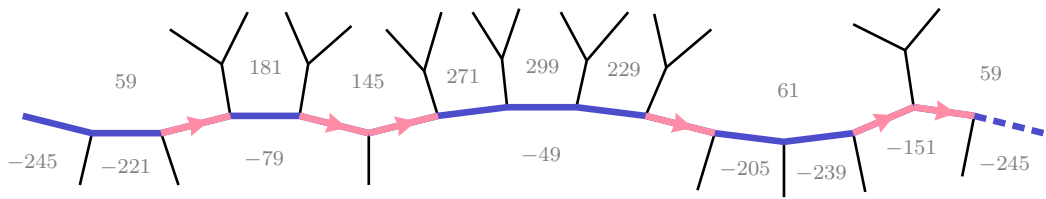

\section{Gauss and Zagier reduction} \la{red}
Discriminants in this section are positive and non-square. For forms with such discriminants, the main reduction methods in the literature are due to Gauss and Zagier. To help analyze them, we first examine the effects of $M_L$, $M_R$, $M_L'$ and $M_R'$ on a form $q=[a,b,c]$ in a topograph. Starting at $q$, let $P$ be the unique simple directed path to the river, meeting it at vertex $v$ and continuing rightwards along it for $M_L$, $M_R$ and leftwards along it for $M_L'$ and $M_R'$.

\begin{prop} \la{four}
With the above notation:
\begin{enumerate}
  \item If $q$ has the same direction as $P$ then $M_L$ and $M_L'$ move forward and left, (repeatedly applying $L$), going as far as possible along the path $P$. See the left of Figure \ref{red ml}.
  \item If $q$ is directed against $P$ then $M_L$ and $M_L'$ move  backwards and left, (repeatedly applying $L^{-1}$), going one edge outside the path $P$. See the right of Figure \ref{red ml}.
  \item If $q$ has the same direction as $P$ then $M_R$ and $M_R'$ move  forward and right, (repeatedly applying $R$), going as far as possible along  $P$.
  \item If $q$ is directed against $P$ then $M_R$ and $M_R'$ move  backwards and right, (repeatedly applying $R^{-1}$), going one edge outside $P$.
\end{enumerate}
\end{prop}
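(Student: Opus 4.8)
The plan is to reduce everything to the path descriptions in Proposition~\ref{riverpath} and to Lemma~\ref{cont}. The key observation is that, by \e{mml}, \e{mmr} and the definition of the path associated with a root, $M_L(q)=L^{a_0}$ where $a_0=\lfloor\ze_q\rfloor$ is the leading partial quotient of $\ze_q=\langle a_0,a_1,a_2,\dots\rangle$; that is, $M_L(q)$ is precisely the first syllable $L^{a_0}$ of the path associated with $\ze_q$. Similarly $M_R(q)=R^{\lfloor 1/\ze_q\rfloor}$, and $M_L'(q),M_R'(q)$ are these two matrices formed from the second root $\ze_q'$. By Proposition~\ref{riverpath} the path associated with $\ze_q$ runs from $q$ to the river and then rightwards along it, while that of $\ze_q'$ runs to the river and then leftwards; these are exactly the paths $P$ and $P'$ of the statement, extended by the rightward (resp.\ leftward) half of the river.

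Next I would fix the dictionary between the geometric hypotheses and the continued fraction. Since the path associated with $\ze_q$ is simple and reaches the river, it begins by traversing the edge $q$ forwards exactly when $a_0=\lfloor\ze_q\rfloor\gqs 0$, and then it agrees with $P$ from the outset; it begins by traversing $q$ backwards exactly when $a_0\lqs -1$, which is the case ``$q$ directed against $P$''. So ``$q$ has the same direction as $P$'' means $\lfloor\ze_q\rfloor\gqs 0$, and ``$q$ has the same direction as $P'$'' means $\lfloor\ze_q'\rfloor\gqs 0$. (For a river edge $[a,b,c]$ one has $D=b^2-4ac>b^2$, so $\sqrt D>|b|$ and this reduces to the sign of $a$, recovering that the rightward river edges are those with $a>0>c$.)

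With this, all four parts go the same way, and I would write out~(i) for $M_L$ and then indicate that~(ii)--(iv) follow by interchanging $L\leftrightarrow R$ and replacing the leading partial quotient of $\ze_q$ by that of $1/\ze_q$, $\ze_q'$ or $1/\ze_q'$ as appropriate. If $q$ has the direction of $P$ then $a_0\gqs 0$, and $M_L(q)=L^{a_0}$ is the leading syllable of $P=L^{a_0}R^{a_1}L^{a_2}\cdots$; since $a_1\gqs 1$, the next turn of $P$ is an $R$, so $L^{a_0}$ is the whole maximal initial run of left turns of $P$, i.e.\ $M_L(q)$ moves forward and left along $P$ as far as possible (and is the identity when $a_0=0$). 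If $q$ is directed against $P$ then $a_0\lqs -1$, and $M_L(q)=L^{a_0}=(L^{-1})^{|a_0|}$ moves backward and left; here, using \e{uu} to follow the labels and planar orientation through the $|a_0|$ vertices met, I would verify that $q,q|L^{-1},\dots,q|L^{a_0+1}$ are the first $|a_0|$ edges of $P$ and that the final configuration $q|L^{a_0}$ is the one edge at the last of those vertices that is not on $P$ --- so $M_L(q)$ lands one edge outside $P$, and the remaining turns $R^{a_1}L^{a_2}\cdots$ of the path rejoin $P$ and continue forwards. Lemma~\ref{cont} is what keeps these descriptions valid after each syllable.

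The step I expect to be the real obstacle is this last verification in the ``against $P$'' case: making precise, for a fixed planar orientation of the topograph, the exact sense in which a reverse run of left (or right) turns tracks $P$ and then steps one edge off it. This should be a routine induction on the length of the run, driven by \e{uu} and the river structure of Proposition~\ref{riverpath}; by contrast, the ``same direction'' halves of~(i)--(iv) are immediate once $M_L,M_R,M_L',M_R'$ are recognized as leading syllables of the paths in Proposition~\ref{riverpath}.
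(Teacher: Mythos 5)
Your proposal follows essentially the same route as the paper's proof: both rest on Proposition~\ref{riverpath} together with recognizing $M_L$, $M_R$, $M_L'$, $M_R'$ as leading syllables of the continued-fraction paths. Two differences are worth recording. First, for the ``directed against $P$'' cases you propose a local induction via \e{uu} and flag it as the real obstacle; the paper sidesteps that induction with a global tree argument: after the initial run $L^{a_0}$ with $a_0\lqs -1$, every remaining turn in $R^{a_1}L^{a_2}\cdots$ is forward, and since the whole path must still reach the river by Proposition~\ref{riverpath}, on a tree the backward run is forced to end exactly one edge off $P$ --- reversing any further would strand the subsequent forward turns. Second, your reduction of (iii)--(iv) to (i)--(ii) by ``interchanging $L$ and $R$'' is too quick: $M_R(q)=R^{\lfloor 1/\ze_q\rfloor}$ is a syllable of the path associated with $\ze_q$ only when $\lfloor \ze_q\rfloor=0$, so in general there is no ``path beginning with $R$'' to appeal to. The paper instead relates $M_R(q|S)$ to $M_L(q)$ through $\lfloor -x\rfloor=-\lfloor x\rfloor-1$ for $x\notin\Z$, which produces the off-by-one counts ($|k_2|-1$ forward right turns in (iii), $k_1+1$ backward right turns in (iv)) that a bare symmetry argument would miss.
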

\begin{proof}
Proposition \ref{riverpath} shows we are in a highly constrained situation. In part (i) above, label $q$ as $q_1$. Then $M_L$ and $M_L'$ correspond to the first $k_1 =\lfloor \frac{-b\pm\sqrt{D}}{2a}\rfloor \gqs 0$ left turns on the path $P$ associated with each root of $q$. They must go as far as possible along $P$ so that the next right turns from $R^{a_1}$ remain on the path. Similarly in part (ii), label $q$ as $q_2$. The only way the succeeding right turns from $R^{a_1}$ remain on the path is for $M_L$ and $M_L'$ to reverse $|k_2|$ places to an edge just off $P$ for $k_2=\lfloor \frac{-b\pm\sqrt{D}}{2a}\rfloor \lqs -1$. Reversing any further would mean the following  turns would never meet the river.

For part (iii) we have $M_R(q_1)=M_R(q_2|S)=R^k$ with $q_2=[a,b,c]$ and by \e{mmr},
\begin{equation*}
  k=\left\lfloor -\frac{-b+\sqrt{D}}{2a}\right\rfloor = -\left\lceil \frac{-b+\sqrt{D}}{2a}\right\rceil = -\left\lfloor \frac{-b+\sqrt{D}}{2a}\right\rfloor-1 = -k_2-1.
\end{equation*}
The formula for $M_R'(q_1)$ is the same, with $-\sqrt{D}$ instead of $\sqrt{D}$. So  we move forward and right $|k_2|-1$ times, one less than in part (ii).  Part (iv) is similar, moving  backwards and right  $k_1+1$ times.
\end{proof}

\SpecialCoor
\psset{griddots=5,subgriddiv=0,gridlabels=0pt}
\psset{xunit=0.29cm, yunit=0.29cm, runit=0.2cm}
\psset{linewidth=1pt}
\psset{dotsize=5pt 0,dotstyle=*}
\begin{figure}[ht]
\centering
\begin{pspicture}(0,0)(42,12) 

\psset{arrowscale=2.4,arrowinset=0.3,arrowlength=1.1}
\newrgbcolor{light}{0.9 0.9 1.0}
\newrgbcolor{light}{0.8 0.7 1.0}
\newrgbcolor{blue2}{0.1 0.2 0.8}
\newrgbcolor{blue2}{0.3 0.3 0.8}
\newrgbcolor{pale}{1 0.85 0.65}
\newrgbcolor{light2}{0.87 0.57 0.77}

\rput(8,6){%
        \begin{pspicture}(0,0)(16,12)

\pscurve[linewidth=9pt,linecolor=pale](8,12)(5.5,10)(5.5,7)(8,5)(8,2)(12,1)(16,2)

\psline(8,2)(8,5)(5.5,7)(5.5,10)(3,12)
\psline(5.5,10)(8,12)
\psline(5.5,7)(2,8)
\psline(8,5)(10.5,7)

\psline[linewidth=2.8pt,linecolor=blue2](0,2)(4,1)(8,2)(12,1)(16,2)


\psline[ArrowInside=->,ArrowInsidePos=0.69](8,12)(5.5,10)
\psline[linecolor=red,ArrowInside=->,ArrowInsidePos=0.69](5.5,7)(8,5)
\psline[linewidth=2.5pt,linecolor=red](5.5,7)(8,5)

\rput(8,0.5){$v$}
\rput(7.8,9.8){$q$}
\rput(4.9,4.7){$q|M_L$}
\rput(10.5,3.5){$P$}
\end{pspicture}}

\rput(34,6){%
        \begin{pspicture}(0,0)(16,12)

\pscurve[linewidth=9pt,linecolor=pale](3,12)(5.5,10)(5.5,7)(8,5)(8,2)(12,1)(16,2)

\psline(8,2)(8,5)(5.5,7)(5.5,10)(3,12)
\psline(5.5,10)(8,12)
\psline(5.5,7)(2,8)
\psline(8,5)(10.5,7)

\psline[linewidth=2.8pt,linecolor=blue2](0,2)(4,1)(8,2)(12,1)(16,2)


\psline[ArrowInside=->,ArrowInsidePos=0.69](5.5,10)(3,12)
\psline[linecolor=red,ArrowInside=->,ArrowInsidePos=0.69](2,8)(5.5,7)
\psline[linewidth=2.5pt,linecolor=red](2,8)(5.5,7)

\rput(8,0.5){$v$}
\rput(5.5,12){$q$}
\rput(2.8,6){$q|M_L$}
\rput(10.5,3.5){$P$}
\end{pspicture}}

\end{pspicture}
\caption{The effect of $M_L$}
\label{red ml}
\end{figure}
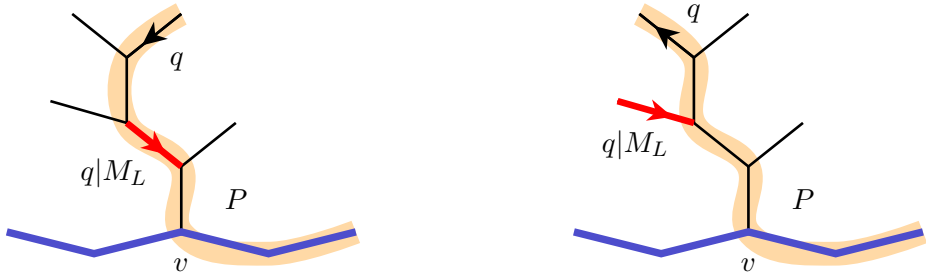

We are focussing on non-square $D>0$, but the effects of $M_L$, $M_R$, $M_L'$ and $M_R'$ are similar for square discriminants $D$ and analogously for $D<0$.

\subsection{Gauss reduction}
Gauss reduction can be described with a combination of $M_L$ and $M_R$ from \e{mml}, \e{mmr}. For $q=[a,b,c]$
put
\begin{align}
  \mg(q)  := M_R(q)S = S T^{-k} = \begin{pmatrix} 0 & -1 \\ 1 & -k \end{pmatrix} \quad &\text{for} \quad k= \left\lfloor -\frac{b+\sqrt{D}}{2c}\right\rfloor \quad \text{if} \quad c<0, \la{c-}\\
  \mg(q)  := S M_L(q|S) = S T^{k} = \begin{pmatrix} 0 & -1 \\ 1 & k \end{pmatrix} \quad &\text{for} \quad k= \left\lfloor \frac{b+\sqrt{D}}{2c}\right\rfloor \quad \text{if} \quad c>0. \la{c+}
\end{align}
Altogether
\begin{equation} \la{c}
   \mg(q) = S T^{k} = \begin{pmatrix} 0 & -1 \\ 1 & k \end{pmatrix} \quad \text{for} \quad k = \sgn(c)\left\lfloor \frac{b+\sqrt{D}}{2|c|}\right\rfloor.
\end{equation}
Then Gauss reduction is obtained by repeatedly applying the  map $q \mapsto q| \mg$. This is equivalent to the reduction described in
\cite[Sect. 3.1]{bue89} and \cite[Sect. 5.6.1]{coh93}. 
See also \cite[Sect. 3]{zey16} and \cite{smi18}.

Proposition \ref{four} allows us to see how the reduction works.  Let $P$ be the unique simple directed path from $q$ to the river, meeting it at vertex $v$ and then continuing  rightwards along it. Assume first that $q$ lies above the river. If $q$ has the opposite direction to $P$ then by \e{c+} and Proposition \ref{four} (i), we have that $q|\mg(q)$ is on $P$ and, after a sequence of left turns, matches its direction. If $q$ has the same direction as $P$ then, with Proposition \ref{four} (ii), backwards left turns are made towards the river going one edge beyond $P$ with $q|\mg(q)$ directed towards the river. The left of Figure \ref{exg} shows an example of this reduction process for  the form $[503851,442423,97121]$. If $q$ lies below the river we get the mirror image of the above, with edges in the opposite direction.

\SpecialCoor
\psset{griddots=5,subgriddiv=0,gridlabels=0pt}
\psset{xunit=0.23cm, yunit=0.23cm, runit=0.2cm}
\psset{linewidth=1pt}
\psset{dotsize=5pt 0,dotstyle=*}
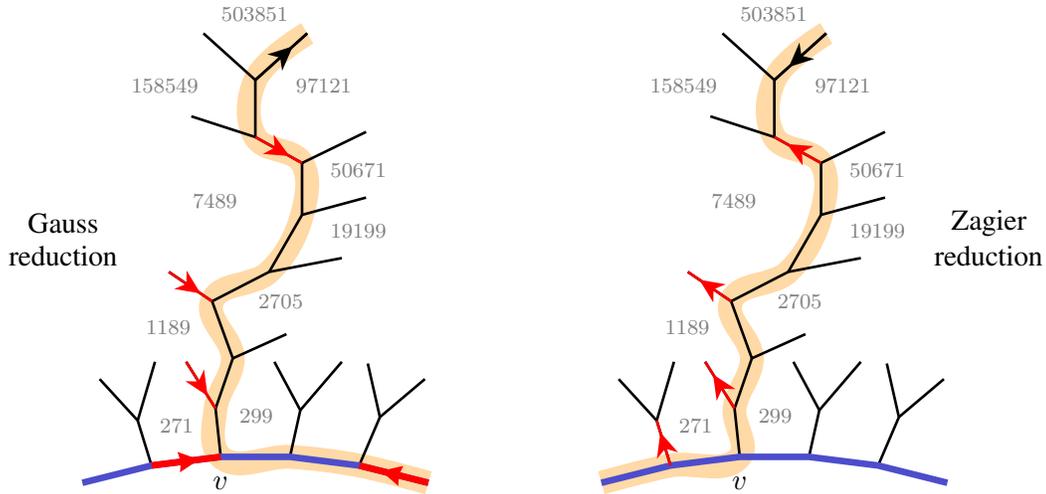
\begin{figure}[ht]
\centering
\begin{pspicture}(0,-1)(50,29) 

\psset{arrowscale=2.4,arrowinset=0.3,arrowlength=1.1}
\newrgbcolor{light}{0.9 0.9 1.0}
\newrgbcolor{blue2}{0.3 0.3 0.8}
\newrgbcolor{pale}{1 0.85 0.65}

\rput(10,14){%
        \begin{pspicture}(0,0)(20,28) 

\pscurve[linewidth=9pt,linecolor=pale](20,0)(16,1)(12,1.5)(8,1.5)
(7.7,4.3)(8.7,7.2)(7.5,10.5)(10.8,12.2)(12.7,15.5)
(12.7,18.5)(10,20)(10,23.3)(13,26)

\psline(1,6)(3.2,3.6)(4,1)
\psline(4.2,7)(3.2,3.6)
\psline(6,7)(7.7,4.3)(8,1.5)
\psline(8.7,7.2)(7.7,4.3)

\psline(11.1,7)(12.6,4.2)(12,1.5)
\psline(15,7)(12.6,4.2)
\psline(16.5,6.9)(17.2,3.8)(16,1)
\psline(19.8,6)(17.2,3.8)

\psline(11.8,8.6)(8.7,7.2)(7.5,10.5)
\psline(5,12.2)(7.5,10.5)(10.8,12.2)(15,13)
\psline(10.8,12.2)(12.7,15.5)(16.4,16.5)
\psline(12.7,15.5)(12.7,18.5)(16.4,20.3)
\psline(12.7,18.5)(10,20)(6.3,21.2)
\psline(10,20)(10,23.3)(7,26)
\psline(10,23.3)(13,26)

\psline[linewidth=2.5pt,linecolor=blue2](0,0)(4,1)(8,1.5)
(12,1.5)(16,1)(20,0)

\psline[ArrowInside=->,ArrowInsidePos=0.69](10,23.3)(13,26)
\psline[linecolor=red,ArrowInside=->,ArrowInsidePos=0.69](10,20)(12.7,18.5)
\psline[linecolor=red,ArrowInside=->,ArrowInsidePos=0.69](5,12.2)(7.5,10.5)
\psline[linecolor=red,ArrowInside=->,ArrowInsidePos=0.69](6,7)(7.7,4.3)

\psset{arrowscale=1.2,arrowinset=0.1,arrowlength=1.0}
\psline[linewidth=2.5pt,linecolor=red,ArrowInside=->,ArrowInsidePos=0.6](4,1)(8,1.5)
\psline[linewidth=2.5pt,linecolor=red,ArrowInside=->,ArrowInsidePos=0.6](20,0)(16,1)

\rput(5.5,3.3){\textcolor{gray}{$_{271}$}}
\rput(10.1,3.9){\textcolor{gray}{$_{299}$}}
\rput(5,9){\textcolor{gray}{$_{1189}$}}
\rput(11.5,10.5){\textcolor{gray}{$_{2705}$}}
\rput(7.7,16.3){\textcolor{gray}{$_{7489}$}}
\rput(16,14.6){\textcolor{gray}{$_{19199}$}}
\rput(16,18.1){\textcolor{gray}{$_{50671}$}}
\rput(4.8,23){\textcolor{gray}{$_{158549}$}}
\rput(14,23){\textcolor{gray}{$_{97121}$}}
\rput(10,27.1){\textcolor{gray}{$_{503851}$}}

\rput(8,0){$v$}

\rput(-1.1,15){Gauss}
\rput(-1.1,13.1){reduction}

\end{pspicture}}

\rput(40,14){%
        \begin{pspicture}(0,0)(20,28) 

\pscurve[linewidth=9pt,linecolor=pale](0,0)(4,1)(8,1.5)
(7.7,4.3)(8.7,7.2)(7.5,10.5)(10.8,12.2)(12.7,15.5)
(12.7,18.5)(10,20)(10,23.3)(13,26)

\psline(1,6)(3.2,3.6)(4,1)
\psline(4.2,7)(3.2,3.6)
\psline(6,7)(7.7,4.3)(8,1.5)
\psline(8.7,7.2)(7.7,4.3)

\psline(11.1,7)(12.6,4.2)(12,1.5)
\psline(15,7)(12.6,4.2)
\psline(16.5,6.9)(17.2,3.8)(16,1)
\psline(19.8,6)(17.2,3.8)

\psline(11.8,8.6)(8.7,7.2)(7.5,10.5)
\psline(5,12.2)(7.5,10.5)(10.8,12.2)(15,13)
\psline(10.8,12.2)(12.7,15.5)(16.4,16.5)
\psline(12.7,15.5)(12.7,18.5)(16.4,20.3)
\psline(12.7,18.5)(10,20)(6.3,21.2)
\psline(10,20)(10,23.3)(7,26)
\psline(10,23.3)(13,26)

\psline[linewidth=2.5pt,linecolor=blue2](0,0)(4,1)(8,1.5)
(12,1.5)(16,1)(20,0)

\psline[ArrowInside=->,ArrowInsidePos=0.69](13,26)(10,23.3)
\psline[linecolor=red,ArrowInside=->,ArrowInsidePos=0.69](12.7,18.5)(10,20)
\psline[linecolor=red,ArrowInside=->,ArrowInsidePos=0.69](7.5,10.5)(5,12.2)
\psline[linecolor=red,ArrowInside=->,ArrowInsidePos=0.69](7.7,4.3)(6,7)

\psline[linecolor=red,ArrowInside=->,ArrowInsidePos=0.69](4,1)(3.2,3.6)


\rput(5.5,3.3){\textcolor{gray}{$_{271}$}}
\rput(10.1,3.9){\textcolor{gray}{$_{299}$}}
\rput(5,9){\textcolor{gray}{$_{1189}$}}
\rput(11.5,10.5){\textcolor{gray}{$_{2705}$}}
\rput(7.7,16.3){\textcolor{gray}{$_{7489}$}}
\rput(16,14.6){\textcolor{gray}{$_{19199}$}}
\rput(16,18.1){\textcolor{gray}{$_{50671}$}}
\rput(4.8,23){\textcolor{gray}{$_{158549}$}}
\rput(14,23){\textcolor{gray}{$_{97121}$}}
\rput(10,27.1){\textcolor{gray}{$_{503851}$}}

\rput(8,0){$v$}

\rput(22.4,15){Zagier}
\rput(22.4,13.1){reduction}

\end{pspicture}}

\end{pspicture}
\caption{Reduction examples}
\label{exg}
\end{figure}

Hence Gauss reduction must reach the river, first stopping at the edge on the river to the left of $v$, (unless on an initial left turn sequence). This edge must be directed rightwards. Now \e{c-} applies and  goes as far as possible along the river making right turns, with the final $S$ directing the edge leftwards. Next \e{c+} applies making left turns as far as possible along the river (moving rightwards) with the final edge directed rightwards. In this way Gauss reduction  produces a cycle of forms on the river made up of: (a) rightward directed edges at the end of a sequence of left turns,  (b) leftward directed edges at the end of a sequence of right turns.
As in Figure \ref{alo}, they may be characterized as follows:

\begin{adef} \la{best}
{\rm A form $[a,b,c]$  is {\em G-reduced} or {\em Gauss reduced} if $a c<0$ and $|a+c|< b$.
}
\end{adef}

It may be seen that these are the same as the simply reduced forms of Definition \ref{sr}, except that the simply reduced forms are all directed rightwards. Figure \ref{griv} illustrates how the Gauss reduction in Figure \ref{exg} continues.

\SpecialCoor
\psset{griddots=5,subgriddiv=0,gridlabels=0pt}
\psset{xunit=0.23cm, yunit=0.23cm, runit=0.2cm}
\psset{linewidth=1pt}
\psset{dotsize=5pt 0,dotstyle=*}
\begin{figure}[ht]
\centering
\begin{pspicture}(2,0)(61,12) 

\psset{arrowscale=1.2,arrowinset=0.1,arrowlength=1.0}
\newrgbcolor{light}{0.9 0.9 1.0}
\newrgbcolor{blue2}{0.3 0.3 0.8}

\psline(5.3,1)(6,4)
\psline(11,1)(10,4)
\psline(10.5,10)(13.5,8)(14,5)
\psline(15,11)(13.5,8)
\psline(17.2,11)(18.5,8)(18,5)
\psline(21,10.2)(18.5,8)
\psline(22,1)(22,4)

\psline(23,10)(25.2,7.6)(26,5)
\psline(26.2,11)(25.2,7.6)
\psline(28,11)(29.7,8.3)(30,5.5)
\psline(30.7,11.2)(29.7,8.3)

\psline(33.1,11)(34.6,8.2)(34,5.5)
\psline(37,11)(34.6,8.2)
\psline(38.5,10.9)(39.2,7.8)(38,5)
\psline(41.8,10)(39.2,7.8)

\psline(41,0.7)(42,4)
\psline(46,0.3)(46,3.5)
\psline(50.7,0.6)(50,4)

\psline(50,10.3)(53,8.8)(53.5,5.5)
\psline(55,11.2)(53,8.8)
\psline(56.3,1.5)(57,5)

\psline[linewidth=2.5pt,linecolor=blue2](2,5)(6,4)(10,4)(14,5)(18,5)(22,4)(26,5)(30,5.5)
(34,5.5)(38,5)(42,4)(46,3.5)(50,4)(53.5,5.5)(57,5)

\psline[linewidth=2.5pt,linecolor=blue2,linestyle=dashed](57,5)(61,4)

\psline[linewidth=2.5pt,linecolor=red2,ArrowInside=->,ArrowInsidePos=0.6](10,4)(14,5)
\psline[linewidth=2.5pt,linecolor=red2,ArrowInside=->,ArrowInsidePos=0.6](22,4)(18,5)
\psline[linewidth=2.5pt,linecolor=red2,ArrowInside=->,ArrowInsidePos=0.6](22,4)(26,5)
\psline[linewidth=2.5pt,linecolor=red2,ArrowInside=->,ArrowInsidePos=0.6](42,4)(38,5)
\psline[linewidth=2.5pt,linecolor=red2,ArrowInside=->,ArrowInsidePos=0.6](50,4)(53.5,5.5)
\psline[linewidth=2.5pt,linecolor=red2,ArrowInside=->,ArrowInsidePos=0.6](57,5)(53.5,5.5)

\rput(8,7){\textcolor{gray}{$_{59}$}}
\rput(16.1,7.8){\textcolor{gray}{$_{181}$}}
\rput(22.0,7){\textcolor{gray}{$_{145}$}}
\rput(27.5,7.3){\textcolor{gray}{$_{271}$}}

\rput(32.1,7.9){\textcolor{gray}{$_{299}$}}
\rput(36.8,7.7){\textcolor{gray}{$_{229}$}}
\rput(46.3,6.5){\textcolor{gray}{$_{61}$}}
\rput(58,7.5){\textcolor{gray}{$_{59}$}}

\rput(2.6,2.3){\textcolor{gray}{$_{-245}$}}
\rput(8,2){\textcolor{gray}{$_{-221}$}}
\rput(16,2.5){\textcolor{gray}{$_{-79}$}}
\rput(32,2.7){\textcolor{gray}{$_{-49}$}}

\rput(43.8,1.9){\textcolor{gray}{$_{-205}$}}
\rput(48.2,1.7){\textcolor{gray}{$_{-239}$}}
\rput(53.4,3.1){\textcolor{gray}{$_{-151}$}}
\rput(58.8,2.2){\textcolor{gray}{$_{-245}$}}

\end{pspicture}
\caption{A cycle of six G-reduced forms}
\label{griv}
\end{figure}
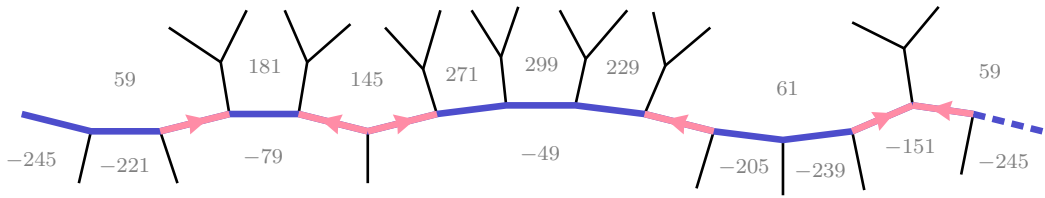

Our graphical arguments have proven  the next result.

\begin{theorem}
Every form of non-square discriminant $D>0$ is explicitly equivalent to a cycle of G-reduced forms. Two  forms are equivalent if and only if their  cycles of G-reduced forms are the same.
\end{theorem}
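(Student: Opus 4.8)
The plan is to deduce the theorem from the graphical description of the Gauss map $\mg$ given in the paragraphs just above, combined with Theorem~\ref{simp} on cycles of simply reduced forms. Rather than re-derive the dynamics of $\mg$ from scratch, I will package that discussion into two assertions and then assemble them.

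First I would prove the existence half. Fix a form $q$ of non-square discriminant $D>0$; it lies on a topograph carrying a unique infinite periodic river (Section~\ref{+-}). Let $P$ be the unique simple directed path from $q$ to that river. Using Proposition~\ref{four}, I would check that each step $q\mapsto q|\mg$ decreases the number of edges separating $q$ from the river: it advances along $P$ when $q$ is directed against $P$ (by \eqref{c+} and Proposition~\ref{four}(i)), and it retreats one edge off $P$ and re-orients toward the river when $q$ is directed along $P$ (Proposition~\ref{four}(ii)), with at most the very first step being exceptional. Hence after finitely many iterations $q|\mg^{n}$ is a river edge. Once on the river, \eqref{c-} makes a maximal run of rightward turns (leaving, after the terminal $S$, a leftward-directed edge at the end of an $R$-run) and \eqref{c+} then makes a maximal run of leftward turns (leaving a rightward-directed edge at the end of an $L$-run), so the iterates of $\mg$ land exactly on the river edges that terminate maximal $L$- or $R$-runs, which are by definition the G-reduced forms. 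Since the river is periodic this set is finite, and $\mg$ permutes it cyclically, so the forward orbit of $q$ is a single cycle; every form in it is equivalent to $q$ because each $\mg\in\SL(2,\Z)$.

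Then I would handle the equivalence criterion. One direction is immediate: forms lying on a common cycle of G-reduced forms are equivalent, since consecutive members differ by an $\SL(2,\Z)$ matrix. For the converse, suppose $q_1\sim q_2$. Then they lie on the same topograph (Section~\ref{sl2}), which has a unique river, and by the previous paragraph iterating $\mg$ from either form eventually reaches, and then exhausts, the \emph{same} fixed finite set of G-reduced river edges; hence the two cycles coincide. A cleaner variant of this argument, which I would probably present instead, is to note that the rightward-directed members of a G-reduced cycle are exactly the simply reduced forms of Definition~\ref{sr}, so the G-reduced cycle is recovered from the cycle of simply reduced forms by inserting, between each pair of consecutive simply reduced forms, the leftward-directed river edge at the end of the intervening $R$-run; the criterion then follows at once from Theorem~\ref{simp}.

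The main obstacle I anticipate is the bookkeeping in the first paragraph: verifying carefully that $\mg$ is eventually ``monotone toward the river'', cleanly disposing of the flagged exceptional case (the initial left-turn sequence), and confirming that on the river $\mg$ restricts to a genuine bijection of the finite set of G-reduced forms, so that eventual periodicity is in fact pure periodicity. Everything after that is formal. If the reduction to Theorem~\ref{simp} via simply reduced forms is used, this obstacle is largely absorbed, and the only remaining point to check with care is that the two notions of reduced form really do differ only by edge orientation together with the interleaved leftward edges.
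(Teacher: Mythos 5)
Your proposal is correct and follows essentially the same route as the paper: the theorem is obtained there, too, as a direct consequence of the geometric description of $q\mapsto q|\mg$ via Proposition \ref{four} (descent to the river, with the initial left-turn sequence as the only exceptional case, then alternating maximal $L$- and $R$-runs landing on the G-reduced edges), with the equivalence criterion coming from the uniqueness of the river on a topograph. Your ``cleaner variant'' matches the paper's own remark that G-reduced forms are exactly the simply reduced forms up to edge orientation, so it is a legitimate repackaging rather than a different argument.
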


We remark that the most common definition  of  `Gauss reduced'  in the literature has the following more complicated condition, used by Gauss himself:
\begin{equation}\label{best2}
  0 < b < \sqrt{D}, \qquad \sqrt{D}-b < 2|a| < \sqrt{D}+b.
\end{equation}
Frobenius in \cite[Sect.~1]{fro} demonstrated the equivalence of \e{best2} and Definition \ref{best}, as well as other variants.

\subsection{Zagier reduction}
In \cite[p. 122]{z81}, Zagier  defined his reduction matrix as
\begin{equation*}
   \mz(q)  :=  -T^{-k}S = \begin{pmatrix} k & 1 \\ -1 & 0 \end{pmatrix} \quad \text{for} \quad k= \left\lceil \frac{b+\sqrt{D}}{2a}\right\rceil.
\end{equation*}
This can be written in our notation as
\begin{equation*}
   \mz(q) = S \cdot M_R'(q|S)  \cdot R.
\end{equation*}
 To give a geometric description of Zagier reduction, let $P$ be the unique simple directed path from $q$ to the river,  then continuing  leftwards along it. Assume first that $q$ lies above the river. If $q$ has the same direction as $P$ then by Proposition \ref{four} (iv) we see that $q|\mz(q)$ is on $P$ and directed against it. If $q$ is directed against $P$ then, with Proposition \ref{four} (iii), right turns are made towards the river going one edge beyond $P$ with $q|\mz(q)$ directed away from the river. Repeating this, as on the right of Figure \ref{exg}, Zagier reduction must reach the river and then produces a cycle of forms pointing up from it. These are the Z-reduced forms $[a,b,c]$ with nearby region labels $a$, $c$ positive and $a-b+c$ negative. 
For forms below the river the reduction proceeds similarly.

\begin{adef} \la{zredd}
{\rm A form $[a,b,c]$  is {\em Z-reduced} or {\em Zagier reduced} if $a, c >0$ and $b>a+c$. It  is {\em Z*-reduced} if $a, c >0$ and $a+b+c<0$.
}
\end{adef}

This alternative Z*-reduction is sometimes convenient and used in \cite[p. 24]{VZ13}, for example. Clearly $q$ is Z*-reduced if and only if $q|S$ is Z-reduced. The number of Z and Z*-reduced forms in a fixed discriminant is finite -- this follows from Lemma \ref{har} with the number of distinct configurations on rivers being finite, or from Lemma \ref{hop} below.   Figure \ref{zriv} shows how the Zagier reduction in Figure \ref{exg} continues.
\SpecialCoor
\psset{griddots=5,subgriddiv=0,gridlabels=0pt}
\psset{xunit=0.23cm, yunit=0.23cm, runit=0.2cm}
\psset{linewidth=1pt}
\psset{dotsize=5pt 0,dotstyle=*}
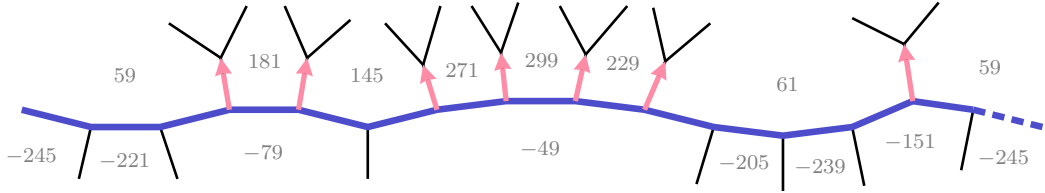
\begin{figure}[ht]
\centering
\begin{pspicture}(2,0)(61,12) 

\psset{arrowscale=1.2,arrowinset=0.1,arrowlength=1.0}
\newrgbcolor{light}{0.9 0.9 1.0}
\newrgbcolor{blue2}{0.3 0.3 0.8}

\psline(5.3,1)(6,4)
\psline(11,1)(10,4)
\psline(10.5,10)(13.5,8)(14,5)
\psline(15,11)(13.5,8)
\psline(17.2,11)(18.5,8)(18,5)
\psline(21,10.2)(18.5,8)
\psline(22,1)(22,4)

\psline(23,10)(25.2,7.6)(26,5)
\psline(26.2,11)(25.2,7.6)
\psline(28,11)(29.7,8.3)(30,5.5)
\psline(30.7,11.2)(29.7,8.3)

\psline(33.1,11)(34.6,8.2)(34,5.5)
\psline(37,11)(34.6,8.2)
\psline(38.5,10.9)(39.2,7.8)(38,5)
\psline(41.8,10)(39.2,7.8)

\psline(41,0.7)(42,4)
\psline(46,0.3)(46,3.5)
\psline(50.7,0.6)(50,4)

\psline(50,10.3)(53,8.8)(53.5,5.5)
\psline(55,11.2)(53,8.8)
\psline(56.3,1.5)(57,5)

\psline[linewidth=2.5pt,linecolor=blue2](2,5)(6,4)(10,4)(14,5)(18,5)(22,4)(26,5)(30,5.5)
(34,5.5)(38,5)(42,4)(46,3.5)(50,4)(53.5,5.5)(57,5)

\psline[linewidth=2.5pt,linecolor=blue2,linestyle=dashed](57,5)(61,4)

\psline[linewidth=2pt,linecolor=red2]{->}(14,5)(13.5,8)
\psline[linewidth=2pt,linecolor=red2]{->}(18,5)(18.5,8)
\psline[linewidth=2pt,linecolor=red2]{->}(26,5)(25.2,7.6)
\psline[linewidth=2pt,linecolor=red2]{->}(30,5.5)(29.7,8.3)
\psline[linewidth=2pt,linecolor=red2]{->}(34,5.5)(34.6,8.2)
\psline[linewidth=2pt,linecolor=red2]{->}(38,5)(39.2,7.8)
\psline[linewidth=2pt,linecolor=red2]{->}(53.5,5.5)(53,8.8)

\rput(8,7){\textcolor{gray}{$_{59}$}}
\rput(16.1,7.8){\textcolor{gray}{$_{181}$}}
\rput(22.0,7){\textcolor{gray}{$_{145}$}}
\rput(27.5,7.3){\textcolor{gray}{$_{271}$}}

\rput(32.1,7.9){\textcolor{gray}{$_{299}$}}
\rput(36.8,7.7){\textcolor{gray}{$_{229}$}}
\rput(46.3,6.5){\textcolor{gray}{$_{61}$}}
\rput(58,7.5){\textcolor{gray}{$_{59}$}}

\rput(2.6,2.3){\textcolor{gray}{$_{-245}$}}
\rput(8,2){\textcolor{gray}{$_{-221}$}}
\rput(16,2.5){\textcolor{gray}{$_{-79}$}}
\rput(32,2.7){\textcolor{gray}{$_{-49}$}}

\rput(43.8,1.9){\textcolor{gray}{$_{-205}$}}
\rput(48.2,1.7){\textcolor{gray}{$_{-239}$}}
\rput(53.4,3.1){\textcolor{gray}{$_{-151}$}}
\rput(58.8,2.2){\textcolor{gray}{$_{-245}$}}

\end{pspicture}
\caption{A cycle of seven Z-reduced forms}
\label{zriv}
\end{figure}
Reverse the direction of the Z-reduced forms in the figure  to see the Z*-reduced forms. To obtain the forms pointing down from the river, on the negative region side, in a topograph containing $q$, apply Zagier reduction to $-q$ to get $q_1, \dots, q_k$ and the desired forms are $-q_1, \dots, -q_k$.

The next theorem follows from our pictorial reasoning.

\begin{theorem} \cite[Satz 1, p. 122]{z81}
Every form of non-square discriminant $D>0$ is explicitly equivalent to a cycle of Z-reduced forms. Two  forms are equivalent if and only if their  cycles of Z-reduced forms agree.
\end{theorem}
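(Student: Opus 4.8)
The plan is to read the statement off the geometric picture of Zagier reduction developed just above, exactly as the simply reduced and $G$-reduced cases were read off from Propositions \ref{four} and \ref{riverpath}; the only genuinely new point is to pin down the output precisely. First I would describe a $Z$-reduced form as an object on the topograph. By Definition \ref{zredd}, $[a,b,c]$ is $Z$-reduced exactly when $a,c>0$ and $b>a+c$; by the orientation convention (Figure \ref{sl2fig}) the three region labels at the tail vertex of its middle edge are $a$, $c$ and $a-b+c$, and $b>a+c$ forces $a-b+c<0$ while $a+b+c>0$. So, on a topograph $\mathcal T$ of non-square discriminant $D>0$, the $Z$-reduced forms are precisely the edges of $\mathcal T$ directed away from a river vertex into the positive region lying above the river (Figure \ref{zriv}). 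Since by Lemma \ref{har} the river of $\mathcal T$ is an infinite periodic path making both an $L$ turn and an $R$ turn, at least one such upward edge occurs in each river period, while the count in Lemma \ref{har} shows there are only finitely many $Z$-reduced forms of discriminant $D$ in all; reading the upward river edges of $\mathcal T$ leftward along the river yields a well-defined finite cyclic sequence, which I will call the cycle of $Z$-reduced forms of $\mathcal T$. Note also that this cycle is intrinsic to $\mathcal T$: each $Z$-reduced form lies on exactly one topograph, so cycles belonging to distinct topographs are disjoint.

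For the first assertion I would start from an arbitrary form $q$ of discriminant $D$ on its topograph $\mathcal T$ and follow the iteration $q\mapsto q|\mz(q)$. Letting $P$ be the unique simple directed path from the current form to the river continued leftward along it, I would apply Proposition \ref{four} through the factorization $\mz(q)=S\cdot M_R'(q|S)\cdot R$ to show that one step of $\mz$ moves the form toward the river (possibly overshooting $P$ by one edge), so that after finitely many steps it lands on an upward river edge, and that every subsequent step then advances one river vertex leftward to the next upward edge. Hence the iteration enters the cycle of $Z$-reduced forms of $\mathcal T$ after finitely many steps and thereafter runs around it; the accumulated product of the matrices $\mz$ lies in $\SL(2,\Z)$ and carries $q$ to each form of that cycle, which is the asserted explicit equivalence.

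For the equivalence half: if $q_1\sim q_2$ then they lie on a common topograph $\mathcal T$, so by the previous step the reduction of each enters the single cycle of $Z$-reduced forms of $\mathcal T$, traversed in the same leftward cyclic order, and the two cycles coincide. Conversely, if the cycles of $Z$-reduced forms of $q_1$ and $q_2$ coincide, pick any form $q_0$ in the common cycle; then $q_1\sim q_0$ and $q_2\sim q_0$, hence $q_1\sim q_2$. I expect the one real obstacle to be the bookkeeping in the existence step — turning the informal ``just look'' claim that $\mz$ genuinely marches to the river and then cycles through exactly the upward river edges, with none skipped and none taken out of order, into a clean induction on the length of the path $P$ from the current form to the river, invoking the appropriate cases of Proposition \ref{four} at each step (together with Lemma \ref{cont} to glue the $M_R'$-steps into one path). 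Everything else — finiteness via Lemma \ref{har}, the identification of $Z$-reduced forms with upward river edges, and the equivalence-class argument — is routine.
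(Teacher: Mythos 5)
Your proposal is correct and follows essentially the same route as the paper: the paper's proof is precisely the pictorial argument you describe, namely identifying Z-reduced forms with the edges pointing up from river vertices and using Proposition \ref{four} (parts (iii) and (iv)) via the factorization $\mz(q)=S\cdot M_R'(q|S)\cdot R$ to show the iteration marches to the river and then cycles leftward through those edges, with the equivalence statement reduced to forms lying on a common topograph. Your write-up simply makes explicit the bookkeeping that the paper compresses into ``follows from our pictorial reasoning.''
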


Note that it is easy to have this reduction move  rightward along the river instead of leftward: replace $\mz(q)$ with $S \cdot M_L(q|S)  \cdot L$.

The Z and Z*-reduced forms  may be parameterized, as in \cite[p. 123]{z81}.
Define the set
\begin{equation} \la{ome}
  \Omega_D:= \left\{ (a,k)\in \Z^2 :|k|<\sqrt{D}, \  k^2 \equiv D \bmod 4, \  a>\frac{\sqrt{D}+k}2, \  a \Bigm| \frac{D-k^2}4  \right\}.
\end{equation}
In the following results we also allow $D$ to be a square.

\begin{lemma} \la{hop}
The Z*-reduced forms of any discriminant $D>0$ are exactly $[a,k-2a,*]$ for $(a,k)\in \Omega_D$. The Z-reduced forms of this discriminant are exactly $[a,-k+2a,*]$ for $(a,k)\in \Omega_D$.
\end{lemma}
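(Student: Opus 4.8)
The plan is to prove the lemma by a direct verification against Definition~\ref{zredd} and the definition~\e{ome} of $\Omega_D$; the topograph plays no role here. I would first settle the Z*-reduced statement and then transfer to the Z-reduced one by a relabelling.

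For the Z* case, starting from a Z*-reduced form $q=[a,b,c]$ (so $a,c>0$, $a+b+c<0$, $D=b^2-4ac$), I would set $k:=b+2a$, so that $q=[a,k-2a,c]$, and rely on the single identity
\begin{equation*}
  4a(a+b+c)=4a^2+4ab+(b^2-D)=(2a+b)^2-D=k^2-D .
\end{equation*}
From it: $a+b+c<0$ and $a>0$ force $k^2<D$, i.e.\ $|k|<\sqrt D$, and also show $a\mid (D-k^2)/4$; the congruence $k\equiv b\pmod 2$ gives $k^2\equiv b^2\equiv D\pmod 4$; and $D=b^2-4ac<b^2$ together with $b<0$ gives $-b>\sqrt D$, i.e.\ $2a-k>\sqrt D$, which is the condition $a>(\sqrt D+k)/2$. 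Hence $(a,k)\in\Omega_D$. Conversely, given $(a,k)\in\Omega_D$, I would define $b:=k-2a$ and $c:=(b^2-D)/(4a)=a-k+(k^2-D)/(4a)$; the divisibility clause of~\e{ome} makes $c\in\Z$, and $[a,b,c]$ then has discriminant $D$ with this $c$ the only possibility, so the notation $[a,k-2a,*]$ is unambiguous. The inequality $a>(\sqrt D+k)/2$ yields $a>0$ (since $|k|<\sqrt D$ gives $k>-\sqrt D$) and $2a-k>\sqrt D$, so $4ac=(2a-k)^2-D>0$ gives $c>0$, while the same identity gives $a+b+c=(k^2-D)/(4a)<0$. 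Thus the two constructions are mutually inverse bijections between the Z*-reduced forms of discriminant $D$ and $\Omega_D$.

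To obtain the Z-reduced statement I would observe that $[a,b,c]\mapsto[a,-b,c]$ is a discriminant-preserving bijection from Z-reduced forms onto Z*-reduced forms: by Definition~\ref{zredd}, ``$a,c>0$ and $b>a+c$'' is precisely the condition that $[a,-b,c]$ be Z*-reduced. Composing with the first part, $[a,b,c]$ is Z-reduced iff $[a,-b,c]=[a,k-2a,*]$ for some $(a,k)\in\Omega_D$, i.e.\ iff $-b=k-2a$, that is $b=-k+2a$, as claimed. Equivalently, one could repeat the computation above directly with $k:=2a-b$, using the parallel identity $4a(a-b+c)=(2a-b)^2-D=k^2-D$.

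I do not expect a genuine obstacle here; the argument is bookkeeping. The only points needing attention are the sign analysis — extracting both $a>0$ and $c>0$ from the single inequality $2a-k>\sqrt D$ — and checking that, under the substitution $b\leftrightarrow k-2a$, each clause defining Z*-reducedness matches exactly one of the four clauses of $\Omega_D$, with none lost or duplicated.
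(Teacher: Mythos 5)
Your proof is correct and follows essentially the same route as the paper: both set $k=2a+b$, extract the four clauses of $\Omega_D$ from the identity $4a(a+b+c)=k^2-D$ (the paper packages this as the computation of $q|U$), and pass to the Z-reduced case via the bijection $[a,b,c]\mapsto[a,-b,c]$. Your write-up is slightly more complete in that it spells out the converse direction, which the paper dismisses as ``easy to check.''
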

\begin{proof}
 Suppose that $q=[a,b,c]$ is Z*-reduced. It is easier to characterize forms $[a',b',c']$ when  $a' c'<0$ since then $(b')^2+4|a'c'|=D$ clearly has finitely many solutions. Here, as in \e{uu},
\begin{equation*}
  q | U = [a+b+c,-b-2a,a] \quad \implies  \quad (2a+b)^2+4a|a+b+c|=D.
\end{equation*}
Let $k=2a+b$ so that $|k|<\sqrt{D}$ and $k^2 \equiv D \bmod 4$. Then $a \mid (D-k^2)/4$ and $c>0$ implies $a>(k+\sqrt{D})/2$.  Hence
\begin{equation*}
  [a,b,c]=[a,k-2a,a-k+(k^2-D)/(4a)]
\end{equation*}
takes the desired form with $(a,k)\in \Omega_D$. It is easy to check that the converse is also true. The lemma's second statement  follows from the first since $[a,b,c]$ is Z*-reduced if and only if $[a,-b,c]$ is Z-reduced.
\end{proof}

\begin{cor} \la{rop}
Let $q=[a,b,c]$ have any discriminant $D>0$. Then
\begin{align*}
  \sqrt{D+4} \lqs b \lqs (D+1)/2 \qquad & \text{if $q$ is $Z$-reduced}, \\
   -(D+1)/2 \lqs b \lqs  -\sqrt{D+4} \qquad & \text{if $q$ is $Z^*$-reduced}.
\end{align*}

\end{cor}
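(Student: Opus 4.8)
The plan is to argue directly from Definition \ref{zredd} together with the relation $D=b^2-4ac$, using only that the coefficients are integers; Lemma \ref{hop} is not really needed here, though it offers an alternative route.

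First I would handle the $Z$-reduced case. Here $a,c>0$ and $b>a+c$, so $a,c\gqs 1$ and $b\gqs 1$. The lower bound is immediate: since $ac\gqs 1$ we have $b^2=D+4ac\gqs D+4$, and as $b>0$ this gives $b\gqs\sqrt{D+4}$. For the upper bound, integrality upgrades $b>a+c$ to $b\gqs a+c+1$, hence $b-1\gqs a+c\gqs 2\sqrt{ac}$ by the arithmetic--geometric mean inequality, so $(b-1)^2\gqs 4ac$. Therefore $D=b^2-4ac\gqs b^2-(b-1)^2=2b-1$, i.e.\ $b\lqs(D+1)/2$.

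Next I would deduce the $Z^*$-reduced case from the $Z$-reduced one. If $q=[a,b,c]$ is $Z^*$-reduced then, as noted right after Definition \ref{zredd}, the form $q|S=[c,-b,a]$ (see \e{uu}) is $Z$-reduced, and it has the same discriminant $D$. Applying the bounds just proved to $q|S$, whose middle coefficient is $-b$, gives $\sqrt{D+4}\lqs -b\lqs(D+1)/2$, which is exactly $-(D+1)/2\lqs b\lqs-\sqrt{D+4}$. (Alternatively, argue directly: $a+b+c<0$ with $a,c\gqs 1$ forces $-b\gqs a+c+1\gqs 2$, and the two estimates $b^2=D+4ac\gqs D+4$ and $(-b-1)^2\gqs 4ac$ then go through verbatim.)

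There is no real obstacle: the argument is a short computation. The only point worth flagging is that both bounds genuinely use integrality of $a,b,c$ — to pass from the strict inequality $b>a+c$ to $b\gqs a+c+1$, and to know $ac\gqs 1$ — and that the step $(a+c)^2\gqs 4ac$ is what converts the linear constraint $b-1\gqs a+c$ into the quadratic one $(b-1)^2\gqs 4ac$ needed to compare against $D=b^2-4ac$.
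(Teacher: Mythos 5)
Your proof is correct. It differs from the paper's mainly in how the upper bound is obtained: the paper routes through the parametrization of Lemma \ref{hop}, writing a $Z$-reduced form as $[a,2a-k,*]$ with $(a,k)\in\Omega_D$, so that $a\mid\frac{D-k^2}{4}$ forces $a\lqs\frac{D-k^2}{4}$, whence $b=2a-k\lqs\frac{D-k^2}{2}-k\lqs\frac{D+1}{2}$ after completing the square via $k^2+2k+1\gqs 0$. You instead work straight from Definition \ref{zredd}, upgrading $b>a+c$ to $b\gqs a+c+1$ by integrality and then using $(a+c)^2\gqs 4ac$ to get $D=b^2-4ac\gqs b^2-(b-1)^2=2b-1$. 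These are genuinely different algebraic paths, though they rest on the same underlying fact (the divisor condition $a\lqs\frac{D-k^2}{4}$ in the paper is equivalent to $b-a-c\gqs 1$, i.e.\ to your integrality step); your version has the advantage of being self-contained and not presupposing Lemma \ref{hop}, while the paper's version comes essentially for free once that lemma is in hand. The lower bound $b^2=D+4ac\gqs D+4$ is identical in both, and your reduction of the $Z^*$ case to the $Z$ case via $q|S=[c,-b,a]$ matches the paper's one-line remark that the other case just replaces $b$ by $-b$.
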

\begin{proof}
In the Z-reduced case, by Lemma \ref{hop}, $b=2a-k \lqs \frac{D-k^2}2 -k$. Then use $k^2+2k+1 \gqs 0$ to obtain the upper bound. For the lower bound $b^2=D+4ac \gqs D+4$. The other case has $-b$ instead of $b$.
\end{proof}

\subsection{Reduction comparison}

We may briefly compare the simple reduction introduced in Section \ref{pnsd} with Gauss and Zagier reduction. As seen in Figure \ref{exg}, after a possible initial sequence of left turns,  Gauss and Zagier reduction make successive right turns towards the river, leaving the optimum path $P$ to reverse direction. Figure \ref{exg} shows that starting with a form above the river, Gauss and Zagier reduction work in essentially the same way, differing only when they reach the river. For forms below the river they work similarly, though Gauss reduction uses successive left turns towards the river with reversals while Zagier reduction uses successive right turns. This follows from Proposition \ref{four}.

Simple reduction works with alternating sequences of first left then right turns towards the river. It remains on the optimum path $P$ except for an initial reversal if the form being reduced is directed away from the river. Gauss and Zagier reduction have the advantage of using a single reduction matrix, $\mg$ or $\mz$, to reduce forms. Simple reduction requires $M_L$ and $M_R$ but produces a more direct path.

\section{On the river} \la{ive}

\subsection{Automorphs}
The group of automorphs of  $q=[a,b,c]$ and the stabilizer group of $z \in \C$ are defined as
$$
  \aut(q)  := \left\{ M \in \SL(2,\Z): q|M = q\right\}, \qquad
  \stab(z)  := \left\{ M \in \SL(2,\Z): Mz = z\right\}.
$$
$\aut(q)$ has a well-known simple structure that we will require. It is clear that $\aut(\lambda q) = \aut(q)$, so we need only consider primitive forms. Put
\begin{equation}\label{gqtu}
 G_q(t,u)  := \begin{pmatrix} (t- b u)/2 & -c u \\ a u & (t+ b u)/2 \end{pmatrix}.
\end{equation}

\begin{prop}\cite[Thm. 202]{la58}, \cite[Thm. 3.9]{bue89} \label{au}
Let $q$ be a primitive form of non-square discriminant $D$. Then
\begin{equation} \la{aust}
 \aut(q) = \stab(\ze_q) = \left\{ G_q(t,u): t,u \in \Z, t^2- D u^2 =4 \right\}.
\end{equation}
\end{prop}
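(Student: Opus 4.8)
The plan is to prove $\aut(q)=\stab(\ze_q)$ first and then to compute $\stab(\ze_q)$ explicitly. Note that $D$ non-square forces $a\neq 0$ (else $D=b^2$) and makes $\ze_q$ a quadratic irrational, so $\sqrt D\notin\Q$. A preliminary remark, used repeatedly, is that a form of given discriminant $D$ is determined by its first root: if $[a,b,c]$ and $[a',b',c']$ have the same discriminant and $\tfrac{-b+\sqrt D}{2a}=\tfrac{-b'+\sqrt D}{2a'}$, then comparing rational and irrational parts (valid since $\sqrt D\notin\Q$) gives $a=a'$, $b=b'$, hence $c=c'$. Now if $M\in\aut(q)$ then $\ze_q=\ze_{q|M}=M^{-1}\ze_q$ by \eqref{roots}, so $M\in\stab(\ze_q)$; conversely, if $M\in\stab(\ze_q)$ then $q|M$ has discriminant $D$ (as $\det M=1$) and first root $M^{-1}\ze_q=\ze_q$, so $q|M=q$ by the remark, i.e.\ $M\in\aut(q)$. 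Hence $\aut(q)=\stab(\ze_q)$.

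It remains to identify $\stab(\ze_q)$. Let $M=(\begin{smallmatrix}\alpha&\beta\\\gamma&\delta\end{smallmatrix})\in\SL(2,\Z)$ fix $\ze_q$; clearing the denominator in $\tfrac{\alpha\ze_q+\beta}{\gamma\ze_q+\delta}=\ze_q$ shows that the integer polynomial $\gamma X^2+(\delta-\alpha)X-\beta$ vanishes at $\ze_q$. Since $q$ is primitive, $aX^2+bX+c$ is, up to sign, the primitive integral polynomial of the quadratic irrational $\ze_q$ (i.e.\ the minimal polynomial over $\Q$ scaled to primitive integer coefficients); hence $(\gamma,\delta-\alpha,-\beta)=u\,(a,b,c)$ for some $u\in\Q$ (if $\gamma=0$ the polynomial has degree $\le 1$, so vanishes identically and $u=0$), and a $\gcd$ argument using $\gcd(a,b,c)=1$ forces $u\in\Z$. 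Setting $t=\alpha+\delta$ then gives $M=G_q(t,u)$, and $1=\det M=\tfrac14(t^2-Du^2)$, so $t^2-Du^2=4$. This yields $\stab(\ze_q)\subseteq\{G_q(t,u):t,u\in\Z,\ t^2-Du^2=4\}$, and it is here that I expect the only genuine care is needed: upgrading the $\Q$-proportionality to $u\in\Z$, and keeping track of $b\equiv D\pmod 2$ so that, conversely, integer solutions of $t^2-Du^2=4$ produce integral matrices.

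For the reverse inclusion, let $t,u\in\Z$ with $t^2-Du^2=4$. The entries $-cu$ and $au$ of $G_q(t,u)$ are integers, and so are the diagonal entries $(t\mp bu)/2$: if $D\equiv 0\pmod 4$ then $b$ is even and $t^2=4+Du^2\equiv 0\pmod 4$, so $t$ is even; if $D\equiv 1\pmod 4$ then $b$ is odd and $t^2-u^2=4+(D-1)u^2\equiv 0\pmod 4$, so $t\equiv u\equiv bu\pmod 2$. Thus $G_q(t,u)\in M_2(\Z)$ with $\det G_q(t,u)=\tfrac14(t^2-Du^2)=1$, so $G_q(t,u)\in\SL(2,\Z)$. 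A direct computation using $a\ze_q^2+b\ze_q+c=0$ shows $G_q(t,u)\,\ze_q=\ze_q$ (alternatively, $\bigl(q|G_q(t,u)\bigr)(1,0)=a\cdot\tfrac14(t^2-Du^2)=a$, and the preliminary remark then forces $q|G_q(t,u)=q$), so $G_q(t,u)\in\stab(\ze_q)=\aut(q)$ by the first paragraph. Combining this with the second paragraph proves \eqref{aust}.
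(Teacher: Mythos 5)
Your proof is correct and follows essentially the same route as the paper: the forward inclusion $\stab(\ze_q)\subseteq\{G_q(t,u): t^2-Du^2=4\}$ is the paper's argument verbatim (extract the quadratic relation satisfied by $\ze_q$, use primitivity and $[\Q(\ze_q):\Q]=2$ to get $\g=au$, $\delta-\alpha=bu$, $\beta=-cu$, then $\det M=1$), and you merely reorganize the rest, proving $\aut(q)=\stab(\ze_q)$ up front via the observation that a form of fixed non-square discriminant is determined by its first root, where the paper instead verifies the form-level identity $q|G_q(t,u)=q$ and deduces both inclusions at the end; your parity check that $(t\pm bu)/2\in\Z$ and the upgrade of $u$ from $\Q$ to $\Z$ fill in details the paper leaves implicit. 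One small caveat: the parenthetical alternative in your last paragraph does not work as stated, since knowing $\bigl(q|G_q(t,u)\bigr)(1,0)=a$ together with the discriminant does not determine the form --- your preliminary remark requires equal first roots, not equal leading coefficients --- but this is only an aside, and your primary verification that $G_q(t,u)$ fixes $\ze_q$ is sound and suffices.
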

\begin{proof}
If $M=(\begin{smallmatrix}\alpha & \beta\\\g &\delta\end{smallmatrix}) \in \stab(\ze_q)$ then $\g \ze_q^2+(\delta-\alpha)\ze_q-\beta=0$. Since $a \ze_q^2+b\ze_q+c=0$ for $\gcd(a,b,c)=1$ and $[\Q(\ze_q):\Q]=2$, we must have
\begin{equation}\label{gdbx}
\g=a u, \qquad \delta-\alpha = b u, \qquad \beta=-c u
\end{equation}
for some integer $u$. Letting $t=\alpha+\delta=\text{tr}(M)$ also shows
$$
\alpha=(t- b u)/2, \qquad \delta=(t+ b u)/2,
$$
and $\det(M)=1$ implies $t^2-u^2 D =4$.
We have shown that
\begin{equation}\label{stabb}
 \stab(\ze_q) \subseteq \left\{ G_q(t,u): t,u \in \Z, t^2- D u^2 =4 \right\}.
\end{equation}

Next, a computation verifies 
\begin{equation}\label{csh}
  q|G_q(t,u)=q  \qquad \text{for} \qquad  t^2- D u^2  =4, \quad D\in \Z.
\end{equation}
Then \e{roots} and \e{csh} imply $G_q(t,u) \ze_q=\ze_q$ and we have equality in \e{stabb}. Lastly, $\aut(q) \subseteq \stab(\ze_q)$ by \e{roots} and $\aut(q) \supseteq \stab(\ze_q)$ by \e{csh} and \e{stabb}.
\end{proof}

Consequently, the automorphs of primitive forms  of non-square discriminant $D$ are in bijection with  solutions of the Pell equation $t^2- D u^2  =4$. It is also routine to check that the map $\psi:\aut(q) \to \C^*$ given by $G_q(t,u) \mapsto (t+u\sqrt{D})/2$ is an injective homomorphism for these $D$ values.  For example, the trivial automorphs correspond to $t=\pm 2, u=0$   and map to $\pm 1$.

\begin{cor} \la{aut}
Let $q$ be a primitive form of   discriminant $D$. Then $\aut(q)=\{\pm I\}$  for $D$ a nonzero square or $D<-4$. Also $\aut(q)/\{\pm I\}\cong \Z$, $\Z/3\Z$   and $\Z/2\Z$  for $D=0$, $-3$ and $-4$, respectively. (See Figures \ref{det0} and \ref{-3-4}.)
\end{cor}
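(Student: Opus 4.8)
The plan is to split into the three regimes according to the sign and type of $D$: nonzero square (and $D<-4$), then $D=0$, then $D=-3$ and $D=-4$, using Proposition \ref{au} for the non-square nonsquare cases... wait — actually Proposition \ref{au} handles \emph{non-square} $D$. So the cleanest route is: for nonzero \emph{square} $D$ and for $D<0$, argue directly; for non-square $D>0$, invoke Proposition \ref{au} plus the fact that the Pell equation $t^2-Du^2=4$ has only $t=\pm 2,u=0$ modulo sign when... no, that is false, it has infinitely many solutions. So let me restate.

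First, for non-square $D$ (positive or negative with $D<-4$, $D\neq -3$): for $D<0$ non-square, the Pell-type equation $t^2-Du^2=4$ forces $Du^2=t^2-4$; since $D<0$ this gives $t^2-4\le 0$, so $t\in\{-2,-1,0,1,2\}$, and checking each (using $4\mid(t^2-4)$ when $u\neq0$ and $|D|\ge 5$) leaves only $t=\pm2,u=0$, hence $\aut(q)=\{\pm I\}$ by Proposition \ref{au}. For $D=-4$: $t^2+4u^2=4$ gives $(t,u)=(\pm2,0)$ or $(0,\pm1)$, so $\aut(q)/\{\pm I\}\cong\Z/2\Z$, generated by $G_q(0,1)$. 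For $D=-3$: $t^2+3u^2=4$ gives $(t,u)\in\{(\pm2,0),(\pm1,\pm1)\}$, six solutions, so $\aut(q)/\{\pm I\}\cong\Z/3\Z$. (Here one should note that $D<0$ primitive forms are positive definite with $a,c>0$, so $\ze_q\in\H$, and $\stab(\ze_q)$ in $\PSL(2,\Z)$ is finite; the elliptic orders $2$ and $3$ match the symmetries of the topographs in Figure \ref{-3-4}.) For $D=0$: a primitive topograph of discriminant $0$ contains $[0,0,1]$ (or its negative), and the automorphs are the $M$ with $q|M=q$; directly, $q(x,y)=y^2$ is fixed exactly by $M=\bigl(\begin{smallmatrix}\pm1 & *\\ 0 & \pm1\end{smallmatrix}\bigr)$ with the sign of the diagonal entries equal and $*\in\Z$, so $\aut(q)=\{\pm T^n:n\in\Z\}$ and $\aut(q)/\{\pm I\}\cong\Z$, realized on the topograph of Figure \ref{det0} as translation along the lake.

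For nonzero \emph{square} $D=m^2$ with $m\ge 1$: a primitive form has, by Theorem \ref{redsq}, a reduced representative $[0,m,c]$ on the left lake. An automorph $M$ fixes this form, hence fixes the corresponding lake edge as a directed edge; but by Proposition \ref{which-lake} the left lake edges are all directed counterclockwise around the lake, and $M$ acts as an automorphism of the tree preserving orientation, so it permutes these directed edges by a rotation. If $M$ moved the lake edge to another lake edge it would move the (unique) river attached to that lake, forcing the topograph to have more than one river, a contradiction — so $M$ fixes this edge. An orientation-preserving tree automorphism of a trivalent tree fixing a directed edge is the identity. Hence $\aut(q)=\{\pm I\}$. (Alternatively, directly: $q=[0,m,c]$, $q|M=q$ with $M=\bigl(\begin{smallmatrix}\alpha&\beta\\\g&\delta\end{smallmatrix}\bigr)$ forces $\g\,mh = $ coefficient conditions giving $\g=0$, $\alpha\delta=1$, and then the middle coefficient condition $2\alpha\beta\cdot 0 + m(\alpha\delta)=m$ together with $\beta$-freedom is killed by the requirement $c\delta^2+\dots=c$; pushing this through gives $M=\pm I$.)

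The main obstacle is the square case, where Proposition \ref{au} does not apply and one must rule out nontrivial automorphs by hand; the topographical argument above (an orientation-preserving automorphism of the trivalent tree fixing the directed reduced-form edge must be trivial) is the crispest way to do it, but it requires being careful that $\aut(q)$ really does act on $\mathcal T$ fixing the marked directed edge and that such an action has no kernel beyond $\pm I$ — i.e., invoking the correspondence (reviewed in \cite[Sect.~3.1]{ri21} and stated in Section \ref{sl2}) that $\PSL(2,\Z)$ acts simply transitively on the directed-edge structure of the topograph. Everything else is a finite check on $t^2-Du^2=4$.
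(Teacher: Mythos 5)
Your proposal is correct and follows essentially the same route as the paper's proof: counting solutions of $t^2-Du^2=4$ via Proposition \ref{au} for $D<0$, a direct computation showing $\aut([0,0,1])=\{\pm T^n\}$ for $D=0$, and for nonzero square $D$ the observation that the reduced form occupies a rigid, unrepeatable position at the left lake, so no nontrivial automorph can move it. The only cosmetic difference is that you phrase the square case as a statement about orientation-preserving automorphisms of the trivalent tree fixing a directed edge, whereas the paper phrases it as the impossibility of a path from $q_0$ to a copy of itself; these are the same argument.
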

\begin{proof}
By Proposition \ref{au}, the small number of solutions to $t^2- D u^2  =4$ for $D<0$ gives the result in these cases, with the image of $\psi$ being the 6th, 4th or 2nd roots of unity. For this, see also \cite[Sect. 8, Satz 2]{z81}. When $D=0$, check directly that $q|M = q$ for $q=[0,0,1]$ iff $M=\pm T^j$. Hence $\aut(q)/\{\pm I\}\cong \Z$ and any  equivalent form will have a conjugate automorph group. Lastly, let $\mathcal T$ be a topograph with   square discriminant $D>0$. It contains a unique reduced form $q_0$, (recall Definition \ref{defx}). Any nontrivial $M$ in $\aut(q_0)$ corresponds to a path in $\mathcal T$ linking $q_0$ to a copy. But $q_0$ lies on the left lake where it meets the river and by the structure of $\mathcal T$, seen in Section \ref{cla}, there can be no copies. So $\aut(q_0)=\{\pm I\}$ and the same is true for any form $q$ on $\mathcal T$ as their automorph groups are conjugate.
\end{proof}

We may now focus on simplifying the structure of $\aut(q)$ in the remaining case. 

\begin{lemma} \la{www}
Let $q$ be a primitive form of non-square discriminant $D>0$. If $t^2- D u^2  =4$ only has the trivial solutions $(t,u)=(\pm 2,0)$ then $\aut(q)=\{\pm I\}$. If there are non-trivial solutions, let $(t_0,u_0)$ be the smallest in positive integers. In this case
\begin{equation} \label{styy}
  \aut(q) = \left\{ \pm G_q^n : n \in \Z\right\} \qquad \text{for} \qquad G_q:= G_q(t_0,u_0).
\end{equation}
\end{lemma}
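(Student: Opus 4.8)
The plan is to exploit Proposition~\ref{au} together with the injective homomorphism $\psi\colon\aut(q)\to\C^*$, $G_q(t,u)\mapsto(t+u\sqrt{D})/2$, recorded just above the statement; since $D>0$ this map lands in $\R^*$. Set $\Lambda:=\psi(\aut(q))$. By Proposition~\ref{au} this is precisely $\{(t+u\sqrt{D})/2 : t,u\in\Z,\ t^2-Du^2=4\}$, a subgroup of $\R^*$, and it contains $-1=\psi(-I)$ as well as $1=\psi(I)$. As $\psi$ is an injective homomorphism, $\aut(q)=\psi^{-1}(\Lambda)$, so it suffices to determine $\Lambda$ and then pull back.

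First I would record the basic identity: if $\varepsilon=(t+u\sqrt{D})/2\in\Lambda$ then its conjugate $(t-u\sqrt{D})/2$ equals $1/\varepsilon$, because the product is $(t^2-Du^2)/4=1$. Hence $t=\varepsilon+1/\varepsilon$ and $u\sqrt{D}=\varepsilon-1/\varepsilon$, so $\varepsilon$ determines $(t,u)$; moreover $\Lambda$ is discrete in $\R^*$, since a sequence in $\Lambda\cap\R_{>0}$ converging to $1$ would have $t=\varepsilon+1/\varepsilon\to 2$ and $u\sqrt{D}=\varepsilon-1/\varepsilon\to 0$, forcing $(t,u)=(2,0)$ eventually as $t,u$ are integers. (Equivalently, one can run the classical Pell argument directly: divide an arbitrary solution by a suitable power of a fundamental one and use the resulting bound $1\le w<\varepsilon_0$ to contradict minimality unless $w=1$.)

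Next I would classify $\Lambda$. The positive part $\Lambda_{>0}=\Lambda\cap\R_{>0}$ is a discrete subgroup of $\R_{>0}$, hence via $\log$ is either trivial or infinite cyclic generated by its least element exceeding $1$; and $\Lambda=\Lambda_{>0}\cup(-\Lambda_{>0})$ because $-1\in\Lambda$. If $t^2-Du^2=4$ has only the solutions $(\pm2,0)$, then $\Lambda_{>0}=\{1\}$ and $\Lambda=\{\pm1\}$, so $\aut(q)=\psi^{-1}(\{\pm1\})=\{\pm I\}$. Otherwise let $\lambda>1$ be the least element of $\Lambda_{>0}$ and write $\lambda=(t_0+u_0\sqrt{D})/2$; from $t_0=\lambda+1/\lambda$ and $u_0\sqrt{D}=\lambda-1/\lambda$ we get $t_0,u_0>0$, and since $\varepsilon\mapsto\varepsilon+1/\varepsilon$ and $\varepsilon\mapsto\varepsilon-1/\varepsilon$ are strictly increasing for $\varepsilon>1$ while every positive integer solution $(t,u)$ gives $(t+u\sqrt D)/2>1$, any such solution satisfies $(t+u\sqrt{D})/2=\lambda^n$ for some $n\ge1$ and hence $t\ge t_0$, $u\ge u_0$; thus $(t_0,u_0)$ is the smallest solution in positive integers, as in the statement. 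Then $\Lambda=\{\pm\lambda^n:n\in\Z\}$, and since $\psi(G_q(t_0,u_0))=\lambda$ and $\psi$ is a homomorphism with $\psi(-I)=-1$, applying $\psi^{-1}$ gives $\aut(q)=\{\pm G_q(t_0,u_0)^n:n\in\Z\}$, which is~\eqref{styy}.

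The only step carrying real content is the discreteness of $\Lambda$ — equivalently, the statement that every solution of $t^2-Du^2=4$ is, up to sign, a power of the fundamental one. This is the classical theory of the Pell equation, and its one essential input is the identity $\varepsilon\cdot(t-u\sqrt D)/2=1$; everything else is bookkeeping with the isomorphism $\psi$ and the elementary structure of discrete subgroups of $\R^*$. I note that the existence of a nontrivial solution when $D>0$ is non-square is not needed here: the lemma is stated conditionally on whether such a solution exists.
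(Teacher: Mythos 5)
Your proposal is correct and follows essentially the same route as the paper: both use the injective homomorphism $\psi$ together with the product identity $(t+u\sqrt{D})/2\cdot(t-u\sqrt{D})/2=1$ to split the image into $\pm$ its positive part, and then invoke the structure of discrete subgroups of $\R_{>0}$ to see that the least element exceeding $1$, namely $(t_0+u_0\sqrt{D})/2$, generates. You supply slightly more detail than the paper does on why the image is discrete and why minimality of $\lambda$ in $\R_{>0}$ coincides with minimality of $(t_0,u_0)$ in positive integers, but the underlying argument is the same.
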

\begin{proof}
The first statement follows from \e{aust}. Now assume the non-trivial solution $(t_0,u_0)$ exists. The image of $\psi$ is a subgroup $S$ of $\R^*$. Since $(t+u\sqrt{D})/2 \cdot (t-u\sqrt{D})/2 =1$ it follows that both factors are $>0$ if $t>0$. Hence $S=\pm S_0$ where $S_0$ restricts to $t>0$ and is a multiplicative subgroup of $\R_{>0}$. Then $\log(S_0)$ is an additive subgroup of $\R$ with smallest positive element $\log((t_0+u_0\sqrt{D})/2)$. It must be the case that this element generates $\log(S_0)$ and so \e{styy} follows.
\end{proof}

\begin{prop} \label{t0}
Let $q$ be a simple primitive form on a topograph with non-square discriminant $D>0$. Let $M=(\begin{smallmatrix}\alpha & \beta\\\g &\delta\end{smallmatrix})=L^{a_0}R^{a_1} \cdots$ for $a_i\gqs 0$ correspond to a shortest period of the river, giving $q|M=q$. Then
$t_0=\alpha+\delta$, $u_0=\gcd(\g,\delta-\alpha,\beta)$ gives the smallest positive integer solution to $t^2- D u^2  =4$. We have that
$M$  is a primitive hyperbolic matrix, $M=G_q:=G_q(t_0,u_0)$ and \e{styy} holds.
\end{prop}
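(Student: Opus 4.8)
The plan is to recognize $M$ as an automorph of $q$, identify it via Proposition \ref{au} with a solution of the Pell equation $t^2-Du^2=4$, pin down the sign data forced by the hypotheses that $q$ is simple and that $M$ is a word in $L$ and $R$, and then use the minimality of the river period to force $M$ to be the fundamental automorph.

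First, $q|M=q$ means $M\in\aut(q)$, so Proposition \ref{au} gives $M=G_q(t,u)$ for some $t,u\in\Z$ with $t^2-Du^2=4$. Matching entries of \e{gqtu} with $M=(\begin{smallmatrix}\alpha & \beta\\\g &\delta\end{smallmatrix})$ yields, exactly as in \e{gdbx}, $\g=au$, $\delta-\alpha=bu$, $\beta=-cu$ and $t=\alpha+\delta$; primitivity of $q$ then gives $\gcd(\g,\delta-\alpha,\beta)=|u|$. By Lemma \ref{har} a shortest period of the river has at least two edges and contains both an $L$ and an $R$ turn, so $M$ is a nontrivial product of $L$ and $R$; in particular $M\neq\pm I$, hence $u\neq 0$ and $t^2=4+Du^2>4$, so $M$ is hyperbolic. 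Since $L$ and $R$ have nonnegative entries so does $M$, and since $q$ is simple, $a>0>c$; thus $\g=au\gqs 0$ forces $u>0$, and the diagonal entries $(t\mp bu)/2$ are nonnegative with sum $t$, so $t>0$ (and then $(t-bu)(t+bu)=4-4acu^2>0$ shows both are strictly positive, so $M$ has all positive entries). Hence $(t,u)$ is a solution of $t^2-Du^2=4$ in positive integers and $u=\gcd(\g,\delta-\alpha,\beta)$.

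As a nontrivial solution exists, Lemma \ref{www} applies: with $(t_0,u_0)$ the least positive solution and $G_q:=G_q(t_0,u_0)$, we have $\aut(q)=\{\pm G_q^n:n\in\Z\}$, which is \e{styy}. Among these matrices, only the $G_q^n=G_q(t_n,u_n)$ with $n\gqs 1$ have all positive entries ($I$ has zeros, $-G_q^n$ has negative entries, and $G_q^{-n}=G_q(t_n,-u_n)$ has negative lower-left entry $-au_n$), so $M=G_q^n$ for some $n\gqs 1$. To get $n=1$, note that every simple form corresponds to a river edge (it borders a positive and a negative region), so every ``copy'' of $q$ lies on the river, and by periodicity consecutive copies are one period $P$ apart. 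The matrix $M$ realises, by hypothesis, the rightward path along the river from $q$ to the first copy of $q$, covering $P$ edges. Since $G_q$ has nonnegative entries it corresponds to a forward path of $L,R$ turns with $q|G_q=q$; such a path cannot leave the river (a forward path on a tree cannot return to the river after leaving it), so it runs rightward along the river to a copy of $q$, at distance $mP$ for some $m\gqs 1$. Then $M=G_q^n$ realises a rightward path of $nmP$ edges, and $nmP=P$ forces $n=m=1$. Therefore $M=G_q=G_q(t_0,u_0)$ and $(t,u)=(t_0,u_0)$. For primitivity of $M$ as a hyperbolic matrix: if $M=N^k$ with $N\in\SL(2,\Z)$, $k\gqs 2$, then $N$ is hyperbolic with the same axis as $M$, so $N\ze_q=\ze_q$ and $N\in\stab(\ze_q)=\aut(q)$ by Proposition \ref{au}; arguing as above, $N$ moves $q$ rightward by a positive multiple of $P$ while $N^k=M$ moves it by exactly $P$, which is impossible for $k\gqs 2$.

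The step I expect to be the main obstacle is the passage from $M=G_q^n$ to $n=1$, together with the primitivity claim: these require making precise that the copies of a simple form all lie on the river at the spacing of the period, and that a forward $L$/$R$ path realising an automorph of $q$ must travel monotonically rightward along the river, so that the shortest-period matrix $M$ cannot be a proper power inside $\aut(q)$. The remaining ingredients — the Pell identification from Proposition \ref{au}, the sign bookkeeping from simplicity and from $M$ being a word in $L,R$, and the reduction to $M=G_q^n$ via Lemma \ref{www} — are routine given Propositions \ref{au} and \ref{riverpath} and Lemmas \ref{har} and \ref{www}.
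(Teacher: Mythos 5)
Your proposal is correct and follows essentially the same route as the paper's proof: identify $M$ as a nontrivial automorph, invoke Lemma \ref{www} to get $\aut(q)=\{\pm G_q^n\}$, use positivity of the entries of $M$ and of $G_q$ (from simplicity of $q$ and Lemma \ref{har}) to force $M=G_q^n$ with $n\gqs 1$, then compare the river paths of $M$ and $G_q$ to conclude $M=G_q$, and finally prove primitivity by noting any root $N$ of $M$ fixes $\ze_q$ and hence lies in $\aut(q)$. Your added sign bookkeeping (e.g.\ the computation $(t-bu)(t+bu)=4-4acu^2>0$) just makes explicit what the paper asserts directly.
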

\begin{proof}
Since $M$ is a non-trivial element of $\aut(q)$, we have immediately from Lemma \ref{www} that the non-trivial solution $(t_0,u_0)$ exists and that \e{styy} holds. Hence $M=\pm G_q^k$ for some $k \in \Z$. In fact, since all the entries of $M$ are $>0$ (it contains at least one $L$ and one $R$ by Lemma \ref{har}) and all the entries of $G_q$ are $>0$ ($q$ is simple), we must have $M= G_q^k$ for some $k \in \Z_{\gqs 1}$. Now $G_q$ corresponds to a topograph path from $q$ to a copy of $q$. This gives a path along the river, and with positivity of entries again,  $G_q= M^\ell$ for some $\ell \in \Z_{\gqs 1}$.   Consequently $M$ and $G_q$ give the same river path and $M=G_q$.

The matrix $M$ is hyperbolic since tr$(M)>2$. To show $M$ is primitive, suppose $M=N^r$ for hyperbolic $N \in \SL(2,\Z)$ and $r\gqs 1$. The two fixed points of $N$ must be the fixed points of $M$. Hence $N$ must fix $z_q$. Proposition \ref{au} then implies that $N \in \aut(q)$ and so $N=\pm M^k$ for some $k \in \Z$ by \e{styy}. Then $M = \pm M^{k r}$. As all the powers of a hyperbolic matrix are distinct we must have $r=1$, as desired.
\end{proof}

\begin{adef} \la{unit}
{\rm For a non-square discriminant $D>0$ we use the following notation.
\begin{enumerate}
  \item Let $(t_0, u_0)$ or  $(t_D, u_D)$ refer to the smallest  solution to $t^2-Du^2=4$ in positive integers. We know that this solution exists by Proposition \ref{t0}.
  \item Set $\varepsilon_D:=(t_D+u_D\sqrt{D})/2$.
  \item Define $G_q:= G_q(t_D,u_D)$ with \e{gqtu}.
\end{enumerate}
}
\end{adef}

Note that if $D$ is a fundamental discriminant then $\varepsilon_D$ is the smallest unit $>1$ of norm $1$ in the ring of integers of $\Q(\sqrt{D})$. Combining  Propositions \ref{au}, \ref{t0} and Lemma \ref{www} gives:

\begin{cor}
Let $q$ be any  primitive form  with non-square discriminant $D>0$. Then
\begin{equation} \label{st2}
  \aut(q) = \stab(z_q) = \left\{ \pm G_q^n : n \in \Z\right\}.
\end{equation}
\end{cor}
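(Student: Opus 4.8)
The plan is to combine the results already established for primitive forms of non-square discriminant $D>0$ with the fact that every such form is equivalent to a simple form on its river, using the invariance of both $\aut$ and $\stab$ under the $\SL(2,\Z)$ action. First I would recall that by Proposition~\ref{au} we already have $\aut(q) = \stab(z_q)$ for any primitive form $q$ of non-square discriminant, so the only thing left to prove is the explicit description $\aut(q) = \{\pm G_q^n : n\in\Z\}$, where $G_q = G_q(t_D,u_D)$ is built from the smallest positive solution of the Pell equation $t^2 - Du^2 = 4$.

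The key steps, in order: (1) Reduce to the simple case. Given an arbitrary primitive form $q$ of discriminant $D>0$, Theorem~\ref{simp} (or Proposition~\ref{riverpath}) provides $N\in\SL(2,\Z)$ with $q_1 := q|N$ simple. Conjugation gives $\aut(q) = N\,\aut(q_1)\,N^{-1}$. (2) For the simple form $q_1$, apply Proposition~\ref{t0}: a shortest river period yields $M$ with $q_1|M = q_1$, $M = G_{q_1}(t_D,u_D)$, and $\aut(q_1) = \{\pm G_{q_1}^n : n\in\Z\}$. (3) Transport back: show $N\,G_{q_1}(t_D,u_D)\,N^{-1} = G_q(t_D,u_D)$. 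This is where a short computation is needed — one verifies from the defining formula \e{gqtu}, or more slickly from \e{gdbx}, that conjugating the matrix $G_{q_1}(t,u)$ that fixes $z_{q_1}$ by $N$ produces the matrix with the same trace $t$ and the same ``$u$'' parameter attached to the coefficients of $q = q_1|N^{-1}$, hence exactly $G_q(t,u)$; here one uses $z_q = N z_{q_1}$ from \e{roots} and the characterization \e{gdbx} of an element of $\stab(z_q)$ in terms of $(a,b,c)$ and $u$. (4) Conclude $\aut(q) = N\{\pm G_{q_1}^n\}N^{-1} = \{\pm G_q^n : n\in\Z\}$, which together with $\aut(q) = \stab(z_q)$ from Proposition~\ref{au} gives \e{st2}.

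The main obstacle is step~(3): making the conjugation identity $N G_{q_1}(t,u) N^{-1} = G_q(t,u)$ clean rather than a brute-force $2\times2$ matrix slog. The efficient route is to avoid multiplying matrices directly and instead argue structurally: $N G_{q_1}(t,u) N^{-1}$ lies in $\stab(N z_{q_1}) = \stab(z_q)$, has trace $t$, and lies in $\aut(q_1|N^{-1}) = \aut(q)$ (using \e{csh} with the invariance of $q_1$ under $G_{q_1}(t,u)$); then by the uniqueness built into the parametrization $\{G_q(t,u): t^2-Du^2=4\}$ of $\stab(z_q)$ in Proposition~\ref{au} — an element of $\stab(z_q)$ is determined by its trace $t$ together with the sign of $u$, and for $(t,u)=(t_D,u_D)$ the element $G_q(t_D,u_D)$ is the one with positive $u$ that corresponds to moving rightward along the river — one deduces $N G_{q_1}(t_D,u_D) N^{-1} = G_q(t_D,u_D)$ exactly. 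One should be slightly careful about whether conjugation could flip $u_0 \mapsto -u_0$; this is pinned down by noting that positivity of the river-path matrix $M$ (all entries $>0$) forces the generator to be $G_q$ rather than $G_q^{-1}$, exactly as in the proof of Proposition~\ref{t0}, so the sign is consistent after conjugation. With that, the corollary follows immediately.
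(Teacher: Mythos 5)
Your proposal is correct, but it takes a more roundabout route than the paper. The paper obtains the corollary by simply combining Proposition \ref{au} (which gives $\aut(q)=\stab(z_q)=\{G_q(t,u)\}$ for \emph{any} primitive $q$), Proposition \ref{t0} (needed only to guarantee that a nontrivial solution of $t^2-Du^2=4$ exists, a fact depending on $D$ alone), and Lemma \ref{www} — and the key point is that Lemma \ref{www} is already stated and proved for an \emph{arbitrary} primitive form of non-square discriminant $D>0$, via the injective homomorphism $\psi:\aut(q)\to\C^*$, $G_q(t,u)\mapsto (t+u\sqrt D)/2$, and the discreteness of the logarithm of its image. So no reduction to a simple form on the river and no conjugation step are needed at all. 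Your steps (1), (3), (4) re-derive the general case from the simple one, which is legitimate: your conjugation identity is essentially the paper's \e{idm} (note that with $q_1=q|N$ the correct direction is $G_{q_1}=N^{-1}G_qN$, i.e.\ $NG_{q_1}N^{-1}=G_q$, which is what you use; the displayed form of \e{idm} in the paper has the conjugation reversed). One simplification you could make: the sign ambiguity you worry about in step (3) — whether the conjugate is $G_q(t_D,u_D)$ or $G_q(t_D,-u_D)=G_q(t_D,u_D)^{-1}$ — is irrelevant for this corollary, since $G_q$ and $G_q^{-1}$ generate the same subgroup $\{\pm G_q^n : n\in\Z\}$; the careful sign analysis only matters when one wants to single out $G_q$ itself, as in the characterization given after the corollary. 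What your approach buys is a uniform "transport along the topograph" picture consistent with the rest of the paper; what the paper's approach buys is brevity, since Lemma \ref{www} already did the work in full generality.
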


We may characterize $G_q \in \SL(2,\Z)$ for primitive $q=[a,b,c]$ as the generator of $\aut(q)/\{\pm I\}$  with positive trace and with bottom left entry having the same sign as $a$, (this distinguishes it from $G_q^{-1}$). The following relation is also useful and can be verified with a computation:
\begin{equation}\label{idm}
  G_{q'} = M G_q M^{-1} \qquad \text{for} \qquad q'=q|M  \qquad \text{with} \qquad   M\in \SL(2,\Z).
\end{equation}
The next corollary follows quickly from \e{gqtu} and Proposition \ref{t0}. See also \cite[Prop. 1.4]{sa82} and \cite[Sect. 2.1]{ri21}.


\begin{cor} \label{tu2}
The map $q \mapsto G_q$ is a bijection from the set of primitive forms of non-square discriminant $D>0$ to the set of primitive hyperbolic matrices $(\begin{smallmatrix}\alpha & \beta\\\g &\delta\end{smallmatrix})\in \SL(2,\Z)$ with trace $t_D$ and $\gcd(\g,\delta-\alpha,\beta)=u_D$. The inverse of the map is  $(\begin{smallmatrix}\alpha & \beta\\\g &\delta\end{smallmatrix}) \mapsto [\g,\delta-\alpha,-\beta]/u_D$.
\end{cor}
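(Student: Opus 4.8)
The plan is to verify directly that the stated formula defines a two-sided inverse of $q\mapsto G_q$; nothing deep is needed beyond \eqref{gqtu}, the Pell relation $t_D^2-Du_D^2=4$, and the matrix-primitivity already recorded in Proposition \ref{t0}. Concretely I would (a) check that $q\mapsto G_q$ lands in the claimed set, (b) check that $(\begin{smallmatrix}\alpha&\beta\\\gamma&\delta\end{smallmatrix})\mapsto[\gamma,\delta-\alpha,-\beta]/u_D$ sends that set back to primitive forms of discriminant $D$, and (c) check that the two composites are the identity by reading the entries of $G_q(t_D,u_D)$ off \eqref{gqtu}.

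For (a): given primitive $q=[a,b,c]$ of non-square discriminant $D>0$, Definition \ref{unit} gives $G_q=G_q(t_D,u_D)=(\begin{smallmatrix}(t_D-bu_D)/2&-cu_D\\au_D&(t_D+bu_D)/2\end{smallmatrix})$, so $\text{tr}(G_q)=t_D$, and since $(t_D,u_D)$ solves $t^2-Du^2=4$ with $u_D\geq1$ we have $t_D^2=4+Du_D^2>4$, hence $G_q$ is hyperbolic. Reading off $\gamma=au_D$, $\delta-\alpha=bu_D$, $\beta=-cu_D$ shows $\gcd(\gamma,\delta-\alpha,\beta)=u_D\gcd(a,b,c)=u_D$ by primitivity of $q$. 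The one point requiring earlier results is that $G_q$ is a \emph{primitive} hyperbolic matrix: for $q$ simple this is exactly Proposition \ref{t0}, and for general primitive $q$ I would write $q=q_0|N$ with $q_0$ simply reduced (hence simple, and primitive since equivalence preserves the coefficient gcd) using Theorem \ref{simp}, and invoke \eqref{idm}, so that $G_q=NG_{q_0}N^{-1}$ is $\SL(2,\Z)$-conjugate to the primitive matrix $G_{q_0}$; conjugation in $\SL(2,\Z)$ preserves matrix-primitivity (if $G_q=C^r$ then $G_{q_0}=(N^{-1}CN)^r$, forcing $r=1$), whence $G_q$ is primitive. (Alternatively one can rerun the fixed-point argument of Proposition \ref{t0}, using Proposition \ref{au} and \eqref{st2}.)

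For (b) and (c): let $M=(\begin{smallmatrix}\alpha&\beta\\\gamma&\delta\end{smallmatrix})$ be hyperbolic with $\text{tr}(M)=t_D$ and $\gcd(\gamma,\delta-\alpha,\beta)=u_D$. Then $u_D$ divides $\gamma,\delta-\alpha,\beta$, so $q:=[\gamma,\delta-\alpha,-\beta]/u_D$ has coprime integer coefficients, and $\gamma\neq0$ since $\gamma=0$ would force $\det M=\alpha\delta=1$ and $\text{tr}(M)=\pm2<t_D$. Its discriminant is $\bigl((\delta-\alpha)^2+4\gamma\beta\bigr)/u_D^2$, and substituting $\beta\gamma=\alpha\delta-1$ from $\det M=1$ collapses the numerator to $(\alpha+\delta)^2-4=t_D^2-4=Du_D^2$, so $q$ is a primitive form of discriminant $D$. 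Now $G_q=G_q(t_D,u_D)$, and plugging $au_D=\gamma$, $bu_D=\delta-\alpha$, $cu_D=-\beta$ and $t_D=\alpha+\delta$ into \eqref{gqtu} yields top-left entry $(t_D-(\delta-\alpha))/2=\alpha$, bottom-right $(t_D+(\delta-\alpha))/2=\delta$, and off-diagonal entries $-(-\beta)=\beta$ and $\gamma$, i.e.\ $G_q=M$. Conversely, applying the stated formula to $G_q=G_q(t_D,u_D)$ for $q=[a,b,c]$ returns $[au_D,bu_D,cu_D]/u_D=q$. Hence the two maps are mutually inverse, so $q\mapsto G_q$ is a bijection with inverse $(\begin{smallmatrix}\alpha&\beta\\\gamma&\delta\end{smallmatrix})\mapsto[\gamma,\delta-\alpha,-\beta]/u_D$.

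The only genuine obstacle is the matrix-primitivity of $G_q$ in step (a); everything else is the bookkeeping above together with the Pell identity $t_D^2-Du_D^2=4$ and $\det M=1$. I would dispose of that obstacle by the conjugation reduction to Proposition \ref{t0} just indicated, since Theorem \ref{simp} and \eqref{idm} are already in hand.
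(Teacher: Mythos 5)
Your proposal is correct and follows exactly the route the paper indicates, since the paper offers no written proof beyond the remark that the corollary ``follows quickly from \eqref{gqtu} and Proposition \ref{t0}'': you verify the two composites directly from the entries of $G_q(t_D,u_D)$, use $\det M=1$ and $t_D^2-Du_D^2=4$ for the discriminant computation, and obtain matrix-primitivity from Proposition \ref{t0}. Your extra step of transporting primitivity from a simply reduced representative to an arbitrary primitive form via Theorem \ref{simp} and the conjugation relation \eqref{idm} is precisely the detail the paper leaves implicit, and it is handled correctly.
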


By \e{idm} and  Corollary \ref{tu2}, equivalence classes of primitive forms correspond to conjugacy classes of primitive hyperbolic matrices and hence closed geodesics on the modular surface $\G\backslash \H$.

\subsection{Class numbers for $D>0$} \la{rty}

A version of the following elegant result is stated in \cite[Eq. (6.4)]{zk75} and given as an exercise in \cite[p. 138]{z81}.
Recall Z*-reduced forms from Definition \ref{zredd}, and  $\varepsilon_D$, $G_q$ from Definition \ref{unit}.

\begin{prop} \la{proz}
Let $q_1, q_2, \dots, q_n$ be all the  Z*-reduced forms on a primitive topograph with non-square discriminant $D>0$. Then the product of their first roots has a simple evaluation:
$
\ze_{q_1} \ze_{q_2} \cdots \ze_{q_n} =  \varepsilon_D.
$
\end{prop}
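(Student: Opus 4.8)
The plan is to organise $q_1,\dots,q_n$ into the single cycle produced by Zagier reduction and then recognise $\ze_{q_1}\cdots\ze_{q_n}$ as (essentially) the multiplier of the associated automorph at its attracting fixed point. Since $q$ is Z*-reduced exactly when $q|S$ is Z-reduced (the remark after Definition \ref{zredd}), the quoted theorem of Zagier shows that the Z*-reduced forms of discriminant $D$ lying on a fixed topograph form one cycle $q_1\to q_2\to\cdots\to q_n\to q_1$, where $q_{i+1}=q_i|A_i$ with
\[
A_i \;=\; S\,\mz(q_i|S)\,S \;=\; \begin{pmatrix} 0 & -1\\ 1 & -k_i\end{pmatrix}\in\SL(2,\Z),\qquad k_i\in\Z_{\gqs 1},
\]
and indices are read modulo $n$, so $q_{n+1}=q_1$. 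After relabelling we may assume $q_1,\dots,q_n$ occur in this cyclic order.

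By \e{roots}, applying $A_i^{-1}=\begin{pmatrix}-k_i&1\\-1&0\end{pmatrix}$ to the first root gives the recursion $\ze_{q_{i+1}}=k_i-1/\ze_{q_i}$. Writing $v_i:=\begin{pmatrix}\ze_{q_i}\\ 1\end{pmatrix}$, a one-line check then shows $A_i v_{i+1}=-\ze_{q_i}^{-1}\,v_i$. Hence, putting $N:=A_1A_2\cdots A_n$ and using $v_{n+1}=v_1$,
\[
N v_1 \;=\; A_1\cdots A_n\, v_{n+1}\;=\;\Bigl(\,\textstyle\prod_{i=1}^n\bigl(-\ze_{q_i}^{-1}\bigr)\Bigr)\, v_1\;=\;\frac{(-1)^n}{\ze_{q_1}\ze_{q_2}\cdots\ze_{q_n}}\;v_1 .
\]
So $v_1$ is an eigenvector of $N$ with eigenvalue $(-1)^n/(\ze_{q_1}\cdots\ze_{q_n})$.

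It remains to identify $N$. Since $q_1|N=q_{n+1}=q_1$ we have $N\in\aut(q_1)$, and because $q_1,\dots,q_n$ are \emph{all} the Z*-reduced forms on the topograph, one turn around the cycle traverses the minimal period of the river exactly once; by Proposition \ref{t0} (together with \e{idm}) $N$ is therefore a primitive automorph, $N=\pm G_{q_1}^{\pm1}$. Now $G_{q_1}=G_{q_1}(t_D,u_D)$ has trace $t_D$ and determinant $1$, so its eigenvalues are $\tfrac{t_D\pm u_D\sqrt D}{2}=\varepsilon_D^{\pm1}$; computing the kernel of $G_{q_1}-\varepsilon_D I$ from \e{gqtu} shows that the eigenvector belonging to the larger eigenvalue $\varepsilon_D$ is exactly $v_1=\begin{pmatrix}\ze_{q_1}\\ 1\end{pmatrix}$. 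Comparing with the display above gives $(-1)^n/(\ze_{q_1}\cdots\ze_{q_n})=\pm\varepsilon_D^{\pm1}$, hence $\ze_{q_1}\cdots\ze_{q_n}=\pm\varepsilon_D^{\pm1}$. Finally, a short use of the inequalities $\alpha_i,\gamma_i>0>\alpha_i+\beta_i+\gamma_i$ defining a Z*-reduced form $[\alpha_i,\beta_i,\gamma_i]$ shows $\ze_{q_i}=\tfrac{-\beta_i+\sqrt D}{2\alpha_i}>1$, so the product exceeds $1$; the only sign choice compatible with this is $\ze_{q_1}\cdots\ze_{q_n}=\varepsilon_D$, as claimed.

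I expect the main obstacle to be the identification step: one must argue carefully that running the reduction around all of $q_1,\dots,q_n$ corresponds to exactly one minimal period of the river — so that $N$ is the \emph{fundamental} automorph rather than a proper power, which is what pins the answer down to $\varepsilon_D$ and not $\varepsilon_D^m$ — and one must keep track of which of the two fixed points of $G_{q_1}$ carries the eigenvalue $\varepsilon_D>1$. Everything else is routine $2\times 2$ matrix bookkeeping.
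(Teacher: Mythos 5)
Your proof is correct and follows essentially the same route as the paper's: both arguments identify $\ze_{q_1}\cdots\ze_{q_n}$ as the eigenvalue of the fundamental automorph acting on the eigenvector $(\ze_{q_1},1)^t$ (the paper's \e{rev}), both use the single-cycle structure of the Z*-reduced forms along the river, and both invoke Proposition \ref{t0} to know that one trip around that cycle gives a \emph{primitive} automorph rather than a proper power. The only real difference is in the bookkeeping at the end: the paper pins down $G_{q_1}=T^{b_1}S\cdots T^{b_n}S$ exactly and reads off the eigenvalue as the product of roots directly via an explicit induction, whereas you settle for $N=\pm G_{q_1}^{\pm 1}$ and resolve the resulting four-fold ambiguity by the (correct) observation that every Z*-reduced form has first root $>1$ --- a slightly more robust way to handle the sign and orientation conventions.
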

\begin{proof}
For any  $q$ of this discriminant $D$, a computation reveals that
\begin{equation}\label{rev}
  G_q \begin{pmatrix} \ze_q \\ 1 \end{pmatrix} = \varepsilon_D \begin{pmatrix}  \ze_q \\ 1 \end{pmatrix}.
\end{equation}
From each Z*-reduced $q_i$, it is easy to see on the topograph that the next Z*-reduced form is reached by left turns, ($L=T$), followed by an $S$. See Figure \ref{zriv} with the arrows reversed. We may assume the ordering $q_{i+1}=q_i|T^{b_i}S$ with indices $\bmod$ $n$ and integers $b_i\gqs 2$.
Starting at the simple form $q':=q_1|T$,
\begin{equation*}
  G_{q'}= T^{b_1-1}S T^{b_2}S T^{b_3}S \cdots T^{b_n}S T
\end{equation*}
by Proposition \ref{t0}, requiring primitivity. Hence, with \e{idm},
\begin{equation} \label{rev2}
  G_{q_1} = T^{b_1}S T^{b_2}S T^{b_3}S \cdots T^{b_n}S.
\end{equation}
For convenience write the roots $\ze_{q_i}$  as  $w_i$. Then
\begin{equation*} 
  w_{i+1}=\ze_{q_{i+1}}= \ze_{q_i|T^{b_i}S} = S T^{-b_i} \ze_{q_i} = \begin{pmatrix} 0 & -1 \\ 1 & -b_i \end{pmatrix} w_i = \frac{1}{b_i-w_i}.
\end{equation*}
Fixing $k$, a simple induction on decreasing $j$ 
finds
\begin{align*}
  T^{b_j}S T^{b_{j+1}}S  \cdots T^{b_k}S \begin{pmatrix} w_{k+1} \\ 1 \end{pmatrix}
  & = \begin{pmatrix} b_j & -1 \\ 1 & 0 \end{pmatrix}
  \begin{pmatrix} b_{j+1} & -1 \\ 1 & 0 \end{pmatrix} \cdots
  \begin{pmatrix} b_k & -1 \\ 1 & 0 \end{pmatrix}\begin{pmatrix} w_{k+1} \\ 1 \end{pmatrix}\\
  & = \begin{pmatrix} w_j w_{j+1} \cdots w_{k+1} \\ w_{j+1} \cdots w_{k+1} \end{pmatrix},
\end{align*}
for $j\lqs k$. Thus, with $j=1$, $k=n$ and \e{rev}, \e{rev2},
\begin{equation*}
   G_{q_1} \begin{pmatrix} w_{1} \\ 1 \end{pmatrix} =  \begin{pmatrix} w_1 w_{2} \cdots w_n w_{1} \\ w_{2} \cdots w_n w_{1} \end{pmatrix} = \varepsilon_D \begin{pmatrix}  w_1 \\ 1 \end{pmatrix},
\end{equation*}
completing the proof.
\end{proof}

The next corollary follows directly from Proposition \ref{proz} by including all primitive topographs of discriminant $D$. It may be compared with Dirichlet's class number formula for positive fundamental discriminants, given in \cite[Sect. 9, Satz 3]{z81}, \cite[Cor. 5.6.10]{coh93} and \cite[p. 3996]{dit21} for example. Then \e{bot} has the advantages of a simpler proof, applying to a larger set of $D$s and not requiring Kronecker symbols.

\begin{cor}
For non-square discriminants $D>0$,
\begin{equation}\label{bot}
\varepsilon_D^{h(D)} = \prod_{\substack{ a+b+c \,< \, 0 \,  <  \, a,  \, c \\ b^2-4ac  \, = \,  D, \, \gcd(a,b,c) \, = \, 1}} \frac{-b+\sqrt{D}}{2a}.
\end{equation}
\end{cor}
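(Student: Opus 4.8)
The plan is to obtain \e{bot} by assembling Proposition \ref{proz} over all the primitive topographs of discriminant $D$. The product on the right of \e{bot} runs precisely over the primitive forms $[a,b,c]$ that are \emph{Z*-reduced} in the sense of Definition \ref{zredd}, since the condition $a+b+c<0<a,c$ is the defining one and the factor $\frac{-b+\sqrt D}{2a}$ is exactly the first root $\ze_q$ of $q=[a,b,c]$ from \e{zeeq}. By Corollary \ref{rop} (equivalently Lemma \ref{hop}) there are only finitely many such forms, so the product is well defined.

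First I would recall from Section \ref{sl2} that the primitive equivalence classes of discriminant $D$ correspond to the primitive topographs $\mathcal T_1,\dots,\mathcal T_{h(D)}$ of that discriminant, with each primitive form of discriminant $D$ appearing on exactly one of them. Since $D>0$ is non-square, every $\mathcal T_j$ carries an infinite periodic river (Section \ref{+-}), and the reduction theory of Sections \ref{pnsd} and \ref{red} shows that the Z*-reduced forms on $\mathcal T_j$ form a nonempty finite cycle $q^{(j)}_1,\dots,q^{(j)}_{n_j}$; by the primitivity of the period matrix $G_q$ established in Proposition \ref{t0}, these $n_j$ forms are pairwise distinct. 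Hence the set of Z*-reduced primitive forms of discriminant $D$ is the disjoint union, over $j=1,\dots,h(D)$, of the cycles on the $\mathcal T_j$.

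Next I would apply Proposition \ref{proz} to each $\mathcal T_j$, which gives $\ze_{q^{(j)}_1}\ze_{q^{(j)}_2}\cdots\ze_{q^{(j)}_{n_j}}=\varepsilon_D$. Multiplying these $h(D)$ identities and using the partition above,
\[
  \prod_{\substack{ a+b+c \,< \, 0 \,  <  \, a,  \, c \\ b^2-4ac  \, = \,  D, \, \gcd(a,b,c) \, = \, 1}} \frac{-b+\sqrt{D}}{2a}
  \;=\; \prod_{j=1}^{h(D)}\ \prod_{i=1}^{n_j} \ze_{q^{(j)}_i}
  \;=\; \prod_{j=1}^{h(D)} \varepsilon_D
  \;=\; \varepsilon_D^{h(D)},
\]
which is \e{bot}.

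The points that need care are purely organizational, and constitute the only real obstacle: that the right-hand product is finite, that no $\mathcal T_j$ contributes an empty product (i.e.\ every primitive topograph of non-square $D>0$ contains at least one Z*-reduced form), and that each Z*-reduced configuration is counted once, not with multiplicity, in the cycle of its topograph. All three follow from the structure theory already developed in Sections \ref{cla}, \ref{pnsd} and \ref{red} together with Proposition \ref{t0}, so the argument is a direct repackaging of Proposition \ref{proz}; no further computation is needed.
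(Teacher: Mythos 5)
Your argument is correct and is exactly the route the paper takes: the paper states that the corollary ``follows directly'' from Proposition \ref{proz}, meaning precisely your decomposition of the set of primitive Z*-reduced forms of discriminant $D$ into the $h(D)$ cycles on the primitive topographs, each contributing a factor of $\varepsilon_D$. Your added remarks on finiteness, nonemptiness of each cycle, and distinctness of the forms within a period (via primitivity of $G_q$ from Proposition \ref{t0}) correctly fill in the details the paper leaves implicit.
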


The product in \e{bot} is finite and may be computed using Corollary \ref{rop}. For example, when $D=148$ then $\varepsilon_D = 73+12\sqrt{37}$, $h(D)=3$ and both sides of \e{bot} agree. Zagier's parametrization in Lemma \ref{hop} implies that all the factors on the right of \e{bot} are $>1$. This  parametrization also gives:

\begin{cor} Recall $\Omega_D$ from \e{ome}. For non-square discriminants $D>0$,
$$
h^*(D) \log \varepsilon_D = \sum_{(a,k) \, \in \, \Omega_D} \log\left(1+\frac{\sqrt{D}-k}{2a}\right).
$$
\end{cor}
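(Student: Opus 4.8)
The plan is to read the right-hand side as the logarithm of a product of first roots and then to evaluate that product one topograph at a time. By Lemma~\ref{hop}, $\Omega_D$ is in bijection with the Z*-reduced forms of discriminant $D$: the pair $(a,k)$ corresponds to $q=[a,k-2a,*]$, whose first root is
\[
  \ze_q=\frac{-(k-2a)+\sqrt D}{2a}=1+\frac{\sqrt D-k}{2a},
\]
which is exactly the quantity inside the logarithm. Hence the sum on the right equals $\log\prod_q\ze_q$, where $q$ runs over all Z*-reduced forms of discriminant $D$.

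Next I would partition these forms according to the topograph on which they lie. Two forms are equivalent precisely when they lie on the same topograph (Section~\ref{sl2}), and by the classification of Section~\ref{cla} there are exactly $h^*(D)$ topographs of discriminant $D$, each carrying a nonempty finite cycle of Z*-reduced forms. So $\prod_q\ze_q$ is the product, over these $h^*(D)$ topographs, of the product of the first roots of the Z*-reduced forms on each one. On a primitive topograph this inner product is $\varepsilon_D$ by Proposition~\ref{proz}. I would then argue that the same value occurs on every topograph by rerunning the proof of Proposition~\ref{proz}: consecutive Z*-reduced forms on the river differ by a block $T^{b_i}S$ with $b_i\geq 2$, one full period of the river sends $\bigl(\begin{smallmatrix}\ze_{q_1}\\1\end{smallmatrix}\bigr)$ to $\bigl(\prod_i\ze_{q_i}\bigr)\bigl(\begin{smallmatrix}\ze_{q_1}\\1\end{smallmatrix}\bigr)$, and identifying the period matrix via Proposition~\ref{t0} together with \e{rev} and \e{idm} reads off the eigenvalue $\varepsilon_D$. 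Granting this, $\prod_q\ze_q=\varepsilon_D^{h^*(D)}$, and taking logarithms yields the claimed identity.

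The main obstacle is precisely this last step: extending Proposition~\ref{proz} beyond primitive topographs, i.e.\ verifying that a topograph of discriminant $D$ with nontrivial region $\gcd$ still contributes a single factor $\varepsilon_D$ and not a unit attached to a proper divisor discriminant. Carrying this out requires tracking how the shortest period of the river of such a topograph sits inside $G_q(t_D,u_D)$, and how the region $\gcd$ interacts with the solutions of the Pell equation $t^2-Du^2=4$; this bookkeeping, rather than any of the formal manipulations above, is the delicate part of the argument.
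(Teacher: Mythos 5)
Your reduction of the right-hand side to $\log\prod_q \ze_q$ over all Z*-reduced forms via Lemma~\ref{hop}, and the partition of that product over the $h^*(D)$ topographs of discriminant $D$, is exactly the route the paper intends (the paper offers nothing beyond citing the parametrization). But the obstacle you flag at the end is not mere bookkeeping: the extension of Proposition~\ref{proz} that you would need is false, and with it the corollary as stated. If $\mathcal T=\lambda\mathcal T'$ with $\mathcal T'$ primitive of discriminant $D'=D/\lambda^2$, then the Z*-reduced forms on $\mathcal T$ are precisely $\lambda$ times those on $\mathcal T'$ (the conditions $a,c>0$ and $a+b+c<0$ are scale-invariant), and first roots are unchanged by scaling, $\ze_{\lambda q}=\ze_q$. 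Hence Proposition~\ref{proz}, applied to $\mathcal T'$, shows that $\mathcal T$ contributes the factor $\varepsilon_{D'}$ rather than $\varepsilon_D$; and since every solution of $t^2-Du^2=4$ gives the solution $(t,\lambda u)$ of $t^2-D'u^2=4$ but not conversely, $\varepsilon_D$ is in general a proper power of $\varepsilon_{D'}$.

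Concretely, take $D=20$, so $h^*(20)=h(20)+h(5)=2$. The set $\Omega_{20}$ has five elements: four give the primitive Z*-reduced forms $[1,-6,4]$, $[4,-10,5]$, $[5,-10,4]$, $[4,-6,1]$, whose first roots multiply to $9+4\sqrt5=\varepsilon_{20}$ in accordance with \e{bot} and $h(20)=1$, while $(a,k)=(2,-2)$ gives $[2,-6,2]=2\,[1,-3,1]$ with first root $(3+\sqrt5)/2=\varepsilon_5$. Since $\varepsilon_{20}=\varepsilon_5^{3}$, the right side of the corollary equals $\log\varepsilon_{20}+\log\varepsilon_5=\tfrac43\log\varepsilon_{20}$, whereas the left side is $2\log\varepsilon_{20}$. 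What your argument actually establishes, once the primitive case is granted, is
\begin{equation*}
\sum_{(a,k)\, \in\, \Omega_D}\log\left(1+\frac{\sqrt D-k}{2a}\right)=\sum_{n^2\mid D}h(D/n^2)\log\varepsilon_{D/n^2},
\end{equation*}
which coincides with $h^*(D)\log\varepsilon_D$ only when $\varepsilon_{D/n^2}=\varepsilon_D$ for every square divisor $n^2$ of $D$ with $D/n^2$ a discriminant (for instance for fundamental $D$, where the claim reduces to \e{bot}). So your proof is sound through the primitive case, the gap you identify is genuine, and it cannot be closed without adding such a hypothesis or restating the identity.
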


We remark that Proposition \ref{proz} has the following analog when $D$ is a square. Suppose a primitive topograph $\mathcal T$ contains the reduced form $[0,m,r]$ for $m>1$ as in Definition \ref{defx}. If $q_1, q_2, \dots, q_n$ are all the  Z*-reduced forms on  $\mathcal T$ then
$
\ze_{q_1} \ze_{q_2} \cdots \ze_{q_n} =  r.
$

\subsection{Binary necklaces} \la{neck}

Let $\mathcal T$ be a topograph with non-square discriminant $D>0$. Starting at any simple form $q$, (so directed rightwards), record the sequence of $L$ and $R$ turns along the river until we reach a copy of $q$ for the first time. This is the  river sequence from \cite[p. 395 - 397]{ri21} and we follow and build on the interesting discussion there. The river sequence of $\mathcal T$ is defined up to cyclic permutations, forming a {\em binary necklace} with $0=L$ and $1=R$, say. We may call a necklace {\em repeating} (not to overuse the terms primitive/imprimitive) if it is  made of smaller identical parts. For example $1011\cdot 1011 \cdot 1011$ is repeating while $101100$ is not. The river shown in Figures \ref{smriv}, \ref{griv} and \ref{zriv} has non-repeating river sequence
\begin{equation*}
  LLRRLRRRRLLLRL = 00110111100010.
\end{equation*}

\begin{theorem} \la{bina}
With their river sequences, primitive topographs of non-square discriminant $D>0$ are in bijection with non-repeating binary necklaces of length at least  $2$.
\end{theorem}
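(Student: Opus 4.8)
The plan is to set up explicit maps in both directions and check they are mutually inverse, leaning on Proposition~\ref{t0} (the matrix of a shortest river period traced from a simple form $q$ equals $G_q$, which is a primitive hyperbolic matrix), Corollary~\ref{tu2} (the bijection $q\mapsto G_q$ between primitive forms of a fixed non-square discriminant $D$ and the primitive hyperbolic matrices of trace $t_D$ with prescribed $\gcd$), and the structure of $\aut(q)$ from Proposition~\ref{au} and Lemma~\ref{www}. The recurring device is that concatenating paths on a topograph multiplies their matrices, so running a path $t$ times yields its matrix raised to the $t$-th power; this is what couples the combinatorial notion of a non-repeating necklace to the algebraic notion of a primitive element of $\aut(q)$.

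\emph{From topographs to necklaces.} Given a primitive topograph $\mathcal{T}$ of non-square discriminant $D>0$, pick any simple form $q$ on it (the rightward river edges) and let $\Phi(\mathcal{T})$ be the necklace of its river sequence; moving the base form along the river cyclically permutes that word, so $\Phi(\mathcal{T})$ is well defined. By Lemma~\ref{har} the river sequence has length at least $2$ and is non-constant. To see $\Phi(\mathcal{T})$ is non-repeating, suppose its word is $w^k$ with $k\geq 2$; then the bi-infinite river turn sequence has period $|w|$, and if $N$ is the matrix of the path recording one such stretch of $|w|$ turns from $q$, running this path $k$ times returns to $q$, so $N^k=G_q$ by Proposition~\ref{t0}. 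Since $G_q$ is hyperbolic so is $N$, and primitivity of $G_q$ forces $k=1$, a contradiction. Thus $\Phi$ lands in the set of non-repeating binary necklaces of length $\geq 2$.

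\emph{From necklaces to topographs.} A non-repeating necklace of length $\geq 2$ is non-constant, so it has a representative word $L^{a_0}R^{a_1}\cdots R^{a_{2m+1}}$ with every $a_i\geq 1$ (cut the cyclic word just before an $L$ that follows an $R$). Let $M=L^{a_0}R^{a_1}\cdots R^{a_{2m+1}}=(\begin{smallmatrix}\alpha & \beta \\ \g & \delta\end{smallmatrix})$, which has all entries positive by \e{modd}, with $\operatorname{tr}M>2$ and $(\delta-\alpha)^2+4\beta\g=\operatorname{tr}(M)^2-4$. Set $u_0=\gcd(\g,\delta-\alpha,\beta)$, $D=(\operatorname{tr}(M)^2-4)/u_0^2$, and $q_0=[\g,\delta-\alpha,-\beta]/u_0$; then $q_0$ is a primitive simple form of discriminant $D$ (it is primitive by the choice of $u_0$, simple because $\g,\beta>0$, and of discriminant $D$ by the identity above), and $D$ is positive and non-square, since $D=\ell^2$ would give $(\operatorname{tr}M-\ell u_0)(\operatorname{tr}M+\ell u_0)=4$, impossible when $\operatorname{tr}M>2$. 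Because $Mz=z$ has precisely the two roots of $q_0$, Proposition~\ref{au} gives $M\in\aut(q_0)$, hence $M=\pm G_{q_0}^{\,t}$ by Lemma~\ref{www}; as $M$ has all positive entries while $\pm G_{q_0}^{\,t}$ does not unless the sign is $+$ and $t\geq 1$, we get $t\geq 1$, and if $t\geq 2$ the path $M$ traces from $q_0$ loops around the river $t$ times, so its turn word is the $t$-fold concatenation of one river period and the necklace would be repeating — a contradiction. Therefore $M=G_{q_0}$. Define $\Psi$ of the necklace to be the topograph containing $q_0$; a different admissible representative conjugates $M$ by a product of $L$'s and $R$'s, replacing $q_0$ by an equivalent form, so $\Psi$ is well defined.

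\emph{The maps are inverse.} Starting from $\mathcal{T}$, the word $\Phi(\mathcal{T})$ has matrix $G_q$ up to conjugacy, and $\Psi$ then recovers, via the inverse of the bijection in Corollary~\ref{tu2} together with \e{idm}, a form equivalent to $q$, so $\Psi(\Phi(\mathcal{T}))=\mathcal{T}$. Starting from a necklace, the river sequence of $\Psi$ of it, read from the simple form $q_0$, is by Proposition~\ref{t0} the turn word of $G_{q_0}=M$, whose necklace is the original one, so $\Phi(\Psi(\cdot))$ is the identity. This gives the bijection. The step needing the most care — and the real obstacle — is proving $M=G_{q_0}$ in the reverse construction, i.e.\ that going once around the necklace produces the primitive automorph of a form whose discriminant is exactly the one computed, which is precisely where primitivity of $G_q$ from Proposition~\ref{t0} and the path-concatenation device are essential.
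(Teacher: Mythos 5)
Your proposal is correct and follows essentially the same route as the paper: Lemma \ref{har} for the length bound, primitivity of $G_q$ from Proposition \ref{t0} for the non-repeating property, and the construction $q=[\g,\delta-\alpha,-\beta]/\gcd(\g,\delta-\alpha,\beta)$ for the inverse map. You merely fill in details the paper leaves implicit (the non-square check via $(\operatorname{tr}M-\ell u_0)(\operatorname{tr}M+\ell u_0)=4$, and the verification that $M=G_{q_0}$ rather than a proper power), which is sound.
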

\begin{proof}
Starting with a primitive topograph $\mathcal T$, its river sequence  corresponds to a binary necklace $B$ of length at least $2$ by Lemma \ref{har}. It is non-repeating since the corresponding matrices $M=L^{a_0}R^{a_1}\cdots $  are primitive by Proposition \ref{t0}.

Beginning with a non-repeating binary necklace $B$ of length $\gqs 2$, let $M$ be made out of the corresponding sequence of $L$s and $R$s. Then $M$ equals $(\begin{smallmatrix}\alpha & \beta\\\g &\delta\end{smallmatrix})$ with positive integer entries and trace necessarily $>2$. Let
\begin{equation*}
  q = [\g, \delta-\alpha,-\beta]/g \qquad \text{for} \qquad g=\gcd(\g,\delta-\alpha,\beta).
\end{equation*}
The form $q$ is primitive and simple with $q|M=q$. Its discriminant is $\frac{(\alpha+\delta)^2-4}{g^2}$ which is positive and not a square. Then $q$ lies on a topograph $\mathcal T$ and it is clear that this  inverts  the first map.
\end{proof}

For example, the first necklaces correspond to topographs of these discriminants, including the Zagier reduced form coming first lexicographically:
\begin{alignat*}{4}
  01, \quad & D =5, &\quad &[1,3,1]&  \qquad \qquad 00001, \quad & D =32, &\quad &[1,6,1] \\
  001, \quad & D =12, &\quad &[1,4,1]&  \qquad  \qquad 00011, \quad & D =60, &\quad &[2,10,5]\\
  011, \quad & D =12, &\quad &[2,6,3]&  \qquad  \qquad 00101, \quad & D =96, &\quad &[3,12,4] \\
  0001, \quad & D =21, &\quad &[1,5,1]&  \qquad  \qquad 00111, \quad & D =60, &\quad &[3,12,7] \\
  0011, \quad & D =8, &\quad &[1,4,2]&  \qquad  \qquad 01011, \quad & D =96, &\quad &[5,14,5] \\
  0111, \quad & D =21, &\quad &[3,9,5]&  \qquad  \qquad 01111, \quad & D =32, &\quad &[4,12,7]
\end{alignat*}
The number of non-repeating binary necklaces (also known as Lyndon words) of length $n\gqs 2$ is the sequence A001037 in the OEIS with formula
$$
\frac 1n \sum_{d | n} \mu(n/d) \cdot 2^d.
$$
This is also discussed in \cite[p. 455]{cark17}. In that paper the periodicity is used to quotient topographs and produce what they term {\em \c{c}arks} where the infinite river is transformed into a circle and the topograph lies in an annulus. See also the binary necklaces in \cite{smi18}.
We remark that any multiple of a primitive topograph will have the same river sequence. Therefore all topographs of non-square discriminant $D>0$ have non-repeating river sequences. 

For $q=[a,b,c]$ define $q^*:=[c,b,a]$. As a river sequence application, the relationship between $q$ and $q^*$  will be developed next.

\begin{prop} \label{-1}
Let $q$ be a primitive form of non-square discriminant $D$. Then $q \sim -q^*$ if and only if there exists an  integer solution $(t,u)$ to $t^2-Du^2=-4$.
\end{prop}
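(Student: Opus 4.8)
The plan is to translate the assertion $q \sim -q^*$ into a statement about the matrix $G_q$ and the Pell equation $t^2 - Du^2 = -4$, using the bijection of Corollary \ref{tu2} together with the basic automorph computation \eqref{csh}. First I would record the elementary fact that for $q = [a,b,c]$ one has $-q^* = [-c,-b,-a]$, and that the map $q \mapsto -q^*$ corresponds, under the correspondence of Corollary \ref{tu2}, to transposing $G_q$ and reversing orientation. Concretely, writing $G_q = (\begin{smallmatrix}\alpha & \beta\\ \gamma & \delta\end{smallmatrix})$, one checks via \eqref{gqtu} that $G_{-q^*} = (\begin{smallmatrix}\delta & -\beta\\ -\gamma & \alpha\end{smallmatrix}) = J^{-1}(G_q^{-1})^t J$ or a similarly explicit expression, so that $-q^*$ has the same discriminant $D$, the same trace $t_D$, and the same edge $\gcd$ $u_D$; in particular $G_{-q^*}$ is again a primitive hyperbolic matrix with the right invariants.

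Next I would produce the matrix that witnesses the half-period. Suppose $(t,u)$ solves $t^2 - Du^2 = -4$; then $(t^2 - Du^2)^2 = 16$ gives, after expansion, that $((t^2+Du^2)/2, tu)$ solves $T^2 - DU^2 = 4$, and in fact this is (a power of) the fundamental solution — more precisely $\varepsilon_D = \eta_D^2$ where $\eta_D = (t+u\sqrt D)/2$ is the fundamental solution of norm $-4$. Set $H := G_q(t,u)$ using the same formula \eqref{gqtu} (allowing norm $-4$); the computation behind \eqref{csh} — which only used $t^2 - Du^2 \in \Z$, $q$ primitive — shows that $H$ fixes $\ze_q$, and a direct check of the action on coefficients via \eqref{tr} shows $q|H = -q^*$ rather than $q$, because the determinant of $H$ is $-1$. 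Thus $H \in \GL(2,\Z)$ with $\det H = -1$ and $q|H = -q^*$; since $q$ and $-q^*$ have the same discriminant and $H$ maps one to the other, composing $H$ with a suitable orientation-reversing involution (or simply observing $\det H = -1$ forces a reflection) one concludes $q \sim -q^*$ — here one must be slightly careful, since $\SL(2,\Z)$-equivalence is what is wanted; the cleanest route is to note $q|(HS^{-1}JS)$ or to directly exhibit an element of $\SL(2,\Z)$ carrying $q$ to $-q^*$ by noting that $-q^* = (q^*)\cdot(-I)$-type manipulations preserve the class, and that $q^* = q|J$ with $J \in \GL(2,\Z)$, $\det J = -1$, combines with $\det H = -1$ to give an $\SL(2,\Z)$ map. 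Conversely, if $q \sim -q^*$, pick $N \in \SL(2,\Z)$ with $q|N = -q^*$; then $q|(NJ)$ returns something in the class of $q$ up to sign, $(NJ)$ has determinant $-1$, and $(NJ)^2 \in \aut(q)$, so $(NJ)^2 = \pm G_q^k$ by \eqref{st2}; analyzing traces and using that $G_q \mapsto \varepsilon_D$ under $\psi$ forces $(NJ)^2$ to be a primitive power of $G_q$ (namely $G_q^{\pm 1}$), whence $NJ$ itself has trace squared related to $\varepsilon_D$, i.e. $NJ$ corresponds to a solution of $t^2 - Du^2 = -4$.

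The step I expect to be the main obstacle is the careful bookkeeping in the converse direction: from $q \sim -q^*$ I get a $\GL(2,\Z)$ element $P := NJ$ with $\det P = -1$ whose square lies in $\aut(q) = \{\pm G_q^n\}$, and I need to extract from $P$ a genuine solution to the negative Pell equation, not merely to $t^2 - Du^2 = 4$. The key is that $P^2$ must be a \emph{primitive} power of $G_q$ — if $P^2 = \pm G_q^k$ with $|k| \geq 2$ then $P$ would be a hyperbolic element whose square is an imprimitive power, contradicting primitivity of $G_q$ as established in Proposition \ref{t0} (a hyperbolic element and its square have the same fixed points, and $P$ fixes $\ze_q$ by Proposition \ref{au}, so $P \in \aut(q)$, forcing $P = \pm G_q^m$ and $\det P = 1$, a contradiction unless $P \notin \SL(2,\Z)$, which is exactly our situation). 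So $P \notin \SL(2,\Z)$, $\det P = -1$, $P$ fixes $\ze_q$; writing $P = \pm G_q(t,u)$ with the norm-$(-4)$ version of \eqref{gqtu} and using $\det P = -1$ gives $t^2 - Du^2 = -4$. I would therefore structure the converse by first upgrading Proposition \ref{au} to a $\GL(2,\Z)$-statement: the full $\GL(2,\Z)$-stabilizer of $\ze_q$ consists of the matrices $G_q(t,u)$ with $t^2 - Du^2 = \pm 4$, proved by the identical argument with $\det = \pm 1$, and then \emph{both} directions follow at once from the observation that $P \mapsto \det P$ sends this stabilizer onto $\{+1\}$ or onto $\{\pm 1\}$ according to whether the negative Pell equation is solvable, while $q \sim -q^*$ is exactly the assertion that some element of the stabilizer has determinant $-1$ (after identifying the coset via $q|P = -q^*$ when $\det P = -1$, which is the routine coefficient computation I will carry out but not belabor here).
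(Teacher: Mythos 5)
Your forward direction rests on a computation that comes out the other way: for $H=G_q(t,u)$ with $t^2-Du^2=-4$ one has $q|H=-q$, \emph{not} $-q^*$. Indeed, using \eqref{gqtu}, the outer coefficients of $q|H$ are $q\bigl((t-bu)/2,\,au\bigr)=\tfrac a4(t^2-Du^2)=-a$ and $q\bigl(-cu,\,(t+bu)/2\bigr)=\tfrac c4(t^2-Du^2)=-c$, and the middle coefficient works out to $-b$. So the determinant $-1$ elements of your enlarged stabilizer of $\ze_q$ realize $q\mapsto -q$, and the $\SL(2,\Z)$ witness for $q\sim -q^*$ is $G_q(t,u)\,J$ (since $(-q)|J=-q^*$); this is exactly the paper's matrix \eqref{gq2}, which is $\kappa\, G_q(t_*,u_*)J$. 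As literally written --- ``$q|H=-q^*$, then compose with $J$ to land in $\SL(2,\Z)$'' --- your argument produces an equivalence $q\sim -q$, which by the river-sequence dictionary \eqref{cba} is a genuinely different condition from $q\sim -q^*$. The same slip reappears in your final sentence (``$q|P=-q^*$ when $\det P=-1$''). The repair is pure bookkeeping, but without it the proposition being proved is the wrong one.

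There is a second gap in the converse. To put $P=NJ$ (with $q|N=-q^*$, so $q|P=-q$ and $\det P=-1$) into the stabilizer of $\ze_q$, you must rule out that $P$ \emph{swaps} $\ze_q$ and $\ze'_q$; Proposition \ref{au} and \eqref{roots} are stated only for $\SL(2,\Z)$, and your appeal to ``$P$ and $P^2$ have the same fixed points'' is not valid in general. What is needed is the $\det=-1$ analogue of \eqref{roots}, namely $\ze_{q|P}=P^{-1}\ze'_q$; combined with $\ze_{-q}=\ze'_q$ this gives $P\ze'_q=\ze'_q$, hence $P$ fixes both roots and your $\GL(2,\Z)$ upgrade of Proposition \ref{au} applies, yielding $t^2-Du^2=4\det P=-4$. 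Once these two points are fixed your route does work for $D>0$, but note it leans on Proposition \ref{t0}, hyperbolicity and $\varepsilon_D$, none of which exist for $D<0$ (where the claim is trivial but needs a separate sentence). The paper's own proof is more economical and uniform in $D$: from $q|M=-q^*$ and \eqref{roots} it gets $M(1/\ze_q)=\ze_q$, hence $\delta\ze_q^2+(\gamma-\beta)\ze_q-\alpha=0$, and primitivity together with $[\Q(\ze_q):\Q]=2$ forces $M$ to be the explicit matrix \eqref{gq2}, whose determinant condition \emph{is} the equation $t_*^2-Du_*^2=-4$; the converse is the same matrix read backwards. No automorph structure theory is needed.
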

\begin{proof}
Suppose that $q|M=-q^*$ for $q=[a,b,c]$ and $M=(\begin{smallmatrix}\alpha & \beta\\\g &\delta\end{smallmatrix}) \in \SL(2,\Z)$.
Then $M^{-1} \ze_q= \ze_{-q^*}=1/\ze_q$ implies $M(1/\ze_q)=\ze_q$. Hence $\delta \ze_q^2 +(\g - \beta)\ze_q-\alpha=0$. Let $u_*=\gcd(\delta, \g - \beta, \alpha)\gqs 1$. Then for some $\kappa=\pm 1$, as in \e{gdbx}, we must have
$$
\delta= \kappa a u_*, \qquad \g- \beta = \kappa b u_*, \qquad \alpha=-\kappa c u_*.
$$
Let $t_* =   \kappa(\beta + \g)$. Consequently
$$
\beta=\kappa (t_*- b u_*)/2, \qquad \g = \kappa (t_*+ b u_*)/2,
$$
and we have shown that
\begin{equation}\label{gq2}
M  = \kappa \begin{pmatrix} -c u_* & (t_*- b u_*)/2 \\ (t_*+ b u_*)/2 &  a u_*  \end{pmatrix},
\end{equation}
 and the determinant of $M$ being $1$ is equivalent to $t_*^2- D u_*^2  =-4$.

In the other direction, use that $q|M=-q^*$ for $M$ given by \e{gq2} when $t_*^2- D u_*^2  =-4$.
\end{proof}

It is straightforward to see that the topographs containing $q$, $q^*$, $-q$ and $-q^*$ have related river sequences. For example,
\begin{equation} \la{cba}
\begin{aligned}
  q & =[a,b,c] & & LRLLRRR & &\\
  q^* & =[c,b,a] & & RRRLLRL &\qquad &\text{order reversed}\\
  -q & =[-a,-b,-c]& & LLLRRLR &\qquad &\text{order reversed and letters switched}\\
  -q^* & =[-c,-b,-a]& \qquad & RLRRLLL &\qquad &\text{letters switched}
\end{aligned}
\end{equation}
This gives an easy way to check if $q \sim q^*$: see if the river sequence for the topograph containing $q$ is invariant under order reversal. Similarly $q \sim -q$ if we have invariance under order reversal with the letters switched -- this is pointed out in \cite[p. 396]{ri21}. See also \cite[p. 456]{cark17} for a discussion of these symmetries. In relation to Proposition \ref{-1}, we may next give  a simple way to tell if $q \sim -q^*$.

\begin{lemma} \la{lexx}
Any non-repeating binary necklace $B$ that is invariant under switching all its bits takes the form $X\cdot \overline{X}$ where $\overline{X}$ switches all the bits of $X$.
\end{lemma}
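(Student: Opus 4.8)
The plan is to pass from the necklace to a chosen representative word $b_0 b_1 \cdots b_{n-1}$, with indices read modulo $n$, and to convert the two hypotheses into arithmetic about the rotation that witnesses the bit-flip symmetry. Writing $\overline{b}$ for the complement of a bit $b$, invariance of $B$ under switching all bits means that the word $\overline{b_0}\,\overline{b_1}\cdots\overline{b_{n-1}}$ is a cyclic rotation of $b_0 b_1 \cdots b_{n-1}$; that is, there is an integer $k$ with $0 \le k < n$ such that $\overline{b_i} = b_{i+k}$ for every $i$. The first substantive step is to check that this is the correct unpacking of "necklace invariant under bit-flipping'' and to record that "non-repeating'' means the cyclic word is aperiodic.

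Next I would apply the relation twice: $b_i = \overline{b_{i+k}} = b_{i+2k}$ for all $i$, so the word is invariant under the rotation by $2k$, and hence under the rotation by $d := \gcd(2k,n)$, since the rotations fixing a word form a subgroup of $\mathbb{Z}/n\mathbb{Z}$. Invariance under a rotation by a divisor $d$ of $n$ means the word is $n/d$ concatenated copies of its length-$d$ prefix, which is exactly "repeating'' unless $d = n$. Thus the non-repeating hypothesis forces $n \mid 2k$. Because $0 \le k < n$ we have $0 \le 2k < 2n$, so either $2k = 0$ or $2k = n$; the case $k = 0$ is impossible since it would give $\overline{b_i} = b_i$. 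Hence $n$ is even and $k = n/2$.

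Finally, set $X = b_0 b_1 \cdots b_{n/2-1}$. The relation $\overline{b_i} = b_{i+n/2}$ for $0 \le i < n/2$ says precisely that the second half of the chosen word is $\overline{X}$, so that representative is $X \cdot \overline{X}$ and therefore $B = X \cdot \overline{X}$ as necklaces. I do not expect a real obstacle: the argument is the short divisibility computation above, and the only delicate point is the bookkeeping at the start — correctly translating "invariant under bit-flipping'' into the existence of the shift $k$ and "non-repeating'' into aperiodicity of the cyclic word — after which everything is forced.
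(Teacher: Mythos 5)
Your proposal is correct and follows essentially the same route as the paper: both arguments take the rotation $k$ witnessing the bit-flip symmetry, observe that rotation by $2k$ fixes the word, and use the non-repeating hypothesis to force $2k$ to be a multiple of the length, whence $k$ is half the length. The only cosmetic difference is that the paper first deduces even length by counting $0$s and $1$s, whereas you obtain it as a byproduct of the divisibility argument; your version is slightly more explicit about why $\gcd(2k,n)<n$ would make the word periodic, but the substance is identical.
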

\begin{proof}
$B$  must have even length $2m$, or else switching bits gives a different number of $0$s and $1$s. Suppose $k$ with $0<k<2m$ gives the distance to shift $B$ left so that it equals $\overline{B}$. Then shifting  $B$ by $2k$ gives $B$ so that $2m|2k$. Hence $k=m$.
\end{proof}

Easily, if  $B=X\cdot \overline{X}$ then any cyclic permutation of $B$ has the same form.

\begin{theorem} \la{xx}
Suppose a primitive topograph of non-square discriminant $D>0$ has river sequence $M$, giving  $q|M=q$. If $M=X\cdot \overline{X}$ where $\overline{X}$ switches all the letters of $X$, then $q|X=-q^*$ and, with $X=(\begin{smallmatrix}\alpha & \beta\\\g &\delta\end{smallmatrix})$, the minimal positive integer solution to $t_*^2-Du_*^2=-4$ has  $t_* = \beta + \g$, $u_*=\gcd(\delta, \g - \beta, \alpha)$. If $M \neq X\cdot \overline{X}$ then $q \not\sim -q^*$ and there are no solutions.
\end{theorem}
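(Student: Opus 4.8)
The plan is to push everything onto the level of the river sequence and then turn the known \emph{equivalence} $q\sim -q^*$ into an exact identity by a positivity argument. First I would record that $q\sim -q^*$ holds precisely when the given word $M$ has the form $X\overline{X}$. Indeed $-q^*=[-c,-b,-a]$ is again a primitive form of discriminant $D$, so by Theorem~\ref{bina} the relation $q\sim -q^*$ is equivalent to the binary necklaces of the two topographs agreeing; by the relations \eqref{cba} the necklace of $-q^*$ is obtained from that of $q$ by switching all bits. Hence $q\sim -q^*$ iff the necklace of the topograph containing $q$ is invariant under bit-switching, which by Lemma~\ref{lexx} together with the remark following it means that \emph{every} word representative of that necklace, and in particular the given $M$, is of the form $X\overline{X}$. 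This settles the last sentence at once: if $M\neq X\overline{X}$ then $q\not\sim -q^*$, so by Proposition~\ref{-1} the equation $t_*^2-Du_*^2=-4$ has no integer solution.

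For the main assertion, assume $M=X\overline{X}$, with $q$ the simple form at the start of $M$, so $q|M=q$ and, by Proposition~\ref{t0}, $M=G_q$ is the primitive hyperbolic generator of $\aut(q)/\{\pm I\}$. The device is that conjugation by $J:=(\begin{smallmatrix}0&1\\1&0\end{smallmatrix})$ interchanges the moves $L$ and $R$ while preserving word order, so $\overline{X}=JXJ$ and therefore
\[
  Y:=XJ\qquad\text{satisfies}\qquad Y^2=XJXJ=X\overline{X}=M=G_q.
\]
Since $Y$ commutes with $Y^2=G_q$, it permutes the two fixed points $\ze_q,\ze'_q$ of the hyperbolic matrix $G_q$. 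As $q=[a,b,c]$ is simple we have $a>0>c$, hence $\ze_q>0>\ze'_q$; and since $X$ is a nonempty product of the maps $L:z\mapsto z+1$ and $R:z\mapsto z/(z+1)$, it carries $\R_{>0}$ into $\R_{>0}$, so $Y\ze_q=X(1/\ze_q)>0$. Therefore $Y\ze_q=\ze_q$, i.e. $X^{-1}\ze_q=1/\ze_q$. Now $X^{-1}\ze_q=\ze_{q|X}$ by \eqref{roots}, while a direct computation gives $1/\ze_q=\ze_{-q^*}$; since a primitive integral form of non-square discriminant $D$ is determined by its first root, this yields $q|X=-q^*$.

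With $q|X=-q^*$ in hand, the formulas follow from the proof of Proposition~\ref{-1}: its computation forces $X=\kappa\,(\begin{smallmatrix}-cu_* & (t_*-bu_*)/2\\ (t_*+bu_*)/2 & au_*\end{smallmatrix})$ with $\kappa=\pm1$, $t_*=\kappa(\beta+\gamma)$, $u_*=\gcd(\delta,\gamma-\beta,\alpha)$ and $t_*^2-Du_*^2=-4$, and since every entry of $X$ is $\gqs 0$ whereas $-cu_*>0$ we must have $\kappa=1$, giving $t_*=\beta+\gamma$. Thus $X=G_q(t_*,u_*)J$ by \eqref{gqtu}, so $Y=XJ=G_q(t_*,u_*)$, whose eigenvalue at $\ze_q$ equals $\varepsilon_*:=(t_*+u_*\sqrt{D})/2$ (compute $au_*\ze_q+(t_*+bu_*)/2$); comparing with $Y^2=G_q$, which has eigenvalue $\varepsilon_D$ at $\ze_q$ by \eqref{rev}, gives $\varepsilon_*^2=\varepsilon_D$. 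For minimality, any positive-integer solution $(t',u')$ of $t^2-Du^2=-4$ gives $\varepsilon':=(t'+u'\sqrt{D})/2>1$ with $(\varepsilon')^2=(t'^2+2+t'u'\sqrt{D})/2$ a positive solution of $t^2-Du^2=4$, hence $(\varepsilon')^2\gqs\varepsilon_D=\varepsilon_*^2$ and so $\varepsilon'\gqs\varepsilon_*$; therefore $(t_*,u_*)$ is the minimal positive solution of $t_*^2-Du_*^2=-4$.

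The main obstacle is the second paragraph: upgrading the equivalence $q\sim -q^*$ to the exact equality $q|X=-q^*$. Everything there turns on deciding which of the two fixed points of $Y^2$ is the image of $\ze_q$ under $Y$, and the clean resolution is the sign computation, which uses crucially that $X$ is literally a product of $L$'s and $R$'s (so preserves positivity of real points) together with simplicity of $q$ (so that $\ze_q$ and $\ze'_q$ have opposite signs). One should also check that the degenerate cases $X=L^m$ or $X=R^m$ cause no trouble, as the required facts — that $X$ preserves $\R_{>0}$, and that all entries of $X$ are $\gqs 0$ — still hold there.
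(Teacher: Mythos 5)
Your proof is correct and follows the paper's own argument in all essentials: write $\overline{X}=JXJ$ so that $M=(XJ)^2$, show $XJ$ fixes $\ze_q$ to conclude $q|X=-q^*$, then run the computation from the proof of Proposition \ref{-1} on $X$ and compare $(XJ)^2$ with $G_q$ to obtain minimality, with the converse direction handled via Lemma \ref{lexx} and Proposition \ref{-1} exactly as in the paper. The only local deviations are your positivity argument for why $XJ$ fixes rather than swaps the two roots (the paper instead observes that $\det(XJ)=-1$ forces $XJ$ to have two distinct real fixed points, which must then coincide with those of $M$), your explicit determination of the sign $\kappa=1$ from the nonnegativity of the entries of $X$ (a detail the paper leaves implicit but which is needed to get $t_*=\beta+\gamma$ rather than $-(\beta+\gamma)$), and your use of the eigenvalue identity $\left((t_*+u_*\sqrt{D})/2\right)^2=\varepsilon_D$ in place of the component formula \e{we} for minimality; all of these are sound.
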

\begin{proof}
Suppose $M=X\cdot \overline{X}$. With $J=(\begin{smallmatrix}0 & 1\\ 1 & 0\end{smallmatrix})$ we have $R=JLJ$ and $L=JRJ$ so that $\overline{X}=JXJ$. Hence we can write $M=N^2$ for $N=XJ$ with $\det(N)=-1$. Note that $N$ fixes two distinct real numbers since the quadratic equation to find them has discriminant \text{tr}$(N)^2-4\det(N)$. The fixed points of $N$ and $M$ must be the same, implying
\begin{equation} \la{ze}
  \ze_q = N \ze_q = XJ  \ze_q = X  (1/\ze_q) = X \ze_{-q^*}.
\end{equation}
Let $Q=q|X$. We have $\ze_Q=X^{-1}\ze_q = \ze_{-q^*}$ by \e{roots} and \e{ze}. Similarly, the second roots match: $\ze'_Q= \ze'_{-q^*}$. The discriminants of $Q$ and $-q^*$ also match and hence $q|X = Q = -q^*$.

The proof of Proposition \ref{-1} now shows that $X$ is given by \e{gq2} and hence $t_*$ and $u_*$ satisfy $t_*^2-Du_*^2=-4$. Also $G_q=M=(XJ)^2$ implies
\begin{equation} \label{we}
  t_D=(t_*^2+Du_*^2)/2, \qquad u_D=t_*u_*.
\end{equation}
In general  $(t,u)\mapsto (\frac{t^2+Du^2}2,tu)$ gives a map from solutions of $t^2-Du^2=-4$ to solutions of $t^2-Du^2=4$. Therefore $(t_*,u_*)$ is a minimal solution of the former since $(t_D,u_D)$ is a minimal solution of the latter.

Lastly, if $q\sim -q^*$ then the river sequence for $q$ is invariant under switching its letters. By Lemma \ref{lexx} it must take the form $X\cdot \overline{X}$.
\end{proof}

See also \cite[Chap. 3]{bue89} for similar results based on continued fractions.
For example, there are $4$ primitive topographs when $D=145$ and their river sequences in binary are
\begin{gather*}
    0010100 \cdot 1101011, \\
    000001000 \cdot 111110111, \\
    000001110 \cdot 111110001, \\
    0000000000010 \cdot 1111111111101.
\end{gather*}
These take the form $X\cdot \overline{X}$ and by Theorem \ref{xx} we obtain the solution $t_*=24$, $u_*=2$ to $t_*^2-Du_*^2=-4$.
Proposition \ref{-1} now implies that $q \sim -q^*$ for all forms of this discriminant.

When a minimal positive integer solution $(t_*, u_*)$ to $t_*^2-Du_*^2=-4$ exists we may define the unit  $\varepsilon^*_D := (t_*+u_*\sqrt{D})/2$ of norm $-1$. Then \e{we} implies that $(\varepsilon^*_D)^2 = \varepsilon_D$.

\subsection{Binary words and equivalence in the wide sense} \la{word}

The river sequence for a topograph with square discriminant $D=m^2$ is the natural one: start at the leftmost river edge and record the sequence of $L$ and $R$ turns from there, moving along the river to the rightmost river edge. If there are not two lakes (i.e. $D=0$), or the two lakes are adjacent, then we say there is no river sequence. If the river has only one edge then we may say the river sequence is the empty set.

Let a primitive topograph $\mathcal T$ have discriminant $D=m^2\gqs 4$. Its river sequence can be easily computed as follows. By Theorem \ref{redsq} it contains a reduced form $[0,m,r]$ on the left lake with $0<r<m$. Let $q=[0,m,r]|S =[r,-m,0]$. Then  the associated path and matrix $M=L^{a_0}R^{a_1} \cdots $  from
\begin{equation}\label{vg}
  \ze_q = m/r=\langle a_0, a_1, \dots, a_n\rangle \qquad \text{for} \qquad 0<r<m, \quad \gcd(r,m)=1,
\end{equation}
 leads to the right lake. Dropping the first and last symbols gives the river sequence of $\mathcal T$. (The first symbol must be $L$ and switching the last symbol between $L$ and $R$ does not affect the value of the continued fraction -- see the discussion before Definition \ref{defx}.) Any non-primitive multiples of $\mathcal T$ have the same river sequence. As before, replacing $L$ with $0$ and $R$ with $1$ gives a binary word.

\begin{theorem}
With their river sequences, primitive topographs of square discriminant $\gqs 4$ are in bijection with  binary words. 
\end{theorem}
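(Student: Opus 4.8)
The plan is to construct the bijection explicitly in both directions and check they are mutually inverse, mirroring the proof of Theorem~\ref{bina} but now with a base point (the left lake) so that words rather than necklaces appear. First I would set up the forward map: given a primitive topograph $\mathcal T$ with discriminant $D=m^2\gqs 4$, Theorem~\ref{redsq} gives it a unique reduced form $[0,m,r]$ with $0<r<m$ and $\gcd(r,m)=1$ by Proposition~\ref{prok}. Put $q=[r,-m,0]$, take the continued fraction $\ze_q=m/r=\langle a_0,a_1,\dots,a_n\rangle$ (which I would normalize, as in the discussion before Definition~\ref{defx}, to have $n$ odd so the path ends with an $R$), form $M=L^{a_0}R^{a_1}\cdots R^{a_n}$, and by Proposition~\ref{which-lake} this path runs from $q$ on the left lake to a form on the right lake, traversing the entire river exactly once. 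Deleting the first symbol (necessarily $L$, since $a_0=\lfloor m/r\rfloor\gqs 1$) and the last symbol (which may be freely switched between $L$ and $R$ without changing the continued fraction) yields the river sequence, a well-defined finite binary word $w(\mathcal T)\in\{0,1\}^{n-1}$, with the empty word allowed when the river has a single edge. Non-primitive multiples are excluded, and the map is well-defined because the reduced form is unique.

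Next I would construct the inverse. Given a binary word $w=w_1\cdots w_k$ with $w_i\in\{0,1\}$, prepend an $L$ and append an $R$ to get a sequence of $k+2$ turns, and let $N=L R^{\ast}\cdots R$ be the corresponding product in $\SL(2,\Z)$ (collecting maximal runs of $L$s and $R$s into powers $L^{a_0}R^{a_1}\cdots R^{a_n}$, always with $n$ odd by the forced final $R$). Writing $N=\left(\begin{smallmatrix}h_n & h_{n-1}\\ k_n & k_{n-1}\end{smallmatrix}\right)$ via \e{modd}, I would read off $m:=k_n$ and $r:=k_{n-1}$; since $\langle a_0,\dots,a_n\rangle=h_n/k_n$ is in lowest terms with $a_0\gqs 1$ we get $\gcd(r,m)=1$ and $0<r<m$ (the inequalities hold because the continued fraction has at least the two forced entries). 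Then set $q:=[r,-m,0]$, a primitive form of discriminant $m^2$, and let $\mathcal T(w)$ be the topograph containing $q$; Proposition~\ref{which-lake} guarantees $q$ sits on the left lake and that the path from $q$ along $\ze_q$ is exactly $N$, so the river sequence of $\mathcal T(w)$ is $w$ once the first and last symbols are stripped. That the two maps compose to the identity in both directions then reduces to: (a) a topograph's reduced form $[0,m,r]$ recovers the same $(m,r)$ from its river sequence; and (b) distinct continued fractions $m/r$ give distinct river-sequence words. Both are bookkeeping once Proposition~\ref{which-lake} and the normalization conventions are in hand.

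The main obstacle I anticipate is the edge/degenerate cases and the normalization of the last continued-fraction symbol, rather than any deep difficulty. Specifically: (i) when $m=1$ there are two adjacent lakes and no river sequence, so one must restrict to $D\gqs 4$, exactly as the statement does; (ii) a river of length one corresponds to the empty word, and one must check the construction above produces and accepts it consistently (here $\langle a_0,a_1\rangle=m/r$ with the stripped word empty means $a_0=1$, i.e. $r=1$... but also $m/r$ could be, say, $\langle 0,\dots\rangle$ — I'd need to rule out $a_0=0$ using $r<m$); and (iii) the ambiguity $\langle\dots,a_{n-1},1\rangle=\langle\dots,a_{n-1}+1\rangle$ at the tail must be pinned down once and for all (the paper's convention: force $n$ odd, i.e. path ends in $R$), and one must verify that switching the last turn does not change which topograph is reached — this is precisely the remark before Definition~\ref{defx}. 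Handling these cleanly, and confirming that $\gcd(r,m)=1$ together with $0<r<m$ exactly matches the condition in Proposition~\ref{prok}, is where the care goes; the algebra itself is immediate from \e{modd}, \e{cf}, and the already-established structure of square-discriminant topographs.
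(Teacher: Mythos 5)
Your construction follows the paper's proof essentially step for step: the forward map is the river sequence obtained from the path associated with $\ze_{[r,-m,0]}=m/r$, and the inverse pads the word, reads off continued fraction entries $a_0,a_1,\dots$, and recovers the topograph containing $[0,m,r]$ from the value $m/r$ of that continued fraction. Your handling of the normalization issues (forcing the path to end in $R$, the tail ambiguity $\langle \dots,a_{n-1},1\rangle=\langle\dots,a_{n-1}+1\rangle$, excluding $m=1$, ruling out $a_0=0$ via $0<r<m$) matches the remarks the paper relies on before Definition \ref{defx} and in Proposition \ref{which-lake}.

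There is, however, one concrete slip in your inverse map: from $N=L^{a_0}R^{a_1}\cdots R^{a_n}=\left(\begin{smallmatrix}h_n & h_{n-1}\\ k_n & k_{n-1}\end{smallmatrix}\right)$ you read off $m:=k_n$ and $r:=k_{n-1}$. Since the requirement is $\ze_q=m/r=\langle a_0,\dots,a_n\rangle=h_n/k_n$, and consecutive convergents are coprime, the correct identification is $m=h_n$ and $r=k_n$ (equivalently, just take $m/r$ to be the value of the continued fraction in lowest terms, as the paper does). Your version fails already on the word $0$: padding gives $L^2R$, so $N=\left(\begin{smallmatrix}3&2\\1&1\end{smallmatrix}\right)$ and your recipe yields $m=r=1$, violating $0<r<m$ and missing the intended topograph $[0,3,1]$ of discriminant $9$. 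With that one-line correction the two maps are indeed mutually inverse and the argument goes through.
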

\begin{proof}
As seen above, these topographs produce binary words from their river sequences. Being primitive with discriminant $D=m^2 \gqs 4$ implies that the binary word is of length $\gqs 0$ because $a_0+ \cdots +a_n$ in \e{vg} must be $\gqs 2$.

Starting with a binary word, add $0$ at each end and let $a_0$ be the  number of initial $0$s,  $a_1$ the number of following $1$s and so on. Let $m/r$ equal the continued fraction $\langle a_0, a_1, \dots\rangle$ in lowest terms. Since $a_i \gqs 1$ for $i=0,1, \dots$ we have $0<r<m$. The topograph containing $[0,m,r]$ is the result. (As noted earlier, adding a $1$ instead of $0$ on the right of the initial binary word also gives $m/r$.) It can be seen that this map is the inverse of the first one.
\end{proof}

The first binary words correspond to topographs of these discriminants, including their reduced forms:
\begin{alignat*}{4}
  \text{none}, \quad & D =1, &\quad &[0,1,1]&  \qquad \qquad 10, \quad & D =25, &\quad &[0,5,3] \\
  \{\}, \quad & D =4, &\quad &[0,2,1]&  \qquad  \qquad 11, \quad & D =16, &\quad &[0,4,3]\\
  0, \quad & D =9, &\quad &[0,3,1]&  \qquad  \qquad 000, \quad & D =25, &\quad &[0,5,1] \\
  1, \quad & D =9, &\quad &[0,3,2]&  \qquad  \qquad 001, \quad & D =49, &\quad &[0,7,2] \\
  00, \quad & D =16, &\quad &[0,4,1]&  \qquad  \qquad 010, \quad & D =64, &\quad &[0,8,3] \\
  01, \quad & D =25, &\quad &[0,5,2]&  \qquad  \qquad 011, \quad & D =49, &\quad &[0,7,3]
\end{alignat*}
Also, a river with all left turns has binary word $000 \cdots 00$ and corresponds to the topograph containing  $[0,m,1]$. One with all right turns has $111 \cdots 11$ and corresponds to $[0,m,m-1]$. The non-repeating condition of Theorem \ref{bina} is not needed here.
For $n\gqs 1$  there are exactly $2^{n-1}$ primitive topographs with square discriminant and rivers of length $n$.

Topographs containing $q$, $q^*$, $-q$ and $-q^*$ have the same relations among their river sequences as in \e{cba}, but for binary words now instead of necklaces. It follows that we can never have $q \sim -q^*$ for a form $q$ of square discriminant with river sequence of length $\gqs 1$, since switching letters will always produce a different sequence. This fact lets us extend  Proposition \ref{-1} to all discriminants:

\begin{theorem}\label{-m}
Let $q$ be a primitive form of any discriminant $D$. Then $q \sim -q^*$ if and only if there exist  integer solutions $t$, $u$ to $t^2-Du^2=-4$.
\end{theorem}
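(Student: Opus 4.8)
The plan is to reduce everything to results already in hand. Split on whether $D$ is a perfect square. If $D$ is not a square — which in particular covers every $D<0$ — then the claim is precisely Proposition \ref{-1}, so there is nothing to do. Thus the real work is the square case $D=m^2$, and there I would carry out two parallel computations: determine exactly for which square $D$ the equation $t^2-Du^2=-4$ is solvable in integers, and determine exactly for which square $D$ we have $q\sim -q^*$, then check the two lists coincide.

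For the equation side: $t^2-m^2u^2=-4$ means $(t-mu)(t+mu)=-4$ with the two factors of equal parity, so the only possibility is $\{t-mu,\,t+mu\}=\{-2,2\}$, forcing $t=0$ and $m|u|=2$. Hence $t^2-Du^2=-4$ is solvable if and only if $m\in\{1,2\}$, i.e. $D\in\{1,4\}$ (with $(t,u)=(0,2)$ and $(0,1)$ respectively); for $D=0$ and for $D=m^2\gqs 9$ it has no solution. For the equivalence side I would first dispatch the degenerate discriminants. When $D\in\{1,4\}$ we have $h(D)=1$ by Proposition \ref{hm2}, and since $\operatorname{disc}(-q^*)=\operatorname{disc}(q)=D$ and $-q^*$ is primitive exactly when $q$ is, both $q$ and $-q^*$ lie in the unique class, so $q\sim -q^*$. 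When $D=0$, every primitive form is equivalent to $[0,0,1]$, whose value set is contained in the nonnegative integers, whereas $-q^*$ has value set contained in the nonpositive integers (it equals $-1$ times the value set of $q^*$, which has the same value set as $q$); since equivalent forms share value sets and $1$ lies in one but not the other, $q\not\sim -q^*$.

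It remains to treat square $D=m^2\gqs 9$. Here I would invoke the bijection established above between primitive topographs of square discriminant $\gqs 4$ and binary words, together with the symmetry table \e{cba} (read for binary words rather than necklaces), according to which the topograph containing $-q^*$ has river sequence equal to the bitwise complement $\overline w$ of the river-sequence word $w$ of the topograph containing $q$. A non-empty binary word never equals its complement, and the empty word corresponds uniquely to $D=4$ (it unwinds to $m/r=2/1$), so for $m\gqs 3$ the word $w$ is non-empty, whence $\overline w\neq w$, the topographs of $q$ and $-q^*$ are distinct, and therefore $q\not\sim -q^*$ — matching the fact that $t^2-Du^2=-4$ is then unsolvable. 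Assembling the cases ($D$ non-square via Proposition \ref{-1}; $D\in\{1,4\}$; $D=0$; $D=m^2\gqs 9$) gives the theorem.

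The main obstacle I anticipate is not conceptual but organizational: keeping the small and degenerate discriminants ($D=0,1,4$, and the "no river sequence" versus "empty word" distinction) straight, and pinning down that the binary word attached to $q$ is non-empty exactly when $m\gqs 3$ so that the complement argument bites. Everything else follows directly from Proposition \ref{-1}, Proposition \ref{hm2}, the binary-word classification, and \e{cba}.
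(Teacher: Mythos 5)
Your proposal is correct and follows essentially the same route as the paper: Proposition \ref{-1} for non-square $D$, separate treatment of $D=0$ and $D\in\{1,4\}$, and for $D=m^2\gqs 9$ the observation that the river-sequence binary word of the topograph of $-q^*$ is the bitwise complement of that of $q$ and hence differs from it once the word is non-empty. The only cosmetic differences are that you settle $D\in\{1,4\}$ by citing $h(D)=1$ rather than exhibiting the explicit automorphism-type matrix \e{gq2}, and you phrase the $D=0$ case via value sets rather than topograph region signs.
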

\begin{proof}
With Proposition \ref{-1} we need only treat square discriminants.  A primitive topograph $\mathcal T$ of discriminant $D=0$ has one lake and all other region labels positive, as in Figure \ref{det0}, or all others negative. For any form $q$ on $\mathcal T$ we have $q \not\sim -q^*$ since $-q^*$ cannot appear on $\mathcal T$ as well. There are also no solutions to $t^2-Du^2=-4$ in this case.

For $D=1$ and $D=4$ there are the solutions $(t_*,u_*)=(0,2)$ and $(0,1)$, respectively. In these cases $q|M=-q^*$ for $M$ given by \e{gq2}.

For $D=m^2$ with $m\gqs 3$ we have $a_0+ \cdots +a_n$ in \e{vg} at least $3$. Therefore any topograph of this discriminant has river sequence of length $\gqs 1$ and so all forms on it have $q \not\sim -q^*$. In this case there are also no solutions to $t^2-Du^2=-4$  since $u\neq 0$ and $(mu)^2-t^2>4$ for $mu \gqs 3$ and $mu>t\gqs 0$.
\end{proof}

There is a second important notion of equivalence of forms in the literature; see for example \cite[p. 62]{z81} and \cite[Sect. 1.5]{VZ13}. Two forms $q_1$ and $q_2$ are {\em equivalent in the wide sense} if $q_1|M=\det(M) q_2$ for some $M$ in $\GL(2,\Z)$.  Let $h_1(D)$ denote the number of primitive classes of discriminant $D$ in this sense. The following result is well known, at least for ideal classes \cite[p. 52]{dav}.

\begin{theorem}
For any discriminant $D$ we have the following. If $D\lqs 0$ then $h(D)=h_1(D)$. For $D> 0$ we also have $h(D)=h_1(D)$ if  there exist  integer solutions $t$, $u$ to $t^2-Du^2=-4$, while $h(D)=2 h_1(D)$ if there are no solutions. In particular, $h(D)=2 h_1(D)$ for  square $D>4$ and $h(1)=h_1(1)=h(4)=h_1(4)=1$.
\end{theorem}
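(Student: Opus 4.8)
The plan is to translate wide equivalence into narrow equivalence using the coset decomposition $\GL(2,\Z)=\SL(2,\Z)\sqcup\SL(2,\Z)J$, where $J=(\begin{smallmatrix}0&1\\1&0\end{smallmatrix})$ has $\det J=-1$ and $q|J=q^*$. First I would record the elementary dictionary: for $M\in\GL(2,\Z)$, writing $M=M'$ if $\det M=1$ and $M=M'J$ with $M'\in\SL(2,\Z)$ if $\det M=-1$, and using $(q_1|M'J)=(q_1|M')|J=(q_1|M')^*$, one sees that $q_1|M=\det(M)q_2$ is solvable in $\GL(2,\Z)$ exactly when $q_1\sim q_2$ or $q_1\sim -q_2^*$. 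Hence the wide equivalence class of a form $q$ is, as a union of narrow classes, just $\{[q],[-q^*]\}$, and $q\mapsto -q^*$ descends to a well-defined involution on narrow classes that preserves the discriminant and the gcd of coefficients, hence primitivity. The theorem then reduces to deciding, for each $D$, whether $[-q^*]$ is a counted class and whether this involution has fixed points.

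Next I would treat $D\lqs 0$. Since every $M\in\GL(2,\Z)$ acts bijectively on $\Z^2$, $q|M$ represents exactly the same integers as $q$; so if $\det M=-1$ and $q_1|M=-q_2$ with $q_1,q_2$ both representing only nonnegative integers (the counting convention for $D\lqs 0$), then $q_2$ represents only nonpositive integers and is therefore identically zero, impossible for a primitive form. Thus among primitive counted forms wide equivalence coincides with narrow equivalence and $h(D)=h_1(D)$; equivalently, $-q^*$ represents only nonpositive integers, so it is never a counted form, and each counted wide class carries a single counted narrow class.

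For $D>0$ I would invoke Theorem \ref{-m}: for a primitive form $q$ of discriminant $D$, $q\sim -q^*$ holds precisely when $t^2-Du^2=-4$ has an integer solution, a condition independent of $q$. When such a solution exists, every wide class equals a single narrow class and $h(D)=h_1(D)$. When none exists, $[q]\mapsto[-q^*]$ is a fixed-point-free involution on the finitely many primitive classes of discriminant $D$ (all of which are counted, since there is no sign restriction for $D>0$), so these classes pair off into wide classes and $h(D)=2h_1(D)$. Finally, for square $D>4$ there is no solution to $t^2-Du^2=-4$ (as already noted in the proof of Theorem \ref{-m}: with $D=m^2$, $m\gqs 3$, one has $u\neq 0$ and $(mu)^2-t^2>4$), giving $h(D)=2h_1(D)$; and for $D=1,4$ the solutions $(t,u)=(0,2)$ and $(0,1)$ exist, giving $h(D)=h_1(D)$, while $h(1)=h(4)=1$ by Proposition \ref{hm2}.

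I expect the only delicate point to be the $D\lqs 0$ case, where one must check that the factor $\det(M)$ in the definition of wide equivalence does not secretly merge two counted positive-definite (or, for $D=0$, nonnegative) classes; the represented-integers argument above is exactly what rules this out. Everything else is bookkeeping with the coset decomposition together with an application of Theorem \ref{-m}.
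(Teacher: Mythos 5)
Your proposal is correct and follows essentially the same route as the paper: both reduce everything to Theorem \ref{-m} together with the involution $q\mapsto -q^*$ induced by the coset $\SL(2,\Z)J$, pairing narrow classes into wide ones when $t^2-Du^2=-4$ is insoluble and collapsing the two cosets when it is soluble. The only cosmetic difference is in the case $D\lqs 0$, where you rule out merging of counted classes via the represented-integers argument, while the paper observes that the fixed-point-free involution sends each counted (positive semidefinite) class to an uncounted one so that only half the narrow classes enter $h(D)$ -- the same observation in different clothing.
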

\begin{proof}
We know from Theorem \ref{-m} that $q \sim -q^*$ for any primitive form of discriminant $D$ if and only if there exist  integer solutions $t$, $u$ to $t^2-Du^2=-4$.

Suppose first that $D>$ and no solution exists. The map $q \mapsto -q^*$ is an involution permuting the $h(D)$ equivalence classes of forms without fixing any. Hence we may list  class representatives $q_1$, $-q_1^*, \dots, q_k, -q_k^*$ and $h(D)=2k$. For $J=(\begin{smallmatrix}0 & 1\\ 1 & 0\end{smallmatrix})$, we have $-q^* = \det(J) q|J$ so that $q$ and $-q^*$ are equivalent in the wide sense. However $q_i$ and $q_j$ cannot be equivalent in the wide sense for $i \neq j$. If they were, then applying $J$ implies that $q_i \sim -q_j^*$, a contradiction. Therefore $h_1(D)=k$ in this case, giving $h(D)=2 h_1(D)$.

If $D\lqs 0$ then the same arguments apply, since $t^2-Du^2=-4$ has no solutions. The only difference is that now $h(D)=k$ by definition, counting only topographs with nonnegative region labels, i.e. forms that do not represent negative numbers. Hence $h(D)= h_1(D)$.

Finally, assume $D>$ and that solutions to $t^2-Du^2=-4$ exist. If $q_1$ and $q_2$ are  equivalent in the wide sense with $q_1|M=\det(M) q_2$ and $\det(M) = -1$, then applying $J$ gives
\begin{equation*}
  q_1|M J = -q_2|J = -q_2^* \sim q_2 \qquad \implies \qquad q_1 \sim q_2.
\end{equation*}
It follows that $h(D)=h_1(D)$ in this case.
\end{proof}

For non-square $D>0$, our  work in this section gives an easy algorithm to find the minimal solution of $t^2-Du^2=4$, decide whether  $t^2-Du^2=-4$ has  solutions, and find the minimal one if it does.

\begin{algo4}
{\rm
Start with $q=[a,b,c]$ the principal form \e{prin} of discriminant $D$, which is primitive and simple. If $a+b+c<0$, let $q \mapsto q|L$ and otherwise let $q \mapsto q|R$. This follows the river rightwards as in Figure \ref{alo}. Repeat until you get back to the principal form and this produces a river sequence  $M=L^{a_0}R^{a_1} \cdots$ for $a_i\gqs 0$. The minimal solution of  $t^2-Du^2=4$ and $\varepsilon_D$ are now given by Proposition \ref{t0}. If this sequence takes the form $X\cdot \overline{X}$, where $\overline{X}$ switches all the letters of $X$, then $t^2-Du^2=-4$ has a minimal solution which is given, along with $\varepsilon^*_D$,  by Theorem \ref{xx}. Otherwise solutions do not exist.
}
\end{algo4}

\section{Infinite series for topographs and class numbers} \label{di}
Duke, Imamo\=glu and T\'oth in \cite{dit21} reconsidered and extended work of Hurwitz from \cite{hur}, expressing class numbers as infinite series. By  restricting these series to involve a single equivalence class, (this possibility is mentioned in \cite[p. 3998]{dit21}), we obtain interesting results for topographs.

Recall that $\G=\PSL(2,\Z)$. As shown in \cite[Sects. 5, 7]{dit21}, the Poincar\'e series $P(\tau)$ with
\begin{equation*}
  P(\tau)= P(\tau;s_1,s_2,s_3):=\sum_{\g \in \G} \mathcal H(\g\tau), \qquad 
\mathcal H(\tau)= \mathcal H(\tau;s_1,s_2,s_3):=\frac{\Im( \tau)^{s_1+s_2+s_3}}{|\tau|^{2s_2}|\tau-1|^{2s_3}},
\end{equation*}
are absolutely and uniformly convergent for $\Re(s_1), \Re(s_2), \Re(s_3) \gqs 1$ and $\tau$ in compact subsets of $\H$. These series are invariant under permutations of $(s_1,s_2,s_3)$. Also, 
\begin{equation}\label{ph}
  P(\tau;1,1,1) = 3\pi/2, \qquad P(\tau;1,2,2) = 3\pi/4 \qquad \text{for all} \quad \tau\in \H.
\end{equation}
 The topograph properties needed in the following sections are summarized in Section \ref{summ}.

\subsection{Negative discriminants}
For $q=[a,b,c]$ positive definite (so that $a, c>0$) of discriminant $D<0$ with first root $\ze_q$,
\begin{equation*}
  \mathcal H(\ze_q;s_1,s_2,s_3) = \left(\frac{\sqrt{|D|}}{2}\right)^{s_1+s_2+s_3}\frac 1{ a^{s_1}c^{s_2}(a+b+c)^{s_3}}.
\end{equation*}
Let $\mathcal T$ be a topograph of discriminant $D<0$ with $Q$ a form on it. Write $w_{\mathcal T} = w_Q$ for $|\aut(Q)/\{\pm I\}|$. By Corollary \ref{aut}, $w_{\mathcal T}$ is $3$ if $\mathcal T$ contains $[a,a,a]$, $2$ if it contains $[a,0,a]$ and $1$ otherwise.
We have
\begin{align*}
    P(\ze_Q) = \sum_{\g \in \G}  \mathcal H(\g \ze_Q) & = \sum_{\g \in \G}  \mathcal H( \ze_{Q|\g})\\
   & = w_Q \left(\frac{\sqrt{|D|}}{2}\right)^{s_1+s_2+s_3} \sum_{q=[a,b,c] \sim Q} \frac 1{ a^{s_1}c^{s_2}(a+b+c)^{s_3}},
\end{align*}
for positive definite $Q$, with the sum over distinct $q$. When $s_1=s_2=s_3=1$ this means, with \e{ph}, that
\begin{equation} \label{tru}
  \frac{12 \pi}{w_Q} = |D|^{3/2} \sum_{q=[a,b,c] \sim Q} \frac 1{ a (a+b+c)c}.
\end{equation}
As a sum over configurations in the topograph containing $Q$, we see in \e{tru} the  labels of regions adjacent to a vertex. In this way,  based on the methods in \cite{dit21}, the next theorem gives  topographic versions of results of Hurwitz in \cite{hur}.

\begin{theorem} \label{mik}
Let $\mathcal T$ be any topograph of discriminant $D<0$. Then
\begin{equation}\label{top}
   |D|^{3/2} \sum_{
\psset{xunit=0.12cm, yunit=0.12cm, runit=0.2cm}
\psset{linewidth=1pt}
\psset{dotsize=7pt 0,dotstyle=*}
\begin{pspicture}(2,3)(14,8.3)
          \psline(5,5)(3,8)
\psline(5,5)(3,2)
\psline(5,5)(9,5)


\rput(2,5){$r$}
\rput(7,7.2){$s$}
\rput(7,2.8){$t$}

\rput(12.5,5){$ \in \mathcal T$}
        \end{pspicture}
}
\frac 1{|r s t|} =  4 \pi, \qquad \qquad |D|^{5/2}
\sum_{
\psset{xunit=0.12cm, yunit=0.12cm, runit=0.2cm}
\psset{linewidth=1pt}
\psset{dotsize=7pt 0,dotstyle=*}
\begin{pspicture}(2,3)(14,8.3)
          \psline(5,5)(3,8)
\psline(5,5)(3,2)
\psline(5,5)(9,5)


\rput(2,5){$r$}
\rput(7,7.2){$s$}
\rput(7,2.8){$t$}

\rput(12.5,5){$ \in \mathcal T$}
        \end{pspicture}
}
\frac{|r+s+t|}{|r s t|^2} =  24 \pi,
\end{equation}
where we sum over all vertices of $\mathcal T$, (each vertex contributing one term).
\end{theorem}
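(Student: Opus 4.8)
The plan is to read off both identities from the Poincar\'e series $P(\tau;s_1,s_2,s_3)$ evaluated at the first root of a form on $\mathcal{T}$, reorganized as a sum over the vertices of $\mathcal{T}$. First I would reduce to the case that $\mathcal{T}$ is a $+$ topograph: replacing $\mathcal{T}$ by $-\mathcal{T}$ changes neither the discriminant nor the two sums (which involve only $|rst|$ and $|r+s+t|$), and for $D<0$ every region label is then a positive integer, none of them zero since a vanishing region label would force $D=(s-t)^2\gqs 0$. Fix any form $Q$ on $\mathcal{T}$. Using the bijection from Section~\ref{sl2} between the configurations of $\mathcal{T}$ (directed edges, each carrying a form) and $\G$ — so that the configuration indexed by $M$ carries the coefficient triple $Q|M$ — together with $\ze_{Q|M}=M^{-1}\ze_Q$ from \e{roots} and the reindexing $M\mapsto M^{-1}$, I would rewrite
\begin{equation*}
  P(\ze_Q;s_1,s_2,s_3)=\sum_{M\in\G}\mathcal H(\ze_{Q|M};s_1,s_2,s_3)=\sum_{\vec e}\mathcal H(\ze_{q_{\vec e}};s_1,s_2,s_3),
\end{equation*}
the last sum over all directed edges $\vec e$ of $\mathcal{T}$, with $q_{\vec e}$ the form on $\vec e$. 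This is exactly the displayed formula for $P(\ze_Q)$ above, merely grouped by configuration rather than by form; absolute convergence for $\Re(s_i)\gqs 1$ makes the rearrangements legitimate. The combinatorial heart is then: if $q_{\vec e}=[a,b,c]$, the three region labels at the head vertex of $\vec e$ are $a$, $c$, $a+b+c$ (Figure~\ref{sl2fig}, or \e{con} with $n=1$), and every vertex of $\mathcal{T}$ is the head of exactly three directed edges; hence for any symmetric $\Phi$ in three variables,
\begin{equation*}
  \sum_{\vec e}\Phi\bigl(a_{q_{\vec e}},\,c_{q_{\vec e}},\,(a{+}b{+}c)_{q_{\vec e}}\bigr)=3\sum_{v}\Phi(r_v,s_v,t_v),
\end{equation*}
summing over the vertices $v$ of $\mathcal{T}$ with their region labels $r_v,s_v,t_v$.

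For the first identity I would take $s_1=s_2=s_3=1$. Since $\mathcal H(\ze_q;1,1,1)=(\sqrt{|D|}/2)^3/(ac(a+b+c))$ and $1/(ac(a+b+c))=\Phi(a,c,a+b+c)$ with the symmetric $\Phi(r,s,t)=1/(rst)$, and since $P(\tau;1,1,1)=3\pi/2$, the two displays above give $3\pi/2=(\sqrt{|D|}/2)^3\cdot 3\sum_v 1/(r_vs_vt_v)$, i.e. $|D|^{3/2}\sum_v 1/|r_vs_vt_v|=4\pi$. For the second identity I would use that $P$ is invariant under permutations of $(s_1,s_2,s_3)$, so $P(\tau;1,2,2)=P(\tau;2,1,2)=P(\tau;2,2,1)=3\pi/4$. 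With $\mathcal H(\ze_q;1,2,2)=(\sqrt{|D|}/2)^5/(ac^2(a+b+c)^2)$, adding the three permuted instances of the summation formula gives
\begin{equation*}
  \tfrac{9\pi}{4}=\Bigl(\tfrac{\sqrt{|D|}}{2}\Bigr)^5\sum_{\vec e}\frac{a+c+(a+b+c)}{\bigl(ac(a+b+c)\bigr)^2},
\end{equation*}
and since $\bigl(a+c+(a+b+c)\bigr)/(ac(a+b+c))^2$ is the symmetric function $\Phi(r,s,t)=(r+s+t)/(rst)^2$ of the head-vertex labels $a,c,a+b+c$, the right side equals $(\sqrt{|D|}/2)^5\cdot 3\sum_v(r_v+s_v+t_v)/(r_vs_vt_v)^2$; hence $|D|^{5/2}\sum_v|r_v+s_v+t_v|/|r_vs_vt_v|^2=24\pi$.

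I expect the only real obstacle to be bookkeeping rather than difficulty: pinning down the bijection between the configurations of $\mathcal{T}$ and $\G$, and the local facts that a configuration $[a,b,c]$ displays the region labels $a$, $c$, $a+b+c$ around its head vertex while each vertex is the head of exactly three configurations. Once these are granted — and they are immediate from Section~\ref{sl2} and Figure~\ref{sl2fig} — what remains is the short arithmetic above, together with the harmless reductions (passing to $+$ topographs, and noting that no region label vanishes when $D<0$ so the absolute values in the statement cause no trouble).
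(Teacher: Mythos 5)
Your proposal is correct and follows essentially the same route as the paper: evaluate the constant Poincar\'e series $P(\tau;1,1,1)=3\pi/2$ and $P(\tau;1,2,2)=3\pi/4$ at the root of a form, convert the $\G$-sum into a sum over configurations of $\mathcal T$, and group the three configurations at each vertex. The only cosmetic differences are that you handle the multiplicity $w_Q$ by parametrizing configurations directly by $\G$ (the paper cancels it explicitly), and for the second identity you symmetrize over the permutations $(1,2,2),(2,1,2),(2,2,1)$ rather than observing, as the paper does, that the three configurations at a vertex already produce $(r+s+t)/(rst)^2$.
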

\begin{proof}
The first equality in \e{top} follows from \e{tru}, combining the three configurations at each vertex of $\mathcal T$, and including absolute values to allow for the negative definite case. The term $w_Q$ drops out as the topograph includes the $w_Q$ copies of each form.

 The second equality follows similarly, using $(s_1, s_2, s_3)=(1,2,2)$.
\end{proof}

 Summing \e{top} over all topographs of discriminant $D$ then implies Hurwitz's infinite series formulas. These involve distinct forms $q=[a,b,c]$ and dividing by the multiplicity factor $w_q$ gives the Hurwitz class numbers  $H$ from Section \ref{<}:
\begin{align}
 H(|D|)  & = \frac{|D|^{3/2}}{12 \pi}\sum_{\substack{b^2-4ac=D \\ a>0}} \frac 1{ a (a+b+c)c} \la{hu}\\
 & =  \frac{|D|^{5/2}}{72 \pi}\sum_{\substack{b^2-4ac=D \\ a>0}} \frac {2a+b+2c}{a^2 (a+b+c)^2 c^2}, \la{hu2}
\end{align}
(see \cite[p. 3997]{dit21}). Include the condition $\gcd(a,b,c)=1$ in \e{hu}, \e{hu2} to obtain $h(D)$ for $D<-4$. These expressions may be compared with the following finite sum from Theorem \ref{negred}, for example:
\begin{equation*}
  h^*(D)= \sum_{\substack{b^2-4ac=D,\ |b|\lqs a \lqs c, \\ b\gqs 0\text{ \ if \ }|b|=a\text{ \ or \ }a=c}} 1 \qquad \qquad (D<0).
\end{equation*}

\subsection{Positive non-square discriminants}
Recall $\varepsilon_D$ from Definition \ref{unit}, with an algorithm to  compute it at the end of Section \ref{ive}. The next result answers affirmatively our question  in the introduction as to whether Theorem \ref{t3}, (\cite[Thm. 3]{dit21}), has a topographic analog.

\begin{theorem} \label{mt}
Let $\mathcal T$ be any topograph of non-square discriminant $D>0$. Define $\mathcal T_\star$ to equal $\mathcal T$ except that all the river edges are relabeled with $\sqrt{D}$ when directed rightwards, ($-\sqrt{D}$ when directed leftwards). Then
\begin{equation}\label{topp}
     D^{3/2}
\sum_{
\psset{xunit=0.12cm, yunit=0.12cm, runit=0.2cm}
\psset{linewidth=1pt}
\psset{dotsize=7pt 0,dotstyle=*}
\begin{pspicture}(2,3)(14,8.3)
\psline{->}(5,5)(3,8)
\psline{->}(5,5)(3,2)
\psline{->}(5,5)(9,5)


\rput(5.5,2.8){\textcolor{red}{$_e$}}
\rput(2,6.1){\textcolor{red}{$_f$}}
\rput(7.5,6.8){\textcolor{red}{$_g$}}

\rput(12.8,5){$ \in \mathcal T_\star$}
        \end{pspicture}
}
 \frac{1}{|e f g|}= 2\log \varepsilon_D,
\end{equation}
where we sum over all vertices of $\mathcal T_\star$ modulo the river period, (each vertex contributing one term).
\end{theorem}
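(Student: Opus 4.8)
The plan is to obtain the left side of \eqref{topp} by unfolding the Poincar\'e series $P(\tau;1,1,1)$ along the closed geodesic attached to $\mathcal T$, in parallel with the proof of Theorem \ref{mik} but integrating along a curve instead of evaluating at a point. Fix a form $Q$ on $\mathcal T$ and let $S_Q\subset\H$ be the geodesic from $\ze'_Q$ to $\ze_Q$. By the Corollary to \eqref{st2}, the orientation--preserving stabiliser of $S_Q$ in $\G$ is $\langle G_Q\rangle$ with $G_Q=G_Q(t_D,u_D)$ as in Definition \ref{unit}, a hyperbolic matrix whose larger eigenvalue is $\varepsilon_D=(t_D+u_D\sqrt D)/2$; hence $C_Q:=\langle G_Q\rangle\backslash S_Q$ is a closed geodesic on $\G\backslash\H$ of hyperbolic length equal to the translation length $2\log\varepsilon_D$ of $G_Q$. (When $Q$ is not primitive, $\langle G_Q\rangle$ is a proper subgroup of $\aut(Q)/\{\pm I\}$ and $C_Q$ covers the primitive closed geodesic $\varepsilon_D/\varepsilon_{D_0}$ times; "modulo the river period" in \eqref{topp} then means modulo $\langle G_Q\rangle$, which is consistent since scaling a topograph leaves $D^{3/2}/|efg|$ unchanged while multiplying the period.) Since $P(\tau;1,1,1)\equiv 3\pi/2$,
\begin{equation*}
  \int_{C_Q}P(\tau;1,1,1)\,\frac{|d\tau|}{\Im\tau}=\frac{3\pi}{2}\cdot 2\log\varepsilon_D=3\pi\log\varepsilon_D.
\end{equation*}

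Next I would decompose $P$ over the ideal triangles of the $\G$--tessellation of $\H$, which correspond bijectively to the vertices of $\mathcal T$ (the three ideal vertices of a triangle are the cusps of the three regions meeting at a vertex). The stabiliser of such a triangle is its order--$3$ rotation, and $\mathcal H(\tau;1,1,1)$ is invariant under that rotation (check on a generator such as $\tau\mapsto 1/(1-\tau)$), so $P(\tau;1,1,1)=3\sum_{v}\mathcal H_{\Delta_v}(\tau)$, where $\mathcal H_{\Delta_v}$ is the pull--back of $\mathcal H(\cdot;1,1,1)$ so that its three special points are the cusps of $\Delta_v$; concretely $\mathcal H_{\Delta_v}(\tau)=(\Im\tau)^3\big/\prod_{i=1}^{3}|q_i\tau-p_i|^2$ if the regions at $v$ sit at cusps $p_i/q_i$ (a cusp at $\infty$ contributing the factor $1$). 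Using the absolute convergence of $P$ at $(1,1,1)$ to interchange summation and integration, and the standard unfolding of a $\langle G_Q\rangle$--orbit of triangles against $C_Q=\langle G_Q\rangle\backslash S_Q$ (the rotation ambiguity $\Delta_v\leftrightarrow v$ is harmless by the invariance just used), this gives
\begin{equation*}
  3\pi\log\varepsilon_D=\int_{C_Q}P(\tau;1,1,1)\,\frac{|d\tau|}{\Im\tau}=3\sum_{v\ \mathrm{mod}\ \langle G_Q\rangle}\ \int_{S_Q}\mathcal H_{\Delta_v}(\tau)\,\frac{|d\tau|}{\Im\tau},
\end{equation*}
the outer sum running over vertices of $\mathcal T$ modulo the river period.

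The heart of the argument is then the local integral. Writing $Q=[a,b,c]$ with $a>0$ and parametrising $S_Q$ by $\tau=-\tfrac{b}{2a}+\tfrac{\sqrt D}{2a}e^{i\theta}$, $\theta\in(0,\pi)$, one gets $|d\tau|/\Im\tau=d\theta/\sin\theta$ and, for each region $\rho_i=p_i/q_i$ at $v$ with label $R_i=Q(p_i,q_i)$,
\begin{equation*}
  |q_i\tau-p_i|^2=\frac{1}{2a^2}\bigl(D q_i^2+2aR_i-(bq_i+2ap_i)q_i\sqrt D\,\cos\theta\bigr),
\end{equation*}
so that the discriminant identity $(bq_i+2ap_i)^2=Dq_i^2+4aR_i$ yields $A_i^2-B_i^2=4a^2R_i^2$ for the coefficients $A_i,B_i$ of $1$ and $\cos\theta$. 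Expanding $\mathcal H_{\Delta_v}(\tau)\,|d\tau|/\Im\tau=\dfrac{D^{3/2}a^3(1-\cos^2\theta)}{\prod_i(A_i+B_i\cos\theta)}\,d\theta$ into partial fractions in $\cos\theta$ and integrating each piece by $\int_0^{\pi}(A+B\cos\theta)^{-1}d\theta=\pi(A^2-B^2)^{-1/2}$, the algebra collapses (using $e_v=R_j+R_k-R_i$ etc.\ and $D=-e_vf_v-f_vg_v-g_ve_v$) to
\begin{equation*}
  \int_{S_Q}\mathcal H_{\Delta_v}(\tau)\,\frac{|d\tau|}{\Im\tau}=\frac{\pi\,D^{3/2}}{2\,|e_v f_v g_v|},
\end{equation*}
where $e_v,f_v,g_v$ are the outgoing edge labels at $v$ \emph{in $\mathcal T_\star$}: this is exactly where the $\star$--relabelling appears, since at a river vertex two of the $R_i$ force two of those edge labels to equal $\pm\sqrt D$, so $|e_vf_vg_v|=D\,|b_v|$ for the remaining transverse label $b_v$, while at a vertex off the river the labels are the ordinary integers. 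Substituting into the displayed identity gives $3\pi\log\varepsilon_D=\tfrac{3\pi}{2}D^{3/2}\sum_v|e_vf_vg_v|^{-1}$, i.e.\ \eqref{topp}.

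The main obstacle is the last step: carrying out the three--fold partial--fraction computation and checking that, after the $|R_i|$'s introduced by $\sqrt{A_i^2-B_i^2}=2a|R_i|$ are resolved, the answer is the single clean reciprocal $D^{3/2}/(2|e_vf_vg_v|)$ of a product of (possibly $\star$--relabelled) edge labels, and that this product is precisely the one occurring in $\mathcal T_\star$ at river vertices. A secondary point needing care is the unfolding/convergence step and the non--primitive bookkeeping of the period described in the first paragraph; these are routine but must be stated correctly for the statement to hold as written for arbitrary (not necessarily primitive) topographs.
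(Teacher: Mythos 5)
Your strategy coincides with the paper's at the top level: unfold the constant Poincar\'e series $P(\tau;1,1,1)=3\pi/2$ along the closed geodesic $C_Q$ of hyperbolic length $2\log\varepsilon_D$, and reduce to local integrals of $\mathcal H$ against $S_Q$ (your $|d\tau|/\Im\tau$ is exactly the paper's $d\tau_Q$ restricted to the geodesic). Where you genuinely differ is in how the unfolded sum is organized. The paper keeps one integral $I_q$ per form $q\sim Q$, evaluates it via \cite[Lemma 2]{dit21}, and then must sort the answers into four sign cases according to $AA'$ and $BB'$, convert the three degenerate cases into statements about Z-reduced forms by applying $S$, $L$, $R$, and only afterwards reassemble everything as a sum over vertices. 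You instead quotient by the order-$3$ stabiliser of an ideal triangle, using the $\Z/3$-invariance of $\mathcal H(\cdot;1,1,1)$, so the unfolded sum is indexed by topograph vertices from the outset and the $\star$-relabelling of river edges emerges from a single unified local evaluation; this removes most of the bookkeeping in the second half of the paper's argument. (Two small remarks: the covering multiplicity for non-primitive $Q$ is $\log\varepsilon_D/\log\varepsilon_{D_0}$, not $\varepsilon_D/\varepsilon_{D_0}$; and your per-vertex integral is literally the paper's $I_q$ for any of the three forms $q$ whose $b$-edge points into $v$, since $\mathcal H(\cdot;1,1,1)$ is the triangle function attached to the cusps $\infty,0,1$, where the regions $a$, $c$, $a+b+c$ meet.)

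The one step that is asserted rather than proved is the one you flag yourself: the evaluation $\int_{S_Q}\mathcal H_{\Delta_v}\,|d\tau|/\Im\tau=\pi D^{3/2}/\bigl(2|e_vf_vg_v|_\star\bigr)$. It is correct, and the partial fractions do collapse as you hope. Taking the base triangle $\{0,1,\infty\}$ one finds $A_1B_2-A_2B_1=-2a^2b\sqrt D$, $A_1B_3-A_3B_1=-2a^2(b+2a)\sqrt D$ and $A_2B_3-A_3B_2=2a^2(b+2c)\sqrt D$, after which the three residue terms, each carrying a factor $|R_i|$ from $\sqrt{A_i^2-B_i^2}=2|a||R_i|$, sum to $\pm D/\bigl(b(b+2a)(b+2c)\bigr)$ when $a$, $c$, $a+b+c$ all have the same sign (a non-river vertex, where $-b$, $b+2a$, $b+2c$ are the outgoing edge labels) and to $\pm 1/b$ otherwise (a river vertex, where two of those edges are river edges and $|efg|_\star=D|b|$); the signs conspire so each contribution is positive. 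This is the same content as the paper's four-case table following \e{++}, so you may either carry out the computation above or import \cite[Lemma 2]{dit21} together with the sign analysis \e{oh}--\e{ohh}. Until that is written down your argument is a correct and somewhat cleaner plan rather than a complete proof, but there is no obstruction to completing it.
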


Whereas Theorem \ref{mik} required region labels, this result for positive $D$ requires edge labels.
It avoids the river edges since they can be zero. A configuration $[a,b,c]$ corresponds to a river edge if and only if $ac<0$. Hence edge labels $e$ are river edge labels if and only if $|e|< \sqrt{D}$. This means that no edge labels for $\mathcal T_\star$ are within $\sqrt{D}$ of $0$.

\SpecialCoor
\psset{griddots=5,subgriddiv=0,gridlabels=0pt}
\psset{xunit=0.38cm, yunit=0.38cm, runit=0.38cm}
\psset{linewidth=1pt}
\psset{dotsize=3pt 0,dotstyle=*}
\begin{figure}[ht]
\centering
\begin{pspicture}(0,-4)(36,6) 

\psset{arrowscale=1.4,arrowinset=0.3,arrowlength=1.1}
\newrgbcolor{light}{0.9 0.9 1.0}
\newrgbcolor{light}{0.8 0.7 1.0}
\newrgbcolor{blue2}{0.1 0.2 0.8}
\newrgbcolor{pale}{1 0.85 0.65}
\newrgbcolor{light2}{0.87 0.57 0.77}

\rput(31,2.5){%
\begin{pspicture}(0,0)(10,5)
\psline[linecolor=white,fillstyle=solid,fillcolor=pale](0,0)(10,0)(10,5)(0,5)(0,0)

\psline(0,0)(10,0)

\rput(1,4){$\H$}


\rput(5,-3){$D>0$ \ square} 

\psarc[linecolor=gray](5,0){3}{0}{180}

\psarc[linecolor=gray](2,1.5){1.5}{0}{360}
\psarc[linecolor=gray](8,1){1}{0}{360}

\psarc[linewidth=1.8pt](5,0){3}{37}{126}

\psline[linecolor=gray]{->}(2,3)(2,2)
\psline[linecolor=gray]{->}(8,2)(8,1.2)

\rput(2,-0.9){$z_Q$}
\psdot(2,0)

\rput(8,-0.9){$z'_Q$}
\psdot(8,0)

\end{pspicture}}

\rput(18,2.5){%
\begin{pspicture}(0,0)(10,5)
\psline[linecolor=white,fillstyle=solid,fillcolor=pale](0,0)(10,0)(10,5)(0,5)(0,0)

\psline(0,0)(10,0)

\rput(1,4){$\H$}


\rput(5,-3){$D>0$ \ non-square}

\psarc[linecolor=gray](5.5,0){3.5}{0}{180}
\psarc[linewidth=1.8pt](5.5,0){3.5}{80}{115}

\rput(2,-0.9){$z_Q$}
\psdot(2,0)

\rput(9,-0.9){$z'_Q$}
\psdot(9,0)

\end{pspicture}}

\rput(5,2.5){%
\begin{pspicture}(0,0)(10,5)
\psline[linecolor=white,fillstyle=solid,fillcolor=pale](0,0)(10,0)(10,5)(0,5)(0,0)

\psline(0,0)(10,0)

\rput(6,3){$z_Q$}
\psdot(7,3)

\rput(1,4){$\H$}

\rput(5,-3){$D<0$}

\end{pspicture}}

\end{pspicture}
\caption{Points and paths of integration along geodesics} \la{paf}
\end{figure}

\begin{proof}[Proof of Theorem \ref{mt}] We are following \cite[Sect. 6]{dit21}, though  the treatment must be changed slightly since we are dealing with only one equivalence class.
Let $S_q$ be the geodesic arc in $\H$ from $\ze_q$ to $\ze'_q$. For any fixed $z$ on $S_q$ let $C_q$ be the part of $S_q$ between $z$ and $G_q z$ where $G_q$ is the generator of the automorphs of $q$ given in Definition \ref{unit}. Then, as depicted in the middle of Figure \ref{paf},
\begin{equation}\label{is}
  \int_{C_Q}  P(\tau) \, d\tau_Q = \sum_{q \sim Q} I_q  \qquad \text{with} \qquad I_q:= \int_{S_q}  \mathcal H(\tau) \, d\tau_q
\end{equation}
and $d\tau_q := \sqrt{D} d\tau/q(\tau,1)$. For $q=[a,b,c]$, ($a$ cannot be $0$ since $D$ is not a square),
\begin{align}
  I_q & = \int_0^{\pi}  \mathcal H\left( -\frac b{2a} + \frac{\sqrt{D}}{2a} e^{i \theta a/|a|}\right) \frac{d\theta}{\sin \theta} \notag\\
   & = \frac{4^{s_2+s_3}D^{(s_1+s_2+s_3)/2}}{|a|^{s_1-s_2-s_3}}\int_0^\infty \frac{u^{s_1+s_2+s_3-1} \,du}{(1+u^2)^{s_1}(A'^2+A^2 u^2)^{s_2}(B'^2+B^2 u^2)^{s_3}}, \label{is2}
\end{align}
when $A=b+\sqrt{D}$, $A'=b-\sqrt{D}$,  $B=b+2a+\sqrt{D}$ and $B'=b+2a-\sqrt{D}$.

In the case $s_1=s_2=s_3=1$,  \e{is} and \e{is2}, along with $ \int_{C_Q}   d\tau_Q = 2\log \varepsilon_D$, imply
\begin{equation} \label{ham}
  3\pi \log \varepsilon_D = 16 D^{3/2}\sum_{[a,b,c] \sim Q} \int_0^\infty \frac{|a| u^{2} \,du}{(1+u^2)(A'^2+A^2 u^2)(B'^2+B^2 u^2)}.
\end{equation}
The integral evaluates to
\begin{equation*}
  \frac{\pi |a|}{2(|A|+|A'|)(|B|+|B'|)(|AB'|+|A'B|)}
\end{equation*}
according to \cite[Lemma 2]{dit21}, producing different answers depending on the signs of $AA'$ and $BB'$ which we denote by $\pm \pm$:
\begin{equation} \label{++}
\begin{aligned}
  \int^{++} & = \frac{\pi |a|}{32 a \cdot b(b+2a)(b+2c)}, & \qquad \int^{+-} & = -\frac{\pi |a|}{32 a \cdot D b}, \\
  \int^{-+} & = \frac{\pi |a|}{32 a \cdot D (b+2a)}, & \int^{--} & = -\frac{\pi |a|}{32 a \cdot D (b+2c)}.
\end{aligned}
\end{equation}
Note that for $a>0$
\begin{alignat}{5}
  AA' & >0 & \quad &\iff \quad & b^2 & >D & \quad &\iff \quad & c &>0, \label{oh}\\
  BB' & >0 & \quad &\iff \quad & (b+2a)^2 & >D  & \quad &\iff \quad &  a+b+c &>0.  \label{ohh}
\end{alignat}
Assume $a>0$. We may apply $S$ in the $+-$ case to obtain $[a',b',c']=q'=q|S$ with $a',c'>0$, $b'>a'+c'$. In other words $q'$ is Z-reduced, recalling Definition \ref{zredd}. Also $-1/b$ becomes $1/b'$. Similarly, in the $-+$ case apply $L$ to get a Z-reduced form and $1/(b+2a)=1/b'$. In the $--$ case we cannot easily obtain an equivalent Z-reduced form. Instead apply $R$ to get a form $q'$ with $a',c'<0$, $b'<a'+c'$, meaning that $-q'$ is Z-reduced. Then $-1/(b+2c)=-1/b'$.

Assume $a<0$. The inequalities are now reversed: $AA'  >0 \iff  c <0$ and $BB'  >0 \iff  a+b+c <0$. Apply $S$ in the $+-$ case and $L$ in the $-+$ case to obtain $q'$ with $-q'$ Z-reduced. Lastly apply $R$ in the $--$ case to obtain $q'$ that is Z-reduced. Notice that $|a|/a$ gives a minus sign in \e{++} when $a<0$, making these terms positive. Altogether
\begin{equation*}
  6 \log \varepsilon_D =  \sum_{\substack{[a,b,c] \sim Q \\ \text{$a$, $c$, $a+b+c$ same sign}}} \frac{D^{3/2}}{|b(b+2a)(b+2c)|}+
  \sum_{\substack{q=[a,b,c] \sim Q \\  \text{$q$ or $-q$ is Z-reduced}}} \frac{3D^{1/2}}{|b|}.
\end{equation*}
On the topograph $\mathcal T$ containing $Q$, modulo the river period,
\begin{equation} \label{lab}
   2\log \varepsilon_D =  \sum_{\substack{\text{vert. $v \in \mathcal T$} \\  \text{$v \notin$ river}}} \frac{D^{3/2}}{|e f g|}+
  \sum_{\substack{\text{vert. $v \in \mathcal T$} \\  \text{$v \in$ river}}} \frac{D^{1/2}}{|e|},
\end{equation}
where the first series involves the edge labels $e, f, g$ directed outwards from vertex $v$ not on the river. The second sum is finite and uses the label of the unique edge, directed outwards from  $v$ on the river, that is not a river edge.
Equality \e{topp} follows.
\end{proof}

Compare  \cite[Eq. (3.5)]{dit21} with its topographic version in the next theorem.

\begin{theorem} \label{mt2}
Let $\mathcal T$ be any topograph of non-square discriminant $D>0$. Define $\mathcal T_\star$ to equal $\mathcal T$ except that all the river edges are relabeled with $\sqrt{D}$ when directed rightwards. Then
\begin{equation}\label{topp2}
\sum_{
\psset{xunit=0.12cm, yunit=0.12cm, runit=0.2cm}
\psset{linewidth=1pt}
\psset{dotsize=7pt 0,dotstyle=*}
\begin{pspicture}(2,3)(14,8.3)
\psline{->}(5,5)(3,8)
\psline{->}(5,5)(3,2)
\psline{->}(5,5)(9,5)


\rput(5.5,2.8){\textcolor{red}{$_e$}}
\rput(2,6.1){\textcolor{red}{$_f$}}
\rput(7.5,6.8){\textcolor{red}{$_g$}}

\rput(12.8,5){$ \in \mathcal T_\star$}
        \end{pspicture}
}
 \left( \frac{D^{5/2}|e+f+g|}{|e f g|^2} + \frac{D^{9/2}}{3|e f g|^3} \right)= 2\log \varepsilon_D,
\end{equation}
where we sum over all vertices of $\mathcal T_\star$  modulo the river period.
\end{theorem}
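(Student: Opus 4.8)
The plan is to run the argument of Theorem~\ref{mt} verbatim, only with the parameter triple $(s_1,s_2,s_3)=(1,2,2)$ in place of $(1,1,1)$. First I would use that $P(\tau;1,2,2)=3\pi/4$ is constant and that $\int_{C_Q}d\tau_Q=2\log\varepsilon_D$ to get, by the unfolding \e{is} with $Q$ any form on $\mathcal T$,
$$
\frac{3\pi}{2}\log\varepsilon_D \;=\; \int_{C_Q}P(\tau;1,2,2)\,d\tau_Q \;=\; \sum_{q\sim Q} I_q,
\qquad I_q=\int_{S_q}\mathcal H(\tau;1,2,2)\,d\tau_q,
$$
the sum over distinct forms in the class of $Q$. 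From \e{is2} with $(1,2,2)$ one has
$I_q = 256\,D^{5/2}|a|^3 \int_0^\infty \frac{u^4\,du}{(1+u^2)(A'^2+A^2u^2)^2(B'^2+B^2u^2)^2}$,
with $A,A',B,B'$ as in the proof of Theorem~\ref{mt}.

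Next I would evaluate this rational integral by partial fractions in $u^2$ (or by quoting a suitable strengthening of \cite[Lemma 2]{dit21}), obtaining a closed form in $|A|,|A'|,|B|,|B'|$ whose shape depends on the signs of $AA'$ and $BB'$, i.e.\ four cases $\pm\pm$ parallel to \e{++}. By \e{oh} and \e{ohh} these signs are controlled by whether $a$, $c$, $a+b+c$ are positive or negative, so — exactly as in Theorem~\ref{mt} — applying $S$, $L$ or $R$ in the $+-$, $-+$, $--$ cases rewrites those contributions as finitely many terms indexed by Z-reduced (or negatives of Z-reduced) forms, while the $++$ case yields the main series. Transcribing onto the topograph $\mathcal T$ containing $Q$, with $e,f,g$ the outward edge labels at a vertex, I expect the analogue of \e{lab},
$$
2\log\varepsilon_D \;=\; \sum_{\substack{v\in\mathcal T\\ v\notin\text{river}}}\left(\frac{D^{5/2}|e+f+g|}{|efg|^2}+\frac{D^{9/2}}{3|efg|^3}\right)
\;+\; \sum_{\substack{v\in\mathcal T\\ v\in\text{river}}}\left(\frac{D^{1/2}}{|e|}+\frac{D^{3/2}}{3|e|^3}\right),
$$
the second sum finite and indexed by the one non-river outward edge $e$ at each river vertex. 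Finally, as in Theorem~\ref{mt}, relabelling every rightward river edge by $\sqrt D$ makes the outward labels at a river vertex equal to $e,\sqrt D,-\sqrt D$, for which $e+f+g=e$ and $|efg|=|e|D$; so the main summand collapses to $D^{1/2}/|e|+D^{3/2}/(3|e|^3)$, merging the two sums into the single sum over $\mathcal T_\star$ of \e{topp2}.

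The hard part will be the explicit evaluation of this degree-ten rational integral together with the sign bookkeeping. Unlike the $(1,1,1)$ case, the squared factors $(A'^2+A^2u^2)^2$, $(B'^2+B^2u^2)^2$ force second-order partial fractions, and it is precisely the resulting cubic-denominator pieces (carrying the factor $\tfrac13$) that produce the extra term $D^{9/2}/(3|efg|^3)$ not present in \e{topp}; I would need to check that these pieces reassemble correctly through the $S,L,R$ normalisations and that the overall constant lands on $2\log\varepsilon_D$ and not some other multiple. Everything else — the unfolding, the geometric identification of the reducing matrix in each sign case, and the river-relabelling device — is identical to the proof of Theorem~\ref{mt}.
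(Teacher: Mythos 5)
Your proposal follows the paper's proof essentially verbatim: same unfolding with $(s_1,s_2,s_3)=(1,2,2)$ and $P(\tau;1,2,2)=3\pi/4$, the same four-case sign analysis resolved by applying $S$, $L$, $R$ to land on Z-reduced forms contributing $3/|b'|+D/|b'|^3$ type terms, and the same river-relabelling collapse at the end. The "hard part" you defer — the explicit value of the degree-ten integral and the reassembly into $\frac{3(e+f+g)}{(efg)^2}-\frac{D^2}{(efg)^3}$ at each off-river vertex — is exactly the computation the paper carries out, and your expected final identity matches it.
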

\begin{proof}
With $(s_1, s_2, s_3)=(1,2,2)$, \e{ham} becomes, similarly to \cite[Sect. 8]{dit21},
\begin{equation*}
  \frac{3\pi}2 \log \varepsilon_D = 2^8 D^{5/2}\sum_{[a,b,c] \sim Q} \int_0^\infty \frac{|a|^3 u^{4} \,du}{(1+u^2)(A'^2+A^2 u^2)^2(B'^2+B^2 u^2)^2}.
\end{equation*}
The integral is
\begin{equation*}
  \frac{\pi |a|^3\Bigl((|A|+|A'|)(|B|+|B'|)+ |AB'|+|A'B|\Bigr)}{4(|A|+|A'|)^2(|B|+|B'|)^2(|AB'|+|A'B|)^3}.
\end{equation*}
As in \e{++},
\begin{equation*} 
\begin{aligned}
  \int^{++} & = \frac{\pi |a|^3}{2^{10} a^3}\cdot \frac{3ab+2ac+b^2}{b^2(b+2a)^2(b+2c)^3}, & \qquad \int^{+-} & = \frac{\pi |a|^3}{2^{10} a^3}\cdot \frac{a-b}{D^2 b^2}, \\
  \int^{-+} & = \frac{\pi |a|^3}{2^{10} a^3}\cdot \frac{b+3a}{D^2 (b+2a)^2}, & \int^{--} & = \frac{\pi |a|^3}{2^{10} a^3}\cdot \frac{ab+2ac-D}{D^2 (b+2c)^3}.
\end{aligned}
\end{equation*}
Apply $S$ in the $+-$ case to obtain $q'=q|S$ where $(a-b)/b^2$ equals $(c'+b')/b'^2$ and $q'$ is Z-reduced if $a>0$ or $-q'$ is Z-reduced if $a<0$. Apply $L$ in the $-+$ case to get $q'=q|L$ where $(b+3a)/(b+2a)^2$ equals $(a'+b')/b'^2$ and $q'$ is Z-reduced if $a>0$ or $-q'$ is Z-reduced if $a<0$. Apply $R$ in the $-+$ case to get $q'=q|R$ where $(ab+2ac-D)/(b+2c)^3$ equals $(a'+c'-b')/b'^2 - D/b'^3$ and $-q'$ is Z-reduced if $a>0$ or $q'$ is Z-reduced if $a<0$. When $a<0$ the $|a|^3/a^3$ factor produces $-1$. The contribution to $\int^{+-}+\int^{-+}+\int^{--}$ from $q'$ that is Z-reduced is therefore
\begin{equation*}
  \frac{c'+b'}{b'^2}+\frac{a'+b'}{b'^2}-\frac{a'+c'-b'}{b'^2}+\frac{D}{b'^3} = \frac{3}{b'}+\frac{D}{b'^3}.
\end{equation*}
The contribution  from $q'$ when $-q'$ is Z-reduced is the same times $-1$.
Altogether
\begin{equation*}
  6 \log \varepsilon_D =  \sum_{\substack{q=[a,b,c] \sim Q \\  \text{$q$ or $-q$ Z-reduced}}} \left(\frac{3D^{1/2}}{|b|}+\frac{D^{3/2}}{|b|^3}\right)+\sum_{\substack{[a,b,c] \sim Q \\ \text{$a$, $c$, $a+b+c$ same sign}}} \frac a{|a|} \cdot \frac{D^{5/2}(3ab+2ac+b^2)}{b^2(b+2a)^2(b+2c)^3}.
\end{equation*}
For the second series use the edge labels $e=-b$, $f=b+2a$ and $g=b+2c$, directed out from the vertex. Then
\begin{equation*}
  \frac{3ab+2ac+b^2}{b^2(b+2a)^2(b+2c)^3} = \frac{3a(b+2c)+D}{b^2(b+2a)^2(b+2c)^3} = \frac{3(e+f)}{2(e f g)^2}+ \frac{e f D}{(e f g)^3}.
\end{equation*}
Adding the three configurations at the vertex makes
\begin{equation*}
  \frac{3(e+f+g)}{(e f g)^2}+ \frac{(e f +f g + g e)D}{(e f g)^3} = \frac{3(e+f+g)}{(e f g)^2}- \frac{D^2}{(e f g)^3}.
\end{equation*}
Thinking of $a, a+b+c, c$ as the region labels $r, s, t$ surrounding the vertex, they are all positive or all negative. Note that $e+f+g=r+s+t$ by \e{r to e}. In the  positive region case we have $e+f+g>0$ and also $e f g<0$ since two edges are positive and one negative. In the  negative region case $e+f+g<0$ and  $e f g>0$ since one edge is  positive and two are negative.
On the topograph $\mathcal T$ containing $Q$ therefore
\begin{equation*}
   2\log \varepsilon_D =  \sum_{\substack{\text{vert. $v \in \mathcal T$} \\  \text{$v \notin$ river}}} \left(\frac{D^{5/2}|e+f+g|}{|e f g|^2}+\frac{D^{9/2}}{3|e f g|^3} \right)+
  \sum_{\substack{\text{vert. $v \in \mathcal T$} \\  \text{$v \in$ river}}} \left(\frac{D^{1/2}}{|e|}+\frac{D^{3/2}}{3|e|^3} \right),
\end{equation*}
with the same notation as \e{lab}, and
 \e{topp2} follows.
\end{proof}

Theorem \ref{mik} uses the region label expressions $r+s+t$, $rst$  while Theorems \ref{mt}, \ref{mt2} use the outward directed edge label expressions $e+f+g$, $e f g$. With  \e{r to e}, \e{e to r} these are related by
\begin{equation*}
  8rst=-efg-D(e+f+g), \qquad  \qquad e f g=-8rst-D(r+s+t).
\end{equation*}
For an example of Theorems \ref{mt} and \ref{mt2} take the topograph in Figure \ref{riv96} with $D=96$. Then $2\log \varepsilon_D =4.5848633$,  to the accuracy shown. The left sides of \e{topp} and  \e{topp2} are $4.5838550$ and $4.5848597$, respectively, when including all vertices within $15$ edges of the river.

Summing \e{topp} over all topographs of discriminant $D$ then gives the class number formula \cite[Eq. (3.3)]{dit21}. Summing \e{topp2} finds a more symmetric version of the formula \cite[Eq. (3.5)]{dit21}.
Theorems \ref{mik}, \ref{mt} and \ref{mt2} correspond to the first two cases of the family of formulas of Hurwitz and \cite[Sect. 8]{dit21}, and may likewise be extended to give faster converging series.

\subsection{Square discriminants} \la{sqrd}
Let  $u/v \in \Q$ be a fraction in lowest terms. The Ford circle in $\H$ that is tangent to $\R$ at $u/v$  may be parameterized by $u/v-1/(x+i v^2)$ for $x \in \R$. With $\lambda>0$,
\begin{equation*}
  \mathcal C_{u/v}(\lambda) \quad \text{given by} \quad \frac uv -\frac 1{x+i \lambda v^2} \quad \text{for} \quad  x \in \R,
\end{equation*}
is a scaled version with diameter $1/(\lambda v^2)$. Fixing $\lambda$, the set of circles $\mathcal C_{u/v}(\lambda)$ for $u/v \in \Q$ is permuted by the action of $\G$ provided we include $\mathcal C_{\pm 1/0}(\lambda)$ parameterized by $x+i \lambda$. Precisely,
\begin{equation}\label{ford}
  \g \mathcal C_{u/v}(\lambda) = \mathcal C_{\g (u/v)}(\lambda) \quad \text{for all} \quad \g \in \G.
\end{equation}

In this section we have forms $q=[a,b,c]$ of discriminant $D=m^2$ where $a$ and $c$ can now be zero. We assume  that $D \neq 0$. Recall $S_q$, the geodesic arc in $\H$ from $\ze_q$ to $\ze'_q$. We would like to use \e{is} but the group of automorphs of $q$ is trivial (Corollary \ref{aut}) and $ \int_{S_q} P(\tau) \, d\tau_q$ does not converge. To fix this, remove the arcs of $S_q$ inside the scaled Ford circles at $\ze_q$ and $\ze_q'$. It is convenient to have $\lambda=t/m$ for $t>0$. So let $S_q(t)$ be the part of $S_q$ between
\begin{equation} \label{wa}
  w_q:= \ze_q -\frac{m}{a+i t v^2}  \qquad \text{and} \qquad w_q':= \ze_q' +\frac{m}{a-i t (v')^2}
\end{equation}
where $v$ and $v'$ are the (positive) denominators of $\ze_q$ and $\ze_q'$, respectively. Note that $v$ and $v'$ are divisors of $a$. Here and in \e{wa} we are assuming $a\neq 0$. When $a=0$, (and hence $b\neq 0$), we have as in Definition \ref{zer},
\begin{equation} \la{a=0}
  \ze_q=\infty, \quad \ze_q'=-c/b \quad \text{if} \quad b<0 \qquad \text{and} \qquad
  \ze_q=-c/b, \quad \ze_q'=\infty \quad \text{if} \quad b>0.
\end{equation}
Then $S_q$ is the vertical line above $-c/b$ and we find $S_q(t)$ to be the part of $S_q$ going from
\begin{equation} \label{phil}
  w_q:= -c/b+i t/m  \quad \text{to} \quad w_q':= -c/b+i m/(t v^2)  \quad \text{if} \quad b<0
\end{equation}
or the same with $w_q$ and $w_q'$ swapped if $b>0$. In \e{phil}, $v$ means the denominator of $-c/b$.
The benefit of this complicated setup is that
\begin{equation*}
  \g^{-1} S_q(t) = S_{q|\g}(t) \quad \text{for all} \quad \g \in \G, \quad \text{and} \quad  S_q(t) \to S_q \quad \text{as} \quad t \to \infty.
\end{equation*}

For our next result, define the period $1$ function
\begin{equation*}
  W_1(x)  := 2\Re  \int_{0}^{\infty} \frac{y}{y^2+1} \cdot \frac{1}{e^{\pi(y+2i x)}-1} \, dy.
\end{equation*}

\begin{theorem} \label{sq}
Let $\mathcal T$ be any topograph of square discriminant $D=m^2>1$. As before, define $\mathcal T_\star$ to equal $\mathcal T$ except that all the river edges are relabeled with $\sqrt{D}=m$ when directed rightwards. Denote by $r$ and $s$ the congruence classes mod $m$ of the lake adjacent region labels. Then
\begin{equation}\label{toww}
 W_1\left(\frac rm \right)+W_1\left(\frac sm \right) + 
     m^{3}
\sum_{
\psset{xunit=0.12cm, yunit=0.12cm, runit=0.2cm}
\psset{linewidth=1pt}
\psset{dotsize=7pt 0,dotstyle=*}
\begin{pspicture}(2,3)(14,8.3)
\psline{->}(5,5)(3,8)
\psline{->}(5,5)(3,2)
\psline{->}(5,5)(9,5)


\rput(5.5,2.8){\textcolor{red}{$_e$}}
\rput(2,6.1){\textcolor{red}{$_f$}}
\rput(7.5,6.8){\textcolor{red}{$_g$}}

\rput(12.8,5){$ \in \mathcal T_\star$}
        \end{pspicture}
}
\frac{1}{|e f g|}  = 2\log\left(\frac m{2\gcd(m,r)} \right),
\end{equation}
where we sum over all vertices of $\mathcal T_\star$ that are not on a lake, (each vertex contributing one term as usual). For $D=m=1$, equation \e{toww} is valid if $2$ is added to the right.
\end{theorem}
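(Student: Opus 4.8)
The plan is to specialize the parameters to $s_1=s_2=s_3=1$, where $P(\tau;1,1,1)\equiv 3\pi/2$, and to run the unfolding argument of Theorems \ref{mt} and \ref{mt2}, but now along the \emph{truncated} geodesic arcs $S_q(t)$. Two features distinguish this case. First, since $D$ is a square, $\aut(q)=\{\pm I\}$ by Corollary \ref{aut}, so the integral over an untruncated geodesic diverges; the Ford–circle truncation is what keeps everything finite for fixed $t$, and the stated identity will emerge by letting $t\to\infty$ and comparing finite parts. Second, the edges bordering the two lakes contribute the functions $W_1$. Fix a form $Q$ on $\mathcal T$, most conveniently a lake–adjacent one $Q=[0,-m,c_0]$ (any primitive multiple works equally), so that $\ze_Q=\infty$, $\ze'_Q=c_0/m$, and $S_Q$ is a vertical half–line. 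Using \eqref{roots}, $Q\neq[0,0,0]$, and the $\G$–equivariance \eqref{ford} of the scaled Ford circles one checks $\g^{-1}S_q(t)=S_{q|\g}(t)$, which gives the unfolding
\begin{equation*}
  \int_{S_Q(t)}P(\tau)\,d\tau_Q=\sum_{q\sim Q}\int_{S_q(t)}\mathcal H(\tau)\,d\tau_q,
\end{equation*}
each form $q\sim Q$ occurring exactly once.

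For the left–hand side I would evaluate directly: $d\tau_Q$ restricts to the hyperbolic arclength form on $S_Q$, and $P(\tau;1,1,1)=3\pi/2$, so the integral is $\tfrac{3\pi}{2}$ times the hyperbolic length of $S_Q(t)$. With $Q=[0,-m,c_0]$ the arc runs, by \eqref{phil}, from height $t/m$ down to $m/(tv^2)$ over $c_0/m$, where $v=m/\gcd(m,c_0)=m/\gcd(m,r)$, so the length is $\log\!\big(t^2v^2/m^2\big)$ and the left side is $3\pi\log t$ plus an explicit $t$–independent constant involving $\log\gcd(m,r)$. (For a generic $Q$ with $a_Q c_Q\neq0$ one instead parameterizes $S_Q$ by $\tau=-\tfrac b{2a}+\tfrac{\sqrt D}{2a}e^{i\theta}$ and computes $\tan(\theta/2)$ at the endpoints $w_Q,w_Q'$ of \eqref{wa}; the $\log t$ divergence is the same.)

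For the right–hand side I would split the sum exactly as in the proof of Theorem \ref{mt}. For forms $q=[a,b,c]$ with $ac\neq0$ the arc $S_q(t)$ tends to $S_q$ as $t\to\infty$ and $\int_{S_q}\mathcal H\,d\tau_q$ is the integral evaluated in \eqref{is2} together with \cite[Lemma 2]{dit21}; since $D$ is a square we have $AA'=4ac$ and $BB'=4a(a+b+c)$, so the four sign–cases of \eqref{++} are governed by \eqref{oh}, \eqref{ohh} just as before, and combining the three configurations meeting at each vertex $v$ not on a lake reproduces the term $D^{3/2}/|efg|=m^3/|efg|$ of \eqref{toww}, while vertices adjacent to the river supply only finitely many boundary terms $D^{1/2}/|e|$. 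The essential new input is the contribution of the forms attached to each lake: the edges bordering a lake, and the forms pointing at it, form $\G$–translate families $\{q\,|\,T^n\}_{n\in\Z}$ along that cusp, with denominators constant within a family; grouping them turns their total contribution into $\int_{S_{q_0}(t)}\big(\sum_{n\in\Z}\mathcal H(\sigma+n)\big)\,d\sigma_{q_0}$ over a vertical arc lying above a point $\equiv r/m\pmod 1$. Evaluating the $1$–periodic sum $\sum_{n}\mathcal H(\sigma+n;1,1,1)$ by partial fractions $\tfrac1{\sigma+n-1}-\tfrac1{\sigma+n}$ and the cotangent expansion, its Fourier expansion in $e^{-\pi\Im\sigma}$ is precisely the integrand of $W_1$, and integrating over $S_{q_0}(t)$ yields $\tfrac{3\pi}{2}\big(\log t-W_1(r/m)+\text{const}\big)+o(1)$; the other lake gives the analogous term with $s/m$ by Proposition \ref{prok}. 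Matching the $3\pi\log t$ divergences on the two sides is then an automatic consistency check, and equating the finite parts (after dividing by $3\pi/2$) gives \eqref{toww}, with the constant $2\log\!\big(m/2\gcd(m,r)\big)$ emerging from the bookkeeping of the Ford–circle denominators; the case $m=1$ is exceptional because $r/m,s/m\in\Z$, an extra lattice term at $\sigma+n=0$ appears, and this accounts for the added $2$.

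The step I expect to be the main obstacle is exactly this lake/cusp analysis: showing that the regularized sum $\int_{S_{q_0}(t)}\sum_{n}\mathcal H(\sigma+n)\,d\sigma_{q_0}$ equals $\tfrac{3\pi}{2}\log t-\tfrac{3\pi}{2}W_1(r/m)$ plus an explicit constant in the limit, i.e. pinning down the exact $W_1$ term and constant, and doing so with enough uniformity in $n$ and $t$ to justify interchanging the sum over forms, the $t\to\infty$ limit, and the geodesic integral. A secondary nuisance will be careful bookkeeping of orientations and of the $S$–image families (those with third coefficient $0$ or with $a-b+c=0$) so that no lake edge or river boundary term is double counted or dropped, and checking that the remaining sum over vertices off the lakes converges absolutely so the $t\to\infty$ passage there is harmless.
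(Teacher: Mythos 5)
Your proposal follows essentially the same route as the paper's proof: unfold $P(\tau;1,1,1)=3\pi/2$ over the Ford-circle-truncated arcs $S_q(t)$, reuse the Theorem \ref{mt} computation for the vertices away from the lakes, and extract the $W_1$ terms from the lake-adjacent forms by periodizing $\mathcal H$ along each cusp. The step you flag as the main obstacle is exactly the paper's Lemma \ref{v}, the identity $\sum_{c\equiv r\bmod m}\frac{m^4y^2}{(c^2+m^2y^2)((c+m)^2+m^2y^2)}=\frac{2\pi y}{4y^2+1}\bigl(1+2\Re\frac{1}{e^{2\pi(y+ir/m)}-1}\bigr)$, proved by precisely the partial-fraction/cotangent argument you sketch; after that the $\tfrac{3\pi}{2}\log t$ divergences cancel and the finite parts give \e{toww} (note your $W_1$ contribution should enter with the opposite sign, but that is bookkeeping).
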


\SpecialCoor
\psset{griddots=5,subgriddiv=0,gridlabels=0pt}
\psset{xunit=6cm, yunit=6cm, runit=2cm}
\psset{linewidth=1pt}
\psset{dotsize=5pt 0,dotstyle=*}
\begin{figure}[ht]
\centering
\begin{pspicture}(-0.2,-0.12)(1.2,0.3) 

\psset{arrowscale=2.4,arrowinset=0.3,arrowlength=1.1}
\newrgbcolor{light}{0.9 0.9 1.0}
\newrgbcolor{light}{0.8 0.7 1.0}
\newrgbcolor{blue2}{0.1 0.2 0.8}
\newrgbcolor{pale}{1 0.85 0.65}
\newrgbcolor{light2}{0.87 0.57 0.77}

\psline[linecolor=light2](0,0.3)(0,-0.12)
\psline[linecolor=light2](-0.66,0)(1.66,0)

\psline[linecolor=light2](0.5,0.02)(0.5,-0.02)
\psline[linecolor=light2](1,0.02)(1,-0.02)

\psline[linecolor=light2](-0.5,0.02)(-0.5,-0.02)
\psline[linecolor=light2](1.5,0.02)(1.5,-0.02)

\psline[linecolor=light2](-0.02,-0.1)(0.02,-0.1)
\psline[linecolor=light2](-0.02,0.1)(0.02,0.1)
\psline[linecolor=light2](-0.02,0.2)(0.02,0.2)

\savedata{\mydata}[
{{-0.66, -0.0844619}, {-0.64, -0.0919651}, {-0.62, -0.0984038}, 
{-0.6, -0.103806}, {-0.58, -0.108196}, {-0.56, -0.111591}, {-0.54, 
-0.114006}, {-0.52, -0.115451}, {-0.5, -0.115932}, {-0.48, 
-0.115451}, {-0.46, -0.114006}, {-0.44, -0.111591}, {-0.42, 
-0.108196}, {-0.4, -0.103806}, {-0.38, -0.0984038}, {-0.36, 
-0.0919651}, {-0.34, -0.0844619}, {-0.32, -0.0758609}, {-0.3, 
-0.0661229}, {-0.28, -0.0552028}, {-0.26, -0.0430484}, {-0.24, 
-0.0296}, {-0.22, -0.0147897}, {-0.2, 0.00146004}, {-0.18, 
  0.0192375}, {-0.16, 0.0386431}, {-0.14, 0.0597911}, {-0.12, 
  0.082812}, {-0.1, 0.107855}, {-0.08, 0.13509}, {-0.06, 
  0.164714}, {-0.04, 0.196952}, {-0.02, 0.232067}, {0., 
  0.270363}, {0.02, 0.232067}, {0.04, 0.196952}, {0.06, 
  0.164714}, {0.08, 0.13509}, {0.1, 0.107855}, {0.12, 
  0.082812}, {0.14, 0.0597911}, {0.16, 0.0386431}, {0.18, 
  0.0192375}, {0.2, 
  0.00146004}, {0.22, -0.0147897}, {0.24, -0.0296}, {0.26, 
-0.0430484}, {0.28, -0.0552028}, {0.3, -0.0661229}, {0.32, 
-0.0758609}, {0.34, -0.0844619}, {0.36, -0.0919651}, {0.38, 
-0.0984038}, {0.4, -0.103806}, {0.42, -0.108196}, {0.44, -0.111591}, 
{0.46, -0.114006}, {0.48, -0.115451}, {0.5, -0.115932}, {0.52, 
-0.115451}, {0.54, -0.114006}, {0.56, -0.111591}, {0.58, -0.108196}, 
{0.6, -0.103806}, {0.62, -0.0984038}, {0.64, -0.0919651}, {0.66, 
-0.0844619}, {0.68, -0.0758609}, {0.7, -0.0661229}, {0.72, 
-0.0552028}, {0.74, -0.0430484}, {0.76, -0.0296}, {0.78, -0.0147897}, 
{0.8, 0.00146004}, {0.82, 0.0192375}, {0.84, 0.0386431}, {0.86, 
  0.0597911}, {0.88, 0.082812}, {0.9, 0.107855}, {0.92, 
  0.13509}, {0.94, 0.164714}, {0.96, 0.196952}, {0.98, 0.232067}, {1.,
   0.270363}, {1.02, 0.232067}, {1.04, 0.196952}, {1.06, 
  0.164714}, {1.08, 0.13509}, {1.1, 0.107855}, {1.12, 
  0.082812}, {1.14, 0.0597911}, {1.16, 0.0386431}, {1.18, 
  0.0192375}, {1.2, 
  0.00146004}, {1.22, -0.0147897}, {1.24, -0.0296}, {1.26, 
-0.0430484}, {1.28, -0.0552028}, {1.3, -0.0661229}, {1.32, 
-0.0758609}, {1.34, -0.0844619}, {1.36, -0.0919651}, {1.38, 
-0.0984038}, {1.4, -0.103806}, {1.42, -0.108196}, {1.44, -0.111591}, 
{1.46, -0.114006}, {1.48, -0.115451}, {1.5, -0.115932}, {1.52, 
-0.115451}, {1.54, -0.114006}, {1.56, -0.111591}, {1.58, -0.108196}, 
{1.6, -0.103806}, {1.62, -0.0984038}, {1.64, -0.0919651}, {1.66, 
-0.0844619}}
  ]
\dataplot[linewidth=0.8pt,plotstyle=line]{\mydata}

\rput(-0.1,-0.1){$_{-0.1}$}
\rput(-0.1,0.1){$_{\phantom{-}0.1}$}
\rput(-0.1,0.2){$_{\phantom{-}0.2}$}

\rput(0.5,-0.06){$_{0.5}$}
\rput(1,-0.06){$_{1}$}

\rput(-0.5,-0.06){$_{-0.5}$}
\rput(1.5,-0.06){$_{1.5}$}

\rput(1.7,0.04){$x$}

\rput(0.8,0.2){$W_1(x)$}


\end{pspicture}
\caption{The graph of $W_1(x)$}
\label{www1}
\end{figure}

\begin{proof}
The analog of \e{is}, as illustrated on the right of Figure \ref{paf}, is
\begin{equation} \label{is2x}
   \int_{S_Q(t)}  P(\tau) \, d\tau_Q = \sum_{q \sim Q} I_q(t)  \qquad \text{with} \qquad I_q(t) := \int_{S_q(t)}  \mathcal H(\tau) \, d\tau_q.
\end{equation}
Set $s_1=s_2=s_3=1$ so that $P(\tau) = 3\pi/2$.
On the left, assuming $Q=[0,b',c']$, we have from \e{phil}
\begin{equation} \label{3p}
  \int_{S_Q(t)}  P(\tau) \, d\tau_Q = \frac{3\pi}2 \int_{m/(t v^2)}^{t/m} \frac{dy}{y} = 3\pi\log(tv/m) = 3\pi \log(t)-3\pi \log(\gcd(b',c')),
\end{equation}
which is independent of the sign of $b'$.
As in the proof of Theorem \ref{mt} we next compute the terms $I_q(t)$. The new cases here are when $a$, $a+b+c$ or $c$ are $0$, corresponding to configurations at a lake. We treat these cases first.

For $q=[a,b,c]$ with $a=0$ we have by \e{phil},
\begin{equation} \label{kp}
  I_q(t)= \int_{m/(t v^2)}^{t/m} \frac{m^4 y^2 \, dy}{(c^2+m^2y^2)((c+b)^2+m^2y^2)} ,
\end{equation}
for $v$  the denominator of $c/m$. When $a\neq 0$,  in the notation of \e{is2} and with \e{wa},
\begin{equation} \label{go}
  I_q(t) = 16 m^{3} \int_{|a|/(t v^2)}^{t(v')^2/|a|} \frac{|a| u^{2} \,du}{(1+u^2)(A'^2+A^2 u^2)(B'^2+B^2 u^2)},
\end{equation}
for $v$ and $v'$  the denominators of $\ze_q$ and $\ze_q'$, respectively.
As in \e{oh}, when $a\neq 0$ we have $AA'=0 \iff c=0$. So when $AA'=0$, replacing $q$ by $q|S=[0,*,*]$ transforms \e{go} into \e{kp} but with $-b$ instead of $b$. (For example, when $A=0$ we have $A'=-2m$, $B=2a$, $B'=2(a-m)$, $v$ the denominator of $m/a$ and $v'=1$. Use $u=m/(|a| y)$.) When $a\neq 0$ we have $BB'=0 \iff a+b+c=0$. In the case $BB'=0$, replacing $q$ by $q|R=[0,*,*]$ transforms \e{go} into \e{kp} in the same way. The regions adjacent to each lake have labels in arithmetic progression as seen in Proposition \ref{prok}. If they are $\equiv r \bmod m$ at a lake, then combining the three configurations at each lake vertex and summing gives the contribution
\begin{equation} \label{cr}
  3\sum_{c \, \equiv \, r \bmod m} \int_{m/(t v^2)}^{t/m} \frac{m^4 y^2 \, dy}{(c^2+m^2y^2)((c+m)^2+m^2y^2)}
\end{equation}
to  \e{is2x}. Interchanging summation and integration is justified and produces a series that may be evaluated explicitly next.

\begin{lemma} \label{v}
For $m \in \Z_{\gqs 1}$ and $y>0$,
\begin{equation*}
  \sum_{c \, \equiv \, r  \bmod m}  \frac{m^4 y^2}{(c^2+m^2y^2)((c+m)^2+m^2y^2)} = \frac{2\pi y}{4y^2+1} \left(
  1+2\Re \frac{1}{e^{2\pi(y+i r/m)}-1}\right).
\end{equation*}
\end{lemma}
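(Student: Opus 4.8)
The plan is to rescale the summation variable so that the identity becomes a standard Mittag--Leffler computation. Writing $c=mk+r$ with $k\in\Z$ and putting $\alpha:=r/m$, one checks that $c^2+m^2y^2=m^2\bigl((k+\alpha)^2+y^2\bigr)$ and $(c+m)^2+m^2y^2=m^2\bigl((k+1+\alpha)^2+y^2\bigr)$, so the factors $m^4$ cancel and it suffices to prove
\begin{equation*}
  y^2\sum_{k\in\Z}\frac{1}{\bigl((k+\alpha)^2+y^2\bigr)\bigl((k+1+\alpha)^2+y^2\bigr)}=\frac{2\pi y}{4y^2+1}\left(1+2\Re\frac{1}{e^{2\pi(y+i\alpha)}-1}\right).
\end{equation*}
The series on the left converges absolutely since its terms are $O(k^{-4})$.

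Next I would decompose the general term by partial fractions. Setting $u=k+\alpha$ and factoring the denominator as $(u-iy)(u+iy)(u+1-iy)(u+1+iy)$, a residue computation gives
\begin{equation*}
  \frac{1}{(u^2+y^2)\bigl((u+1)^2+y^2\bigr)}=A\Bigl(\frac{1}{u-iy}-\frac{1}{u+1+iy}\Bigr)+B\Bigl(\frac{1}{u+iy}-\frac{1}{u+1-iy}\Bigr),
\end{equation*}
where $A=\dfrac{1}{2iy(1+2iy)}$ and $B=\dfrac{-1}{2iy(1-2iy)}$ (the residues at $-1+iy$ and $-1-iy$ equal $-B$ and $-A$ by inspection). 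Each bracketed series converges absolutely; after shifting the index of its second term by one and using the partial-fraction expansion $\pi\cot\pi z=\lim_{N\to\infty}\sum_{|k|\le N}(z+k)^{-1}$ (aligning the symmetric partial sums), the first bracket collapses to $\pi\cot\pi(\alpha-iy)-\pi\cot\pi(\alpha+iy)$ and the second to its negative. Hence
\begin{equation*}
  \sum_{k\in\Z}\frac{1}{\bigl((k+\alpha)^2+y^2\bigr)\bigl((k+1+\alpha)^2+y^2\bigr)}=\pi(A-B)\bigl(\cot\pi(\alpha-iy)-\cot\pi(\alpha+iy)\bigr),
\end{equation*}
and a short computation yields $A-B=\dfrac{1}{iy(1+4y^2)}$.

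Finally I would convert the right side into the stated exponential form. Since $\alpha,y$ are real, $\cot\pi(\alpha-iy)=\overline{\cot\pi(\alpha+iy)}$, so the difference of cotangents is $-2i\,\Im\cot\pi(\alpha+iy)$; combined with $A-B$ and the factor $y^2$ this turns the left side into $\dfrac{2\pi y}{4y^2+1}\bigl(-\Im\cot\pi(\alpha+iy)\bigr)$. It then remains to identify $-\Im\cot\pi(\alpha+iy)$ with $1+2\Re\,(e^{2\pi(y+i\alpha)}-1)^{-1}$: writing $\cot z=i\dfrac{e^{2iz}+1}{e^{2iz}-1}=i+\dfrac{2i}{w-1}$ with $w=e^{-2\pi y+2\pi i\alpha}$ (note $|w|<1$) gives $\Im\cot\pi(\alpha+iy)=1+2\Re(w-1)^{-1}$, while $e^{2\pi(y+i\alpha)}=1/\overline{w}$ gives $\Re(e^{2\pi(y+i\alpha)}-1)^{-1}=-1-\Re(w-1)^{-1}$, so that $1+2\Re(e^{2\pi(y+i\alpha)}-1)^{-1}=-1-2\Re(w-1)^{-1}=-\Im\cot\pi(\alpha+iy)$. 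Substituting back $\alpha=r/m$ completes the proof.

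I expect the only real obstacle to be bookkeeping: justifying the rearrangement of the two conditionally convergent cotangent-type series (keeping the symmetric partial sums in step through the index shift) and tracking signs and complex conjugates in the final exponential identity; no genuinely hard step is involved.
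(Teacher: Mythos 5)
Your proof is correct; I checked the partial-fraction coefficients (the residues $A$, $B$, $-B$, $-A$ at $iy$, $-iy$, $-1+iy$, $-1-iy$), the value $A-B=\frac{1}{iy(1+4y^2)}$, and the final chain of identities converting $-\Im\cot\pi(\alpha+iy)$ into $1+2\Re\,(e^{2\pi(y+i\alpha)}-1)^{-1}$; all of it goes through, and the convergence caveat you flag (keeping symmetric partial sums aligned through the index shift) is exactly the right one. The route is genuinely different from the paper's, though the underlying engine is the same. You decompose the degree-four denominator completely into four simple poles and invoke the Mittag--Leffler expansion of $\pi\cot\pi z$ directly. The paper instead uses the algebraic identity \e{bro}, which expresses the degree-four term as a combination of two degree-two terms whose linear-in-$c$ parts telescope under the shift $c\mapsto c+m$; this reduces everything to the single series $\sum_{c\equiv r}(c^2+t^2)^{-1}$, which is then evaluated by the generalized Euler formula \e{eul} (obtained from the Weierstrass product of $\sin(\pi(x+r)/m)\sin(\pi(x-r)/m)$ -- itself just the cotangent expansion in disguise). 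What the paper's version buys is reusability: the identity \e{bro} is squared and recycled in Lemma \ref{mz} to handle the $(s_1,s_2,s_3)=(1,2,2)$ case, whereas your four-pole decomposition would have to be redone from scratch there with double poles. What your version buys is that it is a completely standard, self-contained residue computation requiring no ad hoc identity, and it makes the appearance of $\cot\pi(\alpha\pm iy)$ -- hence of the Bose-type factor $(e^{2\pi(y+i\alpha)}-1)^{-1}$ -- transparent rather than emerging from a two-sine Weierstrass product.
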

\begin{proof}
The summands may be broken up using
\begin{equation} \label{bro}
   \frac{m^2-2c m}{c^2+m^2y^2}+ \frac{m^2+2(c+m) m}{(c+m)^2+m^2y^2} = \frac{m^4( 4y^2+1)}{(c^2+m^2y^2)((c+m)^2+m^2y^2)},
\end{equation}
from which it follows that
\begin{equation} \label{euro}
  \sum_{c \, \equiv \, r  \bmod m} \frac{2m^2}{c^2+m^2y^2} = \sum_{c \, \equiv \, r  \bmod m}\frac{m^4( 4y^2+1)}{(c^2+m^2y^2)((c+m)^2+m^2y^2)}.
\end{equation}

Starting with $\sin(\pi x)$ and differentiating the log of its Weierstrass product produces Euler's
 formula
\begin{equation*}
  \sum_{n \in \Z} \frac{1}{n^2+t^2}= \frac{\pi}{t}\left( 1+\frac 2{e^{2\pi t}-1}\right) \qquad (t>0).
\end{equation*}
Starting instead with $\sin(\pi (x+r)/m)\sin(\pi (x-r)/m)$ gives, for $t=i x$,
\begin{equation} \label{eul}
  \sum_{c \, \equiv \, r  \bmod m} \frac{1}{c^2+t^2}= \frac{\pi}{m t}\left( 1+2\Re \frac 1{e^{2\pi (t+i r)/m}-1}\right)\qquad (t>0).
\end{equation}
The proof is finished by combining \e{euro} and \e{eul}.
\end{proof}

Then \e{cr} can be written as
\begin{equation} \label{poy}
  3 \int_{m/(t v^2)}^{t/m} \frac{2\pi y \, dy}{4y^2+1} + 3\cdot 2\Re\int_{m/(t v^2)}^{t/m} \frac{2\pi y}{4y^2+1} \cdot \frac{1}{e^{2\pi(y+i r/m)}-1} \, dy,
\end{equation}
and the first term in \e{poy} is
\begin{equation} \la{io}
  \frac{3\pi}2 \log(t) + \frac{3\pi}4\left( \log\left(\frac{4}{m^2}+\frac 1{t^2} \right) - \log\left(\frac{4m^2}{t^2 v^4}+1 \right)\right).
\end{equation}
The sum of the $\frac{3\pi}2 \log(t)$ terms from each of the two lakes equals  the $3\pi \log(t)$ term in \e{3p}. Therefore, subtracting $3\pi \log(t)$ from both sides of the first equality in \e{is2x} ensures that the positive terms on the right side converge as $t\to \infty$. In the limit we obtain
\begin{equation*}
  -3\pi \log(\gcd(b',c')) = \frac{3\pi}2 \log\left(\frac{4}{m^2} \right) + \frac{3\pi}2 W_1\left(\frac{r}{m} \right) + \frac{3\pi}2 W_1\left(\frac{s}{m} \right) + \sum_{q \sim Q} I_q,
\end{equation*}
where we are summing over $q$ with $a$, $AA'$ and $BB'$ nonzero, and $I_q=I_q(\infty)$. This sum was computed in the proof of Theorem \ref{mt}. Lastly, take $Q=[0,b',c']=[0,m,r]$ to obtain \e{toww}. When $D=1$  the two lakes meet, see Figure \ref{1 4}, and the double counting must be corrected.
\end{proof}

Summing \e{toww} over primitive topographs leads to a class number formula. For $m>1$,
\begin{equation} \la{wr}
 h(m^2) \log\left(\frac m2 \right) =  \sum_{a,\, c,\, a+b+c >0}  \frac{m^3}{3b(b+2a)(b+2c)}+ \sum_{\text{$[a,b,c]$ $Z$-reduced}} \frac mb+\sum_{\substack{1\lqs r < m\\ \gcd(m,r)=1}} W_1 \left(\frac rm \right) ,
\end{equation}
where $b^2-4ac=m^2$ and $\gcd(a,b,c)=1$ in the first two sums. We used Proposition \ref{prok} for \e{wr}, and already know  by Proposition \ref{hm2} that $h(m^2)=\phi(m)$.

Next set
\begin{equation}\label{w2}
  W_2(x):=2 \Re \int_0^\infty \frac{y(3y^4+5y^2+6)}{(y^2+1)^3} \cdot
  \frac{1}{e^{\pi(y+2i x)}-1} \, dy.
\end{equation}

\begin{theorem} \label{sq2}
Let $\mathcal T$ be any topograph of square discriminant $D=m^2>1$.  Denote by $r$ and $s$ the congruence classes mod $m$ of its lake adjacent region labels. Then
\begin{equation}\label{toww2}
\frac {W_2\left(\frac rm \right)+ W_2\left(\frac sm \right)+1}3 +
\sum_{
\psset{xunit=0.12cm, yunit=0.12cm, runit=0.2cm}
\psset{linewidth=1pt}
\psset{dotsize=7pt 0,dotstyle=*}
\begin{pspicture}(2,3)(14,8.3)
\psline{->}(5,5)(3,8)
\psline{->}(5,5)(3,2)
\psline{->}(5,5)(9,5)


\rput(5.5,2.8){\textcolor{red}{$_e$}}
\rput(2,6.1){\textcolor{red}{$_f$}}
\rput(7.5,6.8){\textcolor{red}{$_g$}}

\rput(12.8,5){$ \in \mathcal T_\star$}
        \end{pspicture}
}
 \left( \frac{m^{5}|e+f+g|}{|e f g|^2} + \frac{m^{9}}{3|e f g|^3} \right)  = 2\log\left(\frac {m}{2\gcd(m,r)} \right),
\end{equation}
where we sum over all vertices of $\mathcal T_\star$ that are not on a lake. For $D=m=1$, equation \e{toww2} is valid if $8/3$ is added to the right.
\end{theorem}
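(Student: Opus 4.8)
The plan is to run the proof of Theorem~\ref{sq} essentially verbatim, changing only the exponent vector from $(s_1,s_2,s_3)=(1,1,1)$ to $(1,2,2)$, replacing $P(\tau)=3\pi/2$ by $P(\tau;1,2,2)=3\pi/4$ (a constant, as recorded in Section~\ref{di}), and replacing Theorem~\ref{mt} and Lemma~\ref{v} by their $(1,2,2)$-analogues (Theorem~\ref{mt2} and a new summation lemma). So I would start from the identity \e{is2x}, now reading $\int_{S_Q(t)}P(\tau)\,d\tau_Q=\sum_{q\sim Q}I_q(t)$. Taking $Q=[0,m,r]$, the left side is $\tfrac{3\pi}{4}\int_{m/(tv^2)}^{t/m}dy/y=\tfrac{3\pi}{2}\bigl(\log t-\log\gcd(m,r)\bigr)$, exactly the logarithmic divergence in $t$ seen in \e{3p}. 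The terms $I_q(t)$ with $a\neq 0$ and $c,\,a+b+c\neq 0$ are the truncated versions of the integrals $\int^{++},\int^{+-},\int^{-+},\int^{--}$ evaluated in the proof of Theorem~\ref{mt2}; as $t\to\infty$ they reproduce, vertex by vertex (after combining the three configurations at each vertex and applying $S$, $L$ or $R$ to pass to the Z-reduced representative), the summands $\tfrac{m^{5}|e+f+g|}{|efg|^{2}}+\tfrac{m^{9}}{3|efg|^{3}}$ for vertices off the river and $\tfrac{m}{|g|}+\tfrac{m^{3}}{3|g|^{3}}$ for interior river vertices, where one outgoing edge has label $-m$ and another $m$ in $\mathcal T_\star$.

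The only genuinely new ingredient is the treatment of the lake configurations, those with $a=0$ or $AA'=0$ (equivalently $c=0$) or $BB'=0$ (equivalently $a+b+c=0$). As in Theorem~\ref{sq}, I would use $q\mapsto q|S$ and $q\mapsto q|R$ to reduce the latter two to the case $a=0$, for which $b=\pm m$ and the analogue of \e{kp} and \e{go} reads
\begin{equation*}
  I_q(t)=\int_{m/(tv^2)}^{t/m}\frac{m^{8}y^{4}\,dy}{(c^{2}+m^{2}y^{2})^{2}\bigl((c+b)^{2}+m^{2}y^{2}\bigr)^{2}}.
\end{equation*}
Combining the three configurations at a lake vertex whose neighbouring labels lie in the residue class $r\bmod m$ (these are in arithmetic progression of common difference $m$ by Proposition~\ref{prok}), and interchanging the sum over that progression with the integral, the lake contribution becomes the analogue of \e{cr}. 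So I would establish a summation lemma analogous to Lemma~\ref{v}: write the summand, by partial fractions in $c$, as a combination of $(c^{2}+m^{2}y^{2})^{-1}$, $(c^{2}+m^{2}y^{2})^{-2}$, their shifts $c\mapsto c+m$, and numerators linear in $c$; the linear numerators cancel after reindexing, exactly as in \e{bro}, and the remaining sums are evaluated by Euler's formula \e{eul} together with its derivative $\sum_{c\equiv r}(c^{2}+t^{2})^{-2}=-\tfrac1{2t}\tfrac{d}{dt}\sum_{c\equiv r}(c^{2}+t^{2})^{-1}$. After rescaling the integration variable so that $4y^{2}+1$ becomes $y^{2}+1$, and one integration by parts to collapse the derivative of the geometric factor onto the rational kernel, the oscillatory part of the lake integral in the limit $t\to\infty$ becomes a constant multiple of $W_2(r/m)$, with kernel $\tfrac{y(3y^{4}+5y^{2}+6)}{(y^{2}+1)^{3}}$ as in \e{w2}; after the final normalization this multiple is $\tfrac13$. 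The non-oscillatory (pure power-law, $n=0$) part supplies a $\log t$ term (which, summed over the two lakes, matches the $\tfrac{3\pi}{2}\log t$ on the left), a $\log(m/2)$ term, and a new additive constant independent of $r$ --- the source of the ``$+1$'' inside $\tfrac{W_2(r/m)+W_2(s/m)+1}{3}$, in complete analogy with the appearance of the $\tfrac{D^{3/2}}{3|e|^{3}}$ terms in Theorem~\ref{mt2}.

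With the lemma in hand I would finish as in Theorem~\ref{sq}: subtract $\tfrac{3\pi}{2}\log t$ from both sides of \e{is2x}, let $t\to\infty$ so that the remaining positive terms converge, divide through by the appropriate multiple of $\pi$, use Proposition~\ref{prok} to identify the two lake residue classes as $r$ and $s$ with $rs\equiv 1\bmod m$, invoke the proof of Theorem~\ref{mt2} to recognise the $a\neq0$ part of $\sum_q I_q$ as the vertex sum in \e{toww2}, and take $Q=[0,m,r]$ so that $\gcd(b',c')=\gcd(m,r)$. The case $D=m=1$ needs the usual correction because the two lakes coincide (Figure~\ref{1 4}); double-counting the single lake's contribution forces the ``$+8/3$'' on the right, just as in \e{toww} the $(1,1,1)$ computation forced ``$+2$''.

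I expect the main obstacle to be purely computational: carrying out the partial-fraction decomposition of the degree-eight denominator cleanly enough that the $c$-linear numerators visibly cancel after reindexing, and then tracking the several boundary constants produced by the non-oscillatory part so that they assemble exactly into $2\log\!\bigl(m/2\gcd(m,r)\bigr)$ minus $\tfrac13$. Verifying that the resulting kernel is literally $\tfrac{y(3y^{4}+5y^{2}+6)}{(y^{2}+1)^{3}}$, rather than some expression equivalent to it only after a further integration by parts, is the delicate point, since the definition of $W_2$ in \e{w2} fixes one particular representative.
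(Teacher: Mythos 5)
Your proposal follows the paper's proof of Theorem \ref{sq2} essentially step for step: the same Poincar\'e series with $(s_1,s_2,s_3)=(1,2,2)$ and $P(\tau)=3\pi/4$, the same truncated geodesic integrals \e{is2x}--\e{3p2}, the same reliance on the computation in the proof of Theorem \ref{mt2} for the vertices away from the lakes, and the same plan of a new summation lemma (the paper's Lemma \ref{mz}) followed by an integration by parts to land on the specific kernel $\tfrac{y(3y^4+5y^2+6)}{(y^2+1)^3}$ of $W_2$ in \e{w2}, with the $\log t$ divergences cancelling against \e{3p2} and the non-oscillatory antiderivative supplying the extra constant. One concrete imprecision: since $\mathcal H(\tau;1,2,2)$ is \emph{not} symmetric in its three factors, applying $S$ and $R$ to the $c=0$ and $a+b+c=0$ lake configurations does not return the $a=0$ integrand of \e{kp2}; it produces two integrands with mixed exponents, namely $m^6y^4/\bigl((c^2+m^2y^2)((c-b)^2+m^2y^2)^2\bigr)$ and its companion, and it is the sum of all three --- with combined numerator $2(m^8y^6+m^8y^4+cm^7y^4+c^2m^6y^4)$ as in \e{cr2} --- that must be fed into the summation lemma, not three copies of the all-squared integrand. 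Your partial-fraction strategy still applies to that combined integrand (the paper instead uses the tidier decomposition $2X_c=(4y^2+3)Y_c+Z_c$ with $Z_c$ a perfect square, reducing everything to the sums $U$ and $V$ and the derivative of Euler's formula \e{eul}), so this is a correctable oversight in the plan rather than a structural flaw.
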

\begin{proof}
Set  $(s_1, s_2, s_3)=(1,2,2)$ so that $P(\tau) = 3\pi/4$. As in \e{is2x}, \e{3p},
 assuming $Q=[0,b',c']$,
\begin{equation} \label{3p2}
  \frac{3\pi}2 \big( \log(t)- \log(\gcd(b',c'))\big) =  \sum_{q \sim Q} I_q(t).
\end{equation}
For $q=[a,b,c]$ with $a=0$ we have similarly to \e{kp},
\begin{equation} \label{kp2}
  I_q(t)= \int_{m/(t v^2)}^{t/m} \frac{m^8 y^4 \, dy}{(c^2+m^2y^2)^2((c+b)^2+m^2y^2)^2} ,
\end{equation}
for $v$  the denominator of $c/m$. When $a\neq 0$, the analog of \e{go} is
\begin{equation} \label{go2}
  I_q(t) = 2^8 m^{5} \int_{|a|/(t v^2)}^{t(v')^2/|a|} \frac{|a|^3 u^{4} \,du}{(1+u^2)(A'^2+A^2 u^2)^2(B'^2+B^2 u^2)^2},
\end{equation}
for $v$ and $v'$  the denominators of $\ze_q$ and $\ze_q'$, respectively.
At a lake vertex we have three configurations with $a$, $c$ or $a+b+c$ equaling $0$. Applying $S$ when $c=0$ or $R$ when $a+b+c=0$ gets them into the form  $[0,*,*]$ and \e{go2} transforms into
\begin{equation*}
  \int_{m/(t v^2)}^{t/m} \frac{m^6 y^4 \, dy}{(c^2+m^2y^2)((c-b)^2+m^2y^2)^2}, \qquad
   \int_{m/(t v^2)}^{t/m} \frac{m^6 y^4 \, dy}{((c-b)^2+m^2y^2)(c^2+m^2y^2)^2},
\end{equation*}
respectively. Hence the analog of \e{cr} is
\begin{equation} \label{cr2}
  \sum_{c \, \equiv \, r \bmod m} \int_{m/(t v^2)}^{t/m} \frac{2(m^8 y^6 +m^8 y^4 +c m^7 y^4 +c^2 m^6 y^4)}{(c^2+m^2y^2)^2((c+m)^2+m^2y^2)^2}\, dy,
\end{equation}
giving the total contribution to \e{3p2} from all configurations on a lake with  adjacent regions $\equiv r \bmod m$. Interchanging the sum and integral is valid and we need:

\begin{lemma} \la{mz}
For $m \in \Z_{\gqs 1}$ and $y>0$,
\begin{multline} \label{den}
  \sum_{c \, \equiv \, r \bmod m} \frac{2(m^8 y^6 +m^8 y^4 +c m^7 y^4 +c^2 m^6 y^4)}{(c^2+m^2y^2)^2((c+m)^2+m^2y^2)^2}
   = \frac{2\pi y}{(4y^2+1)^3}(24 y^4+18y^2+1) \\
   + 4\pi \Re \frac{y^2}{(4y^2+1)^2}\left(\frac{24 y^4+18y^2+1}{y(4y^2+1)} \cdot
  \frac{1}{e^{2\pi(y+i r/m)}-1} + \frac{2\pi e^{2\pi(y+i r/m)}}{(e^{2\pi(y+i r/m)}-1)^2}\right).
\end{multline}
\end{lemma}
\begin{proof}
Let $X_c$ denote each summand on the left  of \e{den}. Since
\begin{equation*}
  Z_c:=\left(\frac{m^2 y^2}{c^2+m^2y^2} - \frac{m^2 y^2}{(c+m)^2+m^2y^2} \right)^2 = \frac{m^8 y^4+4cm^7y^4+4c^2m^6 y^4}{(c^2+m^2y^2)^2((c+m)^2+m^2y^2)^2},
\end{equation*}
we may write
\begin{equation} \la{ert}
  2X_c= (4y^2+3)Y_c  + Z_c, \quad \text{for} \quad Y_c:=\frac{m^8 y^4}{(c^2+m^2y^2)^2((c+m)^2+m^2y^2)^2}.
\end{equation}
Then a calculation using the square of \e{bro} shows
\begin{equation}  \la{ert2}
   \sum_{c \, \equiv \, r \bmod m} Y_c = \frac{y^2}{(4y^2+1)^2}\left((-8y^2+2)U+(8y^2+10)V \right)
\end{equation}
for
\begin{equation*}
  U:= \sum_{c \, \equiv \, r \bmod m} \frac{m^4 y^2}{(c^2+m^2y^2)^2}, \qquad  V:= \sum_{c \, \equiv \, r \bmod m} \frac{m^4 y^2}{(c^2+m^2y^2)((c+m)^2+m^2y^2)}.
\end{equation*}
Also, easily,
\begin{equation}  \la{ert3}
   \sum_{c \, \equiv \, r \bmod m} Z_c = 2y^2 U - 2y^2 V.
\end{equation}
Altogether, \e{ert}, \e{ert2} and \e{ert3} imply
\begin{equation} \label{rr}
   \sum_{c \, \equiv \, r \bmod m} X_c = \frac{y^2}{(4y^2+1)^2}\left(4U+(24y^2+14)V \right).
\end{equation}
Differentiating \e{eul} finds
\begin{equation*}
  U=  \Re \left[\frac{\pi}{2y}\left(1+\frac{2}{e^{2\pi(y+i r/m)}-1} \right)+ \frac{2\pi^2 e^{2\pi(y+i r/m)}}{(e^{2\pi(y+i r/m)}-1)^2}\right].
\end{equation*}
Inserting this into \e{rr}, along with the formula for $V$ in Lemma \ref{v}, completes the proof.
\end{proof}

Since
\begin{equation} \la{mz2}
  \int \frac{2\pi y}{(4y^2+1)^3}(24 y^4+18y^2+1)\, dy = \frac{3\pi}{8}\log(4y^2+1)- \frac{\pi}{8}\frac{12y^2+1}{(4y^2+1)^2},
\end{equation}
we see that the $\frac{3\pi}{8}\log(4y^2+1)$ term contributes the only part of \e{cr2} that grows with $t$, giving \e{io} divided by $2$. For each of the two lakes, the sum of the $\frac{3\pi}{4} \log(t)$ terms equals  the $\frac{3\pi}{2} \log(t)$ term in \e{3p2}. Subtracting these from both sides of \e{3p2} allows us to take the limit as $t\to \infty$. We obtain from \e{cr2}, via Lemma \ref{mz} and \e{mz2},
\begin{equation*}
  \frac{3\pi}4 \log\left(\frac 2m\right)+\frac{\pi}8 +
  \frac{\pi}4 W_2\left(\frac rm\right).
\end{equation*}
This used that the integral from $0$ to $\infty$ of the last term in  \e{den}, after $y\to y/2$, is
\begin{equation*}
  \frac{\pi}2 \Re \left[\int_0^\infty \frac{y(3y^4+9y^2+2)}{(y^2+1)^3} \cdot \frac{1}{e^{\pi(y+2i u)}-1}\,dy
  + \int_0^\infty \frac{y^2}{(y^2+1)^2} \cdot \frac{2\pi e^{\pi(y+2i u)}}{(e^{\pi(y+2i u)}-1)^2}\,dy \right]
\end{equation*}
for $u=r/m$. Applying integration by parts to the second integral above and adding it to the first produces $
  \frac{\pi}4 W_2(u)$.
The contributions to \e{3p2} from all the configurations away from the lakes are handled as in the proof of Theorem \ref{mt2}.
\end{proof}

For an example of Theorems \ref{sq} and \ref{sq2} take the topograph in Figure \ref{ribbon} with $D=18^2$.  To the accuracy shown, the left sides of \e{toww} and  \e{toww2} are $4.3911059$ and $4.3944308$, respectively, when including all vertices within $15$ edges of the middle river vertex. The right sides are $4.3944492$.

Theorems \ref{sq} and \ref{sq2} are the first two cases of an expected family of results, as in \cite[Sect. 8]{dit21}. Perhaps  $W_1$ and $W_2$ are related to known functions, or can be interpreted as sums over a topograph. Roughly, $W_2(x) \approx 5 W_1(x)$.

\subsection{Discriminant zero}
To complete the discussion, we look at the remaining case.
Let $\mathcal T$ be a topograph of discriminant $D=0$ with region labels assumed to be non-negative. Let the $\gcd$ of these labels be $g$. Then $\mathcal T$ is $g$ times the  primitive topograph containing $Q=[0,0,1]$ seen in Figure \ref{det0}. We exclude the $0$ topograph with $g=0$.    As in \e{qm},
\begin{equation} \la{qm2}
  Q(x,y)=y^2, \qquad Q(x,y)|M  = \g^2 x^2+2\g \delta xy+\delta^2 y^2 \qquad \text{for} \qquad M = \begin{pmatrix} \alpha &\beta \\ \g &\delta \end{pmatrix}.
\end{equation}
Also $Q$ is periodic along its lake border with $Q|T=Q$.
With this invariance under $T$ it is natural to replace the Poincar\'e series $P(\tau)$ by the Eisenstein series
\begin{equation*}
   E(\tau,s)=\sum_{M \in \G_\infty\backslash \G} \Im( M \tau)^s = \frac 12 \sum_{\substack{\g, \delta \in \Z \\  \gcd(\g, \delta)=1}} \frac{y^s}{|\g \tau+\delta|^{2s}},
\end{equation*}
for $\tau=x+i y$, $y>0$ and $\Re(s)>1$, with $\G_\infty$ generated by $T$.

\begin{lemma}
For $\Re(s)>1$,
\begin{equation*}
  y^s \sum_{q=[a,b,c] \sim [0,0,1]} \frac 1{(a(x^2+y^2)+b x+c)^s} = E(x+i y,s).
\end{equation*}
\end{lemma}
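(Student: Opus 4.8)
The plan is to parametrize the forms on the topograph of $[0,0,1]$ and match them term-by-term with the summands of $E(\tau,s)$, where $\tau=x+iy$.

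First I would recall that a form is $\sim[0,0,1]$ exactly when it equals $[0,0,1]|M$ for some $M=\begin{pmatrix}\alpha&\beta\\\gamma&\delta\end{pmatrix}\in\SL(2,\Z)$, and by \eqref{qm2} this form is $[\gamma^2,2\gamma\delta,\delta^2]$. Since (as verified in the proof of Corollary \ref{aut}) $\aut([0,0,1])=\{\pm T^j:j\in\Z\}$, two matrices give the same form if and only if their bottom rows agree up to sign; hence the assignment $(\gamma,\delta)\mapsto[\gamma^2,2\gamma\delta,\delta^2]$ descends to a bijection from $\{\pm(\gamma,\delta):\gcd(\gamma,\delta)=1\}$ — equivalently from $\G_\infty\backslash\G$ — onto the set of distinct forms $\sim[0,0,1]$.

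Second, for the form $q=[\gamma^2,2\gamma\delta,\delta^2]$ a one-line computation gives
\[
  a(x^2+y^2)+bx+c=(\gamma x+\delta)^2+(\gamma y)^2=|\gamma\tau+\delta|^2,
\]
so that $y^s/(a(x^2+y^2)+bx+c)^s=y^s/|\gamma\tau+\delta|^{2s}=\Im(M\tau)^s$. Summing over the bijection of the previous step and comparing with the two displayed forms of $E(\tau,s)$ in the text (the $\tfrac12$ there accounting for the $\pm$ identification) yields
\[
  y^s\sum_{q=[a,b,c]\sim[0,0,1]}\frac{1}{(a(x^2+y^2)+bx+c)^s}
  =\frac12\sum_{\gcd(\gamma,\delta)=1}\frac{y^s}{|\gamma\tau+\delta|^{2s}}=E(\tau,s).
\]
Absolute convergence of the Eisenstein series for $\Re(s)>1$ is standard and justifies the rearrangement of the left-hand sum.

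The only step needing any care — the "main obstacle", such as it is — is the bookkeeping of the stabilizer and the signs: one must be sure that $\aut([0,0,1])$ is exactly $\langle -I,T\rangle$ in $\SL(2,\Z)$, so that distinct triples $[a,b,c]$ correspond precisely to $\pm$-classes of coprime pairs and the combinatorial factors line up with the paper's normalization of $E(\tau,s)$. This is immediate from the direct check already cited from the proof of Corollary \ref{aut}.
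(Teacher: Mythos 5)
Your proof is correct and follows essentially the same route as the paper's: the paper expands $|\gamma\tau+\delta|^{2}=\gamma^{2}x^{2}+2\gamma\delta x+\delta^{2}+\gamma^{2}y^{2}$ and invokes \eqref{qm2} to identify the summands, leaving the coset bookkeeping implicit. You simply make explicit the bijection between $\G_\infty\backslash\G$ and the distinct forms $[\gamma^2,2\gamma\delta,\delta^2]$ via $\aut([0,0,1])=\{\pm T^j\}$, which is a welcome but not substantively different elaboration.
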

\begin{proof}
We have
\begin{equation*}
  E(x+i y,s) = \sum_{M \in \G_\infty\backslash \G} \frac{y^s}{|\g (x+i y) +\delta|^{2s}} = \sum_{M \in \G_\infty\backslash \G} \frac{y^s}{(\g^2 x^2+2\g\delta x+ \delta^2+ \g^2 y^2)^{s}},
\end{equation*}
and the result then follows by \e{qm2}.
\end{proof}

\begin{cor}
Let $\mathcal T$ be a topograph of discriminant $0$ whose non-negative region labels have $\gcd = g \gqs 1$. Then for $\Re(s)>1$,
\begin{equation*}
  g^s
 \sum_{
\psset{xunit=0.12cm, yunit=0.12cm, runit=0.2cm}
\psset{linewidth=1pt}
\psset{dotsize=7pt 0,dotstyle=*}
\begin{pspicture}(3,3)(14.9,7)
\psline(5,3)(5,7)


\rput(3,5){$a$}
\rput(7,5){$c$}

\rput(12,5.3){$ \in \mathcal T$}
        \end{pspicture}
}
 \frac{1}{(a+c)^s} = \frac 12 E(i,s),
\end{equation*}
where the sum is over all edges of $\mathcal T$, (counted once), modulo the lake border period.
\end{cor}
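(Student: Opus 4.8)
The plan is to evaluate $E(i,s)$ explicitly and to match it against the edge sum over $\mathcal T$. The starting point is the previous lemma: for $\tau = x+iy$ with $y>0$ and $\Re(s)>1$,
\[
  y^s \sum_{q=[a,b,c] \sim [0,0,1]} \frac 1{(a(x^2+y^2)+b x+c)^s} = E(x+iy,s).
\]
First I would substitute $\tau = i$, i.e. $x=0$, $y=1$. Then the denominator $a(x^2+y^2)+bx+c$ collapses to $a+c$, and $y^s=1$, so the left-hand side becomes $\sum_{q=[a,b,c]\sim[0,0,1]} (a+c)^{-s}$. The sum is over all distinct forms equivalent to $[0,0,1]$, and by the correspondence of Section \ref{sl2} these are exactly the configurations (with orientation) appearing on the primitive $D=0$ topograph; each configuration $[a,b,c]$ of that topograph has $a+c$ equal to the sum of the two regions flanking the edge carrying $b$, with the convention that an edge and its reverse give the same unordered pair $\{a,c\}$ but distinct directed forms $[a,b,c]$ and $[a,-b,c]$ (both with the same $a+c$).

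The next step is the bookkeeping between "distinct forms $q\sim[0,0,1]$" and "edges of $\mathcal T$, counted once, modulo the lake border period." Each undirected edge of the primitive topograph corresponds to two directed forms, but the statement sums over edges counted once; this factor of $2$ is exactly what produces the $\tfrac12$ in front of $E(i,s)$. More precisely, $\tfrac12 E(i,s)$ should be read together with the $\tfrac12$ appearing in the definition $E(\tau,s)=\tfrac12\sum_{\gcd(\g,\delta)=1}\cdots$, but in fact it is cleaner to note directly: the map $M\mapsto (\g,\delta)$ identifies $\G_\infty\backslash\G$ with coprime pairs up to sign, hence with directed edges of $\mathcal T$ modulo the $T$-period, i.e. with pairs (edge, orientation) modulo the lake period. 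Summing over unoriented edges once therefore halves the Eisenstein sum, giving $\tfrac12 E(i,s)$. I would spell out that the periodicity $Q|T=Q$ is what makes "modulo the lake border period" the right quotient, matching $\G_\infty\backslash\G$.

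Finally, for the general non-primitive topograph $\mathcal T$ with region $\gcd$ equal to $g$, write $\mathcal T = g\cdot\mathcal T_0$ where $\mathcal T_0$ is the primitive $D=0$ topograph; then every region label of $\mathcal T$ is $g$ times the corresponding label of $\mathcal T_0$, so each edge contributes $(g(a_0+c_0))^{-s} = g^{-s}(a_0+c_0)^{-s}$. Multiplying through by $g^s$ reduces to the primitive case already handled, yielding
\[
  g^s \sum_{\text{edges of }\mathcal T} \frac{1}{(a+c)^s} = \tfrac12 E(i,s).
\]
The only genuinely delicate point is the combinatorial identification in the second paragraph: carefully verifying that the orbit $\{q : q\sim[0,0,1]\}$, the quotient $\G_\infty\backslash\G$, and the directed edges of $\mathcal T_0$ modulo the period all match up consistently, and that passing to unoriented edges is precisely the factor of $2$. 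Everything else is substitution and the trivial scaling $\mathcal T = g\mathcal T_0$; I do not expect the analytic input (convergence for $\Re(s)>1$) to require anything beyond what is already guaranteed by the Eisenstein series and the stated lemma.
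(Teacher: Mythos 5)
Your proof is correct and follows the route the paper intends: the corollary is stated there without proof as an immediate consequence of the preceding lemma, obtained exactly as you do by setting $\tau=i$, identifying the distinct forms equivalent to $[0,0,1]$ with $\Gamma_\infty\backslash\Gamma$ (two forms per undirected edge modulo the period $Q|T=Q$, whence the factor $\tfrac12$), and scaling by $g$ for the non-primitive case. The only cosmetic slip is that the two forms attached to a single edge are $[a,b,c]$ and $[c,-b,a]=q|S$ rather than $[a,b,c]$ and $[a,-b,c]$; since all of these have the same value of $a+c$, the count of two forms per edge and hence the factor of $\tfrac12$ are unaffected.
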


{\small \bibliography{qubib} }

{\small 
\vskip 5mm
\noindent
\textsc{Department of Mathematics, The CUNY Graduate Center, 365 Fifth Avenue, New York, NY 10016-4309, U.S.A.}

\noindent
{\em E-mail address:} \texttt{cosullivan@gc.cuny.edu}
}

\end{document}